\title{Homeomorphisms, homotopy equivalences and chain complexes}
\author{Spiros Adams-Florou}
\date{\today}
\newcommand{\bd}{\mathrm{bd}}
\newcommand{\id}{\mathrm{id}}
\newcommand{\R}{\ensuremath{\mathbb{R}}}
\newcommand{\F}{\mathcal{F}}
\newcommand{\Z}{\mathbb{Z}}
\newcommand{\A}{\mathbb{A}}
\newcommand{\C}{\mathscr{C}}
\newcommand{\rr}{\mathcal{R}}
\newcommand{\ttt}{\mathcal{T}}
\newcommand{\CC}{\mathcal{C}}
\newcommand{\mesh}{\mathrm{mesh}}
\newcommand{\comesh}{\mathrm{comesh}}
\newcommand{\rad}{\mathrm{rad}}
\newcommand{\diam}{\mathrm{diam}}
\newcommand{\ep}{\epsilon}
\newcommand{\rdots}{\mathinner{%
  \mkern1mu\raise1pt\hbox{.}%
  \mkern2mu\raise4pt\hbox{.}%
  \mkern2mu\raise7pt\vbox{\kern7pt\hbox{.}}\mkern1mu}}
\newcommand{\pr}{\mathrm{pr}}
\newcommand{\leqslantup}{\rotatebox[origin=c]{90}{$\leqslant$}}
\newcommand{\brc}[2]{\left\{ \begin{array}{c} #1 \\ #2 \end{array}\right. }
\newcommand{\lrangle}[1]{\langle #1 \rangle }
\newcommand{\brcc}[4]{\left\{ \begin{array}{cc} #1 & #2 \\ #3 & #4 \end{array}\right. }
\newcommand{\brccc}[4]{\left\{\hspace{-3mm} \begin{array}{cc} #1 & #2 \\ #3 & #4 \end{array}\right. }
\newcommand{\matrixcc}[4]{\left( \begin{array}{cc} #1 & #2 \\ #3 & #4 \end{array}\right) }
\newcommand{\gam}[2]{ \Gamma_{#1}(\mathring{#2}) }
\newcommand\FigDiff[1]{Figure~\ref*{#1} on page~\pageref*{#1}}
\newcommand\FigSame[1]{Figure~\ref*{#1}}
\newcommand\Figref[1]{\ifthenelse{\value{page}=\pageref{#1}}
{\FigSame{#1}}{\FigDiff{#1}}}
\theoremstyle{plain}
\newtheorem{introthm}{Theorem}
\newtheorem{thm}{Theorem}[chapter]
\newtheorem{lem}[thm]{Lemma}
\newtheorem{prop}[thm]{Proposition}
\newtheorem{cor}[thm]{Corollary}
\newtheorem{conj}[thm]{Conjecture}
\newtheorem{q}[thm]{Question}
\newtheorem{qq}{Question}
\newtheorem{deflem}[thm]{Definition/Lemma}
\theoremstyle{definition}
\newtheorem{defn}[thm]{Definition}
\newtheorem{ex}[thm]{Example}
\newtheorem{rmk}[thm]{Remark}
\newtheorem{notn}[thm]{Notation}
\begin{document}
%% First do introductory stuff - leave this as pagestyle empty until after
%% the table of contents
%\pagestyle{empty}
%
\maketitle
\newpage

%% Include abstract - use separate parskip for this and then set parskip for
%% rest of thesis.
%\pagestyle{plain}

%-----------------------------------------------------------------------------

\pagestyle{fancy}
\fancyhead{} %clear all fields
\renewcommand{\headheight}{28pt}
\cfoot{}
\fancyfoot[LE,RO]{\thepage}

\renewcommand{\chaptermark}[1]{\markboth{\chaptername \ \thechapter.\ #1}{}} 
\renewcommand{\sectionmark}[1]{\markright{\thesection.\ #1} {}}

\fancyhead[LE]{\small \slshape \leftmark}      % Chapter in the right on even pages
\fancyhead[RO]{\small \slshape \rightmark}     % Section in the left on odd pages
\renewcommand{\headrulewidth}{0.3pt}    % Width of head rule

%-----------------------------------------------------------------------------

\pagenumbering{roman}
\setcounter{page}{1}

\onehalfspacing

\addcontentsline{toc}{chapter}{Abstract}
\chapter*{Abstract}
\noindent 
This thesis concerns the relationship between bounded and controlled topology and in particular how these can be used to recognise which homotopy equivalences of reasonable topological spaces are homotopic to homeomorphisms.  

Let $f:X\to Y$ be a simplicial map of finite-dimensional locally finite simplicial complexes. Our first result is that $f$ has contractible point inverses if and only if it is an $\ep$-controlled homotopy equivalences for all $\ep>0$, if and only if $f\times\id:X\times\R \to Y\times\R$ is a homotopy equivalence bounded over the open cone $O(Y^+)$ of Pedersen and Weibel. The most difficult part, the passage from contractible point inverses to bounded over $O(Y^+)$ is proven using a new construction for a finite dimensional locally finite simplicial complex $X$, which we call the \textit{fundamental $\ep$-subdivision cellulation} $X_\ep^\prime$. 

This whole approach can be generalised to algebra using geometric categories. In the second part of the thesis we again work over a finite-dimensional locally finite simplicial complex $X$, and use the $X$-controlled categories $\A^*(X)$, $\A_*(X)$ of Ranicki and Weiss (1990) together with the bounded categories $\CC_M(\A)$ of Pedersen and Weibel (1989). Analogous to the barycentric subdivision of a simplicial complex, we define the algebraic barycentric subdivision of a chain complex over that simplicial complex. The main theorem of the thesis is then that a chain complex $C$ is chain contractible in $\brc{\A^*(X)}{\A_*(X)}$ if and only if $\textit{``}C\otimes\Z\textit{''}\in\brc{\A^*(X\times\R)}{\A_*(X\times\R)}$ is boundedly chain contractible when measured in $O(X^+)$ for a functor $\textit{``}-\otimes\Z\textit{''}$ defined appropriately using algebraic subdivision. In the process we prove a squeezing result: a chain complex with a sufficiently small chain contraction has arbitrarily small chain contractions.

The last part of the thesis draws some consequences for recognising homology manifolds in the homotopy types of Poincar\'{e} Duality spaces. Squeezing tells us that a $PL$ Poincar\'{e} duality space with sufficiently controlled Poincar\'{e} duality is necessarily a homology manifold and the main theorem tells us that a $PL$ Poincar\'{e} duality space $X$ is a homology manifold if and only if $X\times\R$ has bounded Poincar\'{e} duality when measured in the open cone $O(X^+)$.
\vspace{10mm}
\normalsize

\newpage
\addcontentsline{toc}{chapter}{Declaration}
\chapter*{Declaration}

\normalsize
I do hereby declare that this thesis was composed by myself and that
the work described within is my own, except where explicitly stated otherwise.

\vspace{20mm}
\hfill {\it Spiros Adams-Florou}

\hfill March 2012

\newpage
\addcontentsline{toc}{chapter}{Acknowledgements}
\chapter*{Acknowledgements}
\noindent
\normalsize
First and foremost, I would like to thank my supervisor Andrew Ranicki. I am extremely grateful for all the help and advice he has given me over the last three and a half years, in particular for suggesting my research project and giving me the intuition necessary to solve it. Andrew is a very attentive supervisor; he is always available if you have a question and always seems to know the answer or at least where to find it!

I would like to thank both Wolfgang L\"{u}ck and Erik Pedersen for kindly arranging for me to visit the Universities of M\"{u}nster and Copenhagen during the autumns of 2008 and 2010 respectively. In both universities I was made to feel very welcome and enjoyed a stimulating research environment. I'm especially thankful to Erik for the many fruitful discussions I had with him during my stay.

I would also like to take this opportunity to thank Des Sheiham's parents Ivan and Teresa for their generous contributions towards my travel expenses for my trips to M\"{u}nster and Copenhagen. This was greatly appreciated, particularly as I found it difficult to secure funding for my trip to Copenhagen. 

I would like to show my gratitude to my friends and fellow graduate students, above all Mark Powell, Paul Reynolds, Patrick Orson and Julia Collins, from whom I have learned a lot during our many discussions together. I am grateful to all my friends who proofread part of my thesis and particularly Mark for making some very useful suggestions. Many thanks go to my officemates Ciaran Meachan, Jesus Martinez-Garcia and briefly Alexandre Martin for making our office a productive yet fun place to work; I have enjoyed the many hours spent at our blackboard discussing ideas. 

Last, but not least, I would like to thank my parents and my brother for all their love and for always supporting me in everything I have done.

\singlespacing

\newpage
\addcontentsline{toc}{chapter}{Contents}
\tableofcontents

\newpage
\addcontentsline{toc}{chapter}{List of symbols}
\chapter*{List of symbols}
\noindent
\normalsize
\begin{tabular}{cll}
Symbol & Meaning & Reference \\ 
\hline
\\
$*$ & A point & \\
$\sim$ & An equivalence relation or a homotopy & \\
$\simeq$ & Homotopy equivalence or chain equivalence & \\ 
$\cong$ & (PL) homeomorphism or chain isomorphism & \\
%$\lceil x \rceil$ & The least integer greater than or equal to $x$ 
% a
$\A$ & An additive category & \\
$\A^*(X)$, $\A_*(X)$ & The $X$-controlled categories of Ranicki and Weiss & \ref{geometriccategories} \\
$\A(X)$ & Shorthand for either $\A^*(X)$ or $\A_*(X)$ & \ref{eithercategory} \\
% b
bd$(f)$ & The bound of a controlled or bounded map $f$ & \ref{definebd} \\
& The bound of a chain map or chain equivalence $f$ & \ref{bdf2} \\
$B_\ep(m)$ & The open ball of radius $\ep$ around the point $m$ & \ref{openball} \\
$\partial\overline{B_\ep(m)}$ & The sphere of radius $\ep$ around the point $m$ & \ref{openball} \\
% c
$\C(f)$ & The algebraic mapping cone of the chain map $f$ & \ref{algmapcone} \\
$\CC_M(\A)$ & The bounded geometric category of Pedersen and Weibel & \ref{bddcat} \\
$ch(\A)$ & The category of finite chain complexes in $\A$ and chain maps & \ref{definech} \\
comesh$(X)$ & The comesh size of the simplicial complex $X$ & \ref{meshandco} \\
$C(\sigma)$ & The part of the chain complex $C$ over $\mathring{\sigma}$ & \ref{geometriccategories} \\
% d
$\partial \sigma$ & The boundary of the closed simplex $\sigma$ & \ref{interior} \\
$\partial X$ & The boundary of $X$ & \\
$\Delta^n$ & The standard $n$-simplex in $\R^{n+1}$ & \ref{standardsimplex} \\
$\Delta^{lf}_*(X)$ & The locally finite simplicial chain complex of $X$ & \ref{locallyfinitechains} \\
$\Delta^{-*}(X)$ & The simplicial cochain complex of $X$ & \\
diam$(\sigma)$ & The diameter of the simplex $\sigma$ & \ref{diam} \\
$D(\sigma,X)$ & The closed dual cell of $\sigma$ in $X$ & \ref{dualcells} \\
$\mathring{D}(\sigma,X)$ & The open dual cell of $\sigma$ in $X$ & \ref{dualcells} \\
%$\partial D(\sigma,X)$ & The closed dual cell of $\sigma$ in $X$ & \ref{dualcells} \\
$d_X$ & The boundary map of the simplicial chain complex of $X$ & \\
$\delta^X$ & The boundary map of the simplicial cochain complex of $X$ & \\ 
$d_{\tau\sigma}$ & Shorthand for $(d_C)_{\tau\sigma}$ & \ref{partsofs} \\
$d_{\rho_0\ldots\rho_i}$ & Shorthand for $d_{\rho_0\rho_1}d_{\rho_1\rho_2}\ldots d_{\rho_{i-1}\rho_i}$ & \ref{partsofs} \\
% e
% f
$f_{\tau,\sigma}$ or $f_{\tau\sigma}$ & The component of the morphism $f$ from $\sigma$ to $\tau$ & \ref{geometriccategories} \\
$\F(R)$ & The category of finitely generated free $R$-modules & \\
$f_{[Y^\prime],[Y]}$ & The assembled morphism from $Y$ to $Y^\prime$ & \ref{assembleobjects} \\
% g
$\mathbb{G}_X(\A)$ & The $X$-graded geometric category & \ref{geometriccategories} \\
%$\gamma_{\sigma_0,\ldots, \sigma_i}$ & A map $\Delta^{n+1} \to K(\sigma_0)$ used to construct an $\ep$-homotopy inverse & \\
$\Gamma$ & A PL map $X \times Sd\, X \to X_\ep^\prime$ or $X \times Sd\, X \to Sd\, X$ used to & \ref{constructgam} \\
& decompose its image into the homotopies $\Gamma_{\sigma_0,\ldots,\sigma_i}(\rho)$ & \\
$\Gamma_{\sigma_0,\ldots,\sigma_i}$ & $\Gamma$ restricted to $\sigma_i\times\widehat{\sigma_0\ldots\sigma_i}$ & \ref{constructgam} \\
$\Gamma_{\sigma_0,\ldots,\sigma_i}(\rho)$ & Higher homotopies decomposing either $X_\ep^\prime$ or $Sd\, X$ & \ref{Defn:fundamentalcellulation} \\
$\gam{\sigma_0,\ldots,\sigma_i}{\rho}$ & Higher homotopies decomposing either $X_\ep^\prime$ or $Sd\, X$ & \ref{gamp}, \ref{gamp2}\\
% h
% i
$\mathcal{I}$ & The inclusion functor $\A(X\times\R) \to \CC_{O(X^+)}(\A)$ for $X$ a finite & \ref{finiteinclusionfunctor} \\ 
& simplicial complex & \\
$I_\sigma$ & The union of all open simplices in $Sd\, X$ sent by $r$ to $\mathring{\sigma}$ & \ref{Isigma} \\ 
$I^\infty$ & The Hilbert cube & \\
% j
$j_X$ & The coning map $X\times\R \to O(X^+)$ & \ref{coningmap} \\
\end{tabular}
\newpage
\noindent
\hspace{-3mm}\begin{tabular}{cll}
% k
$K(\sigma)$ & The simplicial complex such that $f^{-1}(\mathring{\sigma}) \cong \mathring{\sigma} \times K(\sigma)$ & \ref{Ksigmacontractible} \\
% l
% m
mesh$(X)$ & The mesh size of the simplicial complex $X$ & \ref{meshandco} \\
$M[Y]$ & The assembly of the object $M$ over the collection of simplices $Y$ & \ref{assembleobjects} \\
% n
$N_\ep(\sigma)$ & The union of all simplices in $X$ containing a point within $\ep$ of $\sigma$ & \ref{keylemma} \\
% o
$O(M^+)$ & The open cone of a metric space $(M,d)$ & \ref{openconedefn} \\
$o(\sigma)$ & The incentre of the simplex $\sigma$ & \ref{diam} \\
% p
$P$ & A chain homotopy $s \circ t \sim \id$ & \ref{chapfive} \\
$P^*$ & The dual chain homotopy on simplicial cochains & \\
% q
% r
$R$ & A ring with identity & \\
$\rr$ & The covariant assembly functor assembling subdivided simplices & \ref{covariantassembly} \\
$\rr_r$ & The covariant assembly functor assembling the $I_\sigma$ defined by $r$ & \ref{rrr} \\
$\rr_i$ & The covariant assembly functor obtained by viewing $t^{j-i}(X\times\R)$ & \ref{otherformofexptranseq} \\
& as a subdivision of $t^j(X\times\R)$ & \\
$r$ & A simplicial approximation to the identity or chain inverse to $s$ & \ref{chapfive} \\
$r_*$ & The map $Sd_r\, C \to C$ induced by $r$ & \ref{algsubdiv} \\
$(r_*)_{\tau,\widetilde{\sigma},n}$ & The component of $r_*$ from $Sd\, C_n(\widetilde{\sigma})$ to $C_n(\tau)$ for $C\in \A^*(X)$ & \ref{partsofr} \\
$r_\sigma$ & The restriction of $r$ to a map $\Delta_*(I_\sigma) \to \Delta_*(\mathring{\sigma})$ & \\ 
$r^*$ & The dual map on simplicial cochains & \\
rad$(\sigma)$ & The radius of the simplex $\sigma$ & \ref{diam} \\
% s
$s$ & The subdivision chain equivalence on simplicial chains induced & \ref{chapfive} \\
& by the identity map & \\
$s_*$ & The map $C \to Sd\, C$ induced by $s$ & \ref{algsubdiv} \\ 
$(s_*)_{\gam{\rho_0,\ldots,\rho_i}{\rho},\sigma,n}$ & Component of $s_*$ from $C(\sigma)_n$ to $Sd\, C_n [\gam{\rho_0,\ldots,\rho_i}{\rho}]$ for $C\in \A^*(X)$ & \ref{partsofs} \\
$s_\sigma$ & The restriction of $s$ to a map $\Delta_*(\mathring{\sigma}) \to \Delta_*(I_\sigma)$ & \\
$s^*$ & The dual map on simplicial cochains & \\
$\Sigma C$ & The suspension of the chain complex $C$ & \\
$\widetilde{Sd}_r$ & The algebraic subdivision functor $\A(X) \to ch(\A(Sd\, X))$ & \ref{Deflem:SdtildeFunctor} \\
$Sd\, C$ & The algebraic subdivision of a chain complex $C$ & \ref{Deflem:SdFunctor} \\
$Sd_r\,$ & The algebraic subdivision obtained from the chain inverse $r$ & \ref{Deflem:SdFunctor} \\ 
$Sd\, \sigma$ & The barycentric subdivision of the simplex $\sigma$ & \ref{barycentricsubdivision} \\
$Sd^i\, \sigma$ & The $i^{th}$ iterated barycentric subdivision of the simplex $\sigma$ & \ref{barycentricsubdivision} \\
$Sd\, X$ & The barycentric subdivision of the simplicial complex $X$& \ref{barycentricsubdivisionX} \\
$Sd^i\, X$ & The $i^{th}$ iterated barycentric subdivision of $X$ & \ref{barycentricsubdivisionX} \\
$Sd_i$ & The algebraic subdivision functor from viewing $t^{j-i}(X\times\R)$ as a & \ref{exptranseq} \\
& subdivision of $t^j(X\times\R)$ & \\
$\mathring{\sigma}$ & The open simplex that is the interior of the closed simplex $\sigma$ & \ref{interior} \\
$\widehat{\sigma}$ & The barycentre of the simplex $\sigma$ & \ref{barycentre} \\
$[\sigma]$ & An orientation of the simplex $\sigma$ & \ref{simporient} \\
$[\sigma]\times[\tau]$ & The product orientation of two oriented simplices & \ref{prodorient} \\ 
Supp$(M)$ & The support of $M$ & \ref{definesupports} \\
% t
$t^{-1}$ & Exponential translation & \ref{exptrans} \\
$t^i(X\times\R)$ & Bounded triangulations of $X\times\R$ & \ref{exptrans} \\
$T$ & A (spanning) forest & \ref{forest} \\
$T_v$ & A (spanning) tree on $r^{-1}(v)$ & \ref{forest} \\
$\ttt$ & The contravariant assembly functor assembling open dual cells & \ref{contravariantassembly} \\
$\mathbb{T}(\tau\times\sigma)$ & The set of consistently oriented triangulations of the cell $\tau\times\sigma$ & \ref{setT} \\
% u
% v
$v_0\ldots v_n$ & The simplex spanned by the vertices $\{v_0,\ldots, v_n\}$ & \ref{simplexasjoins} \\
$v_0\ldots\widehat{v_i}\ldots v_n$ & The face of $v_0\ldots v_n$ with the vertex $v_i$ omitted & \ref{face} \\
$[v_0,\ldots,v_n]$ & An orientation of the simplex $v_0\ldots v_n$ & \ref{simporient} \\
$V(\sigma)$ & The set of vertices in the simplex $\sigma$ & \ref{vertexset} \\
% w
% x
$[X]$ & The fundamental class of a Poincar\'{e} duality space $X$ & \ref{PDstuff} \\
$[X]_x$ & The image of $[X]$ under the map $\Delta_*(X) \to \Delta_*(X,X\backslash x)$ & \\ 
$||X||$ & The geometric realisation of the simplicial complex $X$ & \ref{geometricrealisation} \\
$|X_i|$ & The cardinality of the set $X_i$ & \ref{surjectivesimplicialmapssimplextosimplex} \\
$X_\ep^\prime$ & The fundamental $\ep$-subdivision cellulation of $X$ & \ref{Defn:fundamentalcellulation} \\
% y
% z
$``-\otimes\Z''$ & Algebraically crossing with $\Z\subset\R$ & \ref{algcrosswithR} \\
\end{tabular}

\newpage
\addcontentsline{toc}{chapter}{Conventions}
\chapter*{Conventions} \label{conventions}
\begin{enumerate}
 \item In general $\rho$, $\tau$, $\sigma$ will be closed simplices in $X$ and $\widetilde{\rho}$, $\widetilde{\tau}$, $\widetilde{\sigma}$ will always be closed simplices in some subdivision of $X$. 
 \item We will favour talking about collections of open simplices as opposed to collections of closed simplices rel their boundary. In particular we choose to write $\Delta_*(\mathring{\sigma})$ in place of $\Delta_*(\sigma,\partial\sigma)$. This convention is taken to reflect the fact that a simplicial complex can be written as a disjoint union of all its open simplices.
 \item We use the following notation for a chain equivalence
\begin{displaymath}
\xymatrix{ (C, d_C, \Gamma_C) \ar@<0.5ex>[r]^{f} & (D,d_D, \Gamma_D) \ar@<0.5ex>[l]^{g}
} 
\end{displaymath}
to mean 
\begin{eqnarray*}
 d_C\Gamma_C + \Gamma_Cd_C &=& 1 - g\circ f \\
 d_D\Gamma_D + \Gamma_Dd_D &=& 1 - f\circ g. 
\end{eqnarray*}

\end{enumerate}

\cleardoublepage

\onehalfspacing
\pagenumbering{arabic}
\setcounter{page}{1}

%-----------------------------------------------------------
%-----------------------------------------------------------
\chapter{Introduction}
%-----------------------------------------------------------
%-----------------------------------------------------------

Homotopy equivalences are not in general hereditary, meaning that they do not restrict to homotopy equivalences on preimages of open sets. For nice spaces, a map that is close to a homeomorphism is a hereditary homotopy equivalence, but what about the converse - when is a hereditary homotopy equivalence close to a homeomorphism? What local conditions when imposed on a map guarantee global consequences? In 1927 Vietoris proved the Vietoris mapping Theorem in \cite{Vietmapthm}, which can be stated as: for a surjective map $f:X\to Y$ between compact metric spaces, if $f^{-1}(y)$ is acyclic\footnote{Meaning that $\widetilde{H}_*(f^{-1}(y))=0$.} for all $y\in Y$, then $f$ induces isomorphisms on homology.\footnote{Originally this was for augmented Vietoris homology mod $2$, but was subsequently extended to more general coefficient rings by Begle in \cite{Begle1} and \cite{Begle2}.}

Certainly, a homeomorphism satisfies the hypothesis of this theorem, but to what extent does the converse hold? Under what conditions is a map homotopic to a homeomorphism or at least to a map with acyclic point inverses? Many people have studied surjective maps with point inverses that are well behaved in some sense, whether they be contractible, acyclic, cell-like etc. The idea is to weaken the condition of a map being a homeomorphism where all the point inverses are precisely points to the condition where they merely have the homotopy or homology of points.

A great success of this approach for manifolds was the \textit{CE approximation Theorem} of Siebenmann showing that the set of cell-like maps $M\to N$, for $M,N$ closed $n$-manifolds with $n\neq 4$, is precisely the closure of the set of homeomorphisms $M\to N$ in the space of maps $M\to N$.\footnote{Provided in dimension $3$ that $M$ contains no fake cubes.} 

The approach of controlled topology, developed by Chapman, Ferry and Quinn, is to have each space equipped with a control map to a metric space with which we are able to measure distances. Typical theorems involve a concept called squeezing, where one shows that if the input of some geometric obstruction is sufficiently small measured in the metric space, then it can be `squeezed' arbitrarily small. Chapman and Ferry improved upon the \textit{CE approximation Theorem} in dimensions $5$ and above with their \textit{$\alpha$-approximation Theorem}. Phrased in terms of a metric, it says that for any closed metric topological $n$-manifold $N$ with $n\geqslant 5$ and for all $\delta>0$, there exists an $\ep$ such that any map $f:M\to N$ with point inverses smaller than $\ep$ is homotopic to a homeomorphism through maps with point inverses smaller than $\delta$.

The approach of bounded topology is again to have a control map, but this time necessarily to a metric space $M$ without finite diameter. Rather than focus on how small the control is, bounded topology only requires that the control is finite. An advantage of this perspective is that, unlike controlled topology, it is functorial: the sum of two finite bounds is still finite whereas the sum of two values less than $\ep$ need not be less than $\ep$. 

Since the advent of controlled and bounded topology people have been studying the relationship between the two. Pedersen and Weibel introduced the open cone $O(X)\subset \R^{n+1}$ of $X$ a subset of $S^n$. This can be viewed as the union of all the rays in $\R^{n+1}$ out of the origin through points of $X$. More generally $O(M^+)$ can be defined for a metric space $M$, which can be viewed roughly as $M\times\R$ with a metric so that $M\times\{t\}$ is $t$ times as big as $M\times\{1\}$ for $t\geqslant 0$ and $0$ times as big for $t\leqslant 0$.\footnote{See Definition \ref{openconedefn} for a precise definition.} Pedersen and Weibel noted that bounded objects over the open cone behave like local objects over $M$. The advantage to working over the open cone is that lots of rather fiddly local conditions to check at all the points of a space are replaced by a single global condition. This is preferable, particularly when this global condition can be checked with algebra by verifying whether a single chain complex is contractible or not. 

Ferry and Pedersen studied bounded Poincar\'{e} duality spaces over $O(M^+)$ and stated in a footnote on pages $168-169$ of \cite{epsurgthy} that 
\begin{quotation}
``It is easy to see that if $Z$ is a Poincar\'{e} duality space with a map $Z \to K$ such that $Z$ has $\ep$-Poincar\'{e} duality for all $\ep>0$ when measured in $K$ (after subdivision), e.g. a homology manifold, then $Z\times\R$ is an $O(K_+)$-bounded Poincar\'{e} complex. The converse (while true) will not concern us here.'' 
\end{quotation}
This footnote is proved in this thesis for finite-dimensional locally finite simplicial complexes. 

We prove both topological and algebraic Vietoris-like theorems for these spaces together with their converses and as a corollary prove Ferry and Pedersen's conjecture for such spaces. In the process we prove a squeezing result for these spaces with consequences for Poincar\'{e} duality: if a Poincar\'{e} duality space has $\ep$-Poincar\'{e} duality for $\ep$ sufficiently small, then it has arbitrarily small Poincar\'{e} duality and is a homology manifold. Explicit values for $\ep$ are computed.

\vspace{5mm}
In this thesis we work exclusively with simplicial maps between finite-dimensional locally finite simplicial complexes. Such an $X$ naturally has a complete metric\footnote{Given by taking the path metric whose restriction to each $n$-simplex is the subspace metric from the standard embedding into $\R^{n+1}$.} so in order to study $X$ with controlled or bounded topology we need only take $\id:X\to X$ as our control map. With respect to this prescribed control map, $X$ automatically has both a tame and a bounded triangulation: $0< \comesh (X) < \mesh(X) < \infty$. Here $\mesh(X)$ denotes the bound of the largest simplex diameter (c.f. Definition \ref{diam}); having a finite mesh means having a bounded triangulation. Having $\comesh(X)$ non-zero is the dual notion to a bounded triangulation; rather than having each simplex contained inside a ball of uniformly bounded diameter we have each simplex (other than $0$-simplices) containing a ball of uniformly non-zero diameter. 

The first half of the thesis is concerned with proving the following topological Vietoris-like Theorem and its converse:
\begin{introthm}\label{abv}
If $f:X\to Y$ is a simplicial map between finite-dimensional locally finite simplicial complexes $X$,$Y$, then the following are equivalent:
\begin{enumerate}
 \item $f$ has contractible point inverses,
 \item $f$ is an $\ep$-controlled homotopy equivalence measured in $Y$ for all $\ep>0$,
 \item $f\times\id_\R: X\times\R \to Y\times\R$ is an $O(Y^+)$-bounded homotopy equivalence.
\end{enumerate}
\end{introthm}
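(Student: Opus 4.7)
The plan is to prove the cycle $(1)\Rightarrow(3)\Rightarrow(2)\Rightarrow(1)$. The difficult implication is $(1)\Rightarrow(3)$, which is where the fundamental $\ep$-subdivision cellulation $X_\ep^\prime$ does the real work; the other two are relatively mechanical, given the scaling properties of $O(Y^+)$.

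For $(1)\Rightarrow(3)$, I would exploit the simplicial product structure $f^{-1}(\mathring\sigma)\cong \mathring\sigma\times K(\sigma)$, with $K(\sigma)$ contractible by hypothesis. Choosing a basepoint in each $K(\sigma)$ provides a candidate homotopy inverse $g:Y\to X$, and iterated barycentric subdivision produces homotopies $g\circ f\sim\id$, $f\circ g\sim\id$ whose tracks measured in $Y$ can be made smaller than any prescribed $\ep$. To upgrade this to a single $O(Y^+)$-bounded equivalence on $X\times\R$ I need the control to improve like $1/t$ as $t\to\infty$, because $O(Y^+)$ scales the slice $Y\times\{t\}$ by $t$ for $t\geqslant 0$. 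I would work layer by layer on $X\times[n,n+1]$, using cellulations $Y_{\ep(n)}^\prime$ with $\ep(n)=1/(n+1)$, and use the higher-homotopy decomposition $\gam{\sigma_0,\ldots,\sigma_i}{\rho}$ to glue successive layers coherently; the scaling of the open-cone metric then converts the $1/t$-controlled data over $Y$ into a single global bound over $O(Y^+)$.

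For $(3)\Rightarrow(2)$, an $O(Y^+)$-bound of $K$ forces tracks measured in $Y$ on the slice $Y\times\{t\}$ to be at most $K/t$ for $t\gg 0$, so restricting the bounded homotopy equivalence on $X\times\R$ to high slices and projecting back to $Y$ produces, for every $\ep>0$, an $\ep$-controlled homotopy equivalence of $f$ itself. For $(2)\Rightarrow(1)$, fix $y\in\mathring\sigma\subset Y$ and take $\ep$ strictly less than the distance from $y$ to $\partial\sigma$; the resulting $\ep$-controlled homotopy $H:g\circ f\sim\id_X$ stays inside $f^{-1}(B_\ep(y))\subset f^{-1}(\mathring\sigma)\cong\mathring\sigma\times K(\sigma)$ when restricted over $y$, and projecting to the $K(\sigma)$ factor yields a contraction, so $f^{-1}(y)\cong K(\sigma)$ is contractible.

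The main obstacle is $(1)\Rightarrow(3)$: contractible point inverses form a purely local condition, while $O(Y^+)$-boundedness is a coherent global condition requiring simultaneous control on all scales. The cellulation $X_\ep^\prime$ is designed precisely to organise an infinite family of small locally-built homotopy inverses into a single bounded inverse on the cone, and the key technical verification is that the higher homotopies $\gam{\sigma_0,\ldots,\sigma_i}{\rho}$ glue across layer boundaries without accumulating error.
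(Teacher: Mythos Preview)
Your overall cycle $(1)\Rightarrow(3)\Rightarrow(2)\Rightarrow(1)$ and the essential mechanisms at each step match the paper's proof. Two points deserve comment.

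\textbf{On $(1)\Rightarrow(3)$.} Your discrete layer-by-layer construction on $X\times[n,n+1]$ with $\ep(n)=1/(n+1)$ would work, but the paper does something cleaner that avoids the gluing problem you flag as the main obstacle. The cellulations $Y_\ep'$ form a \emph{continuous} one-parameter family for $0<\ep<\comesh(Y)$ (Remark~\ref{crushinggammas}), and the inductive construction of $g_\ep$ over the cells $\Gamma_{\sigma_0,\ldots,\sigma_i}(\sigma_0)$ can be made to vary continuously with $\ep$. One then simply sets $g(y,t)=g_{1/t}(y)$ for $t>1$ and $g(y,t)=g_1(y)$ for $t\leqslant 1$; continuity in $t$ is automatic and no layer-to-layer gluing is needed. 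The higher homotopies $\Gamma_{\sigma_0,\ldots,\sigma_i}$ are used \emph{within} a fixed $\ep$ to patch the local sections $\gamma_\sigma$ over adjacent simplices of $Y$, not to bridge between different values of $\ep$. Also, your phrase ``iterated barycentric subdivision produces homotopies'' is slightly off: $Y_\ep'$ is not an iterated barycentric subdivision but the prism-slice cellulation interpolating between $Y$ and $Sd\,Y$, and it is this that furnishes the $\ep$-small tracks directly.

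\textbf{On $(2)\Rightarrow(1)$.} There is a small gap: you invoke the product structure $f^{-1}(\mathring\sigma)\cong\mathring\sigma\times K(\sigma)$, but this is only available once you know $f$ is surjective (Lemma~\ref{Ksigmacontractible} characterises \emph{surjective} simplicial maps). The paper first proves surjectivity (Lemma~\ref{fsurjects}): if some $\mathring\sigma$ misses the image, then for $\ep<\rad(\sigma)$ the homotopy track of $f\circ g_\ep(o(\sigma))\sim o(\sigma)$ must exit $\mathring\sigma$, contradicting $\ep$-control. After that your argument goes through exactly as written.
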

\noindent Conditions $(1)$ and $(2)$ being equivalent is essentially well known for the case of finite simplicial complexes; see Proposition 2.18 of Jahren and Rognes' paper \cite{plmf} for example. The equivalence of conditions $(2)$ and $(3)$ is a new result, inspired by the work of Ferry and Pedersen.

The way we prove Theorem \ref{abv} is to first characterise surjective simplicial maps $f:X\to Y$ between finite-dimensional locally finite simplicial complexes. The preimage of each open simplex $\mathring{\sigma}\in Y$ is PL homeomorphic to the product $\mathring{\sigma}\times K(\sigma)$ for some finite-dimensional locally finite simplicial complex $K(\sigma)$. In fact the map $f$ is contractible if and only if $K(\sigma)$ is contractible for all $\sigma\in Y$. 

This characterisation means that we can always locally define a section of $f:X\to Y$ over each open simplex. Contractibility of $f$ allows us to patch these local sections together provided we stretch a neighbourhood of $\partial \sigma$ in $\mathring{\sigma}$ for all $\sigma$. We accomplish this with a new construction: the fundamental $\ep$-subdivision cellulation of $X$ which we denote by $X_\ep^\prime$. This cellulation is a subdivision of $X$ that is PL homotopic to the given triangulation on $X$ by a homotopy with tracks of length at most $\ep$. This cellulation is related to the homotopy between a simplicial complex $X$ and its barycentric subdivision $Sd\, X$ provided by cross sections of a prism triangulation of $X\times I$ from $X$ to $Sd\, X$. Patching using $X_\ep^\prime$ gives an $\ep$-controlled homotopy inverse to $f$, but since we have a continuous family of cellulations $X_\ep^\prime$ we get a continuous family of homotopy inverses, i.e.\ we get that $f\times\id_\R$ is an $O(Y^+)$-bounded homotopy equivalence. 

The other implications are comparatively straightforward, especially given the aforementioned characterisation of surjective simplicial maps. An $O(Y^+)$-bounded homotopy equivalence $f\times\id_\R: X\times\R \to Y\times\R$ with bound $B$ can be `sliced' at height $t$ to get an $\ep$-controlled homotopy equivalence $X\to Y$ with control approximately $B/t$. Then, if $f$ is an $\ep$-controlled homotopy equivalence for all $\ep$, we first show that $f$ must be surjective. Then we can use our characterisation to say $f^{-1}(\mathring{\sigma})\cong \mathring{\sigma}\times K(\sigma)$ for all $\sigma\in Y$. Let $g_\ep$ be an $\ep$-controlled homotopy inverse for $\ep$ small enough, then $f\circ g_\ep \simeq \id_X$ provides contractions $K(\sigma)\simeq 0$ for all $\sigma$, proving that $f$ is contractible.

\vspace{5mm}
In the second half of this thesis we develop algebra to prove an algebraic analogue of Theorem \ref{abv}, with an application to Poincar\'{e} duality in mind. Let $\A$ be an additive category and let $ch(\A)$ denote the category of finite chain complexes of objects and morphisms in $\A$ together with chain maps. The simplicial structure of the simplicial complex is very important and needs to be reflected in the algebra. For this we use Quinn's notion of geometric modules, or more generally geometric categories. 

Given a metric space $(X,d)$ and an additive category $\A$, a \textit{geometric category over $X$} has objects that are collections $\{M(x)\,|\,x\in X \}$ of objects in $\A$ indexed by points of $X$, written as a direct sum \[M = \sum_{x\in X} {M(x)}.\] A morphism \[f = \{f_{y,x}\}: L = \sum_{x\in X}L(x) \to M = \sum_{y\in X}M(y) \] is a collection $\{f_{y,x}:L(x) \to M(y)\,|\,x,y\in X\}$ of morphisms in $\A$ where in order to be able to compose morphisms we stipulate that 
$\{y\in X\,|\, f_{y,x}\neq 0\}$ is finite for all $x\in X$.

We consider several different types of geometric categories for a simplicial complex $X$:
\begin{enumerate}[(i)]
 \item The $X$-graded category $\mathbb{G}_X(\A)$, whose objects are graded by the (barycentres of) simplices $X$ and morphisms are not restricted (except to guarantee we can compose). We think of morphisms as matrices.
 \item The categories $\brc{\A^*(X)}{\A_*(X)}$ of Ranicki and Weiss \cite{ranickiweiss}, whose objects are also graded by the (barycentres of) simplices of $X$. Components of morphisms can only be non-zero from $\sigma$ to $\tau$ if $\brc{\tau\leqslant\sigma}{\sigma\leqslant\tau}$ where $\sigma\leqslant\tau$ means that $\sigma$ is a subsimplex of $\tau$. We think of morphisms as triangular matrices.
 \item The bounded category $\CC_{O(X^+)}(\A)$ of Pedersen and Weibel, whose objects are graded by points in $O(X^+)$ in a locally finite way. For each morphism $f$ there is a $k(f)$, called the \textit{bound of $f$}, such that components of the morphism must be zero between points further than $k(f)$ apart. We think of morphisms as band matrices.
\end{enumerate}

These are fairly natural categories to consider since the simplicial chain and cochain complexes, $\Delta^{lf}_*(X)$ and $\Delta^{-*}(X)$, are naturally chain complexes in $\A^*(X)$ and $\A_*(X)$ respectively, and for a Poincar\'{e} duality simplicial complex $X$ the Poicar\'{e} duality chain equivalence is a chain map in $\mathbb{G}_X(\A)$. 

Let $\A(X)$ denote $\A^*(X)$ or $\A_*(X)$. The algebraic analogue of Theorem \ref{abv} is the following algebraic Vietoris-like Theorem and its converse:
\begin{introthm}\label{above}
If $X$ is a finite-dimensional locally finite simplicial complex and $C$ is a chain complex in $\A(X)$, then the following are equivalent:
\begin{enumerate}
 \item $C(\sigma)\simeq 0 \in \A$ for all $\sigma \in X$, i.e.\ $C$ is locally contractible over each simplex in $X$, 
 \item $C\simeq 0 \in \A(X)$, i.e.\ $C$ is globally contractible over $X$,
 \item $\textit{``}C\otimes\Z\textit{''}\simeq 0\in\mathbb{G}_{X\times\R}(\A)$ with finite bound measured in $O(X^+)$.
\end{enumerate}
\end{introthm}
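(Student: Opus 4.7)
The plan is to mirror the strategy of Theorem \ref{abv}, translating each step via the geometric categories $\A(X)$ and $\CC_{O(X^+)}(\A)$. It suffices to prove the cycle $(1)\Rightarrow(2)\Rightarrow(3)\Rightarrow(1)$, since $(2)\Rightarrow(1)$ is immediate: morphisms in $\A^*(X)$ (resp.\ $\A_*(X)$) can only have non-zero components from $\sigma$ to $\tau$ when $\tau \leqslant \sigma$ (resp.\ $\sigma\leqslant\tau$), so the diagonal components $\Gamma_{\sigma,\sigma}$ of any chain contraction $\Gamma$ of $C$ in $\A(X)$ automatically give chain contractions of each $C(\sigma)$ in $\A$.

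For $(1)\Rightarrow(2)$ I would induct on the skeletal filtration of $X$, with the direction of induction determined by whether $\A(X)=\A^*(X)$ or $\A_*(X)$. Given local chain contractions $\Gamma_\sigma$ of each $C(\sigma)$, a global chain contraction in $\A(X)$ is built by placing $\Gamma_\sigma$ on the diagonal and, stage by stage in dimension, adding off-diagonal correction terms that cancel the failure of the partial contraction to contract $C$ over the next simplex; the constraint on morphisms in $\A(X)$ guarantees that each correction term lies in the category. This is the algebraic analogue of the topological patching of local sections of $f$ via the fundamental $\ep$-subdivision cellulation.

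Then $(2)\Rightarrow(3)$ follows by applying the crossing-with-$\R$ functor $``-\otimes\Z"$ from $\A(X)$ to $\A(X\times\R)$ and then the inclusion $\mathcal{I}\colon\A(X\times\R)\to\CC_{O(X^+)}(\A)$. Both are functors of additive categories with chain complexes, so both preserve chain contractibility, and the bounded triangulation $t^0(X\times\R)$ entering the definition of $``-\otimes\Z"$ ensures that the image contraction has finite bound when measured in the open cone.

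The main obstacle is the direction $(3)\Rightarrow(1)$, which uses the squeezing theorem proved earlier in the thesis. A chain contraction of $``C\otimes\Z"$ in $\CC_{O(X^+)}(\A)$ with bound $B$ restricts, at height $t$, to a contraction on the copy of (iteratively subdivided) $C$ sitting there; because the open cone metric scales with $|t|$, its spread in the $X$-direction is of order $B/|t|$. For $|t|$ large enough that this quantity is below $\comesh(X)/2$, the sliced contraction is supported within each open simplex, and reassembling via the algebraic subdivision functors $\rr_i$ yields a sufficiently small chain contraction of each $C(\sigma)$ in $\A$, to which squeezing then applies. The delicate bookkeeping here---moving between iterated subdivisions of $C$ via $s_*$, $r_*$ and $Sd_r$ before invoking squeezing---is the most technical part of the argument, and it is precisely what the algebraic subdivision apparatus developed earlier in the thesis is designed to handle.
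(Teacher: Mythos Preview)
Your overall architecture matches the paper's: $(1)\Leftrightarrow(2)$ is Proposition~\ref{chcont} (and your description of $(1)\Rightarrow(2)$ as diagonal contractions plus off-diagonal corrections is exactly what the paper does, with an explicit closed formula rather than an induction), and $(2)\Rightarrow(3)$ is indeed immediate functoriality of $``{-}\otimes\Z"$. One small correction: the inclusion $\mathcal{I}\colon\A(X\times\R)\to\CC_{O(X^+)}(\A)$ exists only for \emph{finite} $X$ (see Remark~\ref{inclusionfunctorproblems}); for locally finite $X$ one stays in $\mathbb{G}_{t^0(X\times\R)}(\A)$ and measures bound in $O(X^+)$, as condition~(3) is already phrased.

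The real gap is in $(3)\Rightarrow(1)$. You cannot simply restrict the contraction of $``C\otimes\Z"$ to a slice $X\times\{t\}$: the bound $B$ in $O(X^+)$ allows movement of distance $B$ in the $\R$-direction as well, so the restriction to a single height is not a contraction of anything. The paper's mechanism is different and hinges on a notion you do not mention, \emph{exponential translation equivalence}. One applies the PL automorphism $t^{-k}\colon X\times\R\to X\times\R$ (Definition~\ref{exptrans}) to the entire contraction, shrinking its bound by $\bigl(\tfrac{n}{n+1}\bigr)^k$; then one uses that $``C\otimes\Z"$ is exponential translation equivalent, i.e.\ $t^{-k}(``C\otimes\Z")\simeq Sd_k(``C\otimes\Z")$ in $\A(t^{-k}(X\times\R))$, to obtain a contraction of $Sd^k(``C\otimes\Z")$ with bound below the squeezing threshold on a \emph{block} $X\times[v_{j_-},v_{j_+}]$. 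Squeezing then lands this in $\A(t^0(X\times[v_{j_-},v_{j_+}]))$, and only after that---once the contraction is genuinely triangular---does Proposition~\ref{chcont} let you restrict to the slice $X\times\{v_j\}$ and read off $Sd^j C\simeq 0$. Your intuition that the $X$-spread scales like $B/|t|$ is the right heuristic, but the exponential-translation/subdivision identification is what converts it into an actual argument.
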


For $X$ a Poincar\'{e} duality space, we can apply this to the algebraic mapping cone of a Poincar\'{e} duality chain equivalence to get the following consequence for Poincar\'{e} duality:

\begin{introthm}\label{cor}
If $X$ is a finite-dimensional locally finite simplicial complex, then the following are equivalent:
\begin{enumerate}
 \item $X$ has $\ep$-controlled Poincar\'{e} duality for all $\ep>0$ measured in $X$. 
 \item $X\times\R$ has bounded Poincar\'{e} duality measured in $O(X^+)$. 
\end{enumerate}
\end{introthm}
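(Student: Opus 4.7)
\textbf{Proof proposal for Theorem \ref{cor}.} The plan is to interpret Poincar\'{e} duality (controlled and bounded) in terms of chain contractibility of the algebraic mapping cone of a cap product chain equivalence, and then invoke Theorem \ref{above}. Let $[X]\in\Delta^{lf}_*(X)$ be the fundamental class of $X$ and consider the cap product chain map
\[
\phi := [X]\cap -\; : \; \Delta^{-*}(X) \longrightarrow \Delta^{lf}_*(X).
\]
Both source and target are chain complexes in $\A(X)$ for an appropriate choice of $\A$ (with the standard conventions making $\Delta^{-*}(X)\in\A_*(X)$ and $\Delta^{lf}_*(X)\in\A^*(X)$, the duality swapping which of $\A^*$ or $\A_*$ is relevant), and $\phi$ is a morphism in $\mathbb{G}_X(\A)$. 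Let $C:=\C(\phi)$ be its algebraic mapping cone. The content of Poincar\'{e} duality with control is encoded by chain contractions of $C$: $X$ has $\ep$-controlled Poincar\'{e} duality exactly when $C$ admits a chain contraction of bound at most $\ep$ measured in $X$, and $X\times\R$ has bounded Poincar\'{e} duality measured in $O(X^+)$ exactly when $\textit{``}C\otimes\Z\textit{''}=\C(\phi\otimes\id_\Z)$ admits a finite-bound chain contraction in $\CC_{O(X^+)}(\A)$.

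With this translation in hand, condition (2) of Theorem \ref{cor} is precisely condition (3) of Theorem \ref{above} applied to $C$, so the equivalence of the two will follow once I can show condition (1) of Theorem \ref{cor} is equivalent to condition (1) of Theorem \ref{above} for $C$, namely that $C(\sigma)\simeq 0\in\A$ for every closed simplex $\sigma\in X$. First I would verify the forward direction: if $X$ has $\ep$-controlled Poincar\'{e} duality for all $\ep>0$, then for $\ep<\comesh(X)$ any chain contraction of $C$ of bound $\ep$ restricts to each $C(\sigma)$, proving $C(\sigma)\simeq 0$. This uses the fact that $0<\comesh(X)<\mesh(X)<\infty$ so a sufficiently small bound forces the contraction to be supported simplex by simplex. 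For the reverse direction, local contractibility of $C$ over each simplex plus Theorem \ref{above} (condition (2)) gives a global chain contraction in $\A(X)$; this is already a bounded contraction measured in $X$, but a single bound is not yet arbitrarily small. The final push from a sufficiently small chain contraction to \emph{arbitrarily} small chain contractions is the squeezing result advertised in the abstract and the introduction, so I would quote it here.

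The main obstacle, and the step that should be treated most carefully, is the dictionary between the geometric notion of $\ep$-controlled/bounded Poincar\'{e} duality and the algebraic notion of a chain contraction of the cone of the cap product with prescribed bound. In particular the passage from condition (1) of Theorem \ref{cor} to local contractibility of $C(\sigma)$ is delicate precisely because $\ep$-controlled chain equivalences are not functorial under composition with the chain homotopies involved in the cap product construction, so one must work at the level of $\ep$ smaller than the simplicial comesh and use that the components of morphisms and homotopies in $\A(X)$ are supported on comparable simplex pairs of uniformly bounded diameter. Once this dictionary is established and the squeezing theorem is applied, Theorem \ref{above} immediately delivers the equivalence $(1)\Leftrightarrow(2)$ of Theorem \ref{cor}.
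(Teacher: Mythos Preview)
Your strategy---reduce to contractibility of the cone of the cap product and invoke Theorem~\ref{above}---is the right intuition, but there is a genuine gap at the point where you form $C=\C(\phi)$ and assert that Theorem~\ref{above} applies to it. The source $\Delta^{-*}(X)$ lives in $\A_*(X)$ and the target $\Delta^{lf}_*(X)$ lives in $\A^*(X)$; you note this yourself. These are \emph{different} categories, and the cap product $\phi$ is only a morphism in $\mathbb{G}_X(\A)$. Consequently $\C(\phi)$ is not a chain complex in either $\A^*(X)$ or $\A_*(X)$, so neither Theorem~\ref{above} nor the functor $\textit{``}-\otimes\Z\textit{''}$ (which is defined on $\A(X)$, not on $\mathbb{G}_X(\A)$) can be applied to it as stated. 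Your parenthetical remark about ``duality swapping which of $\A^*$ or $\A_*$ is relevant'' does not resolve this: no single choice works for both complexes simultaneously.

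The paper fixes exactly this problem before invoking the splitting machinery. It subdivides once and uses the contravariant assembly functor $\ttt$ to regard $\Delta^{n-*}(Sd\,X)$ as a chain complex in $\A^*(X)$ (dually, covariant assembly $\rr$ is applied to the chain side), so that source and target of the duality map now lie in the \emph{same} category $\A^*$. Having arranged this, the paper does not literally cite Theorem~\ref{above} on a single cone; rather it works over $X\times\R$ with the bounded triangulation $t^0(X\times\R)$, observes that the simplicial chain and cochain complexes of $X\times\R$ are exponential translation equivalent, and applies the Splitting Theorem directly to the Poincar\'e duality chain equivalence between them. The output is a chain equivalence on each slice $X\times\{v_i\}$, which---after identifying the open dual cell of a slice with a product interval---yields $\ep$-controlled Poincar\'e duality on $X$ for arbitrarily small $\ep$. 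Your dictionary ``bounded PD of $X\times\R$ $\Leftrightarrow$ $\textit{``}\C(\phi)\otimes\Z\textit{''}\simeq 0$'' therefore also needs justification: the paper's route is to compare directly with the (co)chain complexes of $X\times\R$ rather than to manufacture them as $\textit{``}-\otimes\Z\textit{''}$ of something over $X$.
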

In particular, condition $(1)$ is equivalent to $X$ being a homology manifold (\cite{RanSingularities}), so this gives us a new way to detect homology manifolds.

\vspace{5mm}
The way we prove Theorem \ref{above} is as follows. A simplicial chain complex $X$ can be barycentrically subdivided. Denote this subdivision by $Sd\, X$. This procedure induces a subdivision chain equivalence on simplicial chains $s_*:\Delta^{lf}_*(X) \to \Delta^{lf}_*(Sd\, X)$. We can view $\Delta^{lf}_*(Sd\, X)$ as a chain complex in $\A^*(X)$ by \textit{assembling over the subdivided simplices}, that is we associate $\Delta_*(Sd\, \mathring{\sigma})$ to $\sigma$ for all $\sigma\in X$. In fact $s_*$ turns out to also be a chain equivalence in $\A^*(X)$. The key point is that with simplicial complexes we can subdivide and reassemble them and the effect this has on the simplicial chain complex is to give you a chain complex chain equivalent to the one you started with. 

Modelled on the effect that barycentric subdivision has on the simplicial chain and cochain complexes, it is possible to define an \textit{algebraic subdivision functor} \[Sd: ch(\A(X)) \to ch(\A(Sd\,X))\] such that for an appropriately defined covariant assembly functor \[\rr:ch(\A(Sd\,X)) \to ch(\A(X)),\] $\rr Sd\, C \simeq C \in \A(X)$ for all chain complexes $C$. Locally barycentric subdivision replaces an open simplex $\mathring{\sigma}$ with its subdivision $Sd\, \mathring{\sigma}$. Algebraic subdivision mimics this replacement by thinking of the part of $C\in\brc{\A^*(X)}{\A_*(X)}$ over $\mathring{\sigma}$, namely $C(\sigma)$, as $C(\sigma)\otimes_\Z\Z$ where $\Z$ is viewed as $\brc{\Sigma^{-|\sigma|}\Delta_*(\mathring{\sigma})}{\Sigma^{|\sigma|}\Delta^{-*}(\mathring{\sigma})}$. Here $\Sigma$ denotes suspension of the chain complex. This is replaced with $\brc{\Sigma^{-|\sigma|}\Delta_*(Sd\, \mathring{\sigma})}{\Sigma^{|\sigma|}\Delta^{-*}(Sd\, \mathring{\sigma})}$ in $Sd\, C$. Though this is not quite accurate the idea is correct. See chapter \ref{chapeight} for the precise details.

It is also possible to assemble contravariantly by assembling over dual cells yielding a functor \[\brc{\rr:ch(\A^*(Sd\,X)) \to ch(\A_*(X))}{\rr:ch(\A_*(Sd\,X)) \to ch(\A^*(X)).}\] Subdividing and then assembling contravariantly allows us to pass between $\A^*(X)$ and $\A_*(X)$.

The categories $\A(X)$ capture algebraically the notion of $\ep$-control for all $\ep>0$ for the following reason. If $X$  has mesh$(X)<\infty$ measured in $X$ then a chain complex $C\in \A(X)$ has bound at most $\mathrm{mesh}(X)$ as non-zero components of morphisms cannot go further than from a simplex to its boundary or vice versa. If $X$ is finite-dimensional, then the bound of $Sd^i\, C \in \A(Sd^i\, X)$ is at most \[\left(\dfrac{\mathrm{dim}(X)}{\mathrm{dim}(X)+1}\right)^i\mathrm{mesh}(X)\] which tends to $0$ as $i\to \infty$. So, given a $C\simeq 0\in \A(X)$ we can subdivide it to get a representative with bound as small as we like.

We already saw that the Poincar\'{e} Duality chain equivalence is naturally a chain map in $\mathbb{G}_X(\A)$ from a chain complex in $\A_*(X)$ to a chain complex in $\A^*(X)$. Subdividing the simplicial cochain complex of $X$ and assembling contravariantly we may think of it as a chain complex in $\A^*(X)$.
Thus it makes sense to ask when the Poincar\'{e} duality chain equivalence is in fact a chain equivalence in $\A^*(X)$. If we can show this then we can subdivide it to get $\ep$-controlled Poincar\'{e} duality for all $\ep>0$, thus making $X$ necessarily a homology manifold. 

A version of squeezing holds for these categories: 
\begin{introthm}[Squeezing Theorem]
Let $X$ be a finite-dimensional locally finite simplicial complex. There exists an $\ep=\ep(X)>0$ and an integer $i=i(X)$ such that if there exists a chain equivalence $Sd^i\,C \to Sd^i\, D$ in $\mathbb{G}_{Sd^i\, X}(\A)$ with control $<\ep$ for $C,D\in \A(X)$, then there exists a chain equivalence
\begin{displaymath}
 \xymatrix{ f: C \ar[r]^-{\sim} & D
}
\end{displaymath}
in $\A(X)$ without subdividing.
\end{introthm}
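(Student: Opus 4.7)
The strategy is to combine the iterated assembly functor $\rr^i$ with the algebraic Vietoris-like Theorem~\ref{above} applied to the mapping cone of the hypothesised chain equivalence. First, I would choose $i=i(X)$ large enough and $\epsilon=\epsilon(X)$ small enough that $\epsilon$ is strictly less than $\comesh(Sd^i\,X)$ (and small relative to other geometric constants of $Sd^i\,X$). Under this choice, any morphism in $\mathbb{G}_{Sd^i\,X}(\A)$ of control $<\epsilon$ can have no non-zero component between simplices lying over disjoint pieces $Sd^i\,\mathring\sigma$ and $Sd^i\,\mathring\tau$ of non-incident $\sigma,\tau\in X$. Hence the hypothesised chain equivalence $g:Sd^i\,C\to Sd^i\,D$, its homotopy inverse and the homotopies witnessing the equivalence all respect the coarse decomposition $Sd^i\,X=\bigsqcup_{\sigma\in X}Sd^i\,\mathring\sigma$.

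Second, I would form the algebraic mapping cone $E:=\C(g)\in\mathbb{G}_{Sd^i\,X}(\A)$. Since $g$ is a chain equivalence with control $<\epsilon$, $E$ is chain contractible with a contracting chain homotopy of control bounded in terms of $\epsilon$. By the first step this contraction decouples over $X$, giving, for each $\sigma\in X$, a chain contraction in $\A$ of the local piece of $E$ supported on $Sd^i\,\mathring\sigma$. Applying the iterated covariant assembly functor $\rr^i$ then produces a chain complex $\rr^i E\in\A(X)$ whose value $(\rr^i E)(\sigma)\simeq 0\in\A$ for every $\sigma$.

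Third, invoke Theorem~\ref{above}: local chain contractibility at every simplex is equivalent to chain contractibility in $\A(X)$. Since $\rr^i$ is additive and commutes with algebraic mapping cones up to chain equivalence, and since $\rr^i Sd^i\,C\simeq C$ and $\rr^i Sd^i\,D\simeq D$ in $\A(X)$, we identify $\rr^i E\simeq\C(\rr^i g)$ with $\rr^i g:C\to D$. Contractibility of $\C(\rr^i g)$ in $\A(X)$ yields the desired chain equivalence $f:=\rr^i g:C\xrightarrow{\sim} D$ in $\A(X)$. The argument in the $\A_*$ case is entirely analogous, using the contravariant assembly functor $\ttt^i$ and the corresponding identifications via dual cells.

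The main obstacle is to make the passage from the $\mathbb{G}$-level chain equivalence to the $\A(X)$-level statement rigorous — specifically, to justify that the assembly $\rr^i$ can be sensibly applied to a morphism $g$ which a priori lives only in $\mathbb{G}_{Sd^i\,X}(\A)$, not in $\A(Sd^i\,X)$, and to verify that the assembled morphism genuinely lies in $\A(X)$ rather than only in the coarser $\mathbb{G}_X(\A)$. The condition $\epsilon<\comesh(Sd^i\,X)$ provides the spatial separation needed to rule out the off-filtration cross-terms; combined with the fact that the source and target complexes $Sd^i\,C$ and $Sd^i\,D$ are themselves triangular over $Sd^i\,X$, it should force $g$ to respect the face-poset filtration closely enough for the algebraic subdivision and reassembly machinery of Chapter~\ref{chapeight} to apply, and for the identification $\rr^i\C(g)\simeq\C(\rr^i g)$ to intertwine the local chain contractions as required.
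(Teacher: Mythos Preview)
Your proposal has a genuine gap, and it is precisely the one you flag as ``the main obstacle'': small control is a \emph{symmetric} condition (it bounds the distance between source and target simplices without regard to direction), whereas membership in $\A^*(X)$ or $\A_*(X)$ is an \emph{asymmetric}, triangular condition. No matter how small $\ep$ is, the chain equivalence $g$, its inverse, and the homotopies may have non-zero components going from some $\widetilde\tau\in Sd^i\,\mathring\tau$ to some $\widetilde\sigma\in Sd^i\,\mathring\sigma$ with $\tau<\sigma$, because simplices of $Sd^i\,\mathring\tau$ and $Sd^i\,\mathring\sigma$ are arbitrarily close along their common boundary. After assembly, $(\rr^i g)_{\sigma,\tau}$ is then non-zero in the wrong direction, so $\rr^i g$ and $\C(\rr^i g)$ land only in $\mathbb{G}_X(\A)$. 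But Proposition~\ref{chcont} --- the local-to-global principle you invoke --- applies only to complexes already in $\A(X)$; for a complex in $\mathbb{G}_X(\A)$ the implication from local contractibility to global contractibility simply fails. For the same reason, the global contraction of $E=\C(g)$ does not ``decouple'' to give contractions of the pieces $(\rr^i E)(\sigma)$: near $\partial\sigma$ the contraction may pass through neighbouring pieces and back.

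The paper does not try to assemble $g$. Instead it sandwiches $f^i$ between the algebraic subdivision maps $s_*:C\to Sd^i\,C$ and $r_*:Sd^i\,D\to D$ (and dually $(r^*)_*$, $(s^*)_*$ in the $\A_*$ case), forming $r_*\circ f^i\circ s_*$. The crucial input, prepared in Chapter~\ref{chapfive} (Proposition~\ref{setupthesqueeze} and Proposition~\ref{setupthedualsqueeze}), is that the chain inverse $r$ can be chosen so that $r_*$ crushes an entire $\ep$-neighbourhood of $\partial\sigma$ in $Sd^i\,\sigma$ onto $\partial\sigma$, and $P$ moves that neighbourhood only towards $\partial\sigma$. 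Then an ``argument by supports'' shows that although $f^i$ may push $s_*(C(\sigma))$ out of $Sd^i\,\sigma$ by up to $\ep$, the subsequent $r_*$ pulls it back into $\sigma$, so the composite and all the accompanying homotopies respect the face-poset direction and genuinely lie in $\A(X)$. The choice of $\ep$ and $i$ is governed by this neighbourhood-retraction property of $r$, not by $\comesh(Sd^i\,X)$.
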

This answers our question about Poincar\'{e} Duality: 
\begin{introthm}[Poincar\'{e} Duality Squeezing]
Let $X$ be a finite-dimensional locally finite simplicial complex. There exists an $\ep=\ep(X)>0$ and an integer $i=i(X)$ such that if $Sd^i X$ has $\ep$-controlled Poincar\'{e} duality then, subdividing as necessary, $X$ has $\ep$-controlled Poincar\'{e} duality for all $\ep>0$ and hence is a homology manifold.
\end{introthm}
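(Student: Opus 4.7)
The plan is to apply the preceding Squeezing Theorem directly to the Poincar\'{e} duality chain equivalence itself. For a PL Poincar\'{e} duality space $X$, cap product with the fundamental class gives a chain equivalence $[X] \cap -:\Delta^{-*}(X) \to \Delta^{lf}_*(X)$ which, by the discussion leading up to Theorem \ref{cor}, sits naturally as a chain map from an object of $\A_*(X)$ to an object of $\A^*(X)$ in $\mathbb{G}_X(\A)$. After subdividing the cochain side and assembling contravariantly via $\ttt$ as in Chapter \ref{chapeight}, both sides may be regarded as chain complexes in $\A^*(X)$, so that the PD comparison lives in a single controlled category.

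First I would package the Poincar\'{e} duality data as a pair of chain complexes $C, D \in \A^*(X)$ such that the iterated algebraic subdivisions $Sd^i\, C$ and $Sd^i\, D$ represent, after the appropriate assembly, the cochain and locally finite chain complexes of $Sd^i\, X$, and so that the geometric hypothesis ``$Sd^i\, X$ has $\ep$-controlled Poincar\'{e} duality'' translates exactly to the existence of a chain equivalence $Sd^i\, C \to Sd^i\, D$ in $\mathbb{G}_{Sd^i\, X}(\A)$ of control $<\ep$. Choosing $\ep = \ep(X)$ and $i = i(X)$ as supplied by the Squeezing Theorem, I then apply it to produce a chain equivalence $f: C \to D$ already in the controlled category $\A^*(X)$, with no further subdivision needed.

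Next I invoke the mesh estimate recalled earlier in the introduction: iterated algebraic subdivision of any chain complex in $\A^*(X)$ has bound at most $\left(\dim X/(\dim X + 1)\right)^j \cdot \mesh(X)$, which tends to $0$. Applying $Sd^j$ to $f$ and to its chain-equivalence data therefore produces, for every $\delta > 0$, a $\delta$-controlled Poincar\'{e} duality chain equivalence for $Sd^j\, X$ with $j$ sufficiently large. Hence, after subdividing as necessary, $X$ has $\ep$-controlled Poincar\'{e} duality for all $\ep > 0$, and Ranicki's characterisation in \cite{RanSingularities} then identifies $X$ as a homology manifold.

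The main obstacle is the bookkeeping in the first step: one must verify that the functors $\widetilde{Sd}_r$, $\rr$ and $\ttt$, combined with the identification of $\Delta^{-*}(X)$ and $\Delta^{lf}_*(X)$ as assemblies in $\A^*(X)$, send the Poincar\'{e} duality chain equivalence for $X$ to the Poincar\'{e} duality chain equivalence for $Sd\, X$ up to controlled chain equivalence, so that the input of the Squeezing Theorem is genuinely supplied by the geometric hypothesis. Once this compatibility of algebraic subdivision with Poincar\'{e} duality is established, the theorem follows as a formal combination of the Squeezing Theorem, the mesh-shrinking estimate for iterated algebraic subdivision, and the homology manifold recognition theorem.
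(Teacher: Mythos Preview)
Your proposal is correct and follows essentially the same route as the paper's proof of Theorem~\ref{PDsqueezing}: one subdivides once and uses $\ttt$ to place $\Delta^{n-*}(Sd\,X)$ alongside $\Delta^{lf}_*(Sd\,X)$ in $\A^*(X)$, invokes Proposition~\ref{subdivgeneralised} to identify the hypothesis as an $\ep$-controlled equivalence between their $i$-fold algebraic subdivisions, applies the Squeezing Theorem to obtain an $\A^*(X)$ equivalence, and then uses Remark~\ref{makesmall} (your mesh estimate) to subdivide down to arbitrarily small control. The only cosmetic difference is an index shift: because one barycentric subdivision is consumed by the $\ttt$ step, the paper states the hypothesis for $Sd^{i+1}\,X$ rather than $Sd^i\,X$, which you should adjust when writing this out carefully.
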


Again the open cone can be used to characterise when a chain complex is chain contractible in $\A(X)$. Algebraic subdivision is used to define a functor \[\textit{``}-\otimes\Z\textit{''}:ch(\A(X)) \to ch(\A(X\times\R)).\] This is done by giving $X\times\R$ a bounded triangulation over $O(X^+)$ which is $Sd^j\, X$ on $X\times\{v_j\}$ for points $\{v_j\}_{j\geqslant 1}$ chosen so that \[|v_j-v_{j+1}| = \frac{\mathrm{dim}(X)+1}{\mathrm{dim}(X)+2}|v_{j+1}-v_{j+2}|\] and which is $X$ on $X\times\{v_j\}$ for points $\{v_j\}_{j\leqslant 0}$ chosen so that $|v_j-v_{j+1}|=1$. The functor $\textit{``}-\otimes\Z\textit{''}$ then sends a chain complex $C\in ch(\A(X))$ to a chain complex which is $Sd^j\, C$ on $X\times\{v_j\}$ for $j\geqslant 1$ and $C$ for $j\leqslant 0$. If $C\simeq 0$ in $\A(X)$ then $Sd^j\, C \simeq 0$ in $\A(Sd^j\, X)$ for all $j\geqslant 0$, which shows that $\textit{``}C\otimes\Z\textit{''}\simeq 0$ in $\A(X\times\R)$. For $X$ a finite simplicial complex a stronger condition than the converse is also true.

To prove this we exploit algebraic consequences of the fact that $X\times\R$ is a product. We define a PL automorphism $t^{-1}: X\times\R \to X\times\R$ called \textit{exponential translation} which is the PL map defined by sending $v_j\mapsto v_{j-1}$. This induces a map on geometric categories over $X\times\R$ sending a chain equivalence with finite bound $B$ to a chain equivalence with bound $\dfrac{\mathrm{dim}(X)+1}{\mathrm{dim}(X)+2}B$. Iterates of this give us chain equivalences with bound as small as we like. 

We say that a chain complex is \textit{exponential translation equivalent} if exponential translates are chain equivalent to each other (when assembled so as to be in the same category). By the way it was defined $\textit{``}C\otimes\Z\textit{''}$ is exponential translation equivalent, so for a chain contraction $\textit{``}C\otimes\Z\textit{''}\simeq 0\in\mathbb{G}_{X\times\R}(\A)$ with finite bound measured in $O(X^+)$ we can take a representative with contraction with bound as small as we like. Combining this with the Squeezing Theorem allows us to obtain a chain contraction over a slice $X\times\{t\}$ for $t$ large enough. The fact that the metric increases in the open cone as you go towards $\{+\infty\}$ in $\R$ and the fact that the chain contraction had finite bound over $O(X^+)$ to begin with, means that the chain contraction on the slices $X\times \{t\}$ have control proportional to $1/t$ measured in $X$. We know what a slice of $\textit{``}C\otimes\Z\textit{''}$ looks like, because it is $Sd^j\, C$ on $X\times\{v_j\}$ for $j\geqslant 0$. So we have chain contractions of $Sd^j\, C$ for large $j$ with control as small as we like. Assembling such a chain contraction gives a chain contraction $C\simeq 0 \in \A(X)$.

Both the simplicial chain and cochain complexes of $X\times\R$ are exponential translation equivalent, so we can apply Theorem \ref{above} to Poincar\'{e} Duality. This gives Theorem \ref{cor} which is a simplicial complex version of the unproven footnote of \cite{epsurgthy}. 

The reason the approach above works so well is that $f\times\id$ is already a product, so by translating it in the $\R$ direction we do not change anything. A natural continuation to the work presented here is to tackle splitting problems where this sliding approach will not work, in which case we expect there to be a $K$-theoretic obstruction over each simplex. 

\vspace{5mm}
This thesis is split into two main parts: a topological half consisting of Chapters \ref{chaptwo} to \ref{chapsix} and an algebraic half consisting of Chapters \ref{chapseven} to \ref{chapten}.

Chapter \ref{chaptwo} surveys many results from the literature concerned with when a map is close to a homeomorphism. Chapter \ref{chapthree} introduces controlled topology and bounded topology as well as the open cone construction and its use to relate the two. Chapter \ref{chapfour} recaps some definitions related to simplicial complexes and explains the natural path space metric on a locally finite simplicial complex. Orientations of simplices are discussed and the fundamental $\ep$-subdivision cellulation is defined. Chapter \ref{chapfive} is a study of the subdivision chain equivalences induced on simplicial chains and cochains by barycentric subdivision. It is explained how chain inverses and chain homotopies can be carefully chosen to have the properties required to prove the squeezing results of the thesis. In Chapter \ref{chapsix} the topological Vietoris-like Theorem is proven directly with the use of the fundamental $\ep$-subdivision cellulation construction. There is also a slight digression on a notion of `triangular' homotopy equivalence of simplicial complexes inspired by the categories $\A(X)$ used later. 

In Chapter \ref{chapseven} we introduce the geometric categories $\mathbb{G}_X(\A)$ and $\A(X)$, discuss assembly and prove that a chain complex in $\A(X)$ is locally contractible if and only if it is globally contractible. Algebraic subdivision is defined in Chapter \ref{chapeight} where plenty of examples are also given. In Chapter \ref{chapnine} we boundedly triangulate $X\times\R$ and define the functor $\textit{``}-\otimes\Z\textit{''}$. Finally in Chapter \ref{chapten} we prove the algebraic squeezing result and use it together with exponential translation equivalence to prove the algebraic Vietoris-like Theorem. We then apply this theorem to Poincar\'{e} duality and homology manifolds. Appendix \ref{appendixa} contains some information about locally finite homology.

\chapter{When is a map close to a homeomorphism?}\label{chaptwo}
In this chapter we survey several results from the literature concerned with the following questions:
\begin{qq}\label{q1}
When is a map $f:M^n\to N^n$ of $n$-dimensional topological manifolds a limit of homeomorphisms?
\end{qq}
\begin{qq}\label{q2}
When is a map $f:M^n\to N^n$ of $n$-dimensional topological manifolds homotopic to a homeomorphism (via a small homotopy)?
\end{qq}
\begin{qq}\label{q3}
 What can be said in the non-manifold case?
\end{qq}
This chapter will set the scene for the work in the first part of this thesis where we consider similar questions for simplicial maps of finite-dimensional locally finite simplicial complexes.

\section{Cell-like mappings as limits of homeomorphisms}
Since the $60$s a very fruitful approach to determine when a map $f$ is close to a homeomorphism has been to impose local conditions on $f$ that are sufficiently strong to deduce global consequences. The idea is that the point inverses of a homeomorphism are all precisely points. Thus, if a map is close to a homeomorphism in some appropriate sense, then the point inverses should also be close to points, again in some suitable sense. 

The first such notion of a set being close to a point that we shall consider is Morton Brown's concept of a cellular subset of a manifold:

\begin{defn}[\cite{BrownSchoenflies}]\label{cellular}
A subset $X\subset M^n$ of an $n$-dimensional topological manifold $M$ is called \textit{cellular} if there exist topological $n$-cells $Q_1, Q_2,\ldots \subset M$ with $Q_{i+1}\subset \mathring{Q}_i$ for all $i$ and \[ \bigcap_{i=1}^{\infty}{Q_i} = X.\]
A map $f:M^n \to X$ from an $n$-dimensional manifold is called $\textit{cellular}$ if for all $x\in X$, the point inverses $f^{-1}(x)$ are cellular subsets of $M$. 
\qed\end{defn}

\begin{ex}
A tree embedded in $\R^2$ is a cellular set.
\begin{figure}[ht]
\begin{center}
{
\includegraphics[width=5cm]{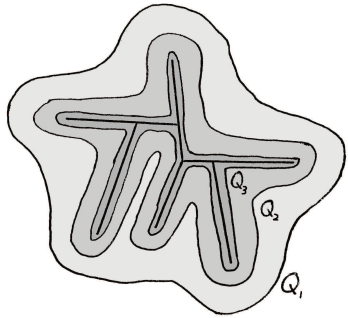}
}
\caption{A tree is cellular in $\R^2$.}
\label{Fig:tree}
\end{center}
\end{figure}
\qed\end{ex}

A cellular subset of a manifold is close to a point in the following sense:
\begin{thm}[\cite{BrownSchoenflies}, \cite{BrownMonotone}]
$S^n\backslash C \cong \R^n$ for a compact subset $C\subset S^n$ if and only if $C$ is a cellular subset of $S^n$.
\end{thm}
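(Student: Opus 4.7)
Both implications reduce to Brown's generalized Schoenflies theorem, proved in the same reference \cite{BrownSchoenflies}: every bicollared $(n-1)$-sphere in $S^n$ bounds a topological $n$-cell on each side.

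For the converse direction, fix a homeomorphism $\phi : \R^n \to S^n\backslash C$ and exhaust $\R^n$ by concentric closed round balls $B_1 \subset \mathring{B}_2 \subset B_2 \subset \mathring{B}_3 \subset \ldots$ with $\bigcup_i B_i = \R^n$. Set $Q_i := S^n\backslash \phi(\mathring{B}_i)$; each $Q_i$ is compact in $S^n$, contains $C$, satisfies $Q_{i+1}\subset \mathring{Q}_i$, and $\bigcap_i Q_i = C$. The topological $(n-1)$-sphere $\partial Q_i = \phi(\partial B_i)$ is bicollared in $S^n$, since the standard bicollar of the round sphere $\partial B_i$ in $\R^n$ is carried by $\phi$ into $S^n\backslash C \subset S^n$. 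Brown's Schoenflies theorem then identifies $Q_i$ as an $n$-cell, so $C$ is cellular by Definition~\ref{cellular}.

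For the forward direction, given a cellular decomposition $C = \bigcap_i Q_i$, the strategy is to construct a continuous surjection $h : S^n \to S^n$ which collapses $C$ to a single point $p$ and restricts to a homeomorphism $S^n\backslash C \to S^n\backslash\{p\}$; composing with $S^n\backslash\{p\}\cong \R^n$ then concludes. Build $h$ as a uniform limit $h = \lim_{k\to\infty}(h_k \circ h_{k-1}\circ \ldots \circ h_1)$ of homeomorphisms $h_i : S^n \to S^n$, each supported inside $\mathring{Q}_i$. Each $h_i$ is produced by fixing an identification $\mathring{Q}_i \cong \R^n$ (the interior of a topological cell is an open $n$-ball), executing a compactly-supported radial compression of $\R^n$ that crushes the (previously-modified) image of $Q_{i+1}$ into a set of $S^n$-diameter less than $1/i$, and extending by the identity on $S^n \backslash \mathring{Q}_i$.

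The main obstacle is verifying uniform convergence of this infinite composition together with injectivity of the limit on $S^n\backslash C$. Convergence follows from the nested support condition: the partial compositions $H_k := h_k\circ\ldots\circ h_1$ and $H_{k-1}$ differ only on the preimage of $\mathring{Q}_k$, and the $H_k$-images of points in $Q_{k+1}$ have diameter less than $1/k$, forcing the sequence to be uniformly Cauchy and $h(C)=\{p\}$. Injectivity off $C$ uses that two distinct points $x, y\notin C$ are separated by some $\partial Q_i$ for $i$ large, beyond which the support of each $h_j$ meets the trajectory of at most one of them, preventing any collision in the limit. The resulting $h$ collapses exactly $C$ to a single point and restricts to a homeomorphism on the complement, yielding $S^n\backslash C \cong \R^n$.
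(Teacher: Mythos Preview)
The paper does not supply a proof of this theorem: it is quoted as a result of Brown, with citations to \cite{BrownSchoenflies} and \cite{BrownMonotone}, and the reader is pointed to Edwards \cite{Ed78} for a review. So there is no ``paper's own proof'' to compare against; your sketch is essentially the classical argument from those sources.

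Your converse direction is clean and correct. For the forward direction the idea---collapse $C$ to a point by an infinite composition of shrinking homeomorphisms---is the right one, but the bookkeeping as written does not quite close. With $H_k = h_k\circ\cdots\circ h_1$ and each $h_i$ supported in the \emph{original} $\mathring{Q}_i$, the set $H_{i-1}(Q_{i+1})$ you want $h_i$ to crush need not lie inside $\mathring{Q}_i$ at all, since $h_1,\ldots,h_{i-1}$ may already have moved it elsewhere in $\mathring{Q}_1$. The standard repair is to build the $h_i$ inductively with support in $H_{i-1}(\mathring{Q}_i)$ rather than in $\mathring{Q}_i$, and to arrange in addition that $h_i$ moves no point by more than $2^{-i}$ (possible because $H_{i-1}(Q_{i+1})$ is compact inside the open cell $H_{i-1}(\mathring{Q}_i)$); this is what actually forces the sequence $(H_k)$ to be uniformly Cauchy. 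Your final paragraph asserts uniform convergence from ``$H_k(Q_{k+1})$ has diameter $<1/k$'', but that alone does not bound $\|H_k - H_{k-1}\|_\infty$. With the adjusted supports and the extra displacement bound, both convergence and injectivity on $S^n\setminus C$ follow, and the rest of your outline goes through.
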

The proof of this theorem is also reviewed in \cite{Ed78} where the condition $S^n\backslash C \cong \R^n$ is referred to by Edwards as $C$ being \textit{point-like}. Brown remarks in \cite{BrownSchoenflies} that if $X\subset M^n$ is a compact cellular subset, then $M/X$ is a manifold and the projection map $\pi:M \to M/X$, which is clearly a cellular map, is a limit of homeomorphisms. The idea of the proof is to send the concentric $n$-cells $Q_i$ around $X$ to a local base of $n$-cells around the point $\pi(X)$. 

Brown noted that for Euclidean space $\R^n$, $\R^n/X$ is a manifold if and only if $X$ is cellular in $\R^n$, but this is not true for general manifolds as pointed out in for example \cite{lach77} where it is observed that $S^2 = S^1\times S^1/S^1\vee S^1$ but $S^1\vee S^1$ is not cellular in $S^1\times S^1$.

Conversely, Finney showed in \cite{finney} that if $f:M\to N$ is a limit of homeomorphisms then it is necessarily a cellular map. This leads one to conjecture that cellular maps are precisely limits of homeomorphisms, but it turns out that cellularity is not quite the right condition; we can be more general.

The problem is that whether the image of an embedding $\phi:X\to M^n$ is cellular in $M$ or not depends on the embedding $\phi$ rather than being an intrinsic property of the space $X$. Any finite-dimensional cellular subset of a manifold, except a point, can be non-cellularly embedded in Euclidean space of greater than twice its dimension. Classic examples include the Artin-Fox wild arc in $\R^3$ (see Figure 6 of \cite{ArtinFox48}), the Antoine-Alexander horned ball in $\R^3$ (see Figure 3 of \cite{Ed78}), and polyhedral copies of the dunce hat in $\R^4$ (\cite{duncehat}). Blankinship also showed in \cite{blankinship} that an arc may be non-cellularly embedding in $\R^n$ for $n\geqslant 3$. This motivated Lacher to consider \textit{embeddability as a cellular subset of some manifold} rather than cellularity as this is an intrinsic property of the space:

\begin{defn}[\cite{LachCellANR}] 
A space $X$ is \textit{cell-like} if there exists an embedding $\phi$ of $X$ into some manifold $M$ such that $\phi(X)$ is cellular in $M$. A mapping $f:X\to Y$ is \textit{cell-like} if $f^{-1}(y)$ is a cell-like space for each $y\in Y$.
\qed\end{defn}

In studying the Hauptvermutung, Sullivan studied the following type of homotopy equivalence: 
\begin{defn}\label{heredproperhtpyequiv}
A proper mapping $f:X\to Y$ is a \textit{hereditary proper homotopy equivalence} if for all open sets $U\subset Y$, $f|_{f^{-1}(U)}:f^{-1}(U)\to U$ is a proper homotopy equivalence.
\qed\end{defn}

\begin{defn}
An \textit{absolute neighbourhood retract (ANR)} is a locally compact separable metric space which can be embedded as a closed subset of $I^\infty\times[0,\infty)$ in such a manner that some neighbourhood $U$ of the image retracts to the image.

A finite-dimensional ANR is called a \textit{Euclidean neighbourhood retract (ENR)} as it can be embedded as a closed subset of Euclidean space such that a neighbourhood of the image retracts to the image.
\qed\end{defn}

Cell-like mappings were studied at the time due to their equivalence with hereditary proper homotopy equivalences as maps of ENRs:
\begin{thm}[Homotopy characterisation of cell-like maps, \cite{LachCellANR} Thm 2]\label{thm:lachcellanr2}
Let $f:X\to Y$ be a proper onto mapping of ENRs. Then the following are equivalent:
\begin{enumerate}[(a)]
 \item $f$ is cell-like,
 \item for each contractible open subset $U\subset Y$, $f^{-1}(U)$ is contractible,
 \item $f$ is a hereditary proper homotopy equivalence. 
\end{enumerate}
\end{thm}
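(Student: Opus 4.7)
The plan is to run the implications cyclically as $(a)\Rightarrow(b)\Rightarrow(c)\Rightarrow(a)$. Throughout I use that an open subset of an ENR is again an ENR, and that properness of $f$ restricts to properness of $f|_{f^{-1}(U)}:f^{-1}(U)\to U$ for any open $U\subset Y$; so every restricted map in sight is a proper map of ANRs with the same kind of point inverses as $f$ itself.

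For $(a)\Rightarrow(b)$, let $U\subset Y$ be open and contractible. The restriction $g:=f|_{f^{-1}(U)}:f^{-1}(U)\to U$ is a proper cell-like surjection between ANRs, and I would invoke the standard theorem that such a map is a proper fine homotopy equivalence. This rests on the shape-theoretic characterisation of cell-like compacta as those with trivial shape, combined with the fact that (proper) shape equivalences between ANRs are honest (proper) homotopy equivalences. From $f^{-1}(U)\simeq U$ and the contractibility of $U$ we conclude $f^{-1}(U)\simeq\ast$.

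For $(b)\Rightarrow(c)$, fix an open $U\subset Y$ and, using local contractibility of the ENR $Y$, choose a locally finite open cover $\{U_\alpha\}$ of $U$ in which every non-empty finite intersection is contractible (the nerve of a sufficiently fine CW-subdivision of $U$ suffices). By $(b)$ applied to each such intersection, every $f^{-1}(U_{\alpha_0}\cap\cdots\cap U_{\alpha_k})$ is contractible, so $f|$ over such a piece is trivially a homotopy equivalence. The main task, and I expect the principal obstacle, is to glue these local inverses into a coherent \emph{global} proper homotopy inverse to $f|_{f^{-1}(U)}$. I would do this by induction over the skeleta of the nerve of $\{U_\alpha\}$, extending the partial inverse and the partial homotopies $gf\simeq\id$, $fg\simeq\id$ one simplex at a time: contractibility of both the base piece and its preimage kills the obstruction to each extension, and local finiteness of the cover ensures the assembled map and homotopies are proper.

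For $(c)\Rightarrow(a)$, fix $y\in Y$ and choose a descending base of contractible open neighbourhoods $U_1\supset U_2\supset\cdots$ of $y$ with $\overline{U_{i+1}}\subset U_i$, again using local contractibility of $Y$. By $(c)$ each $f|:f^{-1}(U_i)\to U_i$ is a proper homotopy equivalence, so each $f^{-1}(U_i)$ is contractible. Properness of $f$ makes $f^{-1}(y)=\bigcap_i f^{-1}(U_i)$ compact, exhibiting the point inverse as a compact subset of the ENR $X$ that is the intersection of a nested sequence of contractible ANR neighbourhoods. Such a compactum has trivial shape, and by the classical characterisation this is equivalent to being cell-like. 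To see cellularity concretely in a manifold, embed $X$ as a closed subset of some $\R^N$ with an open retracting neighbourhood $V$, and use the contractibility of each $f^{-1}(U_i)$ to thicken the sequence to nested topological $N$-cells in $V$ whose intersection is $f^{-1}(y)$, realising $f^{-1}(y)$ as cellular in the manifold $V$.
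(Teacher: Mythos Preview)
The paper does not give a proof of this theorem at all: it is stated as a citation of Lacher's result (\cite{LachCellANR}, Theorem~2), appearing in the survey Chapter~\ref{chaptwo} with no argument supplied. So there is nothing to compare your proposal against in the thesis itself.

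That said, your outline follows the standard route through Lacher's work and is broadly sound. One caution: in your $(a)\Rightarrow(b)$ step you invoke ``the standard theorem that a proper cell-like surjection between ANRs is a proper fine homotopy equivalence'' --- but that is essentially the content of the very result you are proving, so as written the argument is close to circular. A self-contained proof would need to establish this directly, e.g.\ via Haver's or Lacher's near-selection arguments, or via the Vietoris--Begle theorem plus obstruction theory. Your $(b)\Rightarrow(c)$ gluing sketch is plausible but the inductive extension of partial inverses and homotopies over the nerve is the genuinely delicate part and would need care to make rigorous (this is where Lacher does the real work). Your $(c)\Rightarrow(a)$ is fine.
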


Notice here that properness is essential: a cell-like mapping of ANRs\footnote{Or in fact a contractible mapping of ENRs.} need not in general be a homotopy equivalence, let alone a hereditary proper one. As noted, for example in \cite{SiebCE}, the numeral $6$ is the image of an injective, continuous map of the real line which is a cell-like map, neither proper nor a homotopy equivalence. A proper cell-like map is called a \textit{CE} map.

Of further interest is the fact that a hereditary proper homotopy equivalence of closed $n\geqslant 5$ dimensional $PL$ manifolds is often homotopic to a $PL$ homeomorphism:
\begin{thm}[Sullivan]
Let $f:M^n\to N^n$ be a hereditary proper homotopy equivalence of closed $n\geqslant 5$ dimensional $PL$ manifolds. Then $f$ is homotopic to a $PL$ homeomorphism $:M\to N$ provided $\pi_1(M) = H^3(M;\Z_2) = 0$. 
\end{thm}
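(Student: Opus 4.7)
The plan is to deduce the theorem from $PL$ surgery theory combined with Sullivan's analysis of $G/PL$, using the previous theorem to convert the hypothesis into cell-likeness of $f$.

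First I would observe that, by Theorem \ref{thm:lachcellanr2} applied to $f:M^n\to N^n$ (which is automatically a proper onto map of ENRs since $M$ and $N$ are closed manifolds), being a hereditary proper homotopy equivalence is equivalent to being a cell-like map. So $f$ is in particular a (simple) homotopy equivalence and determines a class $[f]\in\mathcal{S}^{PL}(N)$ in the $PL$ structure set of $N$. The goal is to show this class is trivial, i.e.\ $f$ is $PL$ homotopic to a $PL$ homeomorphism. Since $f$ is a homotopy equivalence and $\pi_1(M)=0$, also $\pi_1(N)=0$, and $n\geqslant 5$, so the Browder--Novikov--Sullivan--Wall surgery exact sequence
\[ L_{n+1}(\Z)\longrightarrow \mathcal{S}^{PL}(N)\xrightarrow{\;\eta\;} [N,G/PL]\xrightarrow{\;\theta\;} L_n(\Z) \]
is available, and it suffices to show that the normal invariant $\eta([f])\in[N,G/PL]$ lies in the image of $L_{n+1}(\Z)$, and in fact can be chosen to be zero after acting by $L_{n+1}(\Z)$.

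Next I would invoke Sullivan's computation of $G/PL$ at odd primes and at the prime $2$. Localised at odd primes one has $G/PL_{(odd)}\simeq BSO_{(odd)}$, so the odd-primary part of $\eta([f])$ is detected by rational and odd-primary Pontrjagin-type classes of the associated normal bundle difference. Because $f$ is cell-like, it induces an isomorphism of (co)homology and is covered by a (stable) normal bundle equivalence up to the appropriate fibre homotopy data; this identifies the odd-primary characteristic classes of $f^*\nu_N$ with those of $\nu_M$, so the odd-primary component of $\eta([f])$ vanishes. At the prime $2$, Sullivan showed that $G/PL_{(2)}$ has a Postnikov tower whose first non-trivial $k$-invariant lives in degree $4$ and whose low-dimensional obstruction is captured by a class in $H^{4k+2}(-;\Z_2)$ (the Kervaire class), with an additional secondary obstruction in $H^3(-;\Z_2)$ controlling the splitting.

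Now the hypotheses enter decisively. Since $f$ is a homotopy equivalence, $f^*:H^3(N;\Z_2)\xrightarrow{\cong}H^3(M;\Z_2)=0$, so $H^3(N;\Z_2)=0$, killing the $2$-primary secondary obstruction and allowing the relevant splitting of $G/PL_{(2)}$ into a product of Eilenberg--MacLane spaces through the range we need. Combined with the vanishing of the odd-primary component, this forces $\eta([f])=0\in[N,G/PL]$. Finally, the simply-connected surgery groups $L_*(\Z)$ are $0,0,\Z_2,0$ periodically, and the image of $L_{n+1}(\Z)$ acting on $\mathcal{S}^{PL}(N)$ can be realised by Wall realisation; using $\pi_1(N)=0$ and $n\geqslant 5$ one checks that $[f]$ lifts to $0$, yielding a $PL$ homeomorphism $h:M\to N$ with $h\simeq f$.

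The main obstacle is the middle step: rigorously producing the identification of stable normal data needed to kill the odd-primary normal invariant of a cell-like map, and controlling the $2$-local $k$-invariants of $G/PL$ in low degrees so that the single hypothesis $H^3(M;\Z_2)=0$ genuinely suffices. Both are exactly the technical heart of Sullivan's original argument in his study of the Hauptvermutung, and any proof here amounts to citing (or reproducing) that analysis of $G/PL$ together with the cell-like-implies-normal-invariant-vanishes input.
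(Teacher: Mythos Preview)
The paper does not give a proof of this theorem at all: its entire proof reads ``See \cite{hauptvermutungbook}.'' So there is no argument in the paper to compare against, only a citation to Sullivan's original work on the Hauptvermutung.

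Your sketch is in the right spirit---surgery exact sequence, Sullivan's analysis of $G/PL$ at each prime, and the hypotheses $\pi_1=0$, $H^3(-;\Z_2)=0$ entering exactly where the first nontrivial $k$-invariant of $G/PL$ lives---and your closing paragraph correctly identifies that the substance of any proof is Sullivan's analysis, which is precisely what the paper cites. However, your middle step for the odd-primary part has a real gap. You argue that because $f$ is cell-like it ``identifies the odd-primary characteristic classes of $f^*\nu_N$ with those of $\nu_M$'', hence kills the odd-primary normal invariant. But \emph{any} homotopy equivalence does this: characteristic classes are natural under pullback and $f^*$ is an isomorphism on cohomology regardless of cell-likeness. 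The normal invariant is a bundle-theoretic (not merely characteristic-class) difference, and plenty of homotopy equivalences of simply-connected manifolds have nonzero normal invariant despite matching all characteristic classes. So cell-likeness must be used in a sharper way than you indicate---this is exactly the ``technical heart'' you flag at the end, but as written your odd-primary paragraph claims more than it delivers. Since the paper itself only points to the literature, your proposal is already more detailed than what appears there; just be aware that the step ``cell-like $\Rightarrow$ normal invariant vanishes'' is the genuine content and is not established by the characteristic-class remark you give.
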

\begin{proof}
See \cite{hauptvermutungbook}. 
\end{proof}

Returning to the conjecture that cellular maps are precisely limits of homeomorphisms, Siebenmann proved

\begin{thm}[Approximation Theorem A, \cite{SiebCE}]\label{lotsofpeople}%RMK THAT N=4 is HARD
Let $f:M\to N$ be a CE map of metric topological $m$-manifolds, $m\neq 4$, with or without boundary, such that $f|\partial M$ gives a CE map $\partial M \to \partial N$. Let $\ep: M \to (0,\infty)$ be continuous. Suppose that at least one of the following three conditions holds.

\begin{enumerate}[(a)]
 \item $m\neq 3,4,5$
 \item $m=5$ and $f:\partial M \to \partial N$ is a homeomorphism
 \item $m=3$, and for each $y \in N$, $f^{-1}(y)$ has an open neighbourhood that is prime for connected sum.\footnote{If $X$ is a $3$-manifold possibly with boundary, which is not homeomorphic to an (interior) connected sum $Y\sharp Z$ where $Z$ is a closed $3$-manifold and neither $Y$ nor $Z$ is a $3$-sphere, then we agree to call $X$ prime (for connected sum).} (call such an $f$ prime).
\end{enumerate}
Then there exists a homeomorphism $g:M \to N$ such that $d(f(x),g(x))<\ep(x)$ for all $x\in M$.
\end{thm}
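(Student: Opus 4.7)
The plan is to invoke Bing's shrinking criterion to reduce the statement to an ambient shrinking problem for the cell-like point-inverses of $f$. Recall that a proper surjection $f:M\to N$ of locally compact ANRs is a near-homeomorphism (i.e.\ a uniform limit of homeomorphisms, with the uniform control replaceable by an arbitrary $\ep:N\to(0,\infty)$ via a standard convergence trick) precisely when, for every open cover $\mathcal{V}$ of $N$ and every continuous $\delta:N\to(0,\infty)$, there exists a homeomorphism $h:M\to M$ such that (i) $d(f(x),fh(x))<\delta(f(x))$ for every $x\in M$, and (ii) each set $h(f^{-1}(y))$ is $\mathcal{V}$-small. So the proof reduces to producing such shrinking homeomorphisms for our given CE map $f$.

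By properness and a locally-finite cover argument, it suffices to treat finitely many point-inverses $K_i=f^{-1}(y_i)$ at a time, sitting inside pairwise disjoint small open neighbourhoods $U_i\subset M$, and to shrink each $K_i$ independently with support in $U_i$. For each cell-like compactum $K_i$, Theorem \ref{thm:lachcellanr2} gives that $f^{-1}$ of any contractible open subset of $N$ is contractible, yielding a plentiful supply of small contractible open neighbourhoods of $K_i$; combined with cell-likeness, each $K_i$ sits inside a decreasing nested sequence of topological cells $Q_i^{(1)}\supset Q_i^{(2)}\supset\cdots$ with $\bigcap_j Q_i^{(j)}=K_i$.

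The heart of the argument, and where the dimension hypothesis is essential, is producing an ambient homeomorphism $h_i$ supported in $U_i$ that squeezes such a topological cell $Q_i^{(j)}$ into a preassigned small ball while moving $f$-values only by less than $\delta$. In dimensions $m\geq 6$ I would use the Kirby--Siebenmann handle decomposition of $M$ together with topological engulfing of Stallings--Newman type to isotope $Q_i^{(j)}$ into a small ball with controlled support, then cone the isotopy to a homeomorphism. For $m=5$, the hypothesis that $f|\partial M$ is already a homeomorphism lets us push the cell-like sets off a collar of $\partial M$ using the homeomorphism as an anchor, after which the $5$-dimensional engulfing machinery (which is delicate but available precisely because we have room to work in the interior) finishes the job. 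The $m=3$ prime case is handled separately: Kneser--Milnor prime decomposition places each $f^{-1}(y)$ inside a prime summand, and classical $3$-dimensional cellular-decomposition theory (Armentrout, Starbird--Woodruff) provides the shrinks.

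The main obstacle is exactly this engulfing/shrinking step for a purely topological cell-like set: no PL or smooth structure is assumed, so one must rely on topological handle theory, which is precisely what fails in dimension $4$ and what drives the restriction $m\neq 4$. Once the local shrinkings $h_i$ are in hand, the Bing construction assembles them into a single homeomorphism $h:M\to M$ realising (i) and (ii), and a standard infinite iteration of this procedure produces the limiting homeomorphism $g:M\to N$ with $d(f(x),g(x))<\ep(x)$.
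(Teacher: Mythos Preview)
The paper does not actually prove this theorem: it is stated as background from Siebenmann \cite{SiebCE}, so there is no in-paper proof to compare against. I will therefore assess your outline on its own terms.

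There is a genuine gap at the step where you pass from cell-likeness to a nested sequence of topological cells. You write that contractible open neighbourhoods of $K_i$ ``combined with cell-likeness'' yield $Q_i^{(1)}\supset Q_i^{(2)}\supset\cdots$ with each $Q_i^{(j)}$ a topological $m$-cell and $\bigcap_j Q_i^{(j)}=K_i$. But that is precisely the definition of $K_i$ being \emph{cellular} in $M$, and the paper itself emphasises (Definition~\ref{cellular} and the discussion after Theorem~\ref{thm:lachcellanr2}) that cell-like compacta in manifolds are generally \emph{not} cellular: the Artin--Fox wild arc and Alexander horned ball are cell-like in $\R^3$ but admit no such nested cell neighbourhoods. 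Having arbitrarily small contractible open neighbourhoods is strictly weaker than having arbitrarily small cell neighbourhoods. So the reduction you propose --- shrink a small topological cell containing $K_i$ --- simply does not get off the ground for a general CE map.

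Siebenmann's actual argument does not attempt to shrink individual point-inverses via cellularity. Instead it works chart by chart on $N$, using torus-based handle-straightening (the Kirby torus trick) to produce local homeomorphisms that approximate $f$ over each handle, and then assembles these; the cell-like hypothesis enters to guarantee the relevant lifting and homotopy properties over handles, not to provide cell neighbourhoods of fibres. Your invocation of Bing shrinking and your dimensional bookkeeping are in the right spirit, but the core mechanism you describe is the one that works for \emph{cellular} maps (essentially Brown's argument), not for CE maps, and bridging that gap is exactly the content of the theorem.
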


\noindent Lacher rephrases this in \cite{lach77} in a way that more closely resembles the conjecture: 
\begin{quotation}
 \noindent ``The set of cell-like maps $M\to N$ (where $M$ and $N$ are closed $n$-manifolds, $n\neq 4$) is precisely the closure of the set of homeomorphisms $M\to N$ in the space of maps $M\to N$. (The case $n=3$ requires also that $M$ contains no fake cubes and was done earlier by S. Armentrout.)''
\end{quotation}
See \cite{trout} for the case $n=3$ and \cite{hilbcube} for the case $n=\infty$. In particular we obtain that such a cell-like map is homotopic to a homeomorphism. Siebenmann states that his interest in this theorem is the following conjecture (also made by Kirby):

\begin{conj}[\cite{SiebCE}]
If $M$ is a closed metric topological manifold and $\delta>0$ is prescribed, one can find $\ep=\ep(M,\delta)>0$ so that the following holds: Given any map $f:M\to N$ onto a closed manifold such that, for all $y$ in $N$, $f^{-1}(y)$ has diameter $<\ep$, there exists a homotopy of $f$ to a homeomorphism, through maps with point preimages of diameter $<\delta$.
\end{conj}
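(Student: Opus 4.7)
The plan is to resolve this conjecture using the machinery of controlled topology, following the strategy that Chapman and Ferry eventually carried out as their $\alpha$-approximation Theorem. I will assume throughout that $\dim M \geqslant 5$; the cases $\dim M = 3$ and (the still open) $\dim M = 4$ would require separate treatment. The first step is to upgrade the hypothesis of ``small point inverses'' to cell-likeness: if $\ep$ is chosen small enough relative to a fixed fine handle decomposition of $M$, then each $f^{-1}(y)$ lies inside a coordinate ball of small diameter, and a standard nested-neighbourhood shrinking argument shows that each $f^{-1}(y)$ is cell-like. This allows me to bring Siebenmann's $CE$ Approximation Theorem (Theorem \ref{lotsofpeople}) to bear, producing a homeomorphism $g : M \to N$ with $d(f,g) < \ep'$ for any prescribed $\ep'$.

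The difficulty is that $d(f,g) < \ep'$ only ensures $g$ is a pointwise approximation to $f$; to produce a homotopy $f \simeq g$ whose tracks have diameter $< \delta$ requires a more refined argument. My strategy is to fix a very fine handle decomposition $\{h_i\}$ of $N$ with handles of diameter much less than $\delta$, and then to straighten $f$ handle by handle. Over a single handle $h_i \cong D^k \times D^{n-k}$, the restriction $f|_{f^{-1}(h_i)} : f^{-1}(h_i) \to h_i$ is, by the small-diameter hypothesis, a hereditary homotopy equivalence of controlled size, and hence (after an application of controlled engulfing to put $f^{-1}(\partial h_i)$ into a standard position) the pair $(f^{-1}(h_i), f|_{\partial})$ may be regarded as a controlled $h$-cobordism over $h_i$. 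The controlled $h$-cobordism theorem of Quinn, building on Chapman, now asserts: for each $\delta > 0$ there is a $\delta' > 0$ such that an $h$-cobordism with control $< \delta'$ is a product through a homeomorphism with control $< \delta$. Applied here, this provides a small homotopy of $f|_{f^{-1}(h_i)}$ to a homeomorphism onto $h_i$, rel boundary once matching conditions are arranged.

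The main technical obstacle, and where careful bookkeeping is needed, is controlling error propagation as handles are straightened one at a time in the order $k = 0, 1, \ldots, n$. Each straightening introduces a homotopy of size comparable to the handle diameter, and the composite homotopy must still have tracks of diameter $< \delta$. The way to handle this is first to choose the handle decomposition of $N$ to have mesh $\eta \ll \delta/n$, and then to choose $\ep$ in the hypothesis so small that the induced control over each handle, roughly $\ep + \eta$, is below the threshold $\delta'$ of the controlled $h$-cobordism theorem at every stage of the induction. The dimension hypothesis $n \geqslant 5$ enters precisely because the controlled $h$-cobordism theorem, and the engulfing arguments behind it, require at least two transverse dimensions to execute Whitney-style disk manoeuvres; this is the hard part of the whole argument, since what one really needs is a controlled Whitney trick whose error does not blow up when iterated.

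Once all handles have been straightened, the resulting composite is a homotopy from $f$ to a homeomorphism $g' : M \to N$ through maps whose point preimages have diameter $< \delta$. The only genuinely new input beyond Siebenmann's theorem is the quantitative, controlled version of the $h$-cobordism theorem; the art lies in matching the scales $\ep$, $\eta$, $\delta'$ and $\delta$ so that the finite induction closes up and yields an explicit (though unenlightening) $\ep = \ep(M,\delta)$.
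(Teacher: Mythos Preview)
The paper does not prove this statement: it is presented as a \emph{conjecture} in the historical survey of Chapter~\ref{chaptwo}, quoted from Siebenmann, and the paper simply records that Chapman and Ferry's $\alpha$-approximation Theorem ``confirms Kirby and Siebenmann's conjecture for $n\geqslant 5$.'' There is therefore no proof in the paper to compare your proposal against; you are sketching (a version of) the external Chapman--Ferry argument, which is exactly what the paper cites rather than reproduces.

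That said, one step in your sketch is genuinely problematic. You claim that if $\ep$ is small then each $f^{-1}(y)$, lying in a coordinate ball, is cell-like by a ``nested-neighbourhood shrinking argument.'' Small diameter does not imply cell-likeness: a compact set sitting inside a small ball can perfectly well fail to be cell-like (it need not even be connected, let alone have the shape of a point). The hypothesis of the conjecture is only that point inverses are small, not that $f$ is CE or even a homotopy equivalence, so this reduction to Siebenmann's CE Approximation Theorem does not go through as stated. The actual Chapman--Ferry result takes as input an $\alpha$-equivalence (a fine homotopy equivalence), and bridging from ``small point inverses'' to that hypothesis requires its own argument, which you have not supplied. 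The handle-by-handle straightening via controlled $h$-cobordism theory that you describe afterwards is closer in spirit to how these things are done, but the initial step is a real gap.
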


Chapman and Ferry proved their \textit{$\alpha$-approximation Theorem} which generalises Theorem \ref{lotsofpeople} and confirms Kirby and Siebenmann's conjecture for $n\geqslant 5$. First they define

\begin{defn}[\cite{chapfer}]
 Let $\alpha$ be an open cover of $N$, then a proper map $f:M\to N$ is said to be an \textit{$\alpha$-equivalence} if there exists a $g:N \to M$ and homotopies $\theta_t:f \circ g\sim \id_N$, $\phi_t:g\circ f\sim \id_M$, such that 
\begin{enumerate}[(1)]
 \item $\forall m\in M$, $\exists U\in \alpha$ containing $\{f\phi_t(m)\,|\,0\leqslant t\leqslant 1 \}$,
 \item $\forall n\in N$, $\exists U\in \alpha$ containing $\{\theta_t(n)\,|\,0\leqslant t\leqslant 1 \}$.
\end{enumerate}
\qed\end{defn}
\noindent They then prove
\begin{thm}[$\alpha$-approximation Theorem, \cite{chapfer}]
Let $N^n$ be a topological $n$-manifold, $n\geqslant 5$. For every open cover $\alpha$ of $N$ there is an open cover $\beta$ of $N$ such that any $\beta$-equivalence $f:M^n\to N^n$ which is already a homeomorphism from $\partial M \to \partial N$ is $\alpha$-close to a homeomorphism $h:M\to N$ (i.e.\, for each $m\in M$, there is a $U\in \alpha$ containing $f(m)$ and $h(m)$).
\end{thm}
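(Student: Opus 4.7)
The plan is to deduce this from Siebenmann's CE approximation theorem (Theorem~\ref{lotsofpeople}) by a contradiction/compactness argument, together with the thin $h$-cobordism theorem of Chapman and Ferry. First I would argue by contradiction: fix the open cover $\alpha$ of $N$ and suppose no such $\beta$ exists. Then one can produce a sequence of open covers $\beta_k$ of $N$ with $\mesh(\beta_k)\to 0$ (in a fixed metric on $N$) and $\beta_k$-equivalences $f_k: M_k^n \to N^n$, each restricting to a homeomorphism $\partial M_k \to \partial N$, such that no $f_k$ is $\alpha$-close to a homeomorphism $M_k\to N$.

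The next step uses the dimension hypothesis $n\geqslant 5$. For $\beta_k$ sufficiently fine, the mapping cylinder of $f_k$ is a controlled $h$-cobordism from $M_k$ to $N$, so by the thin $h$-cobordism theorem it admits a product structure, producing a homeomorphism $\varphi_k: M_k \to N$ that is $c\cdot\mesh(\beta_k)$-close to $f_k$ for a constant $c=c(N)$. This homeomorphism need not be $\alpha$-close to $f_k$, but it identifies all the sources with $N$, so after replacing each $f_k$ by $f_k\circ\varphi_k^{-1}: N \to N$ we may assume a common source. The new maps are $\beta_k'$-equivalences $N \to N$ for a comparable sequence of covers with mesh going to zero. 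By Arzel\`{a}--Ascoli (after passing to a subsequence), $f_k \to f_\infty$ uniformly for some continuous $f_\infty: N \to N$.

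I would then verify that $f_\infty$ is CE: for each $y\in N$, the point inverse $f_\infty^{-1}(y)$ is the nested intersection $\bigcap_k f_k^{-1}(U_k)$ for appropriate $\beta_k'$-small open sets $U_k\ni y$, which shrink to $y$ as $k\to\infty$. Since each $f_k$ is a $\beta_k'$-equivalence, the sets $f_k^{-1}(U_k)$ are contractible inside slightly larger sets of comparable size, so $f_\infty^{-1}(y)$ is a cell-like (hereditarily contractible) compactum in $N$. Applying Siebenmann's CE approximation theorem gives a homeomorphism $g: N \to N$ with $g$ and $f_\infty$ being $(\alpha/3)$-close. For $k$ large, uniform convergence gives $f_k$ and $f_\infty$ $(\alpha/3)$-close, so $\varphi_k^{-1}\circ g: M_k \to N$ is $\alpha$-close to the original $f_k$, a contradiction.

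The hard part will be the application of the thin $h$-cobordism theorem to produce the identification $\varphi_k: M_k \to N$: one must check that a $\beta_k$-equivalence, with $\mesh(\beta_k)$ below a threshold depending only on $N$, really does give a controlled $h$-cobordism structure on its mapping cylinder with control small enough to invoke the theorem, and that this can be done compatibly with the boundary homeomorphism hypothesis. A secondary subtlety is ensuring that the cell-likeness of $f_\infty^{-1}(y)$ really follows from the control vanishing in the limit --- this rests on showing that in a topological manifold, a decreasing intersection of compacta each contractible in a comparably small neighbourhood is automatically cell-like, which is where the full strength of the ANR/manifold structure of $N$ (and again $n\geqslant 5$, via Siebenmann at the last step) is invoked.
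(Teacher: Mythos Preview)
The paper does not prove this theorem; it is quoted from Chapman--Ferry \cite{chapfer} as part of the literature survey in Chapter~\ref{chaptwo}, with no argument given beyond the remark that it generalises Siebenmann's CE approximation theorem. So there is no ``paper's own proof'' to compare against.

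That said, your sketch has a genuine gap at the step where you invoke the thin $h$-cobordism theorem. The mapping cylinder of a continuous map $f_k: M_k \to N$ between manifolds is \emph{not} a manifold, and in particular not an $h$-cobordism from $M_k$ to $N$: near a point of $N$ with nontrivial point inverse it fails to be locally Euclidean. You cannot extract a homeomorphism $\varphi_k: M_k \to N$ this way. Producing such an identification is essentially the whole content of the theorem you are trying to prove, so this step is circular. (There is also a logical-dependency issue: Chapman and Ferry's thin $h$-cobordism theorem is proved in the same paper and is closely entangled with the $\alpha$-approximation theorem itself, so citing it as a black box is suspect.)

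Even granting the identification, the Arzel\`a--Ascoli step assumes $N$ compact and the $f_k$ equicontinuous; neither is given. The actual Chapman--Ferry argument is not a limiting/compactness argument reducing to Siebenmann, but a direct handle-by-handle construction using engulfing and torus-trick techniques in dimension $\geqslant 5$.
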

This generalises Theorem \ref{lotsofpeople} in the case $n\geqslant 5$ since it follows from \cite{celllikemappings1} that any $CE$ map $f:M\to N$ is an $\alpha$-equivalence, for any $\alpha$.

\section{The non-manifold case} 
As in the previous section we consider maps whose point inverses are close to being points. In general being homotopic to a homeomorphism is a bit too much to hope for unless we impose much stronger local conditions on our map. This is illustrated by the following example:
\begin{ex}\label{Ex:T}
Let $f:T\to [0,1]$ be the vertical projection of the capital letter $T$ onto its horizontal bar. Clearly $T$ and $[0,1]$ are not PL homeomorphic, yet $f$ is a proper, cell-like, hereditary proper homotopy equivalence, as well as an $\alpha$-equivalence for all open covers $\alpha$ of $[0,1]$ and a simple homotopy equivalence.
\qed\end{ex}
Such a map is a limit of PL homeomorphisms in a different sense, i.e.\ we do not take the closure of the set of homeomorphisms $X\to Y$ in the space of maps $X\to Y$, but rather in something larger. 
\begin{ex}
Consider again the example of the letter $T$. Let $k:T\times I \to T$ be a linear deformation retract of $T$ onto the horizontal bar. Letting $k_t=k(-,t)$, this is a homotopy from $k_0 = \id_T$ to $k_1=f$ through PL homeomorphisms $\{k_t\}_{t>0}$ onto their image. In this sense $f= \lim_{t\to 0} k_t$ is a limit of PL homeomorphisms.

Another point of view is that $f$ is the limit of the identity PL homeomorphism from $T$ to $T_t$ where $T_t$ is $T$ with a metric that is the usual metric on the horizontal bar but where the vertical bar is given $t$ times its usual length.
\qed\end{ex}

\begin{ex}\label{motivatingexamples}
Arguing similarly, a collapse\footnote{In the sense of simple homotopy theory.} $f:K\to L$ is the limit $\lim_{t\to 0}k_t$ of PL homeomorphisms $k_t$, where $k:K\times I \to K$ is the deformation retract of $K$ onto $L$ given by the collapse. 
\qed\end{ex}
Now suppose we have a cell-like map $f$ of PL non-manifolds. Bearing in mind the previous examples, the strongest conclusion we might hope for is that $f$ is a simple homotopy equivalence. We do in fact see this in the results of Cohen and Chapman below.
\begin{defn}
We say that a map $f:X\to Y$ is \textit{contractible} if it has contractible point inverses.
\qed\end{defn}
\begin{rmk}\label{rmkrmk}
From the definitions it is relatively easy to see that for a compact ANR $X$, $X$ is cell-like if and only if $X$ is contractible. For a proof, see for example \cite{Ed78} page 113. 
\qed\end{rmk}
\begin{cor}\label{Cor:CEisCont}
For a proper map $f$ of ANRs, $f$ is CE if and only if it is contractible.  
\end{cor}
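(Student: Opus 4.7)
The plan is to deduce the corollary from Remark \ref{rmkrmk} by applying it fibrewise. Unpacking the definitions, $f:X\to Y$ is CE precisely when $f$ is proper and each point inverse $f^{-1}(y)$ is cell-like, while $f$ is contractible precisely when each $f^{-1}(y)$ is contractible. Since both conditions are conditions on point inverses, and properness is assumed on both sides of the `if and only if', the entire statement reduces to showing that for each $y\in Y$, the point preimage $f^{-1}(y)$ satisfies: cell-like $\Leftrightarrow$ contractible.

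First I would observe that properness of $f$ immediately gives that $f^{-1}(y)$ is compact, as it is the preimage of the compact singleton $\{y\}$. Since $f^{-1}(y)$ is a closed subspace of the ANR $X$, I would then invoke the standard fact that it is a compact ANR (this is where the hypothesis that $X$ is an ANR enters). With this in hand, Remark \ref{rmkrmk} applies directly to $X = f^{-1}(y)$, yielding $f^{-1}(y)$ cell-like $\Leftrightarrow$ $f^{-1}(y)$ contractible. Quantifying over all $y\in Y$ gives the equivalence of $f$ being CE and $f$ being contractible.

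The only mild obstacle is ensuring that $f^{-1}(y)$ is itself an ANR so that Remark \ref{rmkrmk} is applicable as stated. If one is uncomfortable with invoking such a result on closed subspaces directly, the alternative is to work shape-theoretically: a compact metric space is cell-like iff it has trivial shape, and for compact spaces with the homotopy type of a CW complex (in particular compact ANRs, but also more general compacta arising as point inverses here) trivial shape is equivalent to contractibility. Either route completes the proof without further geometric input.
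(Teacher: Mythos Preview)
Your approach matches the paper's, which gives no explicit proof and treats the corollary as immediate from Remark~\ref{rmkrmk} applied fibrewise.

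That said, there is a genuine gap in your justification: the ``standard fact'' you invoke, that a closed subspace of an ANR is an ANR, is false. The Cantor set is a closed subset of $\R$ but is not an ANR. So you cannot conclude that $f^{-1}(y)$ is a compact ANR merely from $X$ being an ANR and $f$ being proper. Your shape-theoretic alternative does not rescue this: for a compact metric space, having the homotopy type of a CW complex is equivalent to being an ANR, so asserting that the point inverses ``arising here'' have CW homotopy type is precisely what needs to be shown. Indeed, the paper itself later exhibits a cell-like compactum that is not contractible (the wedge of two cones on Cantor sets, Example~\ref{earliercomments}), so the implication cell-like $\Rightarrow$ contractible genuinely fails without the ANR hypothesis on the point inverse. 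The corollary as stated is thus somewhat informal; in every application the paper makes of it (PL and simplicial maps of locally finite complexes), point inverses are subcomplexes and hence automatically ANRs, so the issue is moot in practice --- but you should not claim it follows from the hypotheses as written.
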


\begin{thm}[\cite{cohen}]\label{cohensresult}
A contractible PL map of finite polyhedra is a simple homotopy equivalence. 
\end{thm}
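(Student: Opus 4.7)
The plan is to reduce to a simplicial map, exploit the product structure of each preimage from the characterization of surjective simplicial maps, and glue these local pictures into a global simple homotopy equivalence by induction over the simplices of $L$.

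First, by simplicial approximation and subdivision of both $K$ and $L$, I would reduce to the case where $f:K\to L$ is a simplicial map of finite simplicial complexes. The characterization of surjective simplicial maps recorded in the introduction supplies, for each open simplex $\mathring{\sigma}\subset L$, a PL homeomorphism $f^{-1}(\mathring{\sigma})\cong \mathring{\sigma}\times K(\sigma)$ for a finite polyhedron $K(\sigma)$, and the contractible-point-inverse hypothesis forces each $K(\sigma)$ to be contractible. The key local lemma is then that for every closed simplex $\sigma$ of $L$ the projection $\pi:\sigma\times K(\sigma)\to \sigma$ is a simple homotopy equivalence relative to $\partial\sigma\times K(\sigma)$. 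This holds because $K(\sigma)$ is a contractible finite polyhedron, hence $\pi_1 K(\sigma)=1$, so $\mathrm{Wh}(\pi_1 K(\sigma))=0$ and the collapse $K(\sigma)\to\mathrm{pt}$ has zero Whitehead torsion; crossing the corresponding sequence of elementary expansions and collapses with $\sigma$ shows $\pi$ itself is simple.

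Order the simplices of $L$ as $\sigma_1,\ldots,\sigma_N$ so that each $L_i:=\sigma_1\cup\cdots\cup\sigma_i$ is a subcomplex (for instance, in order of increasing dimension). Writing $K_i:=f^{-1}(L_i)$ and $f_i:=f|_{K_i}$, the step from $i-1$ to $i$ attaches $\sigma_i\times K(\sigma_i)$ to $K_{i-1}$ along its boundary part $\partial\sigma_i\times K(\sigma_i)$, while $L_i$ attaches $\sigma_i$ to $L_{i-1}$ along $\partial\sigma_i$. Combining the inductive hypothesis that $f_{i-1}$ is simple with the local lemma via the sum formula for Whitehead torsion along this pushout yields that $f_i$ is also simple; the case $i=N$ gives the theorem.

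The main obstacle lies precisely in this inductive step: the restriction $f|_{f^{-1}(\partial\sigma_i)}:\partial\sigma_i\times K(\sigma_i)\to \partial\sigma_i$ is the boundary projection but sits inside the already-treated $K_{i-1}$, so one cannot naively invoke a product formula. I would handle this by working at the level of mapping cylinders: show that $M_{f_i}$ is obtained from $M_{f_{i-1}}$ by attaching $\sigma_i\times C(K(\sigma_i))$ (with $C$ the cone) along a subcomplex which is a deformation retract of it, so that the attachment is an elementary thickening and therefore produces a simple equivalence. Applying Whitehead's sum theorem then packages the local simple reductions of each $\sigma_i\times C(K(\sigma_i))$ down to $\sigma_i$ into a global simple equivalence $M_f\searrow K$, which, combined with the standard collapse $M_f\searrow L$, gives that $f:K\to L$ is simple.
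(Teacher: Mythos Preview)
The paper does not supply a proof of this statement; it is quoted as Cohen's theorem \cite{cohen} in the literature survey of Chapter~\ref{chaptwo} and no argument is given. There is therefore no proof in the paper to compare your attempt against.

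On the substance of your sketch: the overall strategy --- exploit the local product structure $f^{-1}(\mathring{\sigma})\cong\mathring{\sigma}\times K(\sigma)$ and induct over the simplices of $L$ via the mapping cylinder --- is very much in the spirit of Cohen's original argument. However, your pushout description of the inductive step has a gap. You claim that $K_i$ is obtained from $K_{i-1}$ by attaching $\sigma_i\times K(\sigma_i)$ along $\partial\sigma_i\times K(\sigma_i)$, but the product homeomorphism of Lemma~\ref{surjectivesimplicialmapssimplextosimplex} is only valid over $\mathring{\sigma}_i$ and does not extend to the closed simplex. Concretely, for a single simplex $\rho$ of $K$ with $f(\rho)=\sigma_i$ and vertex-blocks $X_0,\ldots,X_n$, the faces of $\rho$ mapping into $\partial\sigma_i$ are those missing some entire block $X_j$; this subcomplex is generally not homeomorphic to $\partial\sigma_i\times\prod_j\Delta^{r_j-1}$. (Take $\sigma_i$ a $1$-simplex and $\rho$ a $2$-simplex sending two vertices to one endpoint: the relevant boundary piece is a point disjoint union an edge, not two edges.) So the attached cell in your induction is not a product, and the local lemma that $\sigma_i\times K(\sigma_i)\to\sigma_i$ is simple does not plug in directly. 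The same issue recurs in your mapping-cylinder version, where you assert that $M_{f_i}$ is built from $M_{f_{i-1}}$ by attaching $\sigma_i\times C(K(\sigma_i))$. Cohen's own argument handles exactly this by working with the \emph{simplicial} mapping cylinder and proving a collapsing criterion adapted to its combinatorics; your instinct to pass to mapping cylinders is the right one, but it requires engaging with that structure rather than a product model of the closure.
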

Using Corollary \ref{Cor:CEisCont} we could replace the word contractible with CE in this theorem. As stated earlier, Lacher followed Cohen's result in \cite{LachCellANR} proving Theorem \ref{thm:lachcellanr2}. In Theorem 1.1 of \cite{SiebCE}, Siebenmann observes that by Lacher's work in \cite{LachCellANR} one can deduce
\begin{thm}[\cite{SiebCE}, Theorem 1.1]
 Let $f:X \to Y$ be a map of ENRs. If $f$ is CE, then $f$ is a proper homotopy equivalence. Conversely, if a proper map $f$ is a homotopy equivalence over small neighbourhoods of each point of $Y$\footnote{This means an $\alpha$-equivalence for sufficiently small cover $\alpha$.}, then $f$ is CE.
\end{thm}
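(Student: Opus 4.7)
\emph{Forward direction.} If $f:X\to Y$ is CE then by definition $f$ is proper with cell-like point preimages. Since cell-like spaces are nonempty (they admit embeddings into manifolds), $f$ is automatically surjective, so the hypotheses of Theorem \ref{thm:lachcellanr2} are satisfied, and the implication (a)$\Rightarrow$(c) of that theorem immediately gives that $f$ is a hereditary proper homotopy equivalence, hence in particular a proper homotopy equivalence. This half therefore reduces entirely to quoting Lacher.

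\emph{Converse.} Interpreting the hypothesis via the footnote, I assume that for every sufficiently fine open cover $\alpha$ of $Y$ the map $f$ is an $\alpha$-equivalence. The plan is to show each $f^{-1}(y)$, which is compact by properness and sits in the ENR $X$, is cell-like by verifying the shape-theoretic UV$^\infty$ characterisation: a compactum $C$ in an ANR is cell-like iff every neighborhood $U$ of $C$ contains a smaller neighborhood $V$ with $V\hookrightarrow U$ null-homotopic. Fix $y\in Y$. Because $Y$ is locally contractible (being an ENR) I choose a nested basis $V_0\supset V_1\supset V_2\supset\cdots$ of neighborhoods of $y$ with each inclusion $V_{n+1}\hookrightarrow V_n$ null-homotopic. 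Now take $\alpha$ refining $\{V_{n-1}\setminus\overline{V_{n+1}}\}$ and fine enough that $f$ is an $\alpha$-equivalence with inverse $g:Y\to X$ and homotopies $\phi_t:gf\simeq\id_X$, $\theta_t:fg\simeq\id_Y$ whose tracks lie in elements of $\alpha$. The control on $\phi_t$ then forces $\phi_t(f^{-1}(V_{n+1}))\subset f^{-1}(V_n)$, so $\phi_t$ deforms $f^{-1}(V_{n+1})$ inside $f^{-1}(V_n)$ onto $gf(f^{-1}(V_{n+1}))\subset g(V_{n+1})$; composing the null-homotopy of $V_{n+1}\hookrightarrow V_n$ with $g$ and tracking the images back via $\alpha$-control then contracts $g(V_{n+1})$ inside $f^{-1}(V_{n-1})$. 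Concatenating yields a null-homotopy of $f^{-1}(V_{n+1})\hookrightarrow f^{-1}(V_{n-1})$. Properness of $f$ (hence closedness, $X$ and $Y$ being locally compact Hausdorff) makes the family $\{f^{-1}(V_n)\}$ a neighborhood basis of $f^{-1}(y)$, so this verifies UV$^\infty$ and hence cell-likeness.

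\emph{Main obstacle.} The delicate point is bookkeeping what ``sufficiently fine'' means: one needs $\alpha$ chosen so that two homotopy tracks and the image $g(V_{n+1})$ are simultaneously controlled relative to the nested $V_n$. The cleanest organisation is to fix a metric on $Y$ (available since $Y$ is a metric ANR) and arrange all estimates uniformly in that metric, then invoke the UV$^\infty$ criterion as a shape-theoretic black box rather than redo it. This is precisely the route by which Siebenmann observes in Theorem 1.1 of \cite{SiebCE} that the converse can be deduced directly from the techniques of \cite{LachCellANR}, so my proof plan would record the UV$^\infty$ reduction above and defer the shape-theoretic input to Lacher.
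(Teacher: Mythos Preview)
The paper does not give its own proof of this statement: it is quoted as Theorem 1.1 of \cite{SiebCE}, with the surrounding text noting only that ``Siebenmann observes that by Lacher's work in \cite{LachCellANR} one can deduce'' the result. So there is no detailed argument in the thesis to compare your proposal against.

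That said, your proposal is entirely in line with what the paper indicates. Your forward direction is exactly the intended one-line reduction to Theorem~\ref{thm:lachcellanr2} (Lacher's (a)$\Rightarrow$(c)). Your converse via the UV$^\infty$ characterisation, using $\alpha$-control to produce null-homotopies of $f^{-1}(V_{n+1})\hookrightarrow f^{-1}(V_{n-1})$, is a correct unpacking of what ``deduced from the techniques of \cite{LachCellANR}'' means in practice, and you rightly flag the metric bookkeeping as the only point requiring care. In short: your sketch is consistent with the paper's attribution and is a sound proof outline; there is simply nothing more in the thesis itself to check it against.
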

After this, without the assumption of properness, Chapman showed
\begin{thm}[\cite{hilbcube}]
Let $f:X\to Y$ be a cell-like mapping of compact ANRs, then $f$ is a simple homotopy equivalence.
\end{thm}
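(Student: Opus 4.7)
The plan is to reduce to the setting of Hilbert cube manifolds, where Chapman's earlier machinery on $Q$-manifolds does all the work. The strategy splits naturally into a stabilisation step, an approximation step, and a descent step.

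First I would stabilise by crossing with the Hilbert cube $Q = I^\infty$. By Edwards' theorem, for any compact ANR $Z$ the product $Z \times Q$ is a Hilbert cube manifold (a $Q$-manifold). Thus $X \times Q$ and $Y \times Q$ are $Q$-manifolds. The map
\[ f \times \id_Q : X \times Q \longrightarrow Y \times Q \]
is still cell-like, since the point inverses $f^{-1}(y) \times Q$ are cell-like (the product of a cell-like space with a contractible compact ANR is cell-like; alternatively, by Remark \ref{rmkrmk}, a compact ANR is cell-like iff contractible, and contractibility is preserved under products).

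Next I would invoke the $Q$-manifold version of the CE approximation theorem, due to Chapman, which states that a cell-like map between $Q$-manifolds is a uniform limit of homeomorphisms; in particular $f \times \id_Q$ is homotopic to a homeomorphism $h : X \times Q \to Y \times Q$. The third ingredient is Chapman's topological invariance of Whitehead torsion for $Q$-manifolds: every homeomorphism between compact $Q$-manifolds is a simple homotopy equivalence (it has trivial Whitehead torsion, via the triangulation theorem for $Q$-manifolds which identifies $\mathrm{Wh}(X\times Q)$ with $\mathrm{Wh}(\pi_1(X))$ compatibly with torsions of homeomorphisms). Combining these, $f \times \id_Q$ is a simple homotopy equivalence.

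Finally I would descend back to $f$ itself. The Whitehead torsion of a homotopy equivalence of compact ANRs is \emph{defined} (following Chapman) to be the torsion of its stabilisation by $\id_Q$, using the canonical identification of Whitehead groups under the $Q$-stabilisation $Z \mapsto Z \times Q$. Under this definition
\[ \tau(f) = \tau(f \times \id_Q) = 0 \in \mathrm{Wh}(\pi_1(Y)), \]
so $f$ is a simple homotopy equivalence as claimed. The main obstacle is that everything rests on two deep inputs from $Q$-manifold theory, namely the CE approximation theorem for $Q$-manifolds and the topological invariance of Whitehead torsion; granted these, the argument is essentially a two-line stabilise-and-descend. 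One should also check that $f$ is a homotopy equivalence to begin with, but this is automatic: a cell-like map of compact ANRs is a hereditary homotopy equivalence (Theorem~\ref{thm:lachcellanr2} applied after noting compact ANRs can be embedded as ENRs in $Q$, or directly from Remark \ref{rmkrmk} plus the Vietoris--Begle style argument), so the torsion question is well-posed.
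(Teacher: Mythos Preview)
The paper does not prove this theorem at all: it appears in the survey chapter~\ref{chaptwo} as a result quoted from the literature (Chapman, \cite{hilbcube}), with no argument given. So there is nothing in the paper to compare your proposal against.

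That said, your sketch is essentially Chapman's own argument, and the ingredients you identify (Edwards' $Z\times Q$ is a $Q$-manifold, CE approximation for $Q$-manifolds, and topological invariance of Whitehead torsion via $Q$-stabilisation) are exactly the ones used. Your outline is correct as a high-level roadmap, with the honest caveat you already flag: the heavy lifting is entirely inside those cited $Q$-manifold theorems.
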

In the case of finite simplicial complexes all the above results are nicely summarised in Proposition 2.1.8 of \cite{plmf} in which the authors refer to a contractible map as a \textit{simple map}. In this thesis we stick to the terminology contractible.

In \cite{prassidis}, Prassidis points out 
\begin{rmk}\label{rmkonctrl}
A CE map $f:Y \to X$ of locally compact ANRs is a controlled homotopy equivalence with control in $X$, i.e.\ an $\alpha$-equivalence for all $\ep$-nets $\alpha$ of $X$. %seems a bit isolated and we don't talk about controlled topology WE DELAY TO THE NEXT CHAPTER
\qed\end{rmk}

The work of Cohen and Akin on transverse cellular maps is also worth mentioning as it provides an answer to Question \ref{q2} for surjective maps of non-manifold simplicial complexes:

\begin{defn}[\cite{Akin72}]
A surjective simplicial map $f:K\to L$ is called \textit{transverse cellular} if for all simplices $\sigma \in L$, $D(\sigma, f):=f^{-1}(D(\sigma,L))$ is homeomorphic rel $\partial D(\sigma, f)$ to a cone on $\partial D(\sigma,f):=f^{-1}(\partial D(\sigma,L))$.\footnote{See chapter \ref{chapfour} for the definition of dual cells $D(\sigma,X)$.} 
\qed\end{defn}

\begin{prop}[\cite{Akin72}]
 If $f:K \to L$ is transverse cellular, then $f$ is homotopic to a PL homeomorphism $K\to L$ through maps taking $D(\sigma,f)$ to $D(\sigma, L)$. 
\end{prop}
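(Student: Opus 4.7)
The dual cells $\{D(\sigma, L)\}_{\sigma \in L}$ give $L$ the structure of a regular CW complex, and the transverse cellular hypothesis says that the preimages $\{D(\sigma, f)\}_{\sigma \in L}$ give $K$ a corresponding cell structure, with each $D(\sigma, f)$ carrying an explicit PL cone structure on $\partial D(\sigma, f)$. My plan is to construct the PL homeomorphism $h\colon K\to L$ inductively on this dual cell structure, working up in dual cell dimension (equivalently, down in simplex dimension of $L$), and then to produce the homotopy from $f$ to $h$ by a fibrewise straight-line along cone rays inside each $D(\sigma, L)$.

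For the base case, take $\sigma$ to be a top-dimensional simplex of $L$, so that $D(\sigma, L) = \{\widehat\sigma\}$ and $\partial D(\sigma, f) = \emptyset$; the cone hypothesis then forces $D(\sigma, f) = f^{-1}(\widehat\sigma)$ to be a single point, which we map to $\widehat\sigma$. Inductively, suppose $h$ has been defined on $\bigcup_{\tau > \sigma} D(\tau, f) = \partial D(\sigma, f)$ as a PL homeomorphism onto $\bigcup_{\tau > \sigma} D(\tau, L) = \partial D(\sigma, L)$ which sends each $D(\tau, f)$ to $D(\tau, L)$. Since $D(\sigma, L)$ is canonically the PL cone on $\partial D(\sigma, L)$ with cone point $\widehat\sigma$, and by hypothesis $D(\sigma, f)$ is (rel boundary) a PL cone on $\partial D(\sigma, f)$ with some cone point $c_\sigma$, we extend $h$ by coning: send $c_\sigma \mapsto \widehat\sigma$ and extend linearly along cone rays. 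This produces a PL homeomorphism $D(\sigma, f) \to D(\sigma, L)$ agreeing with the inductively defined map on the boundary. Assembling over all $\sigma$ yields the desired global PL homeomorphism $h\colon K \to L$ with $h(D(\sigma, f)) = D(\sigma, L)$ for every $\sigma$.

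For the homotopy, note that $f$ automatically sends $D(\sigma, f)$ into $D(\sigma, L)$ by definition, and so does $h$ by construction; moreover both collapse the cone point $c_\sigma$ of $D(\sigma, f)$ onto $\widehat\sigma$ (the first because $f(D(\sigma, f)) \subseteq D(\sigma, L)$ together with the fact that $c_\sigma$ lies over some point which we may arrange to be $\widehat\sigma$ by choosing the cone structure appropriately; the second by construction). Define $H\colon K \times I \to L$ on each $D(\sigma, f)$ by the straight-line homotopy along cone rays in $D(\sigma, L)$ from $f|_{D(\sigma, f)}$ to $h|_{D(\sigma, f)}$, using the PL cone structure of $D(\sigma, L)$. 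Because the cone structures on the smaller dual cells $D(\tau, L)$ with $\tau > \sigma$ sit consistently inside $\partial D(\sigma, L)$, these local homotopies match on overlaps, and $H_t$ automatically satisfies $H_t(D(\sigma, f)) \subseteq D(\sigma, L)$ for all $t$ and all $\sigma$.

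The main obstacle I expect is bookkeeping the PL compatibility: we need to choose the cone points $c_\sigma$ and the PL cone structures on the $D(\sigma, f)$ so that they are simultaneously compatible with the inductively chosen homeomorphisms on boundaries and with the straight-line homotopies in the target. Concretely, one must verify that the cone extension step produces a genuine PL map (not just topological), which requires the cone structures on $D(\tau, f)$ for $\tau > \sigma$ to be PL-compatible with the cone structure on $D(\sigma, f)$ near the face $D(\tau, f) \subset \partial D(\sigma, f)$; this is where the hypothesis that the cone equivalence holds rel $\partial D(\sigma, f)$ is essential. Once this compatibility is fixed, both the inductive construction of $h$ and the fibrewise cone homotopy from $f$ to $h$ go through, giving the claimed conclusion.
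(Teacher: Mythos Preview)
The paper does not actually prove this proposition; it is merely quoted from Akin's paper \cite{Akin72} as background, with no argument given. So there is nothing in the paper to compare your proof against.

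That said, your approach is the standard one and the construction of $h$ is correct: the dual cells of $L$ are cones $D(\sigma,L)=\widehat{\sigma}*\partial D(\sigma,L)$, the hypothesis supplies matching cone structures on the $D(\sigma,f)$ rel boundary, and inductive coning produces the PL homeomorphism. Your worry about PL compatibility of the cone structures is well placed, and the ``rel $\partial D(\sigma,f)$'' clause is exactly what makes the inductive step go through.

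The one place where your argument is loose is the homotopy. A ``straight-line homotopy along cone rays in $D(\sigma,L)$'' between two arbitrary maps into a cone is not automatically well defined, and even if you interpret it as deforming each map to the cone point and back, the restriction of the cone retraction of $D(\sigma,L)$ to a face $D(\tau,L)\subset\partial D(\sigma,L)$ is not the cone retraction of $D(\tau,L)$, so your compatibility claim across overlaps does not follow for free. The clean fix is to build the homotopy by the same downward induction used for $h$: on top-dimensional $\sigma$ the maps agree; inductively, having a homotopy on $\partial D(\sigma,f)\to\partial D(\sigma,L)$, extend over $D(\sigma,f)\times I$ into the contractible cone $D(\sigma,L)$ (concretely by coning the already-defined map on $\partial(D(\sigma,f)\times I)$ to $\widehat{\sigma}$). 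This yields a homotopy through maps taking each $D(\sigma,f)$ into $D(\sigma,L)$, as required.
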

The condition of transverse cellularity is much stronger than the conditions we will consider in this thesis, as demonstrated by its consequences.

For arbitrary simplicial complexes the notion of cell-like and that of contractible do not in general coincide. 
\begin{ex}\label{earliercomments}
The real line $\R$ is a contractible space that is not cell-like as it is not compact. Conversely, for a general simplicial complex the wedge of two cones on cantor sets can be cell-like but not contractible, see for example page 113 of \cite{Ed78}. However, examples like the latter are excluded if we are dealing with locally finite simplicial complexes. In fact a locally finite cell-like simplicial complex is necessarily contractible - this follows from Remark \ref{rmkrmk} noting that a cell-like simplicial complex is a compact ANR. 
\qed\end{ex}
We choose to work with contractible simplicial maps; using simplicial maps in some sense remedies the difference between contractible and cell-like as it prevents pathologies like the numeral $6$ map from occurring - we cannot have limit points like that of the numeral six without violating local finiteness. More precisely, a cell-like map between locally finite simplicial complexes must necessarily be a proper map by Example \ref{earliercomments}.

The results of the first half of this thesis can be thought of as extending Cohen's result, Theorem \ref{cohensresult}, for PL maps of finite-dimensional compact polyhedra to PL maps of finite-dimensional locally compact polyhedra, and answering Question \ref{q1} in the sense of Example \ref{motivatingexamples} for surjective simplicial maps of spaces of this class.

\chapter{Controlled and bounded topology}\label{chapthree}
The idea of using estimates in geometric topology dates back to Connell and Hollingsworth's paper \cite{ConnHoll} in which the authors introduce estimates in order to compute algebraic obstructions.

After this, controlled topology was developed by Chapman (\cite{Chapcontrol}), Ferry (\cite{Ferryep}) and Quinn (\cite{Quinn1}, \cite{Quinn2}). The general idea of controlled topology is to put an estimate on a geometric obstruction (usually by use of a metric). It is often possible to prove that if the size of the estimate is sufficiently small then the obstruction must vanish. This approach has proved a successful tool for the topological classification of topological manifolds.

A downside to controlled topology is that it is not functorial; the composition of two maps with control less than $\ep$ is a map with control less than $2\ep$. This motivates Pedersen's development in \cite{PedKminusi} and \cite{PedInvariants} of bounded topology, where the emphasis is no longer on how small an estimate is but rather just that it is finite. Functoriality is then satisfied by the fact that the sum of two finite bounds remains finite. Typically controlled topology is concerned with small control measured in a compact space, whereas bounded topology is concerned with finite bounds measured in a non-compact space.

In order to actually be able to make estimates, it is desirable that the space we are working with comes equipped with a metric. This is not strictly necessary: if our space $X$ has at least a map $p:X\to M$ to a metric space $M$, called a \textit{control map}, then we can measure distances in $M$. We can't just use any map however; a constant map would not be able to distinguish points in $X$ so we would be unable to determine anything about $X$ using it. 

\section{Relating controlled and bounded topology}
First we look at controlled topology in more detail. Let $(M,d)$ be a metric space. 
\begin{defn}
Let $p:X\to M$, $q:Y\to M$ be control maps. We say a map $f:(X,p) \to (Y,q)$ is \textit{$\ep$-controlled} if $f$ commutes with the control maps $p$ and $q$ up to a discrepancy of $\ep$, i.e.\ for all $x\in X$, $d(p(x), qf(x))<\ep$. We call $\ep$ the \textit{control} of the map $f$.

A map $f:(X,p) \to (Y,q)$ is called a \textit{controlled map}, if it is $\ep$-controlled for all $\ep$, i.e.\ if $p=qf$. 
\qed\end{defn}

\begin{defn}
We say that a controlled map $f:(X,p) \to (Y,q)$ is an $\ep$-controlled homotopy equivalence, if there exists an $\ep$-controlled homotopy inverse $g$ and $\ep$-controlled homotopies $h_1:g\circ f \sim \id_X$ and $h_2:f\circ g\sim \id_Y$. We call $\ep$ the \textit{control} of the homotopy equivalence $f$. 

We say that a controlled map is a \textit{controlled homotopy equivalence} if it is an $\ep$-controlled homotopy equivalence for all $\ep>0$.
\qed\end{defn}

\begin{rmk}
Note that the condition of being an $\ep$-controlled homotopy equivalence means that the maps $f$ and $g$ do not move points more than a distance $\ep$ \textbf{when measured in $M$} and that the homotopy tracks are no longer than $\ep$ again \textbf{when measured in $M$}. It is perfectly possible that the homotopy tracks are large in $X$ or $Y$ and only become small after mapping to $M$.
\qed\end{rmk}

We consider control maps $(Y,q)$ and $(Y,q^\prime)$ to be controlled equivalent if for all maps $f:X\to Y$, $f:(X,p) \to (Y,q)$ is controlled if and only if $f:(X,p) \to (Y,q^\prime)$ is controlled. This holds if and only if $q=q^\prime$.

Now consider instead bounded topology. Suppose in addition that our metric space $(M,d)$ does not have a finite diameter.

\begin{defn}\label{definebd}
Let $p:X\to M$, $q:Y\to M$ be control maps. A map $f:(X,p) \to (Y,q)$ is called \textit{bounded} if there exists a $B<\infty$ such that for all $x\in X$, $d(p(x), qf(x))<B$. We call the least such $B$ with this property the \textit{bound} of the map $f$ and denote it by bd$(f)$.
\qed\end{defn}

\begin{defn}\label{definebd2}
We say that a bounded map $f:(X,p) \to (Y,q)$ is a $Y$-bounded homotopy equivalence, if there exists a bounded homotopy inverse $g$ together with bounded homotopies $h_1:g\circ f \sim \id_X$ and $h_2:f\circ g\sim \id_Y$. We call $B=\max\{\mathrm{bd}(f),\mathrm{bd}(g), \mathrm{bd}(h_1), \mathrm{bd}(h_2)\}$ the \textit{bound} of the homotopy equivalence $f$. 
\qed\end{defn}

We consider control maps $(Y,q)$ and $(Y,q^\prime)$ to be boundedly equivalent if for all maps $f:X\to Y$, $f:(X,p) \to (Y,q)$ is bounded if and only if $f:(X,p) \to (Y,q^\prime)$ is bounded. This holds if and only if there exists a $k<\infty$ such that for all $x\in X$, $d(q(x),q^\prime(x))<k$.

Morally, equivalent control maps capture precisely the same information about $X$. Bounded topology is coarse in the sense that local topology is invisible to the control map; only the global behaviour out towards infinity is noticed. Controlled topology detects things with non-zero size, but is blind to the arbitrarily small, for example a missing point in $X$ cannot be detected. This is all of course provided the control map is a good one. Let the diameter of a subset of $X$ be the largest distance between any two points of that set measured in $M$. Some of the properties we might want the control map to satisfy are:

\begin{defn}
\begin{enumerate}[(i)]
 \item A control map $p:X\to M$ is called \textit{eventually continuous} if there exists a $k$ and an open covering $\{U_\alpha\}$ of $X$ such that for all $\alpha$, the diameter of $p(U_\alpha)$ is less than $k$.
 \item A \textit{bounded simplicial complex} over $M$ is a pair $(X,p)$ of a simplicial complex $X$ and an eventually continuous control map $p:X\to M$ such that there exists a $k$ with diam$(p(\sigma))<k$ for all simplices $\sigma\in X$.
\end{enumerate}
\qed\end{defn}
\begin{defn}
 Let $(X,p)$ be a bounded simplicial complex over $M$. 
\begin{enumerate}[(i)]
 \item $(X,p)$ is \textit{boundedly $(-1)$-connected} if there exists a $k>0$ such that for all points $m\in M$ there is an $x\in X$ with $d(p(x),m)<k$. In other words ``$p$ is surjective up to a finite bound''.
 \item $(X,p)$ is \textit{boundedly $0$-connected} if for all $d>0$ there exists a $k=k(d)$ such that for $x,y\in X$, if $d(p(x),p(y))<d$ then there is a path from $x$ to $y$ in $X$ with diameter less than $k(d)$.
\end{enumerate}
\qed\end{defn}
We could also ask that the fundamental group be tame and bounded in the sense that non-trivial loops have representatives with non-zero finite diameter when measured in $M$. For more details about all these conditions and their consequences, see for example \cite{epsurgthy}. 

In this thesis we consider only finite-dimensional locally finite simplicial complexes. In chapter \ref{chapfour} we show that such a space $X$ comes equipped with a natural complete metric so we can just take our control map to be the identity. In addition we will see that with respect to this metric, $X$ has a tame and bounded triangulation so all the above conditions will be satisfied for the identity control map (on each path-connected component at least). Thus, the identity map is as good a control map as we could hope for. 

When considering a map $f: (X,p) \to (Y,q)$ of such spaces we will usually measure distances in the target, i.e. with $M=Y$, $q=\id_Y$ and $p=f$.

\section{The open cone}
The open cone was first considered by Pedersen and Weibel in \cite{kthyhom} where it was defined for subsets of $S^n$. This definition was extended to more general spaces by Anderson and Munkholm in \cite{AndMunk}. We make the following definition:
\begin{defn}\label{openconedefn}
Let $(M,d)$ be a complete metric space. The \textit{open cone} $O(M^+)$ is defined to be the identification space $M\times\R/\sim$, where $(m,t)\sim(m^\prime,t)$ for all $m,m^\prime\in M$ if $t\leqslant 0$. Letting $d_X$ be the metric on $X$ we define a metric $d_{O(M^+)}$ on $O(M^+)$ by setting 
\begin{eqnarray*}
 d_{O(M^+)}((m,t),(m^\prime,t)) &=& \brcc{td_X(m,m^\prime),}{t\geqslant 0,}{0,}{t\leqslant 0,} \\
 d_{O(M^+)}((m,t),(m,s)) &=& |t-s|.
\end{eqnarray*}
 Then define $d_{O(M^+)}((m,t),(m^\prime,s))$ to be the infimum over all paths from $(m,t)$ to $(m^\prime,s)$, which are piecewise geodesics in either $M\times\{r\}$ or $\{n\}\times\R$, of the length of the path. I.e.
\[d_{O(M^+)}((m,t),(m^\prime,s)) = \max\{\min\{t,s\},0\}d_X(m,m^\prime) + |t-s|.\]
This metric has been carefully chosen so as to make $M\times\{t\}$, $t$ times as big as $M\times\{1\}$ for all $t\geqslant 0$ and $0$ times as big for all $t\leqslant 0$.
\qed\end{defn}
This is precisely the metric used by Anderson and Munkholm in \cite{AndMunk} and also by Siebenmann and Sullivan in \cite{SiebSull}, but there is a notable distinction: we do not require necessarily that our metric space $(M,d)$ have a finite bound.

\begin{ex}
For $M$ a proper subset of $S^n$ with the subspace metric, the open cone $O(M^+)$ can be thought of as all the points in the lines out from the origin in $\R^{n+1}$ through points in $M^+:= M \cup \{pt\}$. This is not the same as the metric given by Definition \ref{openconedefn} but is equivalent. %IS IT REALLY???
\qed\end{ex}

\begin{defn}\label{coningmap}
There is a natural map $j_X: X\times\R \to O(X^+)=X\times\R/\sim$ given by the quotient map 
\begin{eqnarray*}
X\times\R &\to& X\times\R/\sim \\ 
(x,t) &\mapsto& [(x,t)].
\end{eqnarray*}
We call this the \textit{coning map}.
\qed\end{defn}

In \cite{epsurgthy} Ferry and Pedersen suggest a relation between controlled topology on a space $X$ and bounded topology on the open cone over that space $O(X^+)$ when they write in a footnote:
\begin{quotation}
``It is easy to see that if $Z$ is a Poincar\'{e} duality space with a map $Z \to K$ such that $Z$ has $\ep$-Poincar\'{e} duality for all $\ep>0$ when measured in $K$ (after subdivision), e.g. a homology manifold, then $Z\times\R$ is an $O(K_+)$-bounded Poincar\'{e} complex. The converse (while true) will not concern us here.'' 
\end{quotation}
% Morally, since the metric on $O(K^+)$ scales by a factor of $t$ at height $t$, a finite bound $B$ on the open cone corresponds to a bound proportional to $\frac{B}{t}$ on $(M\times\{t\},d)$.
The proposed relation is the following: 
\begin{conj}\label{arbcontisbddcontoncone}
$f:(X,qf) \to (Y,q)$ is an $M$-bounded homotopy equivalence with bound $\ep$ for all $\ep>0$ if and only if $f\times \id_\R: (X\times\R, j_M(qf\times\id_\R)) \to (Y\times \R, j_M(q\times\id_\R))$ is an $O(M^+)$-bounded homotopy equivalence.
\end{conj}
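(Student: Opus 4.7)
The plan is to prove the two implications separately. The reverse direction falls out of a slicing argument at large heights in $O(M^+)$; the forward direction is the harder one and requires assembling the given family of arbitrarily-controlled inverses into a single bounded inverse via a telescope construction in the $\R$-factor.

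For the implication from $O(M^+)$-bounded to small controlled, let $G: Y \times \R \to X \times \R$ be an $O(M^+)$-bounded inverse to $f \times \id_\R$ with bound $B$, and for each $t > 0$ define $g_t: Y \to X$ by $g_t(y) = \mathrm{pr}_X G(y,t)$. Unpacking Definition \ref{openconedefn}, a bound $d_{O(M^+)}((fx, s), (y, t)) \leqslant B$ with $t > 2B$ forces $|s - t| \leqslant B$ and $d(qfx, qy) \leqslant B/(t - B)$. Applied to the bound on $(f \times \id) \circ G \simeq \id$ this gives $d(qf g_t(y), q(y)) \leqslant B/(t-B)$, and the same slicing at height $t$ of the bounded homotopies $G \circ (f \times \id) \simeq \id$ and $(f \times \id) \circ G \simeq \id$ produces $B/(t-B)$-controlled homotopies $g_t \circ f \simeq \id_X$ and $f \circ g_t \simeq \id_Y$. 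Given $\ep > 0$, choosing $t > B/\ep + B$ makes $g_t$ an $\ep$-controlled inverse.

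For the converse, choose for each integer $n \geqslant 1$ a $(1/n)$-controlled inverse $g_n$ with $(1/n)$-controlled homotopies $H_1^n: g_n \circ f \simeq \id_X$ and $H_2^n: f \circ g_n \simeq \id_Y$. On the slab $Y \times [n, n+1]$ define an interpolation $\gamma$ from $g_n$ to $g_{n+1}$ by concatenating $g_n(H_2^{n+1})$ (taking $g_n$ to $g_n \circ f \circ g_{n+1}$) with $H_1^n(g_{n+1})$ (taking $g_n \circ f \circ g_{n+1}$ to $g_{n+1}$); below $t = 1$ extend $\gamma$ to be constantly $g_1$. Applying $qf$ and using the triangle inequality through $q\circ H_2^{n+1}$ or through $q\circ g_{n+1}$ shows the image of this interpolation lies within $1/n + 1/(n+1) \leqslant 2/n$ of $q(y)$ in $M$, so at height $t \in [n, n+1]$ the horizontal $O(M^+)$-displacement of $G(y, t) := (\gamma(y, t), t)$ is at most $(n+1)(2/n) \leqslant 4$. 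Analogous telescopes built from the $H_i^n$, with tracks confined to slabs of constant $t$, supply the required $O(M^+)$-bounded homotopies $G \circ (f \times \id) \simeq \id$ and $(f \times \id) \circ G \simeq \id$.

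The principal obstacle is to produce $\gamma$ and the telescoped homotopies as honest continuous maps rather than homotopy classes, coherently joined across successive slabs. The estimate above uses only that $qf g_n$ is close to $q$, never that $g_n$ is non-expanding in $M$ (it may well dilate wildly), which is what keeps the bound finite; but actually realising a continuous $\gamma$ at every height requires some homotopy-theoretic regularity of the target spaces. For the simplicial complexes treated in this thesis this is supplied by the fundamental $\ep$-subdivision cellulation $X_\ep^\prime$ of Chapter \ref{chapsix}, which realises the family $\{g_\ep\}_{\ep>0}$ as a genuinely continuous one-parameter family; in the generality of the conjecture, ANR-type hypotheses on $X$, $Y$, and $M$ seem to be needed for the same argument to go through.
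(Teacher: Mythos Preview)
Your reverse implication (slicing at large height) is exactly the paper's argument for $(3)\Rightarrow(2)$ in Theorem~\ref{maintopthm}.

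For the forward implication you take a genuinely different route from the paper. The paper never passes directly from ``$\ep$-controlled for all $\ep$'' to ``$O(Y^+)$-bounded''; instead it goes through the intermediate condition (1) of contractible point inverses. That is, the paper proves $(2)\Rightarrow(1)$ by Proposition~\ref{threeimpliesone} (using that a simplicial map with arbitrarily small control must be surjective and then that $K(\sigma)\simeq *$), and then $(1)\Rightarrow(3)$ by Proposition~\ref{constructhtpyinv}, where the fundamental $\ep$-subdivision cellulation $X_\ep^\prime$ is used to build a \emph{continuous} one-parameter family $\{g_\ep\}$ of inverses together with continuous families of homotopies $(h_1)_\ep$, $(h_2)_\ep$. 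The bounded inverse and bounded homotopies on $X\times\R$ then come for free by setting $g(y,t)=g_{1/t}(y)$, $h_i(-, t,-) = (h_i)_{1/t}$.

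Your telescope is more direct and, unlike the paper's route, does not rely on simplicial structure or on the intermediate characterisation via point inverses. Your construction of $G$ and the bound estimate are correct. The point you flag --- coherence of the telescoped homotopies across slab boundaries --- is the genuine work, but it can be done explicitly without any ANR hypotheses: on the slab $[n,n+1]$ one writes down a two-parameter filler built from $H_2^n$, $H_2^{n+1}$, $H_1^n$ (for $(f\times\id)\circ G\simeq\id$) and checks that the boundary values at $t=n$ and $t=n+1$ are $H_2^n$ and $H_2^{n+1}$ respectively, so consecutive slabs glue. Your final paragraph slightly conflates this with the paper's method: the cellulation $X_\ep^\prime$ is not a patch for your telescope but rather the paper's \emph{alternative} construction, which sidesteps the gluing issue entirely by producing the family $g_\ep$ continuously from the outset, at the cost of being specific to simplicial maps.
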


In this thesis we work with finite-dimensional locally finite simplicial complexes and prove Conjecture \ref{arbcontisbddcontoncone} in this setting, where for the map $f$ we measure in the target space with the identity control map and for $f\times\id_R$ we measure in the open cone over $Y$ with control map the coning map.  

In the next chapter we explain how all locally finite simplicial complexes are naturally (complete) metric spaces, thus allowing us to use the identity map $\id:X\to X$ as our control map.

\chapter{Preliminaries for simplicial complexes}\label{chapfour}
In this chapter we recall for the benefit of the reader some basic facts about simplicial complexes and make some new definitions which we shall require in this thesis.
\section{Simplicial complexes, subdivision and dual cells}
\begin{defn}\label{standardsimplex}
The \textit{standard $n$-simplex} $\Delta^n$ in $\R^{n+1}$ is the convex hull of the standard basis vectors: \[\Delta^n:= \{(t_1,\ldots,t_{n+1})\in \R^{n+1}\,|\, t_i\geqslant0, \sum_{i=1}^{n+1}t_i = 1 \}. \]
\qed\end{defn}
Recall we build an abstract simplicial complex as an identification space of unions of standard simplices with some subsimplices identified. 
\begin{defn}
We say that a simplicial complex $X$ is \textit{locally finite} if each vertex $v\in X$ is contained in only finitely many simplices.
\qed\end{defn}
\begin{defn}
Let $\sigma$ be an $n$-simplex with vertices labelled $\{v_0, \ldots, v_n\}$ and let $\tau$ be an $m$-simplex with vertices $\{w_0,\ldots,w_m\}$. Suppose $\sigma$ and $\tau$ are embedded in $\R^k$, then we say that $\sigma$ and $\tau$ are \textit{joinable} if the vertices $\{v_0,\ldots, v_n,w_0,\ldots, w_m\}$ are all linearly independent. The \textit{join} of $\sigma$ and $\tau$, written $\sigma\tau$, is then the $(n+m+1)$-simplex spanned by this vertex set, i.e.\ the convex hull of this vertex set. 
\qed\end{defn}
\begin{defn}\label{simplexasjoins}
Using the notation of joins, $v_0\ldots v_n$ denotes the $n$-simplex spanned by $\{v_0,\ldots, v_n\}$: \[ v_0\ldots v_n := \{\sum_{i=0}^{n}{t_iv_i}\,|\, \sum_{i=0}^{n}t_i = 1 \}. \]
\qed\end{defn}
\begin{defn}\label{vertexset}
We denote the \textit{vertex set} of the simplex $\sigma=v_0\ldots v_n$ by $V(\sigma)$.
\qed\end{defn}
\begin{defn}\label{barycentre}
We refer to the coordinates $(t_0,\ldots,t_n)$ as \textit{barycentric coordinates} and to the point $\widehat{\sigma}:=(1/(n+1),\ldots,1/(n+1))$ as the \textit{barycentre} of $\sigma=v_0\ldots v_n$. 
\qed\end{defn}
For rigour we should point out that each $n$-simplex of $X$ is thought of as standardly embedded in $\R^{n+1}$ so the barycentric coordinates make sense in $\R^{n+1}$.
\begin{defn}\label{face}
 If we delete one of the $n+1$ vertices of an $n$-simplex $\sigma=v_0\ldots v_n$, then the remaining $n$ vertices span an $(n-1)$-simplex $v_0\ldots\widehat{v_i}\ldots v_n$, called a \textit{face} of $\sigma$. Here $\widehat{v_i}$ is used to indicate that the vertex $v_i$ is omitted. This is not to be confused with the notation for barycentre, this should always be clear from context.
\qed\end{defn}
\begin{defn}\label{interior}
The union of all faces of $\sigma$ is the \textit{boundary} of $\sigma$, $\partial \sigma$. The open simplex $\mathring{\sigma} = \sigma\backslash \partial \sigma$ is the \textit{interior} of $\sigma$.  For an $n$-simplex $\sigma$, $|\sigma|:=n$ will denote the \textit{dimension} of $\sigma$.
\qed\end{defn}

\begin{defn}\label{geometricrealisation}
Forgetting the simplicial structure on $X$ and just considering it as a topological (or metric) space, $||X||$ will denote the \textit{geometric realisation} of $X$.
\qed\end{defn}

\begin{lem}\label{standardmetric}
Any locally finite simplicial complex $X$ can be given a complete metric whose restriction to any $n$-simplex $\sigma$ is just the subspace metric from the standard embedding of $\sigma$ in $\R^{n+1}$. We call this the \textit{standard metric on $X$}.
\end{lem}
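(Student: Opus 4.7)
The plan is to construct the path (or intrinsic) metric. For any piecewise linear path $\gamma:[0,1]\to\|X\|$, meaning a concatenation of finitely many segments each lying in a single closed simplex, define the length of $\gamma$ to be the sum of the Euclidean lengths of the pieces, where each segment in an $n$-simplex $\sigma$ is measured using the standard embedding $\sigma\hookrightarrow\R^{n+1}$ of Definition~\ref{standardsimplex}. Set
\[ d(x,y) \;=\; \inf_{\gamma}\,\mathrm{length}(\gamma), \]
the infimum being taken over all such piecewise linear paths from $x$ to $y$; set $d(x,y)=1$ (say) on distinct path components so that $d$ takes finite values. Symmetry, the triangle inequality, and $d(x,x)=0$ are immediate from reversal and concatenation of paths.

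To see that $d$ restricts on each closed simplex $\sigma$ to its standard subspace metric, note that $\sigma\subset\R^{|\sigma|+1}$ is convex, so for $x,y\in\sigma$ the straight-line segment in $\sigma$ realises the Euclidean distance and gives an upper bound on $d(x,y)$. Any other piecewise linear path that leaves $\sigma$ has length at least equal to this straight line, since each piece is a Euclidean segment and straight-line distance in $\R^{|\sigma|+1}$ is minimised by the segment. Hence $d(x,y)$ equals the Euclidean distance in $\sigma$.

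Positive definiteness and completeness are where local finiteness is used, and this is the main obstacle. Given $x\in\|X\|$, let $\mathrm{st}(x)$ denote the union of all closed simplices containing $x$; by local finiteness this is a finite subcomplex, hence a compact metric space in the Euclidean path metric with some positive distance $\delta>0$ from $x$ to its frontier. I will argue that any piecewise linear path from $x$ to a point $y$ outside $\mathrm{st}(x)$ must first cross this frontier, so $d(x,y)\geqslant\delta$; this both proves that $d$ is a genuine metric and shows that the open $d$-balls of sufficiently small radius about $x$ are entirely contained in $\mathrm{st}(x)$. For completeness, given a Cauchy sequence $\{x_n\}$, tails of the sequence lie in some $d$-ball of radius $<\delta/2$ around some $x_N$, hence inside $\mathrm{st}(x_N)$; being a finite union of Euclidean simplices, $\mathrm{st}(x_N)$ is a compact (hence complete) metric subspace in the restricted path metric, so $\{x_n\}$ converges. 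This establishes the required complete metric.
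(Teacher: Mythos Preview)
Your approach is essentially the same as the paper's: build the path metric by measuring piecewise linear paths simplex-by-simplex and take the infimum. The paper's proof is in fact briefer than yours --- it just states the construction and refers to \S4 of Bartels and to Hughes--Ranicki for details --- so in outline you are aligned with it and even supply more of the verification.

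Two points nonetheless need attention. First, your convention $d(x,y)=1$ on distinct path components is not harmless: if a single component contains points $x,z$ with $d(x,z)>2$ (which certainly happens once the component has diameter exceeding $2$) and $y$ lies in another component, then $d(x,y)+d(y,z)=2<d(x,z)$ and the triangle inequality fails. The paper avoids this by allowing $d(x,y)=\infty$ between components; if you insist on finite values you must pick a constant larger than the diameter of every component, which in general does not exist. Second, your justification that the path metric restricts to the Euclidean metric on a closed simplex $\sigma$ is incomplete: the sentence ``straight-line distance in $\R^{|\sigma|+1}$ is minimised by the segment'' tacitly treats segments of the path lying \emph{outside} $\sigma$ as if they were measured in the same ambient $\R^{|\sigma|+1}$, which they are not. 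The conclusion is correct, but the argument needs to track how a path leaving $\sigma$ exits and re-enters through $\partial\sigma$; this is exactly the sort of detail the paper defers to its references.
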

\begin{proof}
The idea is that each simplex has the standard subspace metric and we extend this to the path metric on the whole of $X$: any $x,y\in X$ in the same path-connected component of $X$ can be joined by a path $\gamma$. By compactness of $[0,1]$, this path can be split up into the concatenation of a finite number of paths $\gamma = \gamma_i\circ\ldots \circ\gamma_1$ with each $\gamma_j$ contained entirely in a single simplex $\sigma_j$. Using the standard metric on $\sigma_j$ we get a length for $\gamma_j$, summing all these lengths gives a length for $\gamma$. We then define the distance between $x$ and $y$ to be the infimum over all paths of the path length. If $x$ and $y$ are in different path-connected components then the distance between them is defined to be infinite. See $\S 4$ of \cite{bartelssqueezing} or Definition $3.1$ of \cite{HR95} for more details.
\end{proof}

\begin{defn}\label{openball}
Let $(M,d)$ be a metric space. Define the open ball of radius $\ep$ around the point $m$ to be \[B_\ep(m) = \{ m^\prime\in M \,|\, d(m,m^\prime)<\ep\}.\] Define the sphere of radius $\ep$ around the point $m$ to be \[\partial\overline{B_\ep(v)}= \{ m^\prime\in M \,|\, d(m,m^\prime)=\ep\}.\]
\qed\end{defn}

\begin{defn}\label{diam}
Let $p:X\to (M,d)$ be a continuous control map. For all $\sigma\in X$, define the \textit{diameter} of $\sigma$ to be \[\mathrm{diam}(\sigma):= \sup_{x,y\in\sigma}{d(p(x),p(y))}.\]

\noindent Define the \textit{radius} of $\sigma$ by \[\mathrm{rad}(\sigma) = \sup_{x\in\sigma}\inf_{y\in\partial\sigma}{d(p(x),p(y))}.\] The radius of $\sigma$ can be thought of as the radius of the largest open ball in $M$ whose intersection with $p(\sigma)$ will fit entirely inside $p(\sigma)$, i.e.\ does not intersect $p(\partial \sigma)$. 

Let $o(\sigma)$ denote the \textit{incentre} of $\sigma$: the centre of the largest inscribed ball. 
\qed\end{defn}

\begin{ex}
\Figref{Fig:new4} illustrates the diameter, radius and incentre of a $2$-simplex with control map an embedding into Euclidean space.

\begin{figure}[ht]
\begin{center}
{
\psfrag{o}{$o(\sigma)$}
\psfrag{rad}{rad$(\sigma)$}
\psfrag{diam}{diam$(\sigma)$}
\includegraphics[width=7cm]{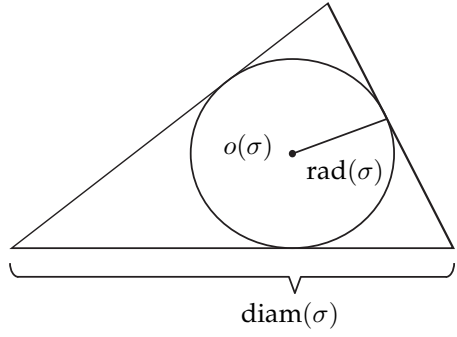}
}
\caption{Diameter, radius and incentre.}
\label{Fig:new4}
\end{center}
\end{figure}
\qed\end{ex}

\begin{rmk}
Note that if $\tau<\sigma$, then rad$(\sigma)\leqslant \mathrm{rad}(\tau)$.
\qed\end{rmk}

\begin{defn}\label{meshandco}
Let \[\mesh(X):= \sup_{\sigma\in X}\{\mathrm{diam}(\sigma)\}.\] If mesh$(X)<\infty$, then we say that $X$ has a \textit{bounded triangulation}. 

Let \[\comesh(X):= \inf_{\sigma\in X, |\sigma|\neq 0}\{\mathrm{rad}(\sigma)\}.\] If comesh$(X)>0$, then we say that $X$ has a \textit{tame triangulation}.
\qed\end{defn}

\begin{lem}
With the subspace metric from the standard embedding $\Delta^n\subset \R^{n+1}$ and control map $1:\Delta^n\to \Delta^n$,  
\begin{eqnarray*}
 \diam (\Delta^n) &=& \sqrt{2} \\
 \rad (\Delta^n) &=& \frac{1}{\sqrt{n(n+1)}}.
\end{eqnarray*}
\end{lem}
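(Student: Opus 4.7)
The plan is to perform two direct computations using the standard coordinates in $\R^{n+1}$, exploiting the symmetry of $\Delta^n$ under the action of $S_{n+1}$ by permutation of coordinates.

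For the diameter, I would first note that since $\Delta^n$ is the convex hull of the vertices $e_1,\ldots,e_{n+1}$ and the function $(x,y)\mapsto \|x-y\|^2$ is convex in each argument separately, the supremum of $\|x-y\|$ over $x,y\in\Delta^n$ is attained at a pair of vertices. A direct calculation gives $\|e_i - e_j\| = \sqrt{2}$ for any $i\ne j$, so $\diam(\Delta^n)=\sqrt{2}$.

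For the radius, the key observation is that $S_{n+1}$ acts by isometries on $\Delta^n$ (with respect to the subspace metric from $\R^{n+1}$) and the function $x\mapsto \inf_{y\in\partial\Delta^n} d(x,y)$ is invariant under this action. Since this function is also concave on $\Delta^n$ (standard fact: the distance to the boundary of a convex body is concave), it attains its supremum at the unique $S_{n+1}$-fixed point of $\Delta^n$, namely the barycentre $\widehat{\Delta^n} = \tfrac{1}{n+1}(1,\ldots,1)$. So $\rad(\Delta^n)$ equals the distance from $\widehat{\Delta^n}$ to $\partial\Delta^n$.

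By symmetry, this is the distance from $\widehat{\Delta^n}$ to any one of the faces $F_i=\{t\in\Delta^n: t_i=0\}$; take $i=0$. The affine hull of $F_0$ is the codimension-$1$ subspace $\{t\in\R^{n+1}: t_0=0,\ \sum_{j\ge 1}t_j = 1\}$, and the nearest point in this affine hull to $\widehat{\Delta^n}$ can be found by orthogonal projection in $\R^{n+1}$; by symmetry this projection is $p=(0,\tfrac{1}{n},\ldots,\tfrac{1}{n})$, and one checks $p\in F_0$, so it is in fact the nearest point of $F_0$. Then
\[
\|\widehat{\Delta^n}-p\|^2 \;=\; \frac{1}{(n+1)^2} + n\cdot\left(\frac{1}{n+1}-\frac{1}{n}\right)^2 \;=\; \frac{1}{(n+1)^2}+\frac{1}{n(n+1)^2} \;=\; \frac{1}{n(n+1)},
\]
giving $\rad(\Delta^n) = \tfrac{1}{\sqrt{n(n+1)}}$.

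There is no real obstacle here; both steps are routine. The only subtlety worth flagging is ensuring that the orthogonal projection $p$ of $\widehat{\Delta^n}$ onto the affine hull of $F_0$ actually lies inside $F_0$ (so that it realises the distance to the closed face rather than just to its affine span), and justifying that the inradius of $\Delta^n$ is genuinely attained at the barycentre rather than merely at some interior point — both handled by the symmetry argument above.
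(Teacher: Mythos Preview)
Your proof is correct and follows essentially the same approach as the paper: convexity to reduce the diameter to a vertex-pair computation, and symmetry to reduce the radius to the distance from the barycentre $\tfrac{1}{n+1}(1,\ldots,1)$ to the face barycentre $(0,\tfrac{1}{n},\ldots,\tfrac{1}{n})$. You are simply more explicit about the justifications (concavity of the distance-to-boundary function, checking the projection lands in the face) where the paper appeals directly to symmetry.
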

\begin{proof}
Recall that with the standard embedding \[\Delta^n:= \{(t_1,\ldots,t_{n+1})\in \R^{n+1}\,|\, t_i\geqslant0, \sum_{i=1}^{n+1}t_i = 1 \}. \] By the convexity of $\Delta^n$, the points in $\Delta^n$ which are furthest apart are the vertices and these all have separation $\sqrt{2}$, whence $\diam (\Delta^n) = \sqrt{2}$.

$\rad(\Delta^n)$ is the radius of the largest $n$-sphere that fits inside $\Delta^n$. This sphere necessarily intersects each face of $\Delta^n$ once, and this intersection must be the barycentre of that face by symmetry. Again by symmetry considerations the centre of the sphere must be the barycentre of $\Delta^n$, so its radius is the separation of the barycentre of $\Delta^n$ and the barycentre of any face, namely
\begin{eqnarray*}
 \rad(\Delta^n) &=& \bigg{\lvert} \left(\frac{1}{n+1},\ldots,\frac{1}{n+1}\right) - \left( 0, \frac{1}{n},\ldots, \frac{1}{n}\right) \bigg{\lvert} \\
 &=& \frac{1}{\sqrt{n(n+1)}}.
\end{eqnarray*}
\end{proof}

\begin{cor}
Let $X$ be an $n$-dimensional locally finite simplicial complex. Then with respect to the standard metric on $X$
\begin{eqnarray*}
 \mesh (X) &=& \sqrt{2}, \\
 \comesh (X) &=& \frac{1}{\sqrt{n(n+1)}},
\end{eqnarray*}
so using the control map $\id: X\to X$, $X$ has a tame and bounded triangulation. 
\end{cor}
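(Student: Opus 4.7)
The plan is to deduce the corollary directly from the previous lemma by observing that the standard metric on $X$ restricts, on each simplex, to exactly the subspace metric coming from the standard embedding used in the lemma. So the work is essentially to pass from per-simplex quantities to their sup/inf over all simplices of $X$.

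First I would note that for any simplex $\sigma \in X$ of dimension $k$, the diameter and radius as computed with the control map $\id_X : X \to X$ coincide with the diameter and radius computed intrinsically in $\sigma$ with its standard embedding into $\R^{k+1}$. For the diameter this is immediate because $\sigma$ is convex and isometrically embedded in $X$ (the path metric restricts to the standard metric on each simplex by Lemma \ref{standardmetric}), so any two points of $\sigma$ realise the same distance whether measured in $\sigma$ or in $X$. For the radius the same observation applies, since $\rad(\sigma)$ is defined by distances from points of $\sigma$ to $\partial\sigma$, and these are achieved by straight-line paths inside $\sigma$.

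Next I would apply the preceding lemma simplex by simplex: for every $k$-simplex $\sigma \subset X$ with $k \geqslant 1$, $\diam(\sigma) = \sqrt{2}$, and $\rad(\sigma) = 1/\sqrt{k(k+1)}$. Taking the supremum over all $\sigma \in X$ yields $\mesh(X) = \sqrt{2}$ (every simplex of positive dimension contributes the same value $\sqrt{2}$, and $0$-simplices contribute $0$, so the supremum is $\sqrt{2}$ as soon as $X$ contains a $1$-simplex; if $X$ is purely $0$-dimensional the statement is vacuous or we interpret $\mesh$ accordingly). For the infimum of radii over simplices of dimension $\geqslant 1$, the function $k \mapsto 1/\sqrt{k(k+1)}$ is strictly decreasing in $k$, so the infimum is attained at the top dimension $k = n$, giving $\comesh(X) = 1/\sqrt{n(n+1)}$.

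Finally, since $\mesh(X) = \sqrt{2} < \infty$ and $\comesh(X) = 1/\sqrt{n(n+1)} > 0$, the triangulation of $X$ is simultaneously bounded and tame with respect to the control map $\id_X$, as claimed. There is no real obstacle here; the only point requiring a moment's care is confirming that the path metric on $X$ restricts to the Euclidean metric on each simplex, which is exactly the content of Lemma \ref{standardmetric}, so there is no discrepancy between the per-simplex computations of the lemma and the globally measured quantities $\mesh(X)$, $\comesh(X)$.
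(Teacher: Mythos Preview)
Your proposal is correct and matches the paper's approach exactly: the paper states this as an immediate corollary of the preceding lemma with no separate proof, and you have simply spelled out the obvious passage from the per-simplex computations to the global sup and inf, using that $k \mapsto 1/\sqrt{k(k+1)}$ is decreasing.
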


\begin{defn}\label{barycentricsubdivision}
Let $\sigma = v_0\ldots v_n$ be an $n$-simplex embedded in Euclidean space. We define the \textit{barycentric subdivision of $\sigma$}, $Sd\, \sigma$, by
\[Sd\, \sigma := \bigcup_{\sigma_0<\ldots<\sigma_k\leqslant \sigma}{\widehat{\sigma_0}\ldots\widehat{\sigma_k}}.\] We denote the $i^{th}$ iterated barycentric subdivision of $\sigma$ by $Sd^i\,\sigma$.   
\qed\end{defn}

\begin{lem}
Let $\sigma$ be an $n$-simplex embedded in Euclidean space. Then with control map the embedding all simplices $\tau \in Sd\, \sigma$ satisfy \[\diam(\tau) < \frac{n}{n+1}\diam(\sigma).\]
\end{lem}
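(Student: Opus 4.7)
The plan is to carry out the classical two-step argument. First, I would prove the auxiliary lemma that for any $m$-simplex $\rho$ (viewed with the Euclidean subspace metric) and any point $p\in\rho$, the distance to the barycentre satisfies
\[ |p-\widehat{\rho}| \;\leqslant\; \frac{m}{m+1}\diam(\rho). \]
The distance function $p\mapsto|p-\widehat{\rho}|$ is convex on the convex set $\rho$, so it attains its maximum on $\rho$ at one of the vertices. For a vertex $v_j$ of $\rho=v_0\ldots v_m$ one has
\[ v_j-\widehat{\rho} \;=\; \frac{1}{m+1}\sum_{i\neq j}(v_j-v_i), \]
and hence $|v_j-\widehat{\rho}|\leqslant \frac{m}{m+1}\max_i|v_j-v_i|\leqslant\frac{m}{m+1}\diam(\rho)$, proving the auxiliary estimate.

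Next, by Definition \ref{barycentricsubdivision} every simplex $\tau\in Sd\,\sigma$ is of the form $\tau=\widehat{\sigma_0}\ldots\widehat{\sigma_k}$ for a chain $\sigma_0<\sigma_1<\ldots<\sigma_k\leqslant\sigma$. Since $\diam(\tau)$ is realised between two of its vertices, it suffices to bound $|\widehat{\sigma_i}-\widehat{\sigma_j}|$ for $i<j$. Because $\sigma_i$ is a face of $\sigma_j$, we have $\widehat{\sigma_i}\in\sigma_j$, and so the auxiliary lemma applied to the simplex $\sigma_j$ gives
\[ |\widehat{\sigma_i}-\widehat{\sigma_j}| \;\leqslant\; \frac{|\sigma_j|}{|\sigma_j|+1}\,\diam(\sigma_j). \]
Using $\diam(\sigma_j)\leqslant\diam(\sigma)$ together with $|\sigma_j|\leqslant n$ and the monotonicity of $m\mapsto \frac{m}{m+1}$, this bound is at most $\frac{n}{n+1}\diam(\sigma)$, giving the desired inequality.

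To upgrade to strict inequality in the relevant cases, I would observe that the two factors in the above estimate cannot both be maximal simultaneously: if $|\sigma_j|<n$ then already $\frac{|\sigma_j|}{|\sigma_j|+1}<\frac{n}{n+1}$, while if $\sigma_j=\sigma$ then $\widehat{\sigma_i}$ lies on a proper face of $\sigma$ and a direct computation in the standard embedding (using $\diam(\sigma)=\sqrt{2}$ and $|v-\widehat{\sigma}|=\sqrt{n/(n+1)}$ for a vertex $v$) shows $|\widehat{\sigma_i}-\widehat{\sigma_j}|<\frac{n}{n+1}\diam(\sigma)$ for $n\geqslant 2$. The only slightly delicate point is tracking this strictness; the main substance of the argument is the convexity-plus-barycentre estimate above, which is entirely routine.
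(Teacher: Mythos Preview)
Your argument is exactly the standard barycentre estimate; the paper does not actually prove this lemma but simply cites Hatcher, page~120, where precisely this argument appears, so your approach coincides with the referenced one. One small caveat worth making explicit: the \emph{strict} inequality as stated fails for $n=1$ (the subdivided $1$-simplex has edges of length exactly $\tfrac12\diam(\sigma)$), which is why your strictness discussion had to restrict to $n\geqslant 2$; the non-strict bound $\diam(\tau)\leqslant\tfrac{n}{n+1}\diam(\sigma)$ that you do establish in full generality is all that is ever used downstream (namely, that $\mesh(Sd^i X)\to 0$).
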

\begin{proof}
See page $120$ of \cite{hatcher}.
\end{proof}

\begin{cor}
Let $X$ be an $n$-dimensional simplicial complex, $n<\infty$, embedded in Euclidean space. Then with control map the embedding \[\mesh(Sd\, X) < \frac{n}{n+1}\mesh(X).\]
\end{cor}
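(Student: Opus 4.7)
The plan is to reduce the corollary to the previous lemma applied simplex by simplex, then take a supremum. First I would observe the basic compatibility between barycentric subdivision of a complex and of its individual simplices: by Definition \ref{barycentricsubdivision}, every simplex $\tau \in Sd\, X$ arises as $\widehat{\sigma_0}\ldots\widehat{\sigma_k}$ for some chain $\sigma_0 < \cdots < \sigma_k$ of simplices of $X$, and therefore $\tau \subseteq Sd\, \sigma$ where $\sigma := \sigma_k \in X$. This is the key structural fact that lets us transfer diameter estimates from individual simplices of $X$ to simplices of $Sd\, X$.

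Next I would apply the previous lemma to each such $\sigma$: if $k = \dim \sigma$ then every simplex $\tau \subseteq Sd\, \sigma$ satisfies
\[
\diam(\tau) \;<\; \frac{k}{k+1}\,\diam(\sigma).
\]
Since $X$ is $n$-dimensional we have $k \leqslant n$, and the function $k \mapsto k/(k+1)$ is monotonically increasing on non-negative integers, so $k/(k+1) \leqslant n/(n+1)$. Combining these bounds with $\diam(\sigma) \leqslant \mesh(X)$ gives
\[
\diam(\tau) \;<\; \frac{n}{n+1}\,\diam(\sigma) \;\leqslant\; \frac{n}{n+1}\,\mesh(X)
\]
for every $\tau \in Sd\, X$. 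Taking the supremum over all $\tau \in Sd\, X$ yields the desired inequality.

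There is essentially no obstacle here; the argument is a routine combination of the two ingredients (the structural identification $\tau \subseteq Sd\, \sigma$ and the per-simplex estimate from the lemma). The only mild subtlety is keeping the inequality strict after passing to the supremum. In the cases of interest (for instance, the standard embedding where $\mesh(X) = \sqrt 2$ is attained by any top-dimensional simplex) this is automatic, since the strict bound $\tfrac{n}{n+1}\mesh(X)$ is realised as a genuine upper bound rather than being approached asymptotically; in general the strict inequality follows as long as $\mesh(X)$ is attained on some simplex of dimension $\leqslant n$, and the hypothesis $n < \infty$ ensures we never lose strictness through the $k/(k+1) \leqslant n/(n+1)$ step.
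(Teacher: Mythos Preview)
Your proposal is correct and follows exactly the approach implicit in the paper, which states this as an immediate corollary of the preceding lemma without giving a separate proof. Your discussion of the strict-inequality-under-supremum subtlety goes beyond what the paper addresses; the paper is content to leave this point implicit.
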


\begin{rmk}
There may be a similar result putting a lower bound on the comesh of a subdivided simplex, but we shall not need it in this thesis.
\qed\end{rmk}

\begin{defn}\label{barycentricsubdivisionX}
We extend the definition of barycentric subdivision from embedded simplices to abstract locally finite simplicial complexes by subdividing each standardly embedded simplex and identifying common faces as before. We write this as \[Sd\, X:=\bigcup_{\sigma_0<\ldots<\sigma_k\in X}{\widehat{\sigma_0}\ldots\widehat{\sigma_k}},\]again denoting the $i^{th}$ iterated barycentric subdivision of $X$ by $Sd^i\, X$.
\qed\end{defn}

\begin{cor}
Let $X$ be a locally finite $n$-dimensional simplicial complex with $n<\infty$. Then measuring in $X$ with the standard metric \[\mesh(Sd\, X) < \frac{n}{n+1}\mesh(X).\]
\end{cor}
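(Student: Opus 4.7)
The plan is to reduce this corollary to the preceding one for embedded simplices by exploiting that the standard metric on $X$ is built simplex-wise.

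First I would recall from Definition \ref{barycentricsubdivisionX} that every simplex $\tau \in Sd\, X$ has the form $\widehat{\sigma_0}\ldots\widehat{\sigma_k}$ for some chain $\sigma_0 < \ldots < \sigma_k$ in $X$, so in particular $\tau \subseteq \sigma_k$. By Lemma \ref{standardmetric}, the standard metric on $X$ restricted to the single simplex $\sigma_k$ agrees with the Euclidean subspace metric coming from the standard embedding $\sigma_k \hookrightarrow \R^{|\sigma_k|+1}$. Since $\tau$ lies inside this one simplex, the diameter of $\tau$ measured in $X$ coincides with its diameter as a subset of $\sigma_k \subset \R^{|\sigma_k|+1}$.

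Next I would apply the preceding corollary to the embedded simplex $\sigma_k$, which has dimension $|\sigma_k| \leqslant n$. This gives
\[\diam(\tau) < \frac{|\sigma_k|}{|\sigma_k|+1}\diam(\sigma_k) \leqslant \frac{n}{n+1}\mesh(X),\]
where the second inequality uses that the function $m \mapsto m/(m+1)$ is increasing on $\mathbb{N}$ and that $\diam(\sigma_k) \leqslant \mesh(X)$ by definition of mesh. Taking the supremum over $\tau \in Sd\, X$ then yields $\mesh(Sd\, X) \leqslant \frac{n}{n+1}\mesh(X)$.

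The only step requiring a moment of care is retaining strict inequality after the supremum. This is not actually an obstacle because in the standard metric every $k$-simplex of $X$ is isometric to $\Delta^k$ for $k\leqslant n$, so there are only finitely many isometry classes of simplices appearing in $Sd\, X$; the supremum is therefore realised by one of finitely many values, each of which is strictly less than $\frac{n}{n+1}\mesh(X)$ by the preceding corollary. Hence the strict inequality $\mesh(Sd\, X) < \frac{n}{n+1}\mesh(X)$ holds as claimed.
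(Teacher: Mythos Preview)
Your proof is correct. The paper states this corollary without proof, treating it as immediate from the preceding lemma and corollary for embedded simplices together with Lemma~\ref{standardmetric}; your argument is exactly the natural way to fill in these details, and your observation that only finitely many isometry classes of simplices occur (so the supremum is attained and strictness survives) is a nice point the paper leaves implicit.
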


\begin{rmk}
We had two choices with how to measure subdivisions:
\begin{enumerate}[(i)]
 \item Like in \cite{bartelssqueezing}, when subdividing a locally finite complex $X$ we could have used the standard metric on $Sd\, X$ given by Lemma \ref{standardmetric}. The effect this would have had is to increase the diameter of an $n$-simplex when subdivided by a factor of at least $\dfrac{n+1}{n}$, but to leave the mesh and comesh unchanged. 
 \item Instead, what we have done is to choose to \textbf{measure all subdivisions of $X$ in the original unsubdivided $X$ via the identity map on geometric realisations}. With this choice $\diam(Sd^i\,\sigma) = \diam(\sigma)$ for all $\sigma\in X$ and for finite-dimensional $X$ 
\begin{equation}\label{subdivscales}
\mesh(Sd\, X) < \frac{\mathrm{dim}(X)}{\mathrm{dim}(X)+1}\mesh(X). 
\end{equation}
\end{enumerate}
\qed\end{rmk}

We reiterate as it is extremely important:
\begin{rmk}
If our control map for $X$ is $\id:X\to (X,d)$, where $d$ is the standard metric on $X$, then the control map for the barycentric subdivision is \[ \id:Sd\, X \to (X,d).\] %REPEATED!!
\qed\end{rmk}

\begin{defn}\label{dualcells}
For any simplex $\sigma\in X$ we define the \textit{closed dual cell} , $D(\sigma, X)$, by \[D(\sigma,X):= \{ \widehat{\sigma_0}\ldots\widehat{\sigma_k}\in Sd\, X\,|\,\sigma\leqslant\sigma_0<\ldots<\sigma_k\in X\}\] with boundary \[\partial D(\sigma,X):= \{ \widehat{\sigma_0}\ldots\widehat{\sigma_k}\in Sd\, X\,|\,\sigma<\sigma_0<\ldots<\sigma_k\in X\}.\] We will call the interior of the closed dual cell the \textit{open dual cell} and denote it by
\[\mathring{D}(\sigma,X):= \{ \widehat{\sigma_0}\ldots\widehat{\sigma_k}\in Sd\, X\,|\,\sigma=\sigma_0<\ldots<\sigma_k\in X\}.\]  Note that for open dual cells $\mathring{D}(\sigma,X)= D(\sigma,X) - \partial D(\sigma,X)$.

%Similarly for $Y$ a collection of open simplices in $X$, let $\mathring{D}(Y,X):= \bigcup_{\sigma\leqslant Y}{\mathring{D}(\sigma,X)}$ denote the collection of all open dual cells to simplices in $Y$.   
\qed\end{defn}
\begin{ex}
\Figref{Fig:new0} shows a $2$-simplex $\sigma$ and examples of an open and a closed dual cell in its barycentric subdivision.
\begin{figure}[ht]
\begin{center}
{
\psfrag{0}[c][c]{$\rho_0$}
\psfrag{1}{$\rho_1$}
\psfrag{2}{$\rho_2$}
\psfrag{01}{$\tau_0$}
\psfrag{02}[bl][bl]{$\tau_1$}
\psfrag{12}{$\tau_2$}
\psfrag{s}{$\sigma$}
\psfrag{D}[br][br]{$D(\tau_0,\sigma)$}
\psfrag{oD}[bl][bl]{$\mathring{D}(\rho_2,\sigma)$}
\includegraphics[width=9cm]{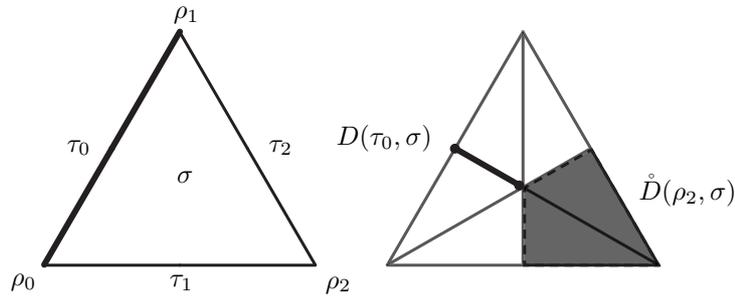}
}
\caption{Dual Cells}
\label{Fig:new0}
\end{center}
\end{figure}
We shall use this labelling of subsimplices of $\sigma$ in all $2$-simplex examples in this thesis.
\qed\end{ex}

\newpage \begin{rmk}
Any point $x\in X$ belongs to both a unique open simplex $\mathring{\sigma} \in X$ and a unique open dual cell $\mathring{D}(\tau,X)\subset Sd\, X.$ 
We will often think of $X$ as decomposed either into open simplices or open dual cells.\qed\end{rmk}

\section{Orientations and cellulations}
\begin{defn}\label{simporient}
Let $\sigma=v_0\ldots v_n$ be an $n$-simplex. An \textit{orientation} of a simplex is an equivalence class of choices of orderings $[v_{i_0},\ldots,v_{i_n}]$ of its vertices, where $[v_{i_0},\ldots,v_{i_n}]=[v_{j_0},\ldots,v_{j_n}]$ if and only if $(v_{i_0}\ldots v_{i_n})=\rho(v_{j_0}\ldots v_{j_n})$ for an even permutation $\rho \in S_n$.
\qed\end{defn}

\begin{defn}
Let a product of simplices $\sigma\times\tau$ be called a \textit{cell}. Another cell $\sigma^\prime\times\tau^\prime$ such that $\sigma^\prime<\sigma$ and $\tau^\prime<\tau$ is called a \textit{subcell}. A subcell is called a \textit{face} if it is codimension $1$.
\qed\end{defn}

\begin{defn}
A \textit{cellulation} of a simplicial complex $X$ is a decomposition of $X$ as a union of cells, such that the intersection of any two cells is a common subcell. 
\qed\end{defn}

\begin{ex}
The product of a $2$-simplex and a $1$-simplex is a cellulation, but if we subdivide it into a $3$-simplex and a square-based pyramid intersecting in a $2$-simplex this is not a cellulation. This is illustrated in \Figref{Fig:pyramid}.
\begin{figure}[ht]
\begin{center}
{
\includegraphics[width=4.5cm]{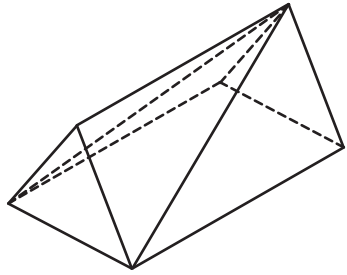}
}
\caption{A subdivision that is not a cellulation.}
\label{Fig:pyramid}
\end{center}
\end{figure}
\qed\end{ex}

\begin{defn}\label{setT}
Let $\tau\times\sigma$ be a cell triangulated with a choice of orientation for each $(|\tau|+|\sigma|)$-simplex. We say this triangulation is \textit{consistently oriented} if with respect to these orientations the simplicial boundary map $d_{|\tau|+|\sigma|}$ is only non-zero to simplices in the boundary $\partial(\tau\times\sigma)$. 

Let $\mathbb{T}(\tau\times\sigma)$ denote the set of consistently oriented triangulations of the cell $\tau\times\sigma$. We will use the letter $u$ to refer to elements of $\mathbb{T}(\tau\times\sigma)$ and $-u$ will be the same triangulation as $u$ but with the opposite orientation. 

Let $\tau^\prime\times \sigma^\prime$ be a face of $\tau\times\sigma$. For any $u\in\mathbb{T}(\tau\times\sigma)$, define $u|_{\tau^\prime\times \sigma^\prime} \in \mathbb{T}(\tau^\prime\times \sigma^\prime)$ to be image of $u$ under the simplicial boundary map $d_{|\sigma|+|\tau|}$ restricted to the face $\tau^\prime\times \sigma^\prime$. 
\qed\end{defn}

In order to define an orientation of a cell we first define an equivalence relation on the set of consistently orientated triangulations of the cell, then quotienting out by this equivalence we define an orientation to be an equivalence class of consistently orientated triangulations.  

Suppose inductively that for all cells $\tau^\prime\times \sigma^\prime$ of dimension less that dim$(\tau\times\sigma)$ we have an equivalence relation $\sim$ on $\mathbb{T}(\tau^\prime\times \sigma^\prime)$ such that $\mathbb{T}(\tau^\prime\times \sigma^\prime)/\sim$ has just two elements. 

Define an equivalence relation $\sim$ on $\mathbb{T}(\tau\times\sigma)$ by saying $u\sim u^\prime$ if \[[u|_{\tau^\prime\times \sigma^\prime}] = [u^\prime|_{\tau^\prime\times \sigma^\prime}] \in \mathbb{T}(\tau^\prime\times \sigma^\prime)/\sim\]for a face $\tau^\prime\times \sigma^\prime\subset \tau\times \sigma$. We see that this equivalence is well-defined and independent of the face used to define it by noting that all faces can be consistently oriented to form a cycle and that the image under the simplicial boundary map of a consistent orientation of $\tau\times\sigma$ is a consistent orientation of the boundary. 

Since $\mathbb{T}(\tau^\prime\times \sigma^\prime)/\sim$ has two elements this means that there are at most two equivalence classes in $\mathbb{T}(\tau\times\sigma)/\sim$, but since $[u|]=-[-u|]$ we get precisely two classes in $\mathbb{T}(\tau\times\sigma)/\sim$. 

Note that for simplices, $\mathbb{T}(\sigma)/\sim$ is just the set of orientations of $\sigma$ in the conventional sense. Thus the base case in our induction is the $0$-simplex with its usual orientation in $\{\pm 1\}$.

\begin{defn}
Define an \textit{orientation} of a cell $\tau\times\sigma$ to be an equivalence class of consistently oriented triangulations of $\tau\times\sigma$ in $\mathbb{T}(\tau\times\sigma)/\sim$.
\qed\end{defn}

\begin{defn}\label{prodorient}
Let $\sigma = v_0\ldots v_n$ and $\tau = w_o\ldots w_m$ be $n$- and $m$-simplices respectively. Let $\sigma$ have orientation $[\sigma]=[v_0,\ldots,v_n]$ and let $\tau$ have orientation $[\tau]=[w_0,\ldots, w_m]$. Define the \textit{product orientation} $[\sigma]\times[\tau]$ of the cell $v_0\ldots v_n\times w_0\ldots w_m$ to be the orientation determined by giving the simplex $(v_0\times w_0)\ldots(v_n\times w_0)(v_n\times w_1)\ldots(v_n\times w_m)$ in the subdivided product the orientation \[ [v_0\times w_0,\ldots,v_n\times w_0,v_n\times w_1,\ldots,v_n\times w_m] \] and orienting all other $(n+m)$-simplices consistently.
\qed\end{defn}

\begin{lem}\label{Lem:superderivation}
The simplicial boundary map behaves as a super-derivation with respect to the product orientation: \[d_{\Delta_*(\sigma\times\tau)}([\sigma]\times[\tau]) = d_{\Delta_*(\sigma)}[\sigma] \times [\tau] + (-1)^{|\sigma|}([\sigma]\times d_{\Delta_*(\tau)}[\tau]). \]
\end{lem}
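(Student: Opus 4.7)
The plan is to reduce the identity to a face-by-face comparison on $\partial(\sigma\times\tau)$. Both sides of the claimed equation are chains supported on $\partial(\sigma\times\tau)$, which decomposes as the union of the codimension-one faces $(v_0\ldots\widehat{v_i}\ldots v_n)\times\tau$ and $\sigma\times(w_0\ldots\widehat{w_j}\ldots w_m)$. By the inductive definition of orientation as an equivalence class in $\mathbb{T}/\sim$, two consistently oriented chains on a face are equal as orientations as soon as they agree on a single simplex of that face, so it will suffice to pick one simplex per face and match signs.

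The concrete representative I would use for $[\sigma]\times[\tau]$ is the shuffle chain: the signed sum of all staircase top-simplices, with the reference simplex
\[S_0 = [(v_0\times w_0),(v_1\times w_0),\ldots,(v_n\times w_0),(v_n\times w_1),\ldots,(v_n\times w_m)]\]
appearing with coefficient $+1$ and all other staircases oriented consistently. The consistency condition forces adjacent staircases (those differing by a single ``corner swap'' between right-then-up and up-then-right) to carry opposite signs, since they share a face interior to $\sigma\times\tau$ on which their boundary contributions must cancel.

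For each face $(v_0\ldots\widehat{v_i}\ldots v_n)\times\tau$ with $0\leqslant i<n$ and each $\sigma\times(w_0\ldots\widehat{w_j}\ldots w_m)$ with $0<j\leqslant m$, the reference simplex $S_0$ itself supplies the boundary contribution, by removing the vertex $v_i\times w_0$ at position $i$ (resp.\ $v_n\times w_j$ at position $n+j$). The resulting face is literally the canonical staircase for the target cell, with sign $(-1)^i$ (resp.\ $(-1)^{n+j}$), matching the RHS summands $(-1)^i[v_0,\ldots,\widehat{v_i},\ldots,v_n]\times[\tau]$ coming from $d[\sigma]\times[\tau]$ and $(-1)^{|\sigma|}(-1)^j[\sigma]\times[w_0,\ldots,\widehat{w_j},\ldots,w_m]$ coming from $(-1)^{|\sigma|}[\sigma]\times d[\tau]$.

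The two boundary faces missed by $S_0$ -- namely $(v_0\ldots v_{n-1})\times\tau$ (removing $v_n$) and $\sigma\times(w_1\ldots w_m)$ (removing $w_0$) -- are handled by the opposite staircase $S_{\mathrm{op}}=[(v_0\times w_0),(v_0\times w_1),\ldots,(v_0\times w_m),(v_1\times w_m),\ldots,(v_n\times w_m)]$, whose coefficient in the representative chain is $(-1)^{nm}$, tracked through the $nm$ corner swaps separating $S_0$ from $S_{\mathrm{op}}$. Combining this with the cost of converting the resulting boundary face of $S_{\mathrm{op}}$ into the canonical staircase of its target face (a further $(n-1)m$ or $n(m-1)$ swaps) gives net sign $(-1)^{|\sigma|}$ in both cases, matching the RHS. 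The main obstacle is precisely this sign propagation across the cellulation -- essentially the combinatorics of the Eilenberg--Zilber shuffle identity -- which could alternatively be bypassed by a direct induction on $|\sigma|+|\tau|$ using the cone decomposition of either factor to reduce to the one-dimensional base cases $|\sigma|+|\tau|=1$.
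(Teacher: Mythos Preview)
Your argument is correct, and the core idea---computing $dS_0$ on the reference staircase and reading off orientations face by face---is exactly what the paper does. The difference is in how much you check.

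You commit to verifying one simplex \emph{per codimension-one cell} of $\partial(\sigma\times\tau)$, which forces you through the $S_{\mathrm{op}}$ detour and the $nm$-swap sign-tracking for the two cells $(v_0\ldots v_{n-1})\times\tau$ and $\sigma\times(w_1\ldots w_m)$ that $dS_0$ misses. The paper avoids this entirely by making a stronger observation up front: $d[\sigma]\times[\tau]$ is itself a coherent (cycle) orientation of the whole of $\partial\sigma\times\tau$, because $d[\sigma]$ is a consistently oriented cycle on $\partial\sigma$ and the product orientations on the cells $\sigma'\times\tau$ inherit this consistency. Hence $d[\sigma]\times[\tau]$ is determined by a \emph{single} $(n+m-1)$-simplex in $\partial\sigma\times\tau$, not one per face; likewise for $(-1)^{|\sigma|}[\sigma]\times d[\tau]$ on $\sigma\times\partial\tau$. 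Since $dS_0$ already lands in at least one face of each part (any $i<n$ and any $j>0$), that comparison finishes the proof without ever invoking $S_{\mathrm{op}}$ or shuffle combinatorics.

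So your route is a more hands-on Eilenberg--Zilber computation; the paper's is shorter because it leverages the inductive definition of orientation more aggressively. Your alternative suggestion of inducting on $|\sigma|+|\tau|$ would also work and is a third valid path.
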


\begin{proof}
The orientations $d_{\Delta_*(\sigma)}[\sigma] \times [\tau]$ and $(-1)^{|\sigma|}([\sigma]\times d_{\Delta_*(\tau)}[\tau])$ are determined by the orientation of a single $(n+m-1)$-simplex in each of $\partial\sigma\times\tau$ and $\sigma\times\partial\tau$. The product orientation $[\sigma]\times[\tau]$ is determined by the orientation \[ [v_0\times w_0,\ldots,v_n\times w_0,v_n\times w_1,\ldots,v_n\times w_m].\]Applying the simplicial boundary map to this we compute that 
\begin{eqnarray*}
&& d_{\Delta_*(\sigma\times\tau)}[v_0\times w_0,\ldots,v_n\times w_0,v_n\times w_1,\ldots,v_n\times w_m]\\
&=& [d_{\Delta_*(\sigma)}[\sigma]\times[w_0],v_n\times w_1,\ldots,v_n\times w_m] \\ 
&& + (-1)^n[v_0\times w_0,\ldots,v_{n-1}\times w_0, [v_n]\times d_{\Delta_*(\tau)}[\tau]] \\
&& - (-1)^n[v_0\times w_0,\ldots,v_{n-1}\times w_0, v_n\times w_1,\ldots, v_n\times w_m].
\end{eqnarray*}
where the condensed notation \[[d_{\Delta_*(\sigma)}[\sigma]\times[w_0],v_n\times w_1,\ldots,v_n\times w_m]\] means \[\sum_{i=0}^{|\sigma|}(-1)^i[v_0\times w_0,\ldots,\widehat{v_i\times w_0},\ldots,v_n\times w_1,\ldots,v_n\times w_m]\]and similarly for the second term.

The restriction of the right hand side to $\partial\sigma\times\tau$ is $[d_{\Delta_*(\sigma)}[\sigma]\times[w_0],v_n\times w_1,\ldots,v_n\times w_m]$ which is precisely the orientation determined by $d_{\Delta_*(\sigma)}[\sigma]\times[\tau]$. The restriction to $\sigma\times \partial\tau$ is $(-1)^n[v_0\times w_0,\ldots,v_{n-1}\times w_0, [v_n]\times d_{\Delta_*(\tau)}[\tau]]$ which is the orientation determined by $(-1)^n([\sigma]\times d_{\Delta_*(\tau)}[\tau])$ as required.
\end{proof}

\begin{ex}
Let $\sigma=v_0v_1$ with orientation $[v_0,v_1]$ and let $\tau=w_0w_1$ with orientation $[w_0,w_1]$, then the product orientation $[\sigma]\times[\tau]$ is $\sigma\times\tau$ decomposed as \[(v_0\times w_0)(v_1\times w_0)(v_1\times w_1)\cup (v_0\times w_0)(v_0\times w_1)(v_1\times w_1)\] with orientations \[ [v_0\times w_0,v_1\times w_0,v_1\times w_1] - [v_0\times w_0,v_0\times w_1,v_1\times w_1].\] The product the other way, $[\tau]\times [\sigma]$, is $\sigma\times \tau$ decomposed the same way but with the opposite orientation, namely \[ [v_0\times w_0,v_0\times w_1,v_1\times w_1] - [v_0\times w_0,v_1\times w_0,v_1\times w_1].\]
\vspace{-7mm}\begin{figure}[ht]
\begin{center}
{
\psfrag{sxt}[][]{$[\sigma]\times[\tau]$}
\psfrag{txs}[][]{$[\tau]\times[\sigma]$}
\psfrag{s}[r][r]{$\sigma$}
\psfrag{t}[b][b]{$\tau$}
\includegraphics[width=7cm]{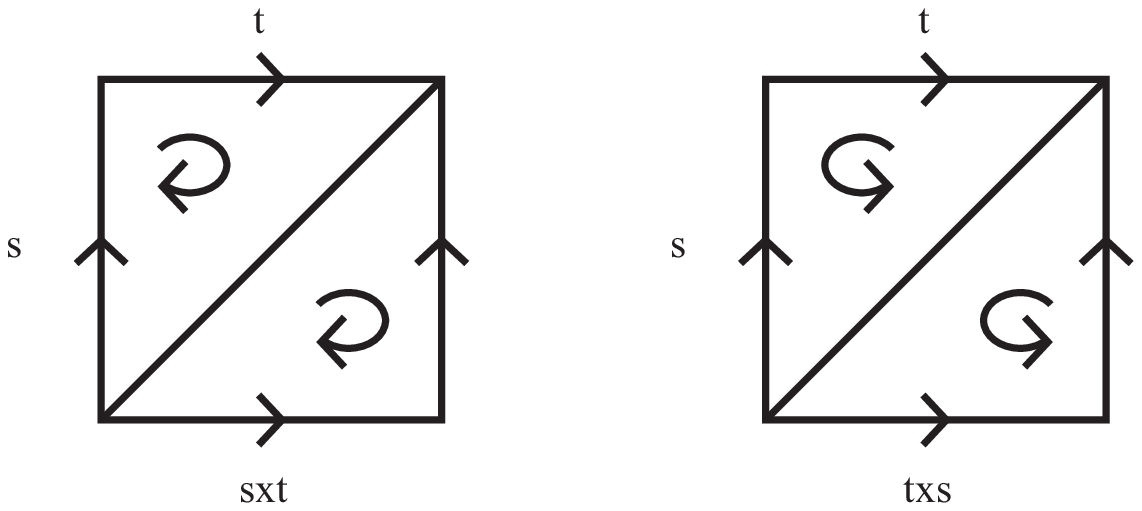}
}
\caption{Product orientations.}
\label{Fig:productorients}
\end{center}
\end{figure}
\qed\end{ex}

\section{The fundamental $\ep$-subdivision cellulation}
Given a simplicial complex $X$ and its barycentric subdivision $Sd\, X$, we can triangulate the prism $||X||\times [0,1]$ so that $||X||\times \{0\}$ is $X$ and $||X||\times\{1\}$ is $Sd\, X$ (see \cite{hatcher} page 112). The slices in between, $||X||\times\{t\}$ for $t\in (0,1)$, form a continuous family of cellulations of $||X||$ from $X$ to $Sd\, X$. Clearly there is straight line homotopy between $||X||\times\{t\}$ and $||X||\times\{0\}=X$ through these cellulations. In this section we mimic this family of cellulations but take care to control the bound of the straight line homotopies to $X$. The cellulation that is a distance $\ep$ from $X$ will be called the fundamental $\ep$-subdivision cellulation of $X$. 

\begin{defn}\label{Defn:fundamentalcellulation}
For $\ep<\comesh(X)$ define the \textit{fundamental $\ep$-subdivision cellulation of $X$}, denoted $X_\ep^\prime$, to be the cellulation with
\begin{enumerate}[(i)]
 \item a vertex $\Gamma_v(v)$ at each vertex in $X$ and for all $\tau\geqslant v$, a vertex $\Gamma_\tau(v)$ at the intersection of the $1$-simplex $\widehat{v}\widehat{\tau}$ and the sphere of radius $\ep$ centred at $v$: \[\Gamma_\tau(v):= \partial\overline{B_\ep(v)} \cap \widehat{v}\widehat{\tau}. \]
 \item a $k$-simplex $\Gamma_\sigma(\tau)$ spanned by $\{\Gamma_\sigma(v_{j_0}),\ldots,\Gamma_\sigma(v_{j_k})\}$ for all subsimplices $\tau = v_{j_0}\ldots v_{j_k}\leqslant \sigma$ and for all $\sigma$.
 \item a product of simplices $\Gamma_{\sigma_0,\ldots \sigma_i}(\tau) \cong \tau\times \Delta^i$ defined iteratively with boundary \[\bigcup_{j=0}^i{\Gamma_{\sigma_0,\ldots,\widehat{\sigma_j},\ldots, \sigma_i}(\tau)} \cup \bigcup_{\rho<\tau}{\Gamma_{\sigma_0,\ldots, \sigma_i}(\rho).} \] %PROBABLY BETTER OF LATER CONDENSED AS A MAP \GAMMA.
\end{enumerate}
\qed\end{defn}

\begin{ex}\label{starstar}
Let $X$ be the $2$-simplex $\sigma$ labelled as in \Figref{Fig:new0} then the fundamental $\ep$-subdivision cellulation of $X$ is as in \Figref{Fig:fundamentalcellulation}. Each $\Gamma_{\sigma_0,\ldots,\sigma_i}(\tau)$ is the closed cell pointed to by the arrow.
\begin{figure}[ht]
\begin{center}
{
\psfrag{ep}{$\ep$}
\psfrag{t1st1}{$\Gamma_{\tau_0,\sigma}(\tau_0)$}
\psfrag{t1t1}{$\Gamma_{\tau_0}(\tau_0)$}
\psfrag{st1}{$\Gamma_{\sigma}(\tau_0)$}
\psfrag{r2t2sr2}{$\Gamma_{\rho_1,\tau_1,\sigma}(\rho_1)$}
\psfrag{r2sr2}{$\Gamma_{\rho_1,\sigma}(\rho_1)$}
\psfrag{t2sr2}{$\Gamma_{\tau_1,\sigma}(\rho_1)$}
\psfrag{r2t2r2}{$\Gamma_{\rho_1,\tau_1}(\rho_1)$}
\psfrag{r3sr3}[tr][tr]{$\Gamma_{\rho_2,\sigma}(\rho_2)$}
\psfrag{sr3}{$\Gamma_{\sigma}(\rho_2)$}
\psfrag{r3r3}{$\Gamma_{\rho_2}(\rho_2)$}
\psfrag{s}[][]{$\Gamma_{\sigma}(\sigma)$}
\includegraphics[width=11cm]{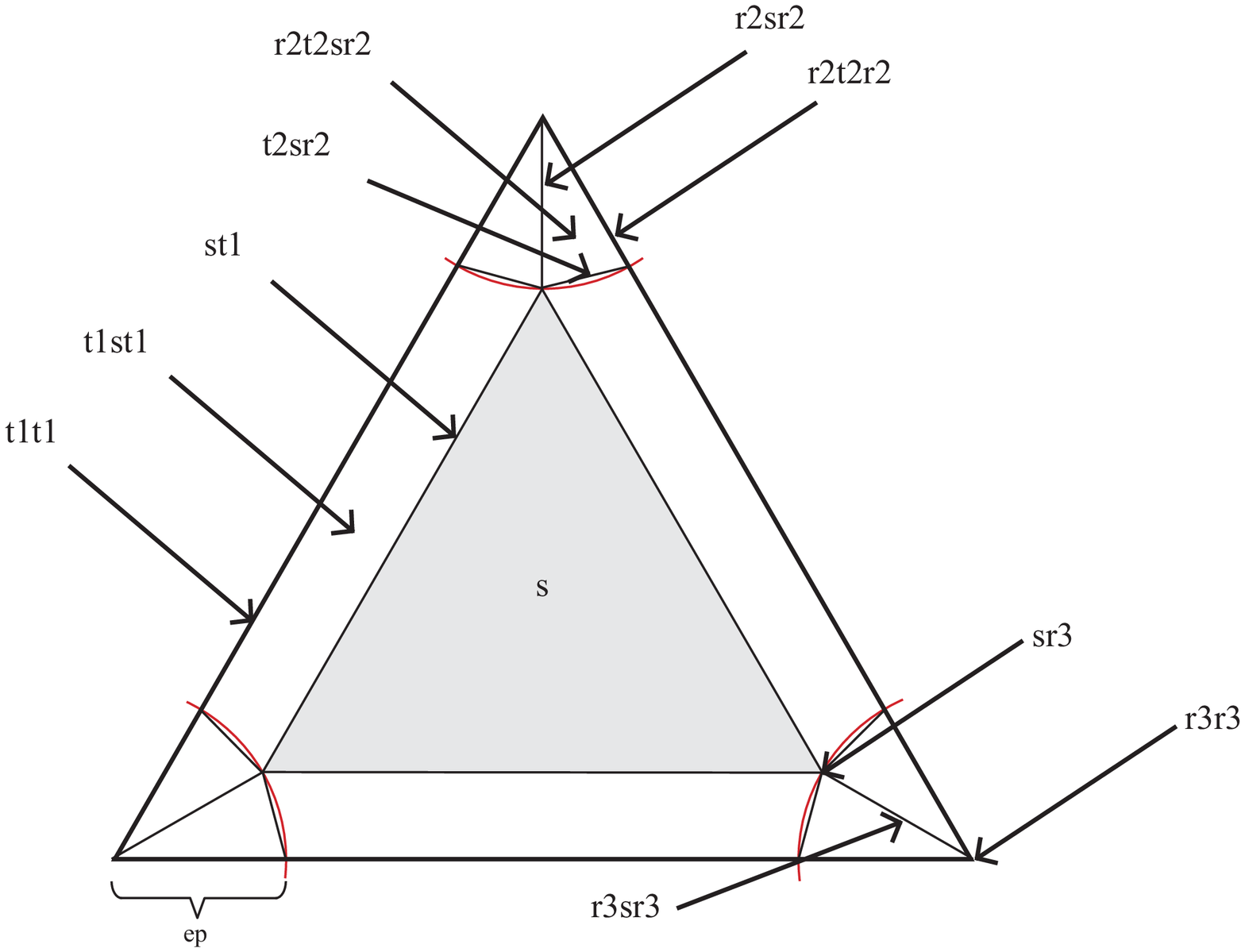}
}
\caption{The cellulation $X_\ep^\prime$ for a $2$-simplex.}
\label{Fig:fundamentalcellulation}
\end{center}
\end{figure}
\qed\end{ex}

\begin{rmk}
We refer to the $\Gamma_{\sigma_0,\ldots \sigma_i}(\tau)$ as \textit{higher homotopies} for the following reasons:
\begin{itemize}
 \item $\Gamma_{\sigma_0}(\tau)$ is to be thought of as the image of a PL $0$-homotopy, i.e. the image of a map $\Gamma_{\sigma_0}: \tau\times\Delta^0 \to X_\ep^\prime$.
 \item $\Gamma_{\sigma_0,\sigma_1}(\tau)$ is to be thought of as the image of a PL $1$-homotopy between the PL $0$-homotopies $\Gamma_{\sigma_0}$ and $\Gamma_{\sigma_1}$, i.e. the image of a map $\Gamma_{\sigma_0,\sigma_1}: \tau\times\Delta^1 \to X_\ep^\prime$.
 \item $\Gamma_{\sigma_0, \sigma_1, \sigma_2}(\tau)$ is to be thought of as the image of a PL $2$-homotopy between the PL $1$-homotopies $\Gamma_{\sigma_0,\sigma_1}\circ \Gamma_{\sigma_1,\sigma_2}$ and $\Gamma_{\sigma_0,\sigma_2}$, i.e. the image of a map $\Gamma_{\sigma_0, \sigma_1, \sigma_2}: \tau\times\Delta^2 \to X_\ep^\prime$. 
 \item similarly for higher homotopies.
\end{itemize}
\qed\end{rmk}

\begin{rmk}\label{epsubdivinprod}
Let $\tau=v_0\ldots v_n$ with barycentric coordinates $(s_0,\ldots, s_n)$. Each $\Gamma_{\sigma_0,\ldots, \sigma_m}(\tau)$ is linearly isomorphic to $\tau\times \Delta^m$ via the isomorphism 
\begin{eqnarray*}
\tau\times \Delta^m &\to& \Gamma_{\sigma_0,\ldots, \sigma_m}(\tau) \\
(s_0,\ldots, s_n,t_0,\ldots,t_m) &\mapsto& \sum_{i=0}^n\sum_{j=0}^m s_it_j\Gamma_{\sigma_i}(v_j). 
\end{eqnarray*}
Sometimes it will be convenient to think of this $\Delta^m$ as the simplex $\widehat{\sigma_0}\ldots\widehat{\sigma_m}$ so that $X_\ep^\prime$ will be embedded in $X\times Sd\, X$ as \[X_\ep^\prime = \bigcup_{\sigma_0<\ldots< \sigma_m \subset X}\sigma_0 \times \widehat{\sigma_0}\ldots\widehat{\sigma_m}.\] We will also think of the homotopies $\Gamma_{\sigma_0,\ldots, \sigma_m}(\tau)$ as images of a PL isomorphism
\begin{eqnarray*}
 \Gamma: X_\ep^\prime \subset X\times Sd\, X &\to& X_\ep^\prime \subset X \\
\tau\times \widehat{\sigma_0}\ldots\widehat{\sigma_m} &\mapsto& \Gamma_{\sigma_0,\ldots, \sigma_m}(\tau). 
\end{eqnarray*}
\qed\end{rmk}
It will be convenient to introduce the following notation.
\begin{defn}\label{gamp}
Let $\gam{\sigma_0,\ldots, \sigma_m}{\tau}$ denote the image under $\Gamma$ of $\mathring{\tau}\times \widehat{\sigma_0}\ldots\widehat{\sigma_m}$.
\qed\end{defn}

\begin{rmk}\label{crushinggammas}
For all $\ep< \comesh(X)$, the fundamental $\ep$-subdivision cellulation $X_\ep^\prime$ (viewed as a PL map as in Remark \ref{epsubdivinprod}) is homotopic to the triangulation of $X$ through fundamental $\delta$-subdivision cellulations, $0<\delta<\ep$.
\qed\end{rmk}

\begin{defn}\label{epsurgcellorient}
Given a choice of orientation $[\tau]$ for all the simplices $\tau\in X$, there is a canonical way to orient the simplices of $X_\ep^\prime$ considered as a subset of $X\times Sd\, X$. We give $\tau \times \widehat{\sigma_0}\ldots \widehat{\sigma_m}$ the product orientation \[ [\tau]\times [\widehat{\sigma_0},\ldots, \widehat{\sigma_m}].\] We can then use the isomorphism $\Gamma$ to orient $X_\ep^\prime\subset X$. We will call this the \textit{standard orientation of $X_\ep^\prime$.}
\qed\end{defn}

\begin{rmk}
Giving $X_\ep^\prime$ the standard orientation, Lemma \ref{Lem:superderivation} implies that
\begin{equation}\label{boundaryofgamma}
 \partial(\Gamma_{\sigma_0,\ldots, \sigma_m}(\tau)) = \bigcup_{j=0}^{m}(-1)^j\Gamma_{\sigma_0,\ldots, \widehat{\sigma_j}, \ldots, \sigma_m}(\tau) \cup \bigcup_{\rho<\tau}\Gamma_{\sigma_0,\ldots, \sigma_m}(\rho),
\end{equation}
 by which we mean that for $\widetilde{\tau}$ a face of $\widetilde{\sigma}\in\Gamma_{\sigma_0,\ldots, \sigma_m}(\tau)$ and \[\widetilde{\tau} \in \brc{\Gamma_{\sigma_0,\ldots, \widehat{\sigma_j}, \ldots, \sigma_m}(\tau)}{\Gamma_{\sigma_0,\ldots, \sigma_m}(\rho),}\]
the restriction of the simplicial boundary map is \[\brc{(-1)^j}{1}:\Delta_{|\widetilde{\sigma}|}(\widetilde{\sigma}) \to \Delta_{|\widetilde{\sigma}|-1}(\widetilde{\tau}).\] 
\qed\end{rmk}

\begin{ex}\label{Ex:SigmaOrients}
Continuing on from Example \ref{starstar} with $\sigma$ oriented as in \Figref{Fig:prethesis5},
\begin{figure}[ht]
\begin{center}
{
\psfrag{s}{$\sigma$}
\psfrag{p1}{$\rho_0$}
\psfrag{p2}[][]{$\rho_1$}
\psfrag{p3}{$\rho_2$}
\psfrag{t1}{$\tau_0$}
\psfrag{t2}{$\tau_1$}
\psfrag{t3}{$\tau_2$}
\psfrag{+}{$+$}
\psfrag{-}{$-$}
\includegraphics[width=5cm]{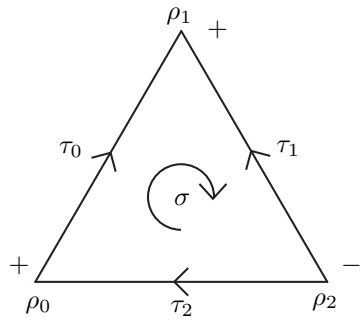}
}
\caption{A choice of orientations for all subsimplices of $\sigma$.}
\label{Fig:prethesis5}
\end{center}
\end{figure}
we see that the standard orientation of $X_\ep^\prime$ is as in \Figref{Fig:StandardOrient}.
\begin{figure}[ht]
\begin{center}
{
\psfrag{+}{$+$}
\psfrag{-}{$-$}
\includegraphics[width=9cm]{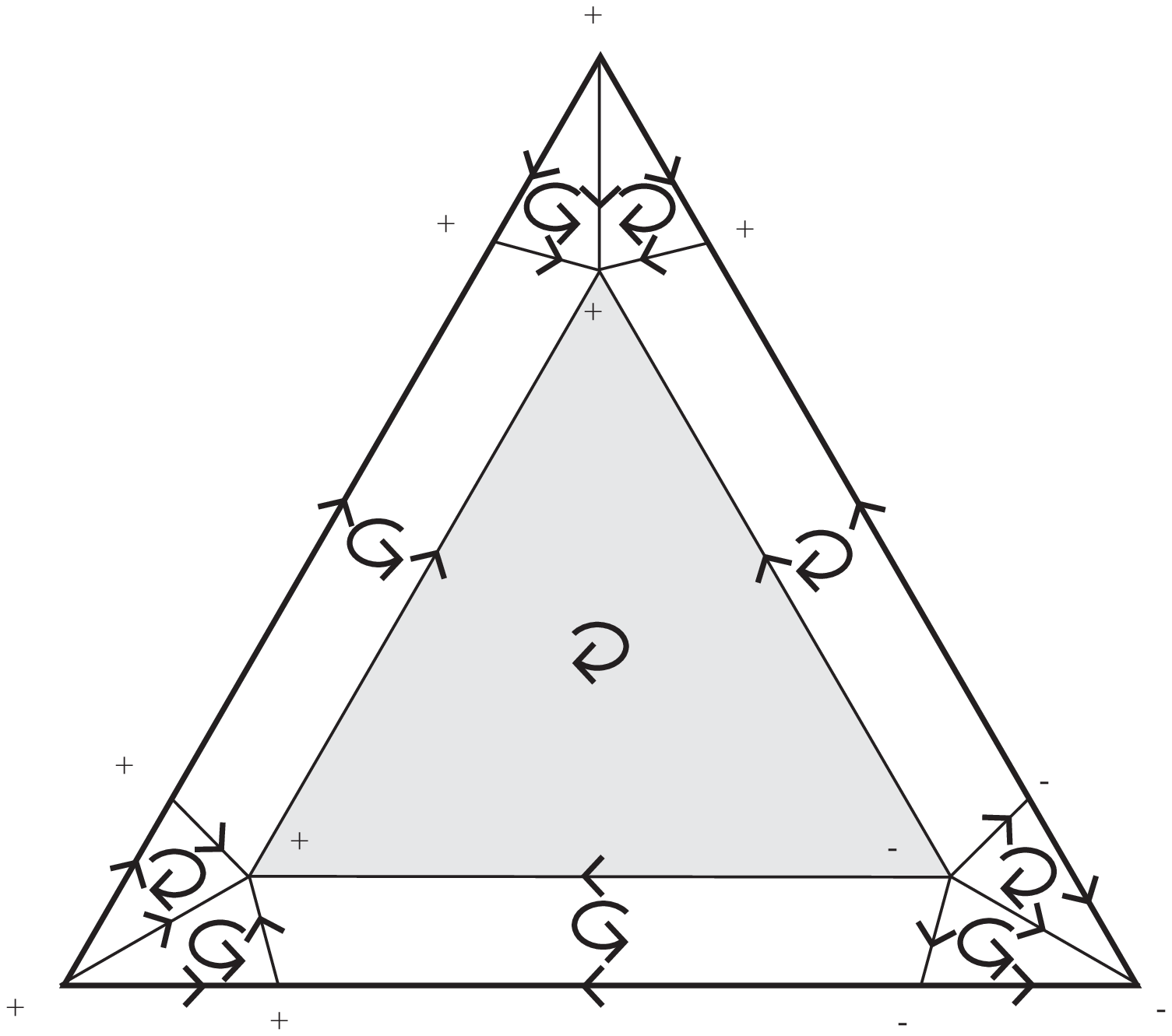}
}
\caption{Standard orientations of $X_\ep^\prime$.}
\label{Fig:StandardOrient}
\end{center}
\end{figure}
\qed\end{ex}

\newpage Before we are able to prove the main result of the topological half of the thesis, it will be useful to take a close look at the effect of barycentric subdivision on the simplicial chain complex of $X$.

\chapter{Subdivision and the simplicial chain complex}\label{chapfive} %STRESS THAT WE USE OPEN SIMPLICES. NEED A GOOD CONVENTION OR NOTATION!!!
It is well known that the simplicial chain complex associated to any subdivision $X^\prime$ of a simplicial complex $X$ is chain equivalent\footnote{In fact this equivalence is a retraction.} to the simplicial chain complex of $X$ with the identity map $\id_X:X \to X^\prime$ inducing the chain equivalence \[s:= (\id_X)_*: \Delta^{lf}_*(X) \to \Delta^{lf}_*(X^\prime).\] 
Here we must use locally finite simplicial chains as our simplicial complex $X$ may not be finite. See Appendix \ref{appendixa} for a recap of locally finite homology. 

A choice of chain inverse $r:\Delta^{lf}_*(X^\prime) \to \Delta^{lf}_*(X)$ corresponds to a choice of simplicial approximation to the identity map $\id_X: X^\prime \to X$. We will use the same notation $r$ when thinking either of this map as a simplicial map or as the induced map on the level of chains. %GIVE REFERENCES!!! THIS NOTATION ABOUT T BIT SOUNDS LIKE IT SHOULD GO SOMEWHERE ELSE!

In this chapter we explicitly review the construction of the chain inverse $r$ and the chain homotopy $P:s\circ r \sim \id_{\Delta^{lf}_*(X^\prime)}$ for barycentric subdivision and iterated barycentric subdivisions. Importantly, it is shown that $r$ and $P$ can be chosen so as to retract a neighbourhood of each simplex $\sigma\in X$ onto that simplex. This fact will be used in subsequent chapters to prove squeezing results. 

\section{Barycentric subdivision}
In this section we deal with a single barycentric subdivision. We explain how to obtain chain inverses $r$ and for each $r$ a canonical chain homotopy $P$. Given a choice of $r$, $Sd\, X$ can be decomposed into a collection of homotopies that encode $r$ and the canonical $P$.

Consider the barycentric subdivision $Sd\, X$ and the map on simplicial chains induced by the identity $s: \Delta_*(\sigma)\to \Delta_*(Sd\, \sigma)$ for some $n$-simplex $\sigma = v_0\ldots v_n \in X$. Given a choice of orientation $[v_{i_0},\ldots, v_{i_n}]$ of $\sigma$, if we give all the $n$-simplices in $Sd\, \sigma$ the same orientation, i.e.\ the simplex $\widehat{v_{j_0}}\ldots\widehat{v_{j_0}\ldots v_{j_n}}$ is given the orientation $[\widehat{v_{j_0}\ldots v_{i_0}},\ldots, \widehat{v_{j_0}\ldots v_{i_n}}]$, then with respect to these orientations \[s= \left(\begin{array}{c}1 \\ \vdots \\ 1 \end{array}\right): \Delta_n(\sigma) \to \Delta_n(Sd\, \sigma) .\]

\begin{ex}
Suppose we have oriented the $2$-simplex $\sigma$ as in \Figref{Fig:prethesis5}, then the orientations of the subdivision that would make $s= \left(\begin{array}{c}1 \\ \vdots \\ 1 \end{array}\right)$ would be as in \Figref{Fig:prethesis5b}.
\begin{figure}[ht]
\begin{center}
{
\psfrag{+}[][]{$+$}
\psfrag{-}[][]{$-$}
\includegraphics[width=5cm]{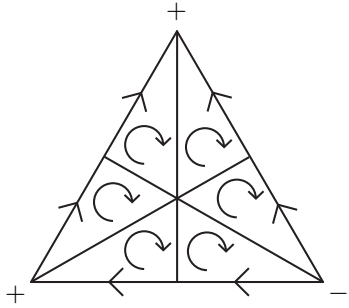}
}
\caption{Giving the same orientations to $Sd\, \sigma$.}
\label{Fig:prethesis5b}
\end{center}
\end{figure}
\qed\end{ex}

\begin{defn}\label{forest}
Let $Y$ be a closed subcomplex of $Sd\, X$. By a \textit{tree on $Y$} we mean a contractible graph whose vertices are vertices of $Y$ and whose edges are $1$-simplices in $Y$. A union of disjoint trees on $Y$ is called a \textit{forest on $Y$}. If a tree (resp. forest) on $Y$ contains every vertex in $Y$ we call it a \textit{spanning tree (resp. forest) on $Y$}.  
\qed\end{defn}

For the barycentric subdivision, a simplicial approximation to the identity $r:Sd\,X \to X$ corresponds to a choice for each simplex $v_0\ldots v_n$ of which vertex $v_j$ to map the barycentre $\widehat{v_0\ldots v_n}$ to. The following lemma shows that for a given choice of $r$, we can construct a canonical chain homotopy $P$ using a canonical spanning forest on $Sd\, X$. 

\begin{lem}\label{PFromTrees}
Let $s: \Delta^{lf}_*(X) \to \Delta^{lf}_*(Sd\, X)$ be the barycentric subdivision chain equivalence and let $r:\Delta^{lf}_*(Sd\, X) \to \Delta^{lf}_*(X)$ be a choice of chain inverse. Then 
\begin{enumerate}[(i)]
 \item we can define from $r$ a canonical spanning forest $T$ on $Sd\, X$,
 \item such a choice of spanning forest defines a canonical chain homotopy $P: s\circ r\sim \id_{\Delta^{lf}_*(Sd\, X)}$.
\end{enumerate}
\end{lem}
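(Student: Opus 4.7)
The plan is to read the forest directly off the simplicial approximation $r$, then bootstrap the chain homotopy $P$ from the forest by an inductive coning construction over suitable contractible carrier subcomplexes.

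For part $(\mathrm{i})$, the key observation is that because $r$ is a simplicial approximation to the identity, $r(\widehat{\sigma}) \in V(\sigma)$ for every simplex $\sigma \in X$. For each non-vertex $\sigma \in X$ with $r(\widehat{\sigma}) = v$, the inclusion $v < \sigma$ gives a $1$-simplex $\widehat{v}\widehat{\sigma} \in Sd\, X$ whose two endpoints $r$ collapses to $v$, so the edge lies in the subcomplex $r^{-1}(v) \subset Sd\, X$. I take $T_v$ to be the union of $\widehat{v}$ with all such edges, producing a star-shaped graph rooted at $\widehat{v}$. It is automatically a tree, and it spans $r^{-1}(v)$ because the vertex set of $r^{-1}(v)$ is precisely $\{\widehat{\sigma} \mid r(\widehat{\sigma})=v\}$, each such $\widehat{\sigma}$ being an endpoint of exactly one edge of $T_v$. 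The trees $T_v$ are disjoint because any common vertex would need two distinct images under $r$, so the union $T = \bigsqcup_{v \in V(X)} T_v$ is a spanning forest on $Sd\, X$ canonically determined by $r$.

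For part $(\mathrm{ii})$, I define $P$ on $0$-chains by sending $\widehat{\sigma}$ to the unique oriented $1$-chain in $T$ whose boundary is $\widehat{r(\widehat{\sigma})} - \widehat{\sigma}$ (and to $0$ if $\widehat{\sigma} = r(\widehat{\sigma})$); this gives $dP + Pd = sr - \id$ in degree $0$ by construction. To extend to degree $k \geq 1$, I proceed inductively: for a $k$-simplex $\widetilde{\sigma} = \widehat{\sigma_0}\ldots\widehat{\sigma_k} \in Sd\, X$, the chain $z := (sr - \id)(\widetilde{\sigma}) - P d(\widetilde{\sigma})$ is a cycle by the usual chain-homotopy identity, and is supported in the closed dual cell $D(\sigma_0, X) \subset Sd\, X$ (which contains both $\widetilde{\sigma}$ and the simplex $r(\widetilde{\sigma}) \subset \sigma_0$, since $r(\widehat{\sigma_i}) \in V(\sigma_i)$ forces $r(\widehat{\sigma_i}) \in V(\sigma_k)$ while the vertex $\widehat{\sigma_0}$ is common to every face). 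Because $D(\sigma_0,X)$ is contractible, $z$ bounds in it; I select the bounding chain canonically by coning $z$ on the distinguished vertex $\widehat{\sigma_0}$ supplied by the forest $T$, setting $P(\widetilde{\sigma})$ equal to this cone.

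The hard part will be checking that this inductive extension is well-defined and genuinely canonical. This requires monotonicity of the carrier assignment $\widetilde{\sigma} \mapsto D(\sigma_0,X)$ under the face relation, so that chains produced at lower dimensions are already supported where the coning operation expects them, as well as the observation that the cone point $\widehat{\sigma_0}$ always lies in the carrier. Both points follow from the dual-cell formalism of Definition \ref{dualcells}, together with the star structure of each $T_v$, but they require careful bookkeeping to line up the faces $\widehat{\sigma_0}\ldots\widehat{\widehat{\sigma_j}}\ldots\widehat{\sigma_k}$ with the appropriate smaller dual cells (noting in particular that removing $\widehat{\sigma_0}$ drops us into $D(\sigma_1,X) \subset D(\sigma_0,X)$). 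Once these compatibilities are verified, the forest $T$ determines $P$ at every level, which is exactly what is needed to justify the word ``canonical'' in the lemma statement.
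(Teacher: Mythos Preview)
Your construction of the forest in part~(i) is exactly the paper's: each $T_v$ is the star on $\widehat{v}$ with leaves the barycentres $\widehat{\sigma}$ satisfying $r(\widehat{\sigma})=v$.

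Part~(ii) has a genuine gap. Your carrier is wrong: the chain $sr(\widetilde{\sigma})$ is \emph{not} supported in the dual cell $D(\sigma_0,X)$. Writing $\widetilde{\sigma}=\widehat{\sigma_0}\ldots\widehat{\sigma_k}$ with $\sigma_0<\cdots<\sigma_k$, the image $r(\widetilde{\sigma})$ is the simplex of $X$ spanned by $r(\widehat{\sigma_0}),\ldots,r(\widehat{\sigma_k})$, and $s$ of that simplex is its full barycentric subdivision. A simplex $\widehat{\rho_0}\ldots\widehat{\rho_j}$ of that subdivision lies in $D(\sigma_0,X)$ only if $\sigma_0\leqslant\rho_0$, and there is no reason for this: $\rho_0$ runs over \emph{all} vertices of $r(\widetilde{\sigma})\leqslant\sigma_k$, most of which do not contain $\sigma_0$. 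Concretely, take $X=v_0v_1$, $\widetilde{\sigma}=\widehat{v_0}\,\widehat{v_0v_1}$, and $r(\widehat{v_0v_1})=v_1$. Then $sr(\widetilde{\sigma})$ is the subdivided edge, one of whose two $1$-simplices is $\widehat{v_1}\,\widehat{v_0v_1}$, which is not in $D(v_0,X)$. So you cannot cone on $\widehat{\sigma_0}$ inside $D(\sigma_0,X)$. (You also have a sign slip: the paper's convention is $dP+Pd=\id-sr$, so $dP(\widehat{\sigma})=[\widehat{\sigma}]-[r(\widehat{\sigma})]$.)

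The acyclic-carrier idea is salvageable if you replace $D(\sigma_0,X)$ by $Sd\,\sigma_k$ and cone on $\widehat{\sigma_k}$: everything in sight---$\widetilde{\sigma}$, $sr(\widetilde{\sigma})$, and inductively $Pd(\widetilde{\sigma})$---lives in $Sd\,\sigma_k$, which is a cone on $\widehat{\sigma_k}$. But note that this repaired construction no longer uses the forest $T$ beyond degree~$0$; the cone point $\widehat{\sigma_k}$ is supplied by $\widetilde{\sigma}$, not by $T$. The paper takes a different and more hands-on route: it defines $P(w_0\ldots w_n)$ directly as the oriented sum of the $(n{+}1)$-simplices filling the prism bounded by $w_0\ldots w_n$, $r(w_0)\ldots r(w_n)$, and the inductively built $P$ of the faces, invoking the product orientation $[\Delta^1]\times[w_0\ldots w_n]$ and Lemma~\ref{Lem:superderivation} to get the signs right. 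This prism construction genuinely uses the edges of $T$ (equivalently, the map $r$ on vertices) at every stage, which is what makes the word ``canonical'' earn its keep in the way the paper intends.
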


\begin{proof}
\begin{enumerate}[(i)]
 \item Let $r$ be given. For all vertices $v\in V(X)$ we construct a spanning tree $T_v$ on $r^{-1}(v)$ by connecting all simplices directly to $v$. The canonical spanning forest is then the union of all these disjoint trees \[T = \bigcup_{v\in V(X)}T_v.\] This is a spanning forest because each vertex of $Sd\, X$ is in $r^{-1}(v)$ for some $v$.
 \item We now define $P$ iteratively. Let any choices of orientations for simplices in $Sd\, X$ be given. For each $w\in V(Sd\, X)$ there exists a unique path in $T_{r(w)}$ from $w$ to $r(w)$.\footnote{Which for a canonical tree is the union of either zero or one $1$-simplices.} Set $P(w)$ to be the union of all the $1$-simplices in this path oriented consistently so that $dP = [w] -[r(w)]$. Continue to define $P$ iteratively by sending an $n$-simplex $w_0\ldots w_n$ to all the $(n+1)$-simplices bounded by \[w_0\ldots w_n \cup r(w_0)\ldots r(w_n) \cup \bigcup_{i=0}^{n}{P(w_0\ldots \widehat{w_i}\ldots w_n)}\] and oriented consistently so that $dP = [w_0\ldots w_n] - [r(w_0)\ldots r(w_n)] - Pd.$ We can orient correctly since we are effectively giving $P(w_0\ldots w_n)$ the product orientation $[\Delta^1 \times w_0\ldots w_n]$ which by Lemma \ref{Lem:superderivation} satisfies \[ d[\Delta^1 \times w_0\ldots w_n] = d[\Delta^1]\times [w_0\ldots w_n] - [\Delta^1]\times d[w_0\ldots w_n],\]i.e. \[ dP(w_0\ldots w_n) = [w_0\ldots w_n] - [r(w_0)\ldots r(w_n)] - Pd(w_0\ldots w_n).\]
\end{enumerate}
\end{proof}

\begin{rmk}
Note that not all possible chain homotopies $P$ can be obtained this way. Let \[T = \bigcup_{v\in V(X)}T_v\] be a spanning forest such that $T_v$ spans $r^{-1}(v)$ for all $v$, but where each $T_v$ need not be canonical. Part $(ii)$ of the proof above still defines a chain contraction $P$, but it is not a canonical one. 
\qed\end{rmk}

\begin{ex}
Let $X$ be the $2$-simplex $\rho_0\rho_1\rho_2$ labelled as in \Figref{Fig:prethesis5} and let us choose spanning forests $T$ and $T^\prime$ as in \Figref{Fig:NextGen6}, where the $T_{\rho_i}$ are all canonical and the $T^\prime_{\rho_i}$ are not.
\begin{figure}[ht]
\begin{center}
{
\psfrag{Tp1}{$T_{\rho_0}$}
\psfrag{Tp2}{$T_{\rho_1}$}
\psfrag{Tp3}{$T_{\rho_2}$}
\psfrag{Sp1}{$T^\prime_{\rho_0}$}
\psfrag{Sp2}{$T^\prime_{\rho_1}$}
\psfrag{Sp3}{$T^\prime_{\rho_2}$}
\includegraphics[width=9cm]{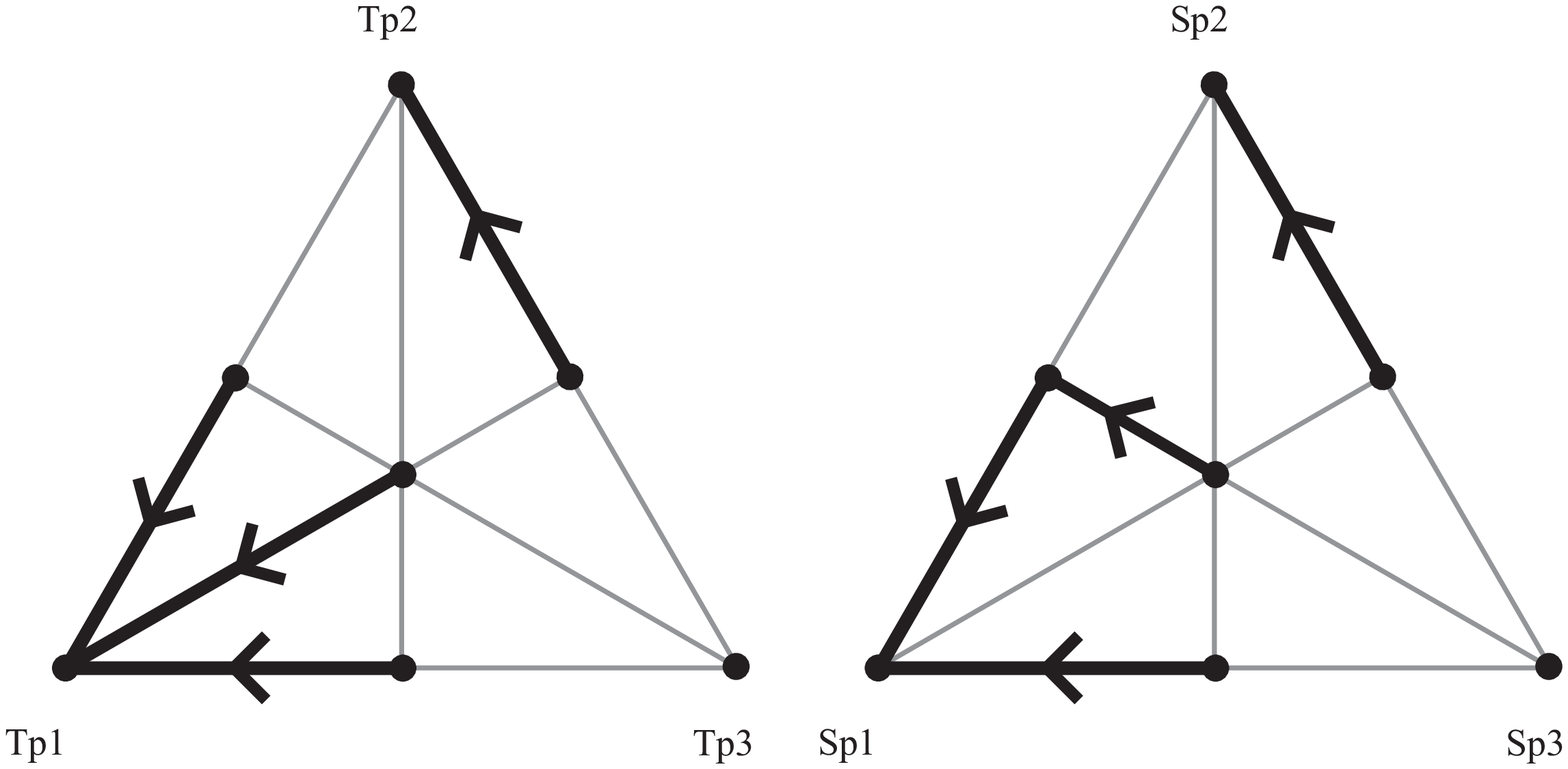}
}
\caption{Canonical spanning trees $T_{\rho_i}$ and non-canonical $T^\prime_{\rho_i}$.}
\label{Fig:NextGen6}
\end{center}
\end{figure}
Then the corresponding chain homotopies $P$ and $P^\prime$ are illustrated in \Figref{Fig:NextGen7}.
\begin{figure}[ht]
\begin{center}
{
\psfrag{t}{$\tau$}
\psfrag{stt}{$sr(\tau)$}
\psfrag{Pt}{$P(\tau)$}
\psfrag{St}{$P^\prime(\tau)$}
\includegraphics[width=9cm]{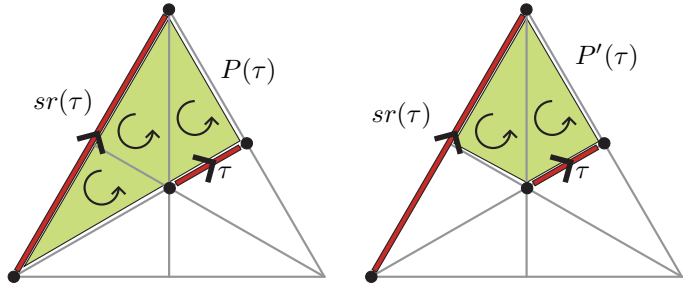}
}
\caption{$P(\tau)$ and $P^\prime(\tau)$ shaded in yellow for $\tau \in Sd\, \sigma$.}
\label{Fig:NextGen7}
\end{center}
\end{figure}
\qed\end{ex}

Simplicial approximations to the identity $r:Sd\, X \to X$ are characterised by which $|\sigma|$-simplex in $Sd\, \sigma$ is stretched by $r$ to fill $\sigma$ for each $\sigma\in X$.

\begin{lem}\label{tdeterminesGammas}
Chain inverses $r:\Delta^{lf}_*(Sd\, X) \to \Delta^{lf}_*(X)$ are in one-one correspondence with choices for all $\sigma\in X$ of a single $|\sigma|$-simplex $\Gamma_\sigma(\sigma)$ such that for $\tau\leqslant \sigma$, if $\Gamma_\sigma(\sigma) \cap \tau$ is a $|\tau|$-simplex then $\Gamma_\tau(\tau)= \Gamma_\sigma(\sigma)\cap \tau$.
\end{lem}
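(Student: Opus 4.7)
The plan is to exhibit explicit mutually inverse constructions between chain inverses $r$ and collections $\{\Gamma_\sigma(\sigma)\}_{\sigma\in X}$ satisfying the stated compatibility. The central observation is that a simplicial approximation to the identity sends each barycentre $\widehat{\rho}\in Sd\, X$ to a vertex $r(\widehat{\rho})\in V(\rho)$; so for $\sigma$ of dimension $n$, a top-dimensional simplex $\widehat{\rho_0}\widehat{\rho_1}\cdots\widehat{\rho_n}$ of $Sd\,\sigma$ (corresponding to a chain $\rho_0<\rho_1<\cdots<\rho_n=\sigma$) is sent to the simplex on vertices $r(\widehat{\rho_0}),\ldots,r(\widehat{\rho_n})$. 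Since $r(\widehat{\rho_i})\in V(\rho_i)$ and $|V(\rho_i)|=i+1$, this image has dimension $n$ precisely when $r(\widehat{\rho_i})\in V(\rho_i)\setminus V(\rho_{i-1})$ for every $i\geqslant 1$, in which case it equals $\sigma$; otherwise the image is degenerate and contributes zero on chains. A downward induction starting at $\rho_n=\sigma$, using that each $V(\rho_i)\setminus V(\rho_{i-1})$ contains exactly one element, shows that given $r$ there is exactly one chain $\rho_0<\cdots<\rho_n=\sigma$ producing a nondegenerate top simplex.

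For the forward direction, declare $\Gamma_\sigma(\sigma)$ to be this unique top simplex of $Sd\,\sigma$. With the orientation convention of the preceding section each top simplex of $Sd\,\sigma$ appears in $s(\sigma)$ with coefficient $+1$, so the equation $r\circ s(\sigma)=\sigma$ pins down the sign automatically. The compatibility condition then follows at once: if $\tau\leqslant\sigma$ and $\Gamma_\sigma(\sigma)\cap\tau$ has dimension $|\tau|$, then it is the face of $\Gamma_\sigma(\sigma)$ lying in $Sd\,\tau$, and $r$ maps it nondegenerately onto $\tau$; since $r$ restricted to $Sd\,\tau$ is itself a simplicial approximation to the identity inducing a chain inverse for $\tau$, the uniqueness claim applied to $\tau$ identifies this face with $\Gamma_\tau(\tau)$.

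For the backward direction, given data $\{\Gamma_\sigma(\sigma)\}$ satisfying the compatibility, write $\Gamma_\sigma(\sigma)=\widehat{\rho_0}\cdots\widehat{\rho_n}$ with $\rho_0<\cdots<\rho_n=\sigma$ and set $r(\widehat{\rho_i})$ to be the unique vertex in $V(\rho_i)\setminus V(\rho_{i-1})$, with the convention $V(\rho_{-1})=\emptyset$. The main obstacle is the consistency check that this assigns a single vertex to each barycentre: if $\widehat{\rho}$ appears in both $\Gamma_{\sigma_1}(\sigma_1)$ and $\Gamma_{\sigma_2}(\sigma_2)$, then $\Gamma_{\sigma_j}(\sigma_j)\cap\rho$ is $|\rho|$-dimensional for $j=1,2$, so the hypothesis forces both intersections to equal $\Gamma_\rho(\rho)$, and both candidate definitions of $r(\widehat{\rho})$ collapse to the one obtained from $\Gamma_\rho(\rho)$ alone. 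Extending linearly produces a simplicial approximation to the identity for which $\Gamma_\sigma(\sigma)$ is, by construction, the unique top simplex of $Sd\,\sigma$ mapped nondegenerately onto $\sigma$; hence $r\circ s=\id$ on chains. The two procedures invert each other essentially tautologically by the uniqueness in the induction argument, completing the bijection.
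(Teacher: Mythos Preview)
Your proof is correct and follows essentially the same approach as the paper's: both directions exploit that a simplicial approximation to the identity sends $\widehat{\rho}$ to a vertex of $\rho$, and the converse construction in each case sets $r(\widehat{\rho_i})$ to be the unique vertex of $\rho_i$ not in $\rho_{i-1}$ along the chain defining $\Gamma_\sigma(\sigma)$. Your treatment is more explicit than the paper's in two places---the downward-induction argument for uniqueness of the nondegenerate top simplex, and the consistency check (via the compatibility hypothesis applied with $\tau=\rho$) that the backward construction assigns a single value to each barycentre---both of which the paper leaves implicit.
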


\begin{proof}
Given $r$ define $\Gamma_\sigma(\sigma)$ to be the unique simplex mapping onto $\sigma$ under $r$. If $\Gamma_\sigma(\sigma) \cap \tau$ is a $|\tau|$-simplex then it must necessarily be $\Gamma_\tau(\tau)$ since it is in $Sd\, \tau$ and maps onto $\tau$ under $r$.

Conversely, suppose we have consistently chosen $\Gamma_\sigma(\sigma)$ for all $\sigma\in X$. $\Gamma_\sigma(\sigma)$ intersects the subdivision of the $i$-skeleton of $\sigma$, $Sd\, (\sigma^{(i)})$, in precisely one $i$-dimensional face. Denote this face $\widehat{\sigma_0}\ldots\widehat{\sigma_i}$ where $|\sigma_j|=j$. Define $r(\widehat{\sigma_0}\ldots\widehat{\sigma_i})$ to be the unique vertex in $\sigma_i$ not also in $\sigma_{i-1}$. Doing this for all $\sigma \in X$ defines a simplicial approximation to the identity $r$ that maps $\Gamma_\sigma(\sigma)$ onto $\sigma$ for all $\sigma\in X$. 
\end{proof}

\begin{lem}\label{constructgam}
Given choices of orientations of all $\sigma\in X$, a choice of subdivision chain equivalence
\begin{displaymath}
 \xymatrix@1{ (\Delta^{lf}_*(X),d_{\Delta^{lf}_*(X)},0) \ar@<0.5ex>[rr]^-{s} && (\Delta^{lf}_*(Sd\,X),d_{\Delta^{lf}_*(Sd\,X)},P) \ar@<0.5ex>[ll]^-{r}
}
\end{displaymath}
decomposes $Sd\, X$ into the union of a collection of canonically oriented homotopies $\Gamma_{\sigma_0,\ldots,\sigma_i}(\sigma_0)$, one for each sequence of inclusions \[\sigma_0<\ldots <\sigma_i,\] analogous to the fundamental $\ep$-subdivision cellulation, and satisfying \[\partial\Gamma_{\sigma_0,\ldots, \sigma_i}(\sigma_0) = \bigcup_{j=0}^i(-1)^j\Gamma_{\sigma_0,\ldots,\widehat{\sigma}_j,\ldots, \sigma_i}(\sigma_0) \cup \Gamma_{\sigma_0,\ldots, \sigma_i}(\partial\sigma_0).\]
\end{lem}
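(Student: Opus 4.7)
The plan is to construct the cells $\Gamma_{\sigma_0,\ldots,\sigma_i}(\sigma_0)$ inductively on $i$, mimicking the prism construction that defines the fundamental $\ep$-subdivision cellulation in Definition \ref{Defn:fundamentalcellulation}. The chain inverse $r$ provides the base pieces via Lemma \ref{tdeterminesGammas}, while the canonical chain homotopy $P$ built from the spanning forest $T = \bigcup_{v} T_v$ in Lemma \ref{PFromTrees} provides the prism operator that glues them together. Crucially, because $P$ is constructed by giving each $P(w_0\ldots w_n)$ the product orientation of $\Delta^1\times w_0\ldots w_n$, the homotopy $P$ behaves formally like ``multiplication by an interval,'' so iterated applications of $P$ to the base cells produce cells of product type $\sigma_0\times\Delta^i$, in direct analogy with the cells $\tau\times\Delta^i$ appearing in $X_\ep^\prime$.

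For the base case $i=0$, take $\Gamma_{\sigma_0}(\sigma_0)$ to be the distinguished $|\sigma_0|$-simplex of $Sd\,\sigma_0$ that $r$ maps homeomorphically onto $\sigma_0$, equipped with the orientation transported from $[\sigma_0]$ via $r$. Consistency on the boundary is automatic: if $\tau<\sigma_0$ and $\Gamma_{\sigma_0}(\sigma_0)\cap\tau$ is a $|\tau|$-simplex, Lemma \ref{tdeterminesGammas} forces this intersection to be $\Gamma_\tau(\tau)$.

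For the inductive step on a chain $\sigma_0<\ldots<\sigma_i$, I will define $\Gamma_{\sigma_0,\ldots,\sigma_i}(\sigma_0)\subset Sd\,\sigma_i$ as the $(|\sigma_0|+i)$-dimensional union of simplices obtained by applying the prism operator coming from $P$ to $\Gamma_{\sigma_0,\ldots,\sigma_{i-1}}(\sigma_0)$, taking the image within $Sd\,\sigma_i$; this is the barycentric-subdivision analogue of taking the product with an additional $\widehat{\sigma_i}$-factor in the decomposition of Remark \ref{epsubdivinprod}. The orientation on $\Gamma_{\sigma_0,\ldots,\sigma_i}(\sigma_0)$ is the product orientation $[\sigma_0]\times[\widehat{\sigma_0},\ldots,\widehat{\sigma_i}]$ from Definition \ref{prodorient}, transferred through the identification with $\sigma_0\times\widehat{\sigma_0}\ldots\widehat{\sigma_i}$.

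Finally, one must verify both the covering statement and the boundary formula. Covering is a counting/inductive argument: the top-dimensional simplices of $Sd\,X$ correspond to maximal chains $\sigma_0<\ldots<\sigma_n$, and each such simplex sits inside a unique $\Gamma_{\sigma_0,\ldots,\sigma_n}(\sigma_0)$ (with degenerate lower-dimensional $\Gamma$'s absorbed into boundaries whenever $r$ identifies vertices on different $\Gamma_\tau(\tau)$). The boundary formula is a direct consequence of Lemma \ref{Lem:superderivation}: since each $\Gamma_{\sigma_0,\ldots,\sigma_i}(\sigma_0)\cong\sigma_0\times\Delta^i$ is given the product orientation, the simplicial boundary splits as $d[\sigma_0]\times[\Delta^i]\pm[\sigma_0]\times d[\Delta^i]$, with the first summand yielding $\Gamma_{\sigma_0,\ldots,\sigma_i}(\partial\sigma_0)$ and the second yielding $\bigcup_{j=0}^{i}(-1)^j\Gamma_{\sigma_0,\ldots,\widehat{\sigma}_j,\ldots,\sigma_i}(\sigma_0)$. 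I expect the main obstacle to be bookkeeping: keeping the $\Gamma$'s consistent across different chains whose terminal simplex $\sigma_i$ varies, i.e.\ ensuring the prism layers fit together along common faces of neighbouring simplices. This compatibility is exactly what the coherence of $r$ from Lemma \ref{tdeterminesGammas} and the spanning-forest origin of $P$ guarantee.
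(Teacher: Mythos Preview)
Your base case agrees with the paper, but the inductive step has a real gap. You propose to obtain $\Gamma_{\sigma_0,\ldots,\sigma_i}(\sigma_0)$ by ``applying the prism operator coming from $P$'' to $\Gamma_{\sigma_0,\ldots,\sigma_{i-1}}(\sigma_0)$. But $P$ is the chain homotopy $dP+Pd = 1 - sr$, so $P(\tau)$ is a prism whose ends are $\tau$ and $sr(\tau)$; in other words $P$ drags a simplex \emph{towards} its $r$-image, not outwards into $Sd\,\sigma_i$. Concretely, take $X=v_0v_1$ with $r(\widehat{v_0v_1})=v_0$. Then $\Gamma_{v_0}(v_0)=\widehat{v_0}$ and $P(\widehat{v_0})=0$ (the path in $T_{v_0}$ from $\widehat{v_0}$ to itself), whereas the correct cell $\Gamma_{v_0,\,v_0v_1}(v_0)$ is the edge $\widehat{v_0}\,\widehat{v_0v_1}$. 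More generally, $\Gamma_{\sigma_0,\ldots,\sigma_{i-1}}(\sigma_0)$ already sits in $Sd\,\sigma_{i-1}$, and the canonical $P$ (built from trees lying in each $r^{-1}(v)$) keeps it there; nothing in your step explains how the new simplex $\sigma_i$ enters.

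The paper does not use $P$ at all in this construction. It uses only $r$: from Lemma~\ref{tdeterminesGammas} one has, for every $\sigma$, the linear isomorphism $\Gamma_\sigma:\sigma\to\Gamma_\sigma(\sigma)\subset Sd\,\sigma$ inverse to $r|$. For a chain $\sigma_0<\cdots<\sigma_m$ and $x\in\sigma_0$, each $\Gamma_{\sigma_j}(x)$ lies in the simplex $\sigma_m$, so the convex combination
\[
(x,t_0,\ldots,t_m)\;\longmapsto\;\sum_{j=0}^{m} t_j\,\Gamma_{\sigma_j}(x)
\]
is a well-defined PL map $\sigma_0\times\Delta^m\to Sd\,\sigma_m$. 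This single formula defines all the higher $\Gamma_{\sigma_0,\ldots,\sigma_m}(\sigma_0)$ at once, assembles into a surjective PL map $\Gamma:X_\ep^\prime\subset X\times Sd\,X\to Sd\,X$, and the boundary identity then follows by pushing forward the product orientations of $X_\ep^\prime$ via $\Gamma$ (so Lemma~\ref{Lem:superderivation} applies exactly as you anticipated, but to the domain rather than to a cell built by iterating $P$). The missing idea in your proposal is precisely this convex-combination map; once you have it, there is no induction and no bookkeeping across neighbouring $\sigma_i$, because compatibility is automatic from linearity.
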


\begin{proof}
Suppose we have specified orientations for all $\sigma\in X$ and have chosen a subdivision chain equivalence as in the statement of the lemma with $P$ the canonical chain homotopy. Let $\Gamma_\sigma(\sigma)$ be the unique $|\sigma|$-simplex mapping under $r$ to $\sigma$ given by Lemma \ref{tdeterminesGammas}. We think of each $\Gamma_\sigma$ as the map \begin{equation}\label{Gammasigmas} 
\Gamma_\sigma: \sigma \to \Gamma_\sigma(\sigma)
\end{equation} 
which is the linear isomorphism with inverse \[ r|: \Gamma_\sigma(\sigma)\to \sigma.\]

Now let $X_\ep^\prime$ be the fundamental $\ep$-subdivision cellulation for some $\ep$ thought of as embedded in $X\times Sd\, X$ as in Remark \ref{epsubdivinprod}. Define a PL map $\Gamma: X_\ep^\prime \subset X\times Sd\, X \to Sd\, X$ by mapping for all $\sigma_0<\ldots < \sigma_m \leqslant \tau$
\begin{eqnarray*}
 \tau\times \widehat{\sigma}_0\ldots\widehat{\sigma}_m &\to& Sd\, X, \\
 (x,t_0,\ldots,t_m) &\mapsto& \sum_{j=0}^{m}t_j\Gamma_{\sigma_j}(x)
\end{eqnarray*}
where the maps $\Gamma_{\sigma_j}$ are those from equation \ref{Gammasigmas}. This map is not to be confused with the one used to define the higher homotopies of $X_\ep^\prime$; we intentionally use the same notation as these decompositions are completely analogous.

The map $\Gamma:X_\ep^\prime \to Sd\,X$ surjects onto $Sd\, X$. Like with the labelling of $X_\ep^\prime$, let $\Gamma_{\sigma_0,\ldots,\sigma_m}(\tau)$ denote the image of $\tau\times \widehat{\sigma}_0\ldots\widehat{\sigma}_m$ under the map $\Gamma$. The cells of $X_\ep^\prime$ are oriented canonically with the product orientation as in Definition \ref{epsurgcellorient}. We push forward these orientations to give consistent orientations for all simplices in $Sd\, X$. Since the orientations are push forwards their orientations satisfy \[\partial\Gamma_{\sigma_0,\ldots, \sigma_i}(\sigma_0) = \bigcup_{j=0}^i(-1)^j\Gamma_{\sigma_0,\ldots,\widehat{\sigma}_j,\ldots, \sigma_i}(\sigma_0) \cup \Gamma_{\sigma_0,\ldots, \sigma_i}(\partial\sigma_0)\] since the orientations of $X_\ep^\prime$ do. 
\end{proof}

\begin{defn}\label{gamp2}
As in Definition \ref{gamp}, let $\gam{\sigma_0,\ldots, \sigma_i}{\tau}$ denote \[\Gamma_{\sigma_0,\ldots, \sigma_i}(\tau) \backslash \Gamma_{\sigma_0,\ldots, \sigma_i}(\partial \tau).\]
\qed\end{defn}

\begin{ex}
Let $\sigma$ be the $2$-simplex $\rho_0\rho_1\rho_2$ again labelled as in \Figref{Fig:prethesis5} and let $r$ be the following subdivision chain inverse
\begin{eqnarray*}
 f: Sd\, \sigma &\to& \sigma \\
 \{\widehat{\rho_0}, \widehat{\rho_0\rho_2}, \widehat{\rho_0\rho_1}, \widehat{\rho_0\rho_1\rho_2}\}  &\mapsto& \rho_0 \\
 \{\widehat{\rho_1}, \widehat{\rho_1\rho_2} \} &\mapsto& \rho_1 \\
 \{\widehat{\rho_2}\} &\mapsto& \rho_2,
\end{eqnarray*}
then following the construction in Lemma \ref{constructgam} $Sd\, \sigma$ is decomposed with $0$-homotopies as shown in \Figref{Fig:constructgam} and higher homotopies as shown in \Figref{Fig:constructgamhigher}. %Are Figure and figure capitalisations consistent throughout?
\begin{figure}[ht]
\begin{center}
{
\psfrag{p1p1}{$\gam{\rho_0}{\rho_0}$}
\psfrag{t1p1}{$\gam{\tau_0}{\rho_0}$}
\psfrag{t1p2}{$\gam{\tau_0}{\rho_1}=\gam{\rho_1}{\rho_1}$}
\psfrag{t1t1}{$\gam{\tau_0}{\tau_0}$}
\psfrag{sp1}{$\gam{\sigma}{\rho_0}$}
\psfrag{st1}{$\gam{\sigma}{\tau_0}$}
\psfrag{sp2}{$\gam{\sigma}{\rho_1}=\gam{\tau_1}{\rho_1}$}
\psfrag{st3}{$\gam{\sigma}{\tau_2}$}
\psfrag{ss}{$\gam{\sigma}{\sigma}$}
\psfrag{st2}{$\gam{\sigma}{\tau_1}=\gam{\tau_1}{\tau_1}$}
\psfrag{t3p1}{$\gam{\tau_2}{\rho_0}$}
\psfrag{t3t3}{$\gam{\tau_2}{\tau_2}$}
\psfrag{sp3}{$\gam{\sigma}{\rho_2}=\gam{\tau_1}{\rho_2}=\gam{\tau_2}{\rho_2}=\gam{\rho_2}{\rho_2}$}
\includegraphics[width=9cm]{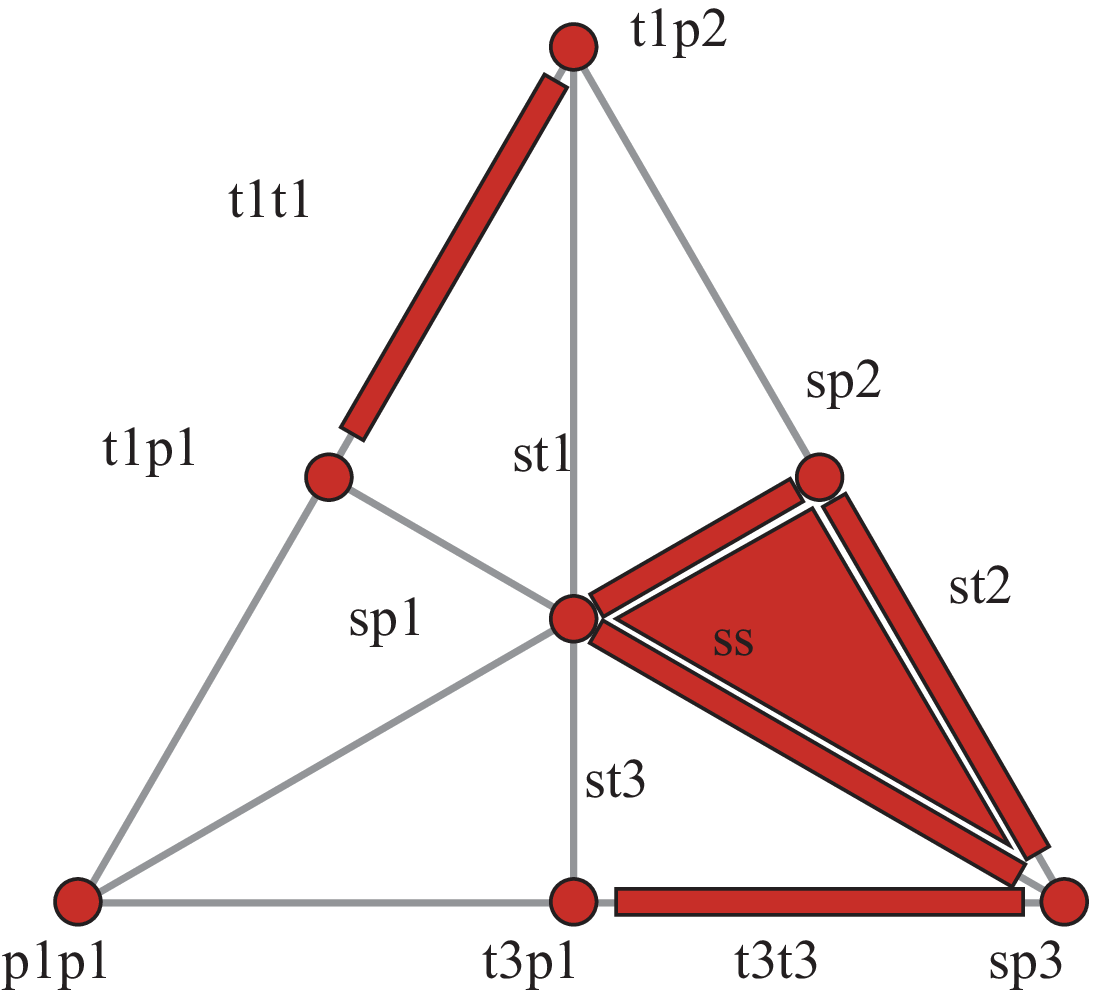}
}
\caption{Decomposing $Sd\, \sigma$: The $0$-homotopies $\gam{\tau}{\rho}$ for all $\rho\leqslant\tau\leqslant\sigma$.}
\label{Fig:constructgam}
\end{center}
\end{figure}
\begin{figure}[ht]
\begin{center}
{
\psfrag{t1sp2}{\shortstack[]{$\gam{\tau_0,\sigma}{\rho_1}=\gam{\rho_1,\tau_1}{\rho_1}$ \\$=\gam{\rho_1,\sigma}{\rho_1}$}}
\psfrag{t1st1}{$\gam{\tau_0,\sigma}{\tau_0}$}
\psfrag{t1sp1}{$\gam{\tau_0,\sigma}{\rho_0}$}
\psfrag{p1t1p1}{$\gam{\rho_0,\tau_0}{\rho_0}$}
\psfrag{p1t1sp1}{$\gam{\rho_0,\tau_0,\sigma}{\rho_0}$}
\psfrag{p1t3sp1}{$\gam{\rho_0,\tau_2,\sigma}{\rho_0}$}
\psfrag{p1sp1}{$\gam{\rho_0,\sigma}{\rho_0}$}
\psfrag{p1t3p1}{$\gam{\rho_0,\tau_2}{\rho_0}$}
\psfrag{t3sp1}{$\gam{\tau_2,\sigma}{\rho_0}$}
\psfrag{t3st3}{$\gam{\tau_2,\sigma}{\tau_2}$}
\includegraphics[width=13cm]{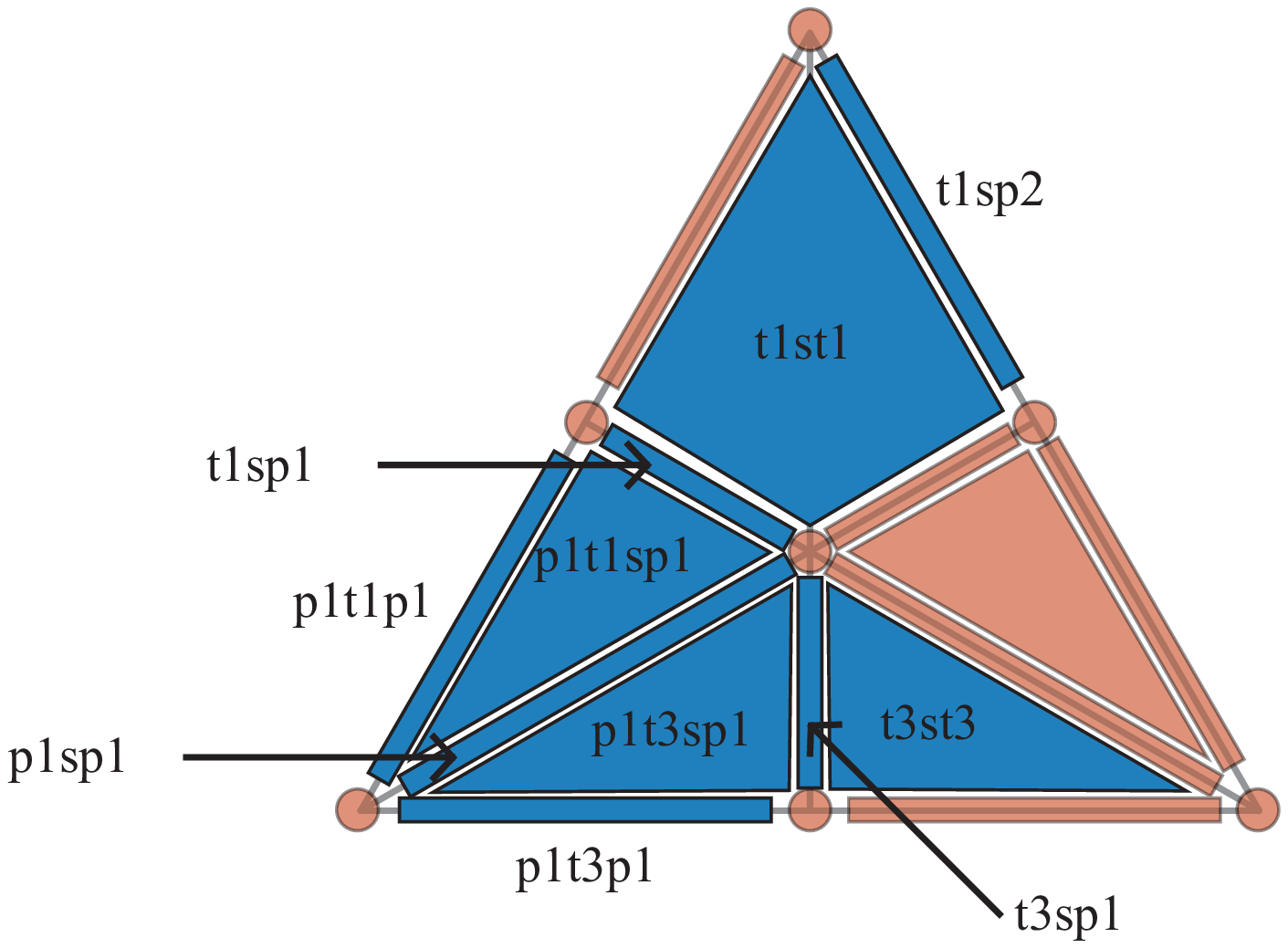}
}
\caption{Decomposing $Sd\, \sigma$: The higher homotopies $\gam{\sigma_1,\ldots,\sigma_i}{\tau}$.}
\label{Fig:constructgamhigher} 
\end{center}
\end{figure}

Further if we orient the subsimplices of $\sigma$ as in \Figref{Fig:prethesis5}, then the orientations attributed to the homotopies are as depicted in \Figref{Fig:constructgamorient}.
\begin{figure}[ht]
\begin{center}
{
\psfrag{+}{$+$}
\psfrag{-}{$-$}
\includegraphics[width=7cm]{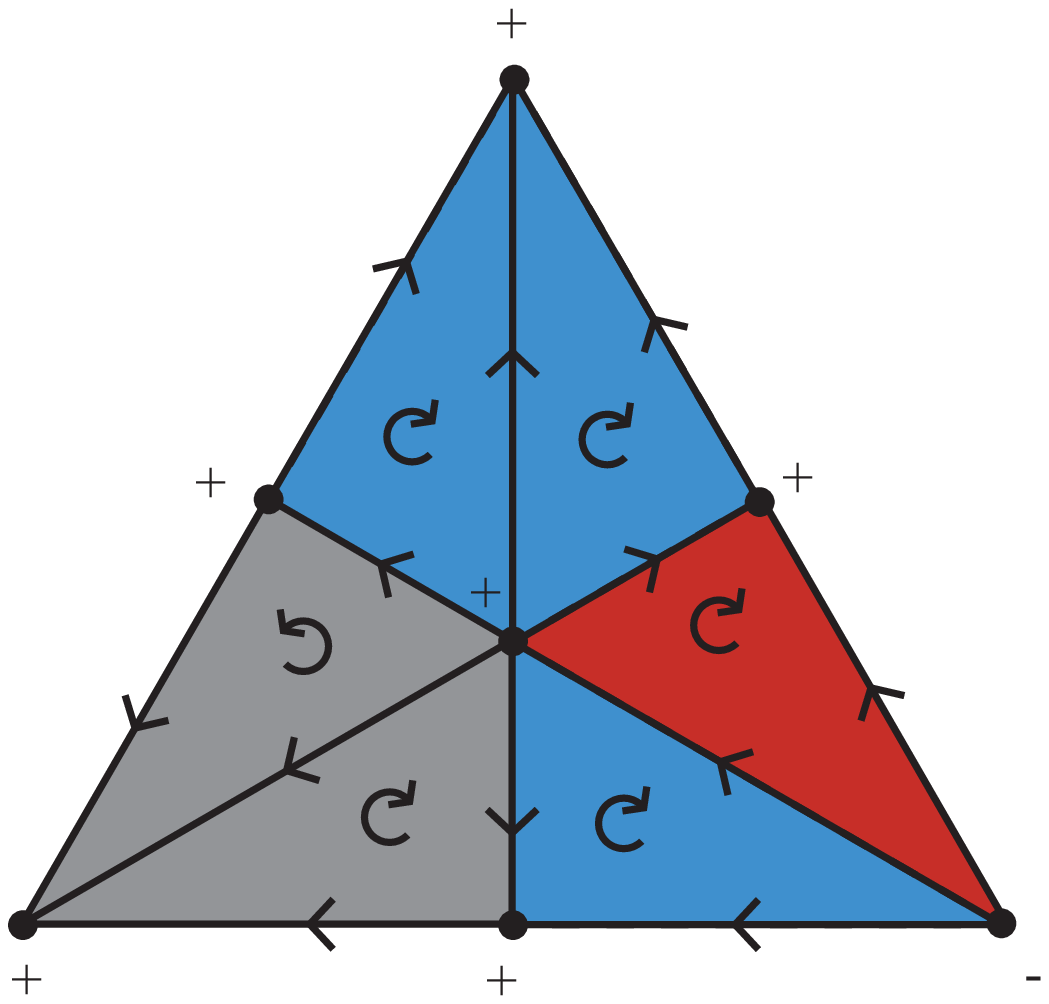}
}
\caption{Correct orientations for all $\gam{\rho_0,\ldots,\rho_i}{\rho}$.}
\label{Fig:constructgamorient}
\end{center}
\end{figure}
\qed\end{ex}

\begin{defn}\label{Isigma}
Thinking of $r:Sd\, X \to X$ as a simplicial map, for any simplex $\sigma\in X$, define $I_\sigma\subset Sd\, X$ to be the set of open simplices mapping onto $\mathring{\sigma}$ under $r$, or equivalently $I_\sigma$ is the union over all sequences of inclusions $\sigma\leqslant \sigma_0<\ldots<\sigma_i$ of $\Gamma_{\sigma_0,\ldots,\sigma_i}(\mathring{\sigma})$: 

\begin{equation}\label{Isigmadefn}
I_\sigma:= \bigcup_{\tau \in r^{-1}(\sigma)} {\mathring{\tau}} = \bigcup_{\sigma_0<\ldots<\sigma_i\leqslant\sigma}\Gamma_{\sigma_0,\ldots,\sigma_i}(\mathring{\sigma}).
\end{equation}
\qed\end{defn}

\begin{ex}
\Figref{Fig:prethesis4} shows what the various $I_\tau$, $\tau\leqslant\sigma$ are for our example of the $2$-simplex $\sigma$.
\begin{figure}[ht]
\begin{center}
{
\psfrag{s}[l][r]{$I_{\sigma}$}
\psfrag{p1}[l][r]{$I_{\rho_1}$}
\psfrag{p2}{$I_{\rho_2}$}
\psfrag{p3}[t][]{$I_{\rho_3}$}
\psfrag{t1}{$I_{\tau_0}$}
\psfrag{t2}{$I_{\tau_1}$}
\psfrag{t3}[tl][tr]{$I_{\tau_2}$}
\includegraphics[width=7cm]{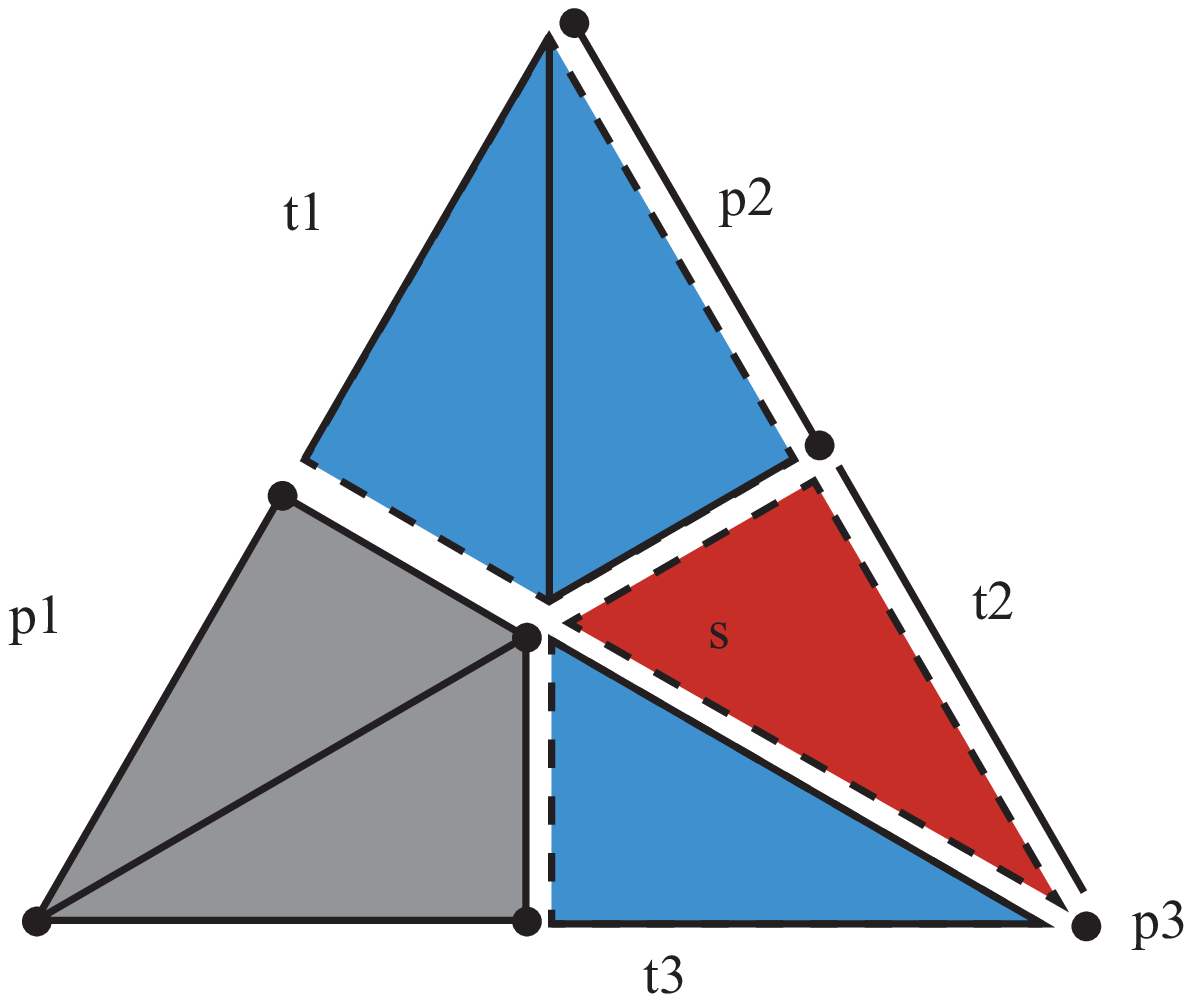}
}
\caption{The $I_\tau$'s for $\sigma$.}
\label{Fig:prethesis4}
\end{center}
\end{figure}
\qed\end{ex}

\begin{rmk}\label{Isigmadecomp}
Note that, just like $X$ can be decomposed into the disjoint union of all the open simplices, we can also decompose $Sd\, X$ into the disjoint union of all the $I_\sigma$, $\sigma\in X$.
\qed\end{rmk}

\section{Composing multiple subdivisions}
We would like to understand more general subdivisions than just a single barycentric subdivision. All subdivisions $X^\prime$ of a simplicial complex $X$ have $Sd^i\, X$ as a subdivision for sufficiently large $i$. So, a first step towards understanding more general subdivisions is to consider iterated barycentric subdivisions. For this we consider the effects of composing barycentric subdivision chain equivalences and how to obtain a chain inverse $r$ and canonical chain homotopy $P$ for the composition. 

\begin{lem}\label{circcheq}
Given chain equivalences
\begin{displaymath}
\xymatrix@1{ (C, d_C, \Gamma_C) \ar@<0.5ex>[rr]^-{f} && (D,d_D, \Gamma_D), \ar@<0.5ex>[ll]^-{g} & (D, d_D, \Gamma^\prime_D) \ar@<0.5ex>[rr]^-{h} && (E,d_E, \Gamma^\prime_E) \ar@<0.5ex>[ll]^-{i}
} 
\end{displaymath}
the composition
\begin{displaymath}
\xymatrix@1{ (C, d_C, \Gamma_C + g\circ\Gamma^\prime_D\circ f ) \ar@<0.5ex>[rr]^-{h\circ f} && (E,d_E, \Gamma^\prime_E + h\circ \Gamma_D\circ i) \ar@<0.5ex>[ll]^-{g\circ i} } 
\end{displaymath}
is a chain equivalence.
\end{lem}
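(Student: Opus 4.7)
The plan is to verify the two chain homotopy identities directly by computation, which is purely formal once one uses the given data. Specifically, by the convention stated at the beginning of the paper, I must check that
\begin{eqnarray*}
d_C(\Gamma_C + g\circ\Gamma'_D\circ f) + (\Gamma_C + g\circ\Gamma'_D\circ f)d_C &=& 1 - (g\circ i)\circ(h\circ f), \\
d_E(\Gamma'_E + h\circ\Gamma_D\circ i) + (\Gamma'_E + h\circ\Gamma_D\circ i)d_E &=& 1 - (h\circ f)\circ(g\circ i).
\end{eqnarray*}

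First I would expand the left-hand side of the first identity and split it into two pieces: the part involving $\Gamma_C$ alone, and the part involving $g\Gamma'_D f$. The $\Gamma_C$ piece equals $1 - g\circ f$ by hypothesis. For the second piece I would use that $f$ and $g$ are chain maps, so $d_C g = g d_D$ and $f d_C = d_D f$, which allows me to pull $g$ past $d_C$ on the left and $f$ past $d_C$ on the right. This rewrites the second piece as $g(d_D\Gamma'_D + \Gamma'_D d_D)f$, which by hypothesis equals $g(1 - i\circ h)f = gf - gihf$. Adding the two contributions cancels the $gf$ terms and leaves $1 - (gi)(hf)$, as required. The second identity is proved by the symmetric calculation, swapping the roles of the two chain equivalences.

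The proof is entirely routine and involves no obstacles; the only thing one must be careful about is invoking that $f$ and $g$ (respectively $h$ and $i$) are chain maps in order to commute them past the boundary operators. No sign issues arise because the formula is additive in the two chain homotopies being combined, and no re-grading is involved.
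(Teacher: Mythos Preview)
Your proposal is correct and is precisely the straightforward verification the paper alludes to; the paper's own proof consists only of the words ``Straightforward verification.'' You have simply supplied the details, and they are all in order.
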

\begin{proof}
Straightforward verification.
\end{proof}

\begin{deflem}\label{CanUseSingleTreeLem}
 Consider the composition 
\begin{equation} \label{eqi}
\xymatrix{ (\Delta^{lf}_*(X), d_{\Delta^{lf}_*(X)},0) \ar@<0.5ex>[rr]^-{s_2\circ s_1} && (\Delta^{lf}_*(Sd^2\,X), d_{\Delta^{lf}_*(Sd^2\,X)}, P_2 + s_2\circ P_1\circ r_2) \ar@<0.5ex>[ll]^-{r_1\circ r_2} }
\end{equation}
of two canonical subdivision chain equivalences
\begin{equation} \label{eqii}
\xymatrix{ (\Delta^{lf}_*(X), d_{\Delta^{lf}_*(X)},0) \ar@<0.5ex>[rr]^-{s_1} && (\Delta^{lf}_*(Sd\,X), d_{\Delta^{lf}_*(Sd\,X)}, P_1) \ar@<0.5ex>[ll]^-{r_1} }
\end{equation}\vspace{-3mm}
\begin{equation} \label{eqiii}
\xymatrix{ (\Delta^{lf}_*(Sd\,X), d_{\Delta^{lf}_*(Sd\,X)},0) \ar@<0.5ex>[rr]^-{s_2} && (\Delta^{lf}_*(Sd^2\,X), d_{\Delta^{lf}_*(Sd^2\,X)}, P_2) \ar@<0.5ex>[ll]^-{r_2} }
\end{equation}
\end{deflem}
\noindent as in Lemma \ref{circcheq}. Let \[T_1 = \bigcup_{v\in V(X)}{T_{1,v}}\]be the canonical trees used to define the canonical $P_1$ as in Lemma \ref{PFromTrees}. Similarly let \[T_2 = \bigcup_{w\in V(Sd\,X)}{T_{2,w}}\]be the canonical trees used to define the canonical $P_2$. Then $P_2 + s_2\circ P_1\circ r_2$ is defined by the tree \[T_3 = T_2 \cup s_2(T_1).\] 
We will call $P_2 + s_2\circ P_1\circ t_2$ the \textit{canonical chain homotopy} for the choice of chain inverse $r_2r_1$, and the tree $T_3$ used to define it will also be called \textit{canonical}.

\begin{proof}
$P_2$ is given by $T_2$: for all $v\in V(Sd^2\,X)$, $P_2(v)$ is the path along the edge $vr_2(v)$ in $T_2$ from $v$ to $r_2(v)\in V(Sd\, X)$. Similarly, $P_1r_2(v)$ is the path along the edge $r_2(v)r_1r_2(v)$ in $Sd\,X$ from $r_2(v)$ to $r_1(r_2(v))$. Then $s_2P_1r_2(v)$ is the same path considered in $Sd^2\,X$. 

Thus composing paths, $(P_2 + s_2P_1r_2)(v)$ is a path $v \to r_2(v) \to r_1r_2(v)$ in $T_2\cup s_2(T_1)$. This path cannot be a loop as it is the union of two straight paths, one in $T_2$ and the other in $s_2(T_1)$. Thus $T_2\cup s_2(T_1)$ is a collection of spanning trees for the $(r_1r_2)^{-1}(v)$ as required.
\end{proof} 
So we see that taking the union of the canonical trees provides us with our chain inverse for the composition.
\begin{rmk}
It should be noted that not all chain inverses $r$ to iterated barycentric subdivisions $s:\Delta^{lf}_*(X)\to \Delta^{lf}_*(Sd^i\,X)$ can be decomposed into compositions $r=r_1\circ\ldots\circ r_i$ of chain inverses $r_j: \Delta^{lf}_*(Sd^j\, X) \to \Delta^{lf}_*(Sd^{j-1}\,X)$.
\qed\end{rmk}
Generalising this to the composition of more than two barycentric subdivision chain equivalences is not that much more work.
\begin{cor}\label{CanUseSingleTreeCor}
For any iterated barycentric subdivision $s:\Delta^{lf}_*(X)\to \Delta^{lf}_*(Sd^i\,X)$ and choice of inverse $r$ of the form $r_1\circ\ldots\circ r_i$ there is a canonical chain homotopy 
\begin{displaymath}
\xymatrix{ (\Delta^{lf}_*(X), d_{\Delta^{lf}_*(X)},0) \ar@<0.5ex>[rr]^-{s} && (\Delta^{lf}_*(Sd^i\,X), d_{\Delta^{lf}_*(Sd^i\,X)}, P) \ar@<0.5ex>[ll]^-{r} }
\end{displaymath}
where \[P= P_i + s_iP_{i-1}r_i + \ldots + (s_i\circ\ldots\circ s_2)P_1(r_2\circ\ldots\circ r_i)\]
and is defined by the tree \[T = T_i \cup s_i(T_{i-1}) \cup \ldots \cup s_i\circ\ldots\circ s_2(T_1),\] where each $T_i$ is the canonical tree used to construct the chain homotopy $P_i$.
\end{cor}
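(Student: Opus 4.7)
The plan is to argue by induction on $i$, with the base case $i=1$ being Lemma \ref{PFromTrees} (the chain homotopy $P_1$ is defined by the canonical spanning forest $T_1$) and the case $i=2$ being exactly Definition/Lemma \ref{CanUseSingleTreeLem}. So assume inductively that the composition $r_1\circ\ldots\circ r_{i-1}$ is a chain inverse to $s_{i-1}\circ\ldots\circ s_1$ and that the chain homotopy
\[
P^{(i-1)} \;=\; P_{i-1} + s_{i-1}P_{i-2}r_{i-1} + \ldots + (s_{i-1}\circ\ldots\circ s_2)P_1(r_2\circ\ldots\circ r_{i-1})
\]
is the canonical chain homotopy defined by the forest
\[
T^{(i-1)} \;=\; T_{i-1} \cup s_{i-1}(T_{i-2}) \cup \ldots \cup (s_{i-1}\circ\ldots\circ s_2)(T_1).
\]

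The inductive step is then to apply Lemma \ref{circcheq} to the composition of the chain equivalence $(s_{i-1}\circ\ldots\circ s_1, r_1\circ\ldots\circ r_{i-1}, P^{(i-1)})$ with the single-subdivision chain equivalence $(s_i, r_i, P_i)$. The formula from Lemma \ref{circcheq} produces exactly
\[
P \;=\; P_i + s_i\circ P^{(i-1)}\circ r_i \;=\; P_i + s_iP_{i-1}r_i + \ldots + (s_i\circ\ldots\circ s_2)P_1(r_2\circ\ldots\circ r_i),
\]
which is the formula claimed in the statement.

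It remains to identify the forest which defines $P$. Here I would repeat the geometric argument given in the proof of Definition/Lemma \ref{CanUseSingleTreeLem}: each summand $(s_i\circ\ldots\circ s_{j+1})P_j(r_{j+1}\circ\ldots\circ r_i)$, evaluated on a vertex $v\in V(Sd^i X)$, describes the path in $Sd^i X$ obtained by pushing forward (via $s_i\circ\ldots\circ s_{j+1}$) the canonical $T_j$-edge from $(r_{j+1}\circ\ldots\circ r_i)(v)$ to $(r_j\circ\ldots\circ r_i)(v)$. Summing over $j$, the chain homotopy $P(v)$ is the concatenation of the edges of
\[
T \;=\; T_i \cup s_i(T_{i-1}) \cup \ldots \cup (s_i\circ\ldots\circ s_2)(T_1),
\]
which traces a path $v \to r_i(v) \to r_{i-1}r_i(v) \to \ldots \to (r_1\circ\ldots\circ r_i)(v)$.

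The only thing to check, and this is the main (but minor) obstacle, is that $T$ really is a spanning forest on $Sd^i X$ whose components span the point-inverses of $r_1\circ\ldots\circ r_i$ without creating loops. This follows because each layer $(s_i\circ\ldots\circ s_{j+1})(T_j)$ lives on the $j$-th subdivision skeleton pulled up by $s_i\circ\ldots\circ s_{j+1}$, and these skeleta intersect only in vertices; so the union of edges cannot form a cycle, and each vertex $v\in V(Sd^i X)$ is connected via a unique monotone path to its ultimate image in $V(X)$. Hence the components of $T$ are exactly spanning trees on $(r_1\circ\ldots\circ r_i)^{-1}(v)$ for $v\in V(X)$, proving that $P$ is the canonical chain homotopy in the sense of Lemma \ref{PFromTrees} applied to the composed chain inverse $r_1\circ\ldots\circ r_i$.
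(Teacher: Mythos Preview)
Your inductive framework and use of Lemma~\ref{circcheq} are exactly right, and the resulting formula for $P$ matches the paper's. The paper's proof is essentially the same in structure: it too reduces everything to checking that the concatenated path $v \to r_i(v) \to \ldots \to (r_1\circ\ldots\circ r_i)(v)$ in $T$ is never a loop.

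The difference, and the one place where your argument has a genuine gap, is in how you rule out cycles in $T$. Your claim that ``these skeleta intersect only in vertices; so the union of edges cannot form a cycle'' is not a valid inference: two forests that meet only in vertices can certainly union to a graph with cycles. Your mention of a ``monotone path'' is the right intuition, but it is not developed into an argument. The paper handles this point by a metric argument: the segment of the path lying in $s_i\circ\ldots\circ s_{j+1}(T_j)$ has length at most half that of the previous segment (each barycentric subdivision at least halves edge lengths), so since $\sum_{l\geq 1}(1/2)^l<1$, once the path leaves a vertex it can never accumulate enough length to return to it. Hence any apparent loop must be stationary. You should replace your skeleta argument with this geometric-series observation, or else make the monotonicity argument precise (e.g.\ by showing that each non-stationary step strictly decreases the subdivision level of the vertex reached, so no vertex can be revisited).
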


\begin{proof}
First note that for any iterated barycentric subdivision $s=s_i\circ\ldots\circ s_1$. All we need to check is that for all $v\in Sd^i\, X$ the path \[(P_i + s_iP_{i-1}t_i + \ldots + (s_i\circ\ldots s_2)P_1(r_2\circ\ldots\circ r_i))(v) \]is not a loop in $Sd^i\,X$. This path is the union of edges \[ v \mapsto r_i(v) \mapsto \ldots \mapsto (r_1\circ\ldots\circ r_i)(v),\] where $r_j\circ\ldots\circ r_i(v) \mapsto r_{j-1}\circ\ldots\circ r_i(v)$ is an edge in $Sd^j\, X$. Suppose this is a loop, then we must have that $r_j\circ\ldots\circ r_i(v) = r_k\circ\ldots\circ r_i(v)$ for some $j,k$. Thinking about the path in reverse, since \[\sum_{l=1}^{n}(1/2)^l < 1\] for all $n<\infty$ the path cannot get back to a vertex in $Sd^j\, X$ once it has deviated from it as each subsequent path travels half as far as the previous. This forces all the $r_l\circ\ldots\circ r_i(v)$ for $l$ between $j$ and $k$ to be the same point, so the loop is just stationary at a single point. Hence the path is not a loop as claimed.
\end{proof}

\begin{ex}\label{ExUnionTrees}
Let $T_1$ and $T_2$ be canonical trees as given in \Figref{Fig:insert1}.
\begin{figure}[ht]
\begin{center}
{
\psfrag{T1}{$T_1$}
\psfrag{T2}{$T_2$}
\includegraphics[width=11cm]{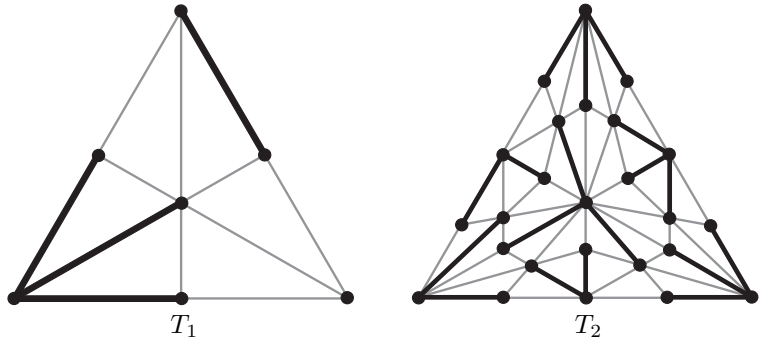}
}
\caption{Constructing trees $T_1$ and $T_2$.}
\label{Fig:insert1}
\end{center}
\end{figure}
Then their union $T = T_2 \cup s_2(T_1)$ as in \Figref{Fig:insert2} is canonical for $r_2r_1$.
\begin{figure}[ht]
\begin{center}
{
\psfrag{T3}[t][t]{$T$}
\includegraphics[width=5.5cm]{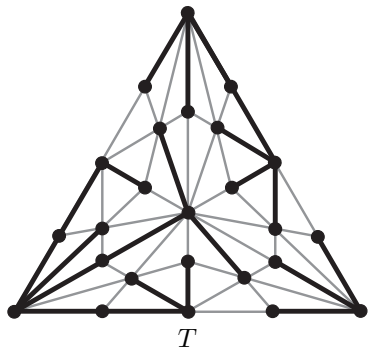}
}
\caption{The tree $T = T_2 \cup s_2(T_1)$.}
\label{Fig:insert2}
\end{center}
\end{figure}
\qed\end{ex}
Before, in the case of a single barycentric subdivision, we saw in Lemma \ref{constructgam} that a chain homotopy $P$ decomposed $Sd\, X$ into higher homotopies $\Gamma_{\sigma_0,\ldots, \sigma_i}(\sigma)$. The same is true for the $P$ obtained from iterated barycentric subdivisions:
\begin{cor}\label{BigTreeUnion}
Using $T = T_n \cup s_n(T_{n-1}) \cup \ldots \cup s_i\circ\ldots\circ s_2(T_1)$ we can decompose $Sd^n\,X$ into homotopies proceeding as in Lemma \ref{constructgam}.
\end{cor}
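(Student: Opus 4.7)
The plan is to reduce the corollary to a direct application of Lemma \ref{constructgam}. Since the composition $r = r_1 \circ \cdots \circ r_n : \Delta^{lf}_*(Sd^n\, X) \to \Delta^{lf}_*(X)$ of simplicial approximations to the identity is itself a simplicial approximation to the identity, and the canonical chain homotopy $P$ defined by $T$ is given by Corollary \ref{CanUseSingleTreeCor}, the triple $(s, r, P)$ forms a subdivision chain equivalence of the same formal type as in Lemma \ref{constructgam}, with $Sd\, X$ replaced throughout by $Sd^n\, X$.

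The only hypothesis of the construction in Lemma \ref{constructgam} that needs checking in the iterated setting is that for each $\sigma \in X$ there is a \emph{unique} $|\sigma|$-simplex $\Gamma_\sigma(\sigma) \in Sd^n\, X$ mapping onto $\sigma$ under $r$. I would prove this by induction on $n$: the base case is Lemma \ref{tdeterminesGammas}, and for the inductive step any such $|\sigma|$-simplex must map under $r_n$ onto a $|\sigma|$-simplex in $Sd^{n-1}\, X$ mapping onto $\sigma$ under $r_1 \circ \cdots \circ r_{n-1}$, which by inductive hypothesis is unique; applying Lemma \ref{tdeterminesGammas} to $r_n$ then gives uniqueness one level up. The consistency condition $\Gamma_\tau(\tau) = \Gamma_\sigma(\sigma) \cap \tau$ for $\tau \leqslant \sigma$ is inherited from the corresponding condition at each level.

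With uniqueness of $\Gamma_\sigma(\sigma)$ in hand, the construction of Lemma \ref{constructgam} proceeds verbatim: one defines linear isomorphisms $\Gamma_\sigma : \sigma \to \Gamma_\sigma(\sigma)$ as inverses of $r|$, assembles them into a PL map $\Gamma : X_\ep^\prime \to Sd^n\, X$ via barycentric linear combinations, and reads off the homotopies $\Gamma_{\sigma_0, \ldots, \sigma_m}(\tau)$ as the images of $\tau \times \widehat{\sigma_0} \ldots \widehat{\sigma_m}$. The canonical orientations of $X_\ep^\prime$ push forward to $Sd^n\, X$ satisfying the boundary formula of Lemma \ref{constructgam}, by the same super-derivation argument via Lemma \ref{Lem:superderivation}.

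The main point left to verify is that this decomposition is genuinely compatible with the specific $P$ defined by the forest $T$ of Corollary \ref{CanUseSingleTreeCor} --- that is, that the edges of $T$ agree with the $1$-homotopies $\Gamma_{\sigma_0, \sigma_1}(v)$ for vertices $v$. I expect this to be the principal bookkeeping obstacle, but it should follow by unwinding Corollary \ref{CanUseSingleTreeCor} one stage at a time: at each subdivision level $i$, the summand $s_n \circ \cdots \circ s_{i+1}(T_i)$ contributes precisely those $1$-homotopy edges which trace out, from a vertex $v \in Sd^n\, X$ to $r_i \circ \cdots \circ r_n(v)$, the $i$th leg of the telescoped path defining $P(v)$, so that reading $T$ level by level recovers the fan of barycentre-to-vertex edges appearing in the decomposition.
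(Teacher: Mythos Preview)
Your proposal is correct and is essentially what the paper intends: the corollary is stated without proof, the implicit argument being exactly yours --- Corollary~\ref{CanUseSingleTreeCor} furnishes a subdivision chain equivalence $(s,r,P)$ of the same shape as in Lemma~\ref{constructgam} (with $Sd^n\,X$ in place of $Sd\,X$), after which that construction runs verbatim. Your inductive check that there is a unique $|\sigma|$-simplex in $Sd^n\,X$ mapping onto $\sigma$ under $r=r_1\circ\cdots\circ r_n$ is the one nontrivial detail not spelled out in the paper, and your argument for it is sound (a simplicial map cannot raise dimension, so the image under $r_n$ is again a $|\sigma|$-simplex and induction applies).
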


\begin{ex}
Using the union of trees $T = T_2 \cup s_2(T_1)$ from \Figref{Fig:insert2} we obtain homotopies as labelled in \Figref{Fig:insert3}.
\begin{figure}[ht]
\begin{center}
{
\psfrag{ss}{$\gam{\sigma}{\sigma}$}
\psfrag{p1t1sp1}{$\gam{\rho_1,\tau_0,\sigma}{\rho_1}$}
\psfrag{p0t1sp0}[br][br]{$\gam{\rho_0,\tau_0,\sigma}{\rho_0}$}
\psfrag{p0t3sp0}{$\gam{\rho_0,\tau_2,\sigma}{\rho_0}$}
\psfrag{t2st2}[b][b]{$\gam{\tau_1,\sigma}{\tau_1}$}
\psfrag{t3st3}[][]{$\gam{\tau_2,\sigma}{\tau_2}$}
\psfrag{t1st1}[b][b]{$\gam{\tau_0,\sigma}{\tau_0}$}
\includegraphics[width=10cm]{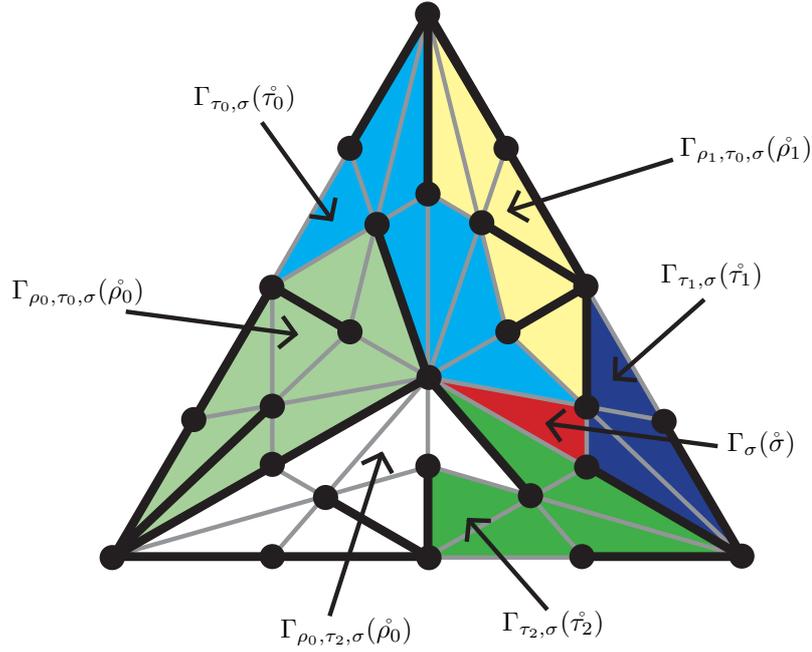}
}
\caption{Homotopies constructed from $T_2 \cup s_2(T_1)$.}
\label{Fig:insert3}
\end{center}
\end{figure}
\qed\end{ex}
Now, with the subdivision chain equivalences better understood, we can show how to carefully choose a chain inverse $r$ so that, for $P$ the canonical chain homotopy obtained from $r$, for all $\sigma\in X$, $P$ maps a neighbourhood in $\sigma$ of $\partial \sigma$ \textit{towards} $\partial\sigma$, and $r$ crushes this neighbourhood all the way to the boundary $\partial\sigma$. Having such an $r$ and $P$ will allow us later to prove squeezing results. The rough idea is that if a map $f$ maps a simplex to a small neighbourhood of that simplex, then $r\circ f$ will stay inside the simplex.

\begin{lem}\label{keylemma}
Let $X$ be a finite-dimensional, locally finite simplicial complex, so that in particular $0<\comesh(X)< \mesh(X) <\infty$, and let $s: \Delta^{lf}_*(X) \to \Delta^{lf}_*(Sd^i\,X)$ be some iterated barycentric subdivision. Then we may choose a chain inverse $r$ and a chain homotopy $P$ such that for all $\sigma\in X$ and for all $0<\ep < \rad(\sigma) - 2 \mesh(Sd^i\,\sigma)$
\begin{eqnarray*}
 P(\Delta_*(N_\ep(Sd^i(\partial\sigma))\cap \sigma )) &\subset& \Delta_{*+1}(N_\ep(Sd^i\, (\partial\sigma))\cap \sigma),\\
 r(\Delta_*(N_\ep(Sd^i(\partial\sigma))\cap \sigma )) &\subset& \Delta_{*}(Sd^i\, (\partial\sigma))
\end{eqnarray*}
where $N_\ep(Sd^i\,(\partial\sigma))$ is the union of all simplices in $Sd^i\, \sigma$ containing a point within $\ep$ of any point in $\partial\sigma$. 
\end{lem}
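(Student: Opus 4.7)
The plan is to choose $r$ so that the unique $|\sigma|$-simplex $\Gamma_\sigma(\sigma)\subset Sd^i\sigma$ mapped onto $\sigma$ by $r$ (per Lemma~\ref{tdeterminesGammas}) contains the incentre $o(\sigma)=\widehat\sigma$, and to extend $r$ to the remaining vertices by a ``nearest vertex to $\partial\sigma$'' rule applied stage by stage in the decomposition $r=r_1\circ\cdots\circ r_i$. The chain homotopy $P$ is then taken to be the canonical one from Corollary~\ref{CanUseSingleTreeCor} built from the canonical tree $T=T_i\cup s_i(T_{i-1})\cup\cdots$. The hypothesis $\ep<\rad(\sigma)-2\mesh(Sd^i\sigma)$ is the key geometric input: it separates the ``central'' region of $\sigma$ (containing $\Gamma_\sigma(\sigma)$) from the ``peripheral'' region $N_\ep(Sd^i\partial\sigma)\cap\sigma$, allowing the two roles of $r$ to coexist.

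For the first inclusion, let $\tau\in N_\ep(Sd^i\partial\sigma)\cap\sigma$. Since $\tau$ contains a point within $\ep$ of $\partial\sigma$ and $\diam(\tau)\le\mesh(Sd^i\sigma)$, every point of $\tau$ lies within $\ep+\mesh(Sd^i\sigma)$ of $\partial\sigma$. The chosen $\Gamma_\sigma(\sigma)$ contains $\widehat\sigma$ (at distance $\rad(\sigma)$ from $\partial\sigma$) and has diameter at most $\mesh(Sd^i\sigma)$, so it lies at distance at least $\rad(\sigma)-\mesh(Sd^i\sigma)>\ep+\mesh(Sd^i\sigma)$ from $\partial\sigma$; hence $\tau\cap\Gamma_\sigma(\sigma)=\emptyset$, so $\tau\neq\Gamma_\sigma(\sigma)$. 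By the uniqueness assertion of Lemma~\ref{tdeterminesGammas} and the fact that $|\tau|\le|\sigma|$ for $\tau\subset\sigma$, we deduce $r(\tau)\subsetneq\sigma$, so $r(\tau)\subset\partial\sigma=Sd^i\partial\sigma$.

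For the second inclusion, the key geometric fact is that $d(\,\cdot\,,\partial\sigma)$ is concave on $\sigma$ (being a minimum of linear functions), so for any vertex $w$ interior to a simplex $\widetilde\sigma\in Sd^{j-1}X$ one has $\min_{v\in V(\widetilde\sigma)}d(v,\partial\sigma)\le d(w,\partial\sigma)$. In particular the nearest-vertex rule preserves $N_\ep(Sd^i\partial\sigma)$, and by induction on stage every intermediate $r_k\circ\cdots\circ r_i(w)$ along the canonical tree path from $w\in N_\ep(Sd^i\partial\sigma)\cap V(Sd^i X)$ to $r(w)\in V(X)\subset\partial\sigma$ remains in $N_\ep$. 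For a higher-dimensional $\tau\in N_\ep(Sd^i\partial\sigma)\cap\sigma$, concavity forces at least one vertex of $\tau$ to lie in $N_\ep$, and $P(\tau)$ is built by Lemma~\ref{PFromTrees}(ii) as the iterated prism bounded by $\tau$, $sr(\tau)\subset\partial\sigma$, and $P(d\tau)$. Each filling $(|\tau|+1)$-simplex contains at least one vertex either from $sr(\tau)\subset\partial\sigma$ (at distance zero) or from the side-prism of an $N_\ep$-vertex of $\tau$, and hence lies in $N_\ep(Sd^i\partial\sigma)\cap\sigma$.

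The main obstacle is coordinating the stage-by-stage choice of $r_j$ so that the composite $\Gamma_\sigma(\sigma)$ ends up containing $\widehat\sigma$, while the nearest-vertex rule simultaneously keeps every filling simplex in the canonical prism decomposition of $P(\tau)$ meeting an $N_\ep$-vertex. The geometric hypothesis $\ep<\rad(\sigma)-2\mesh(Sd^i\sigma)$ is exactly what guarantees that the central region (a $\mesh(Sd^i\sigma)$-neighbourhood of $\widehat\sigma$) and the peripheral region (an $(\ep+\mesh(Sd^i\sigma))$-neighbourhood of $\partial\sigma$) do not overlap, making this coordination possible and ensuring the whole argument goes through.
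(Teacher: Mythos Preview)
Your proposal takes essentially the same approach as the paper: choose $\Gamma_\sigma(\sigma)$ well away from $N_\ep$ (you specify one containing $\widehat\sigma$; the paper takes any $|\sigma|$-simplex lying in $Sd^i\sigma\setminus\overline{N_\ep}$), extend $r$ stage-by-stage via a ``closer to $\partial\sigma$'' rule on the remaining barycentres, and take the canonical $P$ of Corollary~\ref{CanUseSingleTreeCor}. Two minor remarks: Lemma~\ref{tdeterminesGammas} is stated only for a single barycentric subdivision, so your uniqueness argument for $r(\tau)\subsetneq\sigma$ needs the easy observation that uniqueness persists under composition; and your assertion that every filling $(|\tau|+1)$-simplex of $P(\tau)$ contains a vertex from $sr(\tau)$ or from the track $P(v)$ of the $N_\ep$-vertex $v$ is itself not justified---though the paper is even terser here, simply declaring the properties hold ``by construction''.
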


\begin{proof}
Since $\ep< \rad(\sigma) - 2 \mesh(Sd^i\,\sigma)$ a ball of radius $2\mesh(Sd^i\,\sigma)$ around the incentre of $\sigma$ is contained in the complement of $N_\ep(\partial\sigma)$. Therefore we can find a $|\sigma|$-simplex in $Sd^i\,X$ contained entirely in $Sd^i\,\sigma \backslash \overline{N_\ep(Sd^i\,(\partial\sigma))}$. Let $\Gamma^i_{\sigma}(\sigma)$ be a choice of such a $|\sigma|$-simplex. We construct $r$ in such a way as to stretch $\Gamma^i_{\sigma}(\sigma)$ to fill the whole of $\sigma$ whilst crushing everything else towards the boundary.

Each $|\sigma|$-simplex in $Sd^i\,\sigma$ is contained in exactly one simplex in $Sd^j\, \sigma$ for $j=0,\ldots,i$. Let $\Gamma^j_{\sigma}(\sigma)$ denote the unique $|\sigma|$-simplex in $Sd^j\, \sigma$ containing $\Gamma^i_{\sigma}(\sigma)$. By Lemma \ref{tdeterminesGammas}, each $r_i$ corresponds to consistent choices of top dimensional simplices in $Sd\, \tau$ to stretch to fill each simplex $\tau \in Sd^i\, X$. Thus the $\Gamma^j_{\sigma}(\sigma)$ determine the map $r_j$ on their vertices. For all vertices which are not vertices of $\Gamma^j_{\sigma}(\sigma)$ for some $j$ we proceed as follows.

We may extend our definition of $r_1$ from $\Gamma^1_{\sigma}(\sigma)$ to the whole of $Sd\,\sigma$ in any way we like. Then proceed iteratively defining each successive $r_j$ to map those barycentres $\widehat{\widetilde{\tau}}$ of a simplex $\widetilde{\tau}\in Sd^{j-1}\, \sigma$ which are not vertices of $\Gamma^j_{\sigma}(\sigma)$ to any vertex $v$ of $\widetilde{\tau}$ that is closer to the boundary $\partial\sigma$ than $\widetilde{\tau}$ is. We can always find a vertex closer otherwise this would contradict convexity of $\widetilde{\tau}$. Then taking $r=r_i\circ\ldots\circ r_1$ and the corresponding canonical $P$ satisfies the properties of the lemma by construction. The only simplex not crushed onto the boundary is $\Gamma^i_{\sigma}(\sigma)$ and this was chosen to be in $Sd^i\,\sigma \backslash \overline{N_\ep(Sd^i\,(\partial\sigma))}$.
\end{proof}

\begin{ex}
Let $X$ be the $2$-simplex $\sigma$ as before. \Figref{Fig:insert4} demonstrates choices of $T_1$ and $T_2$ constructed with the proof of Lemma \ref{keylemma}. 
\begin{figure}[ht]
\begin{center}
{
\psfrag{T1}[t][t]{$T_1$}
\psfrag{T2}[t][t]{$T_2$}
\includegraphics[width=11cm]{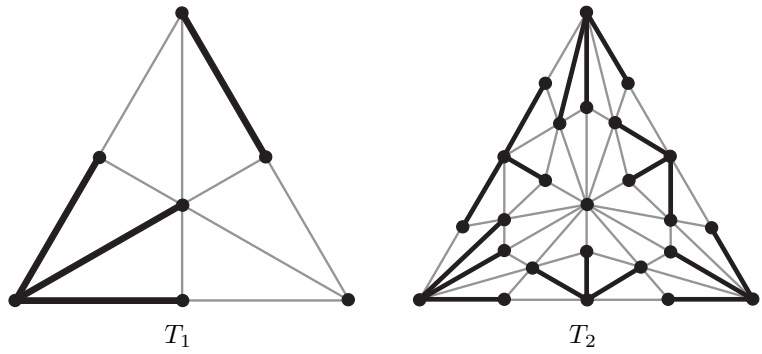}
}
\caption{Trees constructed to retract to the boundary.}
\label{Fig:insert4}
\end{center}
\end{figure}
In \Figref{Fig:insert5} the region shaded in red, $Sd^2\,\sigma \backslash \mathring{D}(\widetilde{\sigma},Sd\,\sigma)$, is crushed to the boundary by $r$ and contracted towards the boundary by the $P$ constructed with $T=T_2\cup s_2(T_1)$.
\begin{figure}[ht]
\begin{center}
{
\psfrag{T3=T2us2(T1)}[t][t]{$T=T_2\cup s_2(T_1)$}
\includegraphics[width=5.5cm]{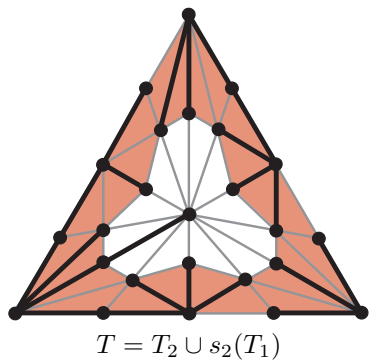}
}
\caption{The neighbourhood of $\partial\sigma$ that is retracted by $P$.}
\label{Fig:insert5}
\end{center}
\end{figure}
\qed\end{ex}
In fact we can choose $\ep$ uniformly across all simplices by virtue of the fact that $X$ is locally finite and finite-dimensional.
\begin{prop}\label{setupthesqueeze}
Let $X$ be a locally finite, $n$-dimensional simplicial complex, then there exists an $\ep=\ep(X)>0$, an integer $i=i(X)$ and a chain equivalence
\begin{displaymath}
\xymatrix{ (\Delta^{lf}_*(X), d_{\Delta^{lf}_*(X)},0) \ar@<0.5ex>[rr]^-{s} && (\Delta^{lf}_*(Sd^i\,X), d_{\Delta^{lf}_*(Sd^i\,X)}, P) \ar@<0.5ex>[ll]^-{r} }
\end{displaymath}
such that for all $\sigma\in X$, 
\begin{eqnarray*}
 P(\Delta_*(N_{\ep^\prime}(Sd^i \sigma))) &\subset& \Delta_{*+1}(N_{\ep^\prime}(Sd^i \sigma)),\\
 r(\Delta_*(N_{\ep^\prime}(Sd^i \partial \sigma))) &\subset& \Delta_{*}(\partial\sigma) \\
 s(\Delta_*(\mathring{\sigma})) &\subset& \Delta_*(Sd^i\, \mathring\sigma)
\end{eqnarray*}
for all $0\leqslant \ep^\prime \leqslant \ep$.
\end{prop}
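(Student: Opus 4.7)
The plan is to upgrade Lemma \ref{keylemma} from a per-simplex statement to the uniform global one asserted here, by a single choice of the subdivision depth $i$. Since $X$ is $n$-dimensional and its triangulation is tame and bounded ($\comesh(X)>0$, $\mesh(X)<\infty$), the scaling estimate (\ref{subdivscales}) gives $\mesh(Sd^i\,\sigma) \leqslant \mesh(Sd^i\,X) \leqslant (n/(n+1))^i \mesh(X)$, which tends to $0$ as $i \to \infty$ (the case $n=0$ being vacuous), while $\rad(\sigma) \geqslant \comesh(X)$ for every $\sigma$ with $|\sigma| \geqslant 1$. Pick $i = i(X)$ large enough that $2\mesh(Sd^i\,X) < \comesh(X)$ and set
\[ \ep = \ep(X) := \tfrac{1}{2}\bigl(\comesh(X) - 2\mesh(Sd^i\,X)\bigr) > 0. \]
By construction $\ep < \rad(\sigma) - 2\mesh(Sd^i\,\sigma)$ for every $\sigma$ with $|\sigma| \geqslant 1$, so the hypothesis of Lemma \ref{keylemma} is met simultaneously for every simplex.

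Next I would construct $r$ and $P$ globally by carrying out Lemma \ref{keylemma}'s selection once for each $\sigma \in X$ with this common $\ep$: pick a $|\sigma|$-simplex $\Gamma^i_\sigma(\sigma) \subset Sd^i\,\mathring{\sigma}$ disjoint from $\overline{N_\ep(Sd^i\,\partial\sigma)}$, then extend to a composition $r = r_i \circ \cdots \circ r_1$ as in the proof of the lemma and take $P$ to be the associated canonical chain homotopy of Corollary \ref{CanUseSingleTreeCor}. The consistency condition of Lemma \ref{tdeterminesGammas} is automatic because each $\Gamma^i_\sigma(\sigma)$ lies strictly in $Sd^i\,\mathring{\sigma}$ and therefore has empty intersection with every proper face $\tau < \sigma$, so the per-simplex choices assemble unambiguously into a globally defined simplicial approximation to the identity.

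Verifying the three conclusions for all $0 \leqslant \ep^\prime \leqslant \ep$ is then short. The $s$-statement is immediate from the definition of the subdivision chain map, since $s(\sigma)$ is a signed sum of $|\sigma|$-dimensional simplices of $Sd^i\,\sigma$ and no such simplex can lie in the lower-dimensional $Sd^i\,\partial\sigma$. The $r$-statement is Lemma \ref{keylemma}'s per-$\sigma$ conclusion applied with the common $\ep$. For the $P$-statement, the canonical forest $T$ has each tree $T_v$ contained in the star of $v$ in $Sd^i\,X$, so $P$ does not move any simplex out of the simplex of $X$ that contains it; combining this forest-locality with Lemma \ref{keylemma}'s per-simplex control of $P$ near boundaries yields the required global containment in the $\ep^\prime$-neighbourhood of $Sd^i\,\sigma$.

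The main obstacle is this last point: Lemma \ref{keylemma} only controls $P$ on chains near the boundary of a single fixed simplex, and one has to verify that the independently chosen retractions across different simplices of $X$ patch together so that $P$ really sends $N_{\ep^\prime}(Sd^i\,\sigma)$ to itself rather than leaking into nearby simplices of $X$ that are not within $\ep^\prime$ of $\sigma$. The canonical forest construction, whose component trees live inside open stars of vertices, is precisely what localises $P$ enough for this patching to succeed.
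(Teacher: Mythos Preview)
Your approach is essentially the same as the paper's: choose $i$ large enough that $2\mesh(Sd^i X)<\comesh(X)$, take $\ep$ in the remaining gap, and invoke Lemma~\ref{keylemma} uniformly over all simplices. The paper is terser --- it writes down a parametrised choice $\ep(X)=\alpha\,\comesh(X)$ for any $\alpha\in(0,1)$ with an explicit formula for $i(X)$, checks the inequality $\ep<\rad(\sigma)-2\mesh(Sd^i\sigma)$ holds for every $\sigma$, and then simply says ``Lemma~\ref{keylemma} holds for every simplex'' --- whereas you spell out the patching and the $s$-statement, which the paper leaves implicit. Your extra care about whether the per-simplex constructions of $r$ assemble into a global simplicial map, and whether $P$ stays inside $N_{\ep'}(Sd^i\sigma)$ rather than leaking into adjacent cofaces, is warranted: these points are genuinely glossed over in the paper and your forest-locality reasoning is the right mechanism.
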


\begin{proof}
Since $X$ is locally finite and finite-dimensional, $0<\comesh(X)<\mesh(X)<\infty$, so we can choose 
\begin{eqnarray*}
\ep(X)&:=& \alpha\comesh(X),\quad \mathrm{for}\; \mathrm{any} \; \alpha\in(0,1), \\
i(X)&:=& \bigg{\lceil}\ln\left(\frac{(1-\alpha)\comesh(X)}{2\mesh(X)}\right)\bigg{/}\ln\left(\frac{n}{n+1}\right)\bigg{\rceil} +1.
\end{eqnarray*}
We see that
\begin{eqnarray*}
\ep = \alpha\comesh(X) &<& \comesh(X)-2\left(\frac{n}{n+1}\right)^i\mesh(X) \\
    &\leqslant& \comesh(X) - 2\mesh(Sd^i\,X) \\
    &\leqslant& \rad(\sigma) - 2\mesh(Sd^i\,\sigma),\; \forall \sigma\in X,
\end{eqnarray*}
so Lemma \ref{keylemma} holds for every simplex. 
\end{proof}

\begin{cor}\label{setupmapsqueeze}
Let $f:X\to Y$ be a simplicial map between finite-dimensional, locally finite simplicial complexes, then there exists an $\ep(X,Y)$ and an $i(X,Y)$ and chain equivalences
\begin{displaymath}
\xymatrix{ (\Delta^{lf}_*(X), d_{\Delta^{lf}_*(X)},0) \ar@<0.5ex>[rr]^-{s_X} && (\Delta^{lf}_*(Sd^i\,X), d_{\Delta^{lf}_*(Sd^i\,X)}, P_X) \ar@<0.5ex>[ll]^-{r_X} \\
(\Delta^{lf}_*(Y), d_{\Delta^{lf}_*(Y)},0) \ar@<0.5ex>[rr]^-{s_Y} && (\Delta^{lf}_*(Sd^i\,Y), d_{\Delta^{lf}_*(Sd^i\,Y)}, P_Y) \ar@<0.5ex>[ll]^-{r_Y}}
\end{displaymath}
such that for all $\sigma\in Y$, 
\begin{eqnarray*}
 P_Y(\Delta_*(N_\ep(Sd^i \sigma))) &\subset& \Delta_{*+1}(N_\ep(Sd^i \sigma)),\\
 r_Y(\Delta_*(N_\ep(Sd^i \sigma))) &\subset& \Delta_{*}(Sd^i\, \sigma), \\
 P_X(\Delta_*(f^{-1}(N_\ep(Sd^i \sigma)))) &\subset& \Delta_{*+1}(f^{-1}(N_\ep(Sd^i \sigma))),\\
 r_X(\Delta_*(f^{-1}(N_\ep(Sd^i \sigma)))) &\subset& \Delta_{*}(f^{-1}(Sd^i\, \sigma)).
\end{eqnarray*}
\end{cor}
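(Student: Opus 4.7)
The plan is to apply Proposition \ref{setupthesqueeze} to $Y$ and then build $(s_X, r_X, P_X)$ compatibly on top of it. Applying the proposition to $Y$ yields $\ep_Y > 0$, $i_Y \in \N$, and a chain equivalence $(s_Y, r_Y, P_Y)$, with preferred simplices $\Gamma^{i_Y}_\sigma(\sigma) \subset \mathring{\sigma}\setminus\overline{N_{\ep_Y}(Sd^{i_Y}\partial\sigma)}$ for each $\sigma\in Y$. I will then pick $i = i(X,Y) \geq i_Y$ sufficiently large and $\ep = \ep(X,Y) \leq \ep_Y$ sufficiently small, to be determined; local finiteness and finite-dimensionality of $X$ and $Y$ will allow uniform choices.

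For each $\tau \in X$ with $\sigma := f(\tau)$, the key step is to choose a preferred $|\tau|$-simplex $\Gamma^i_\tau(\tau)$ of $Sd^i \tau$ satisfying both (i) $\Gamma^i_\tau(\tau)\subset \mathring{\tau}\setminus\overline{N_\ep(Sd^i\partial\tau)}$ and (ii) $f(\Gamma^i_\tau(\tau)) \subset \mathring{\sigma}\setminus\overline{N_\ep(Sd^i\partial\sigma)}$. Such a simplex exists for $i$ large and $\ep$ small because the set of points of $\mathring{\tau}$ meeting both conditions has positive inradius bounded below uniformly in $\tau$: the fibre-direction thickness of $f^{-1}(\mathring{\sigma}\setminus N_\ep(\partial\sigma))\cap \mathring{\tau}$ is bounded below by a constant depending only on $|\tau|$ and $|\sigma|$, and its base-direction thickness is bounded below by $\ep/L$ where $L$ is a uniform Lipschitz constant for the affine restrictions $f|_\tau$, while $\mesh(Sd^i \tau)\to 0$ as $i\to\infty$.

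I then define $r_X$ by stretching each $\Gamma^i_\tau(\tau)$ onto $\tau$. For every vertex $v$ of $Sd^i X$ not belonging to any preferred simplex I use the following stratified crushing rule: if $v\in\mathring{\rho}$ and $v\in f^{-1}(N_\ep(Sd^i\sigma'))$ for some $\sigma' < f(\rho)$, iteratively crush $v$ toward a vertex of $\rho_{\sigma'}:=\mathrm{span}(V(\rho)\cap f^{-1}(V(\sigma')))\subset \partial\rho$. This rule is consistent because $V(\rho_{\sigma'_1})\cap V(\rho_{\sigma'_2}) = V(\rho_{\sigma'_1\cap \sigma'_2})$, so several simultaneous $\sigma'$-conditions always admit a common choice; and for small $\ep$ a vertex of $\rho$ nearest to $v$ lies in $V(\rho_{\sigma'})$. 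Define $P_X$ to be the canonical chain homotopy associated to $r_X$ via Corollary \ref{CanUseSingleTreeCor}.

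The $Y$-conditions are immediate from Proposition \ref{setupthesqueeze}, so the main content is the verification of the $X$-conditions. Fix $\sigma\in Y$ and let $\tilde{\tau}\subset f^{-1}(N_\ep(Sd^i\sigma))$ be a simplex of $Sd^i X$. For each vertex $v\in\mathring{\rho}$ of $\tilde{\tau}$, smallness of $\ep$ forces $f(\rho)$ to be comparable to $\sigma$. If $f(\rho)\leq\sigma$ then $r_X(v)\in V(\rho)\subset f^{-1}(V(f(\rho)))\subset f^{-1}(V(\sigma))$ automatically; if $f(\rho)>\sigma$ strictly, condition (ii) ensures $f(\Gamma^i_\rho(\rho))$ is at distance at least $\ep$ from $\sigma\subset\partial f(\rho)$, so $v\notin \Gamma^i_\rho(\rho)$ and the refined crushing yields $r_X(v)\in V(\rho_\sigma)\subset f^{-1}(V(\sigma))$. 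Either way $r_X(\tilde{\tau})\subset f^{-1}(\sigma)$. The $P_X$-statement follows similarly, since the canonical spanning forest edge linking $v$ to $r_X(v)$ has both endpoints in $f^{-1}(N_\ep(Sd^i\sigma))$. The main obstacle is the simultaneous existence and consistency of the compatible preferred simplices $\Gamma^i_\tau(\tau)$ satisfying conditions (i) and (ii) uniformly in $\tau$, which is where finite-dimensionality and local finiteness of $X$ and $Y$ are essential.
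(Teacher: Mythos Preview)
Your construction is far more elaborate than necessary and misses the one-line observation that makes this corollary immediate. The paper's proof simply notes that for a simplicial map $f$ between complexes carrying their standard metrics,
\[
f^{-1}(N_\ep(Sd^i\,\sigma)) = N_\ep(f^{-1}(Sd^i\,\sigma)),
\]
so the $X$-conditions for $f^{-1}(\sigma)$ are nothing but the conclusions of Proposition~\ref{setupthesqueeze} applied to $X$ itself (since $f^{-1}(\sigma)$ is a union of simplices of $X$ and $N_\ep$, $P_X$, $r_X$ all distribute over such unions). One then applies Proposition~\ref{setupthesqueeze} independently to $X$ and to $Y$, and takes $\ep(X,Y)=\min(\ep(X),\ep(Y))$ and $i(X,Y)=\max(i(X),i(Y))$. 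No $f$-adapted choices of $\Gamma^i_\tau(\tau)$, no stratified crushing rule, and no compatibility analysis are needed.

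Your approach --- choosing preferred simplices satisfying an extra condition (ii) tied to $f$, and then designing a crushing rule respecting the fibre structure --- could probably be pushed through, but it is doing by hand what the identity above gives for free. It also leaves some loose ends you would need to nail down: the consistency condition of Lemma~\ref{tdeterminesGammas} (that $\Gamma^i_\tau(\tau)\cap\tau' = \Gamma^i_{\tau'}(\tau')$ whenever this intersection is $|\tau'|$-dimensional) is not addressed for your $f$-adapted choices, and your ``stratified crushing rule'' needs to produce a single well-defined simplicial approximation $r_X$, not merely compatible instructions for each $\sigma'$. The paper's route avoids all of this by decoupling the construction on $X$ from $f$ entirely.
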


\begin{proof}
Note that for any $\ep>0$ and for all $\sigma\in Y$, 
\begin{equation}\label{transferep}
f^{-1}(N_{\ep}(Sd^i \sigma))=N_{\ep}(f^{-1}(Sd^i \sigma)) 
\end{equation}
where the former has $\ep$ measured in $Y$ and the latter in $X$ using the standard metrics on $X$ and $Y$. Apply Proposition \ref{setupthesqueeze} separately to each of $X$ and $Y$ and choose 
\begin{eqnarray*}
\ep(X,Y)&=& \mathrm{min}(\ep(X),\ep(Y)), \\
i(X,Y)&=&\mathrm{max}(i(X),i(Y)).
\end{eqnarray*}
Then the corollary follows from combining Proposition \ref{setupthesqueeze} and equation $(\ref{transferep})$.
\end{proof}
With the same choice of $r$ and $P$ we can consider the dual chain equivalence for cochains. A similar result to Proposition \ref{setupthesqueeze} can be proven which is also useful later for squeezing.
\begin{prop}\label{setupthedualsqueeze}
Choosing $\ep(X)$ and $i(X)$ as in Proposition \ref{setupthesqueeze}, the dual chain equivalence 
\begin{displaymath}
\xymatrix{ (\Delta^{-*}(X), \delta^{\Delta^{-*}(X)},0) \ar@<0.5ex>[rr]^-{r^*} && (\Delta^{-*}(Sd^i\,X), \delta^{\Delta^{-*}(Sd^i\,X)}, P^*) \ar@<0.5ex>[ll]^-{s^*} }
\end{displaymath}
to the chain equivalence provided by Proposition \ref{setupthesqueeze} satisfies 
\begin{enumerate}[(i)]
 \item $r^*(\Delta^{-*}(\mathring{\sigma})) \subset \Delta^{-*}(\bigcup_{\tau\geqslant\sigma}(Sd^i\,\tau \backslash N_\ep(\partial\tau\backslash \sigma)))$,
 \item $s^*(\Delta^{-*}(Sd^i\, \mathring{\sigma})) \subset \Delta^{-*}(\bigcup_{\tau\geqslant \sigma}\mathring{\tau})$, 
 \item $P^*(\Delta^{-*}(Sd^i\,\tau\backslash N_{\ep^\prime}(\partial\tau\backslash\sigma))) \subset \Delta^{-*+1}(Sd^i\,\tau\backslash N_{\ep^\prime/2}(\partial\tau\backslash\sigma))$, for all $\tau\geqslant\sigma$, $0\leqslant \ep^\prime\leqslant \ep$.  
\end{enumerate}
\end{prop}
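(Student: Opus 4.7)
The plan is to derive each of (i), (ii), (iii) by dualising the corresponding properties established in Proposition~\ref{setupthesqueeze}. The underlying principle is that, for a chain map $f\colon\Delta_*(X)\to\Delta_*(Y)$ and a cochain $\phi\in\Delta^{-*}(A)$ (vanishing on simplices outside $A$), the dual $f^*\phi=\phi\circ f$ is supported precisely on those simplices whose image under $f$ has a summand in $A$.

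I would first dispatch (ii), which is purely combinatorial. A generator of $\Delta^{-*}(Sd^i\mathring{\sigma})$ is the dual of some $\widetilde{\sigma}\subset\mathring{\sigma}$, and $s^*(\widetilde{\sigma}^*)$ is nonzero on $\sigma'\in X$ exactly when $Sd^i\sigma'$ contains $\widetilde{\sigma}$; this forces $\mathring{\sigma}\cap\sigma'\neq\emptyset$ and hence $\sigma\leqslant\sigma'$, so $s^*\phi$ is supported on $\bigcup_{\tau\geqslant\sigma}\mathring{\tau}$.

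For (i), the support of $r^*\phi$ is $r^{-1}(\sigma)\subset Sd^iX$. Any $\widetilde{\sigma}\in Sd^iX$ with $r(\widetilde{\sigma})=\pm\sigma$ lies in a unique minimal closed simplex $\tau\in X$; since $r$ is a simplicial approximation to the identity, $r(\widetilde{\sigma})\leqslant\tau$, forcing $\sigma\leqslant\tau$. To show $\widetilde{\sigma}\notin N_\ep(\partial\tau\backslash\sigma)$ I refine the construction in the proof of Lemma~\ref{keylemma}: at each inductive step defining $r_j$ on a barycentre $\widehat{\widetilde{\rho}}$ that is not a vertex of the chosen $\Gamma^j_\tau(\tau)$, rather than selecting an arbitrary vertex of $\widetilde{\rho}$ closer to $\partial\tau$, one selects a vertex lying on the face of $\tau$ nearest to $\widehat{\widetilde{\rho}}$. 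With this refinement, any simplex in $N_\ep(\partial\tau\backslash\sigma)\cap Sd^i\tau$ has its barycentres iteratively pushed toward a face of $\tau$ not containing $\sigma$, so $r(\widetilde{\sigma})$ lies in such a face and cannot equal $\sigma$. Hence $r^{-1}(\sigma)\cap Sd^i\tau\subset Sd^i\tau\backslash N_\ep(\partial\tau\backslash\sigma)$, which is (i).

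The main obstacle is (iii), which requires a face-specific version of the containment in Proposition~\ref{setupthesqueeze}. Using the canonical $P$ from Corollary~\ref{CanUseSingleTreeCor} and the forest $T=T_i\cup s_i(T_{i-1})\cup\cdots\cup s_i\circ\cdots\circ s_2(T_1)$, $P(\widetilde{\sigma})$ is supported in the union of prism-like simplices assembled from the vertices of $\widetilde{\sigma}$, the vertices of $sr(\widetilde{\sigma})$, and the intermediate tree-path vertices joining them. Under the refinement used for (i), the tree path emanating from a vertex of $\widetilde{\sigma}$ lying close to $\partial\tau\backslash\sigma$ moves toward a face of $\tau$ inside $\partial\tau\backslash\sigma$ at every step and therefore remains inside a neighbourhood of $\partial\tau\backslash\sigma$ throughout. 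Convexity of the distance function to the polyhedral subcomplex $\partial\tau\backslash\sigma$ then implies that every simplex in the support of $P(\widetilde{\sigma})$ contains a point within $\ep^\prime$ of $\partial\tau\backslash\sigma$ whenever $\widetilde{\sigma}$ contains one within $\ep^\prime/2$; the factor of two absorbs the vertex-to-interior jump controlled by $\mesh(Sd^iX)$, which is small by the choice of $i$ in Proposition~\ref{setupthesqueeze}. Dualising this containment yields (iii). The delicate point is verifying that the nearest-face refinement of Lemma~\ref{keylemma} can be performed consistently for all pairs $(\tau,\sigma)$ with $\sigma\leqslant\tau$ simultaneously, and that the cumulative displacement of $P$ under these refined $r_j$ fits inside the factor-of-two slack between $\ep^\prime$ and $\ep^\prime/2$; this book-keeping is where essentially all of the remaining work resides.
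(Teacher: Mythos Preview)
Your treatment of (ii) matches the paper's exactly. For (i) and (iii), however, you take a significantly harder route than the paper, and the detour creates the very consistency problem you flag at the end.

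The paper does \emph{not} refine the construction of $r$ from Lemma~\ref{keylemma}. Instead it argues (i) directly from the statement of Proposition~\ref{setupthesqueeze}: since $r$ maps $N_\ep(\partial\sigma')\cap Sd^i\sigma'$ into $\partial\sigma'$ for \emph{every} $\sigma'\in X$, the preimage $r^{-1}(\mathring{\sigma})$ automatically avoids an $\ep$-neighbourhood of $\partial\tau\backslash\sigma$ inside each $Sd^i\tau$ with $\tau\geqslant\sigma$. No ``nearest-face'' choice is needed; the existing ``closer-to-boundary'' choice already suffices because it is made consistently across all simplices of $X$ and their faces.

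For (iii) the paper again avoids any tree-level analysis. The only property used is that $P$ was built (in Lemma~\ref{keylemma}) to move simplices \emph{toward} $\partial\tau$. Dualising, $P^*$ can only move supports \emph{away} from $\partial\tau$. So the image of $Y=Sd^i\tau\backslash N_{\ep'}(\partial\tau\backslash\sigma)$ under $P^*$ can only leak out of $Y$ near $\sigma$ itself (the one part of $\partial\tau$ not bounding $Y$). A simple distance-bisection---any such point is closer to $Y$ than to $\partial\tau\backslash\sigma$---gives the containment in $Sd^i\tau\backslash N_{\ep'/2}(\partial\tau\backslash\sigma)$, and that is where the factor of $2$ comes from.

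Your refinement, by contrast, asks for a single $r$ that simultaneously pushes toward the nearest face for \emph{every} pair $\sigma\leqslant\tau$, and you rightly identify this as the crux. It is not obvious such a choice exists (different pairs impose competing ``nearest-face'' constraints on the same barycentre), and even if it does, verifying it is considerably more work than the paper's direct argument. The gap is real for your approach but is entirely avoidable: drop the refinement and deduce (i) and (iii) straight from the crushing property of $r$ and the toward-boundary property of $P$ that Proposition~\ref{setupthesqueeze} already guarantees.
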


\begin{proof}
\begin{enumerate}[(i)]
 \item The support of $r^*(\mathring{\sigma})$ is all the simplices in $Sd^i\, X$ that map under $r$ to $\mathring{\sigma}$. By Proposition \ref{setupthesqueeze}, for all $\sigma\in X$, an $\ep$-neighbourhood of $\partial\sigma$ is mapped to $\partial \sigma$ by $r$, thus the preimage of $\mathring{\sigma}$ under $r$ is contained in \[\bigcup_{\tau\geqslant\sigma}(Sd^i\,\tau \backslash N_\ep(\partial\tau\backslash \sigma)),\] from which the result follows.
 \item The support of $s^*(Sd^i\, \mathring{\sigma})$ is all the simplices in $X$ that map under $s$ to $Sd^i\, \mathring{\sigma}$. By Proposition \ref{setupthesqueeze}, $s$ only maps from a simplex interior to its closure which immediately implies the result.
 \item The support of $P^*(\mathring{\widetilde{\sigma}})$ is all the simplices in $Sd^i\,X$ that map under $P$ to $\mathring{\widetilde{\sigma}}$. Recall that we constructed $P$ in Lemma \ref{keylemma} so that it maps simplices \textit{towards the boundary}. This means that the image in $Sd^i\tau$ under $P^*$ of $Sd^i\tau \backslash N_{\ep^\prime}(\partial \tau)$ is contained in $Sd^i\tau \backslash N_{\ep^\prime}(\partial \tau)$ for all $0\leqslant\ep^\prime\leqslant\ep$. This is not quite the claim. Consider now $P^*$ applied to $Y=Sd^i\,\tau\backslash N_{\ep^\prime}(\partial\tau\backslash\sigma)$. Everything outside $Y$ is closer to the boundary than $Y$, except possibly near $\sigma$. We just need to determine how far out of $Y$ it is possible for $P^*$ to map, but $P^*$ can only map to points closer to $Y$ than $\partial\tau\backslash \sigma$. Thus the image is supported on \[Sd^i\,\tau\backslash N_{\ep^\prime/2}(\partial\tau\backslash\sigma)\] as claimed. \Figref{Fig:better1} illustrates this argument; on the left the dotted lines indicate $\ep$-neighbourhoods of the boundary and on the right they indicate $\ep/2$-neighbourhoods of the boundary. The lower face is $\sigma$ and $\tau$ is the whole simplex.

\begin{figure}[ht]
\begin{center}
{
\psfrag{s}[][]{$\sigma$}
\psfrag{blah}[][]{$Y$}
\includegraphics[width = 5.5cm]{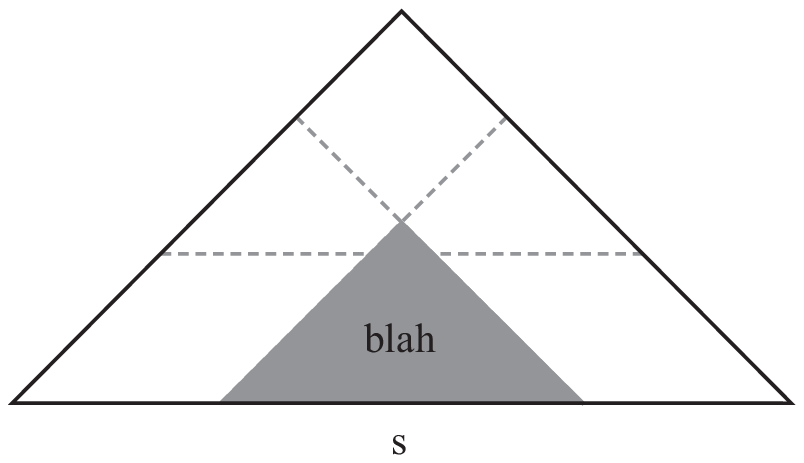}
\psfrag{blah}[][]{$P^*(Y)$}
\includegraphics[width = 5.5cm]{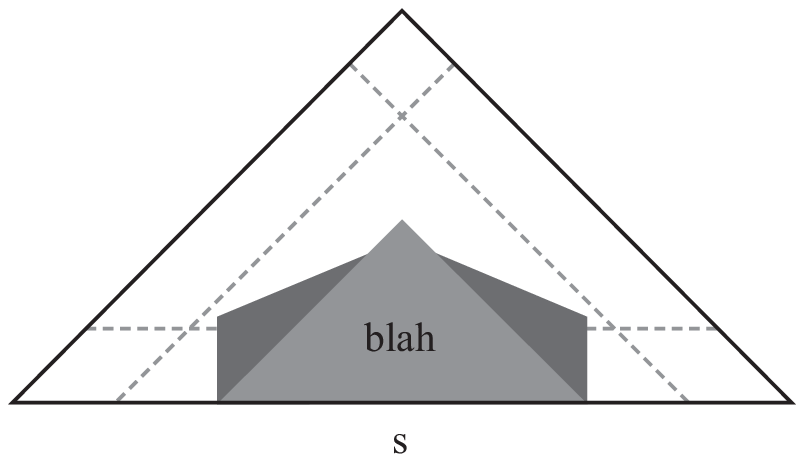}
}
\caption{$P^*$ applied to $Y=Sd^i\,\tau\backslash N_{\ep^\prime}(\partial\tau\backslash\sigma)$.}
\label{Fig:better1}
\end{center}
\end{figure}
\end{enumerate}
\end{proof}

\chapter{Controlled topological Vietoris-like theorem for simplicial complexes}\label{chapsix}
In this chapter we will prove the main topological result of this thesis:

\begin{thm}\label{maintopthm}
Let $f:X\to Y$ be a simplicial map between finite-dimensional locally finite simplicial complexes, then the following are equivalent:
\begin{enumerate}[(1)]
 \item $f$ has contractible point inverses,
 \item $f$ is an $\ep$-controlled homotopy equivalence measured in $Y$ for all $\ep>0$,
 \item $f\times\id_\R: X\times\R \to Y\times\R$ is an $O(Y^+)$-bounded homotopy equivalence.
\end{enumerate}
\end{thm}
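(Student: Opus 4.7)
The plan is to prove this by establishing the cycle $(1) \Rightarrow (3) \Rightarrow (2) \Rightarrow (1)$, leaning heavily on the structural characterization of surjective simplicial maps between finite-dimensional locally finite simplicial complexes. Specifically, the preimage of each open simplex decomposes as a product $f^{-1}(\mathring{\sigma}) \cong \mathring{\sigma}\times K(\sigma)$ for some finite-dimensional locally finite simplicial complex $K(\sigma)$, and $f$ has contractible point inverses if and only if each $K(\sigma)$ is contractible. I would establish this product structure first, as it is the geometric backbone driving every implication.

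For $(1)\Rightarrow(3)$, which I expect to be the main obstacle, the strategy is to build a homotopy inverse using the fundamental $\ep$-subdivision cellulation $X_\ep^\prime$ of Definition \ref{Defn:fundamentalcellulation}. Over each open simplex $\mathring{\sigma}\subset Y$ the product structure gives a tautological local section $\mathring{\sigma}\hookrightarrow \mathring{\sigma}\times K(\sigma)=f^{-1}(\mathring{\sigma})$ obtained by picking any point in the contractible $K(\sigma)$; the problem is that these local sections do not match on the boundary of $\sigma$. The cellulation $Y_\ep^\prime$ is designed precisely to resolve this: each cell $\Gamma_{\sigma_0,\ldots,\sigma_i}(\tau)\cong\tau\times\widehat{\sigma_0}\ldots\widehat{\sigma_i}$ provides the simplicial track along which the local section for $\sigma_0$ can be interpolated across a collar of $\partial\sigma_i$ using successive contractions $K(\sigma_0)\simeq K(\sigma_1)\simeq\cdots\simeq K(\sigma_i)\simeq *$ obtained from contractibility of point inverses. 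This yields an $\ep$-controlled homotopy inverse $g_\ep$ with homotopies whose tracks lie inside the cells of $Y_\ep^\prime$ (hence of size $\leqslant \ep$). Crucially, because $\{Y_\ep^\prime\}_{\ep>0}$ is a continuous family (in fact PL, by Remark \ref{crushinggammas}), the family $\{g_\ep\}$ assembles into a single map $g\colon Y\times\R\to X\times\R$ whose restriction to the slice $Y\times\{\ep\}$ is essentially $g_\ep$. Measured in $O(Y^+)$, where the slice $Y\times\{\ep\}$ is scaled by $\ep$, the $\ep$-controlled discrepancies at height $\ep$ have $O(Y^+)$-size bounded by a uniform constant, giving the required $O(Y^+)$-bounded homotopy inverse for $f\times\id_\R$.

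For $(3)\Rightarrow(2)$, I would slice. Given an $O(Y^+)$-bounded homotopy inverse $g$ to $f\times\id_\R$ with bound $B$, restrict to slices $(Y\times\{t\})$. Since $d_{O(Y^+)}((y,t),(y',t)) = t\cdot d_Y(y,y')$ for $t\geqslant 0$, a bound $B$ in $O(Y^+)$ translates to a bound $B/t$ when measured in $Y$. A small adjustment (composing with projection $X\times\R\to X$ and insertion $Y\to Y\times\{t\}$, then using the $\R$-direction bound $B$ to push the image back into $X$) produces an $\ep$-controlled homotopy equivalence with $\ep \approx B/t$; since $t$ is arbitrary, $f$ is $\ep$-controlled for all $\ep>0$.

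For $(2)\Rightarrow(1)$, first observe $f$ must be surjective: if $y\in Y\setminus f(X)$, then for $\ep$ smaller than the distance from $y$ to $f(X)$ there is no $x\in X$ with $d(f(x),y)<\ep$, contradicting the existence of an $\ep$-controlled homotopy inverse. Apply the product characterization: $f^{-1}(\mathring{\sigma})\cong \mathring{\sigma}\times K(\sigma)$. Fix $\sigma\in Y$ and pick $\ep$ much smaller than $\mathrm{rad}(\sigma)$ and $\mathrm{comesh}(X)$, and let $g_\ep$ be an $\ep$-controlled homotopy inverse with $\ep$-controlled homotopy $h\colon g_\ep\circ f\simeq \id_X$. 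For any $x_0\in f^{-1}(o(\sigma))$, both the track $t\mapsto h(x_0,t)$ and the composition $g_\ep\circ f$ remain inside $f^{-1}(\sigma)$, so restriction of $h$ to the fibre $\{y_0\}\times K(\sigma)\subset f^{-1}(\mathring{\sigma})$ together with the fact that $g_\ep f$ factors through $Y$ (and hence collapses the fibre direction) yields a null-homotopy of $K(\sigma)$. Hence every $K(\sigma)$ is contractible, which is (1). The hard part throughout is the patching in $(1)\Rightarrow(3)$, because the fundamental $\ep$-subdivision cellulation must be used both to define the sections cell-by-cell and to keep the homotopy tracks $O(Y^+)$-bounded uniformly in $\ep$.
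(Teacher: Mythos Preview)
Your proposal is correct and follows essentially the same route as the paper: the cycle $(1)\Rightarrow(3)\Rightarrow(2)\Rightarrow(1)$, driven by the product decomposition $f^{-1}(\mathring{\sigma})\cong\mathring{\sigma}\times K(\sigma)$, with the fundamental $\ep$-subdivision cellulation of $Y$ used to patch local sections for $(1)\Rightarrow(3)$, slicing at height $t$ for $(3)\Rightarrow(2)$, and surjectivity plus the fibrewise contraction from $g_\ep\circ f\simeq\id_X$ for $(2)\Rightarrow(1)$. One small parametrisation point: placing $g_\ep$ at height $\ep$ only covers $Y\times(0,\comesh(Y))$; the paper instead uses $g_{1/t}$ at height $t\geqslant 1$ and $g_1$ for $t\leqslant 1$, which gives a map defined on all of $Y\times\R$ with uniform $O(Y^+)$-bound---but this is a cosmetic fix and your underlying idea is the same.
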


Note in the statement above that $f$ is not required to be a proper map, so following Example \ref{earliercomments} these conditions do not imply that $f$ is cell-like (but are implied by $f$ being cell-like).

A contractible map is necessarily surjective as the empty set is not contractible. We will prove that conditions $(2)$ and $(3)$ also force $f$ to be surjective, so a good place to start is by discussing surjective simplicial maps and in particular contractible ones.

\section{A study of contractible simplicial maps}
By examining surjective simplicial maps we prove $(1)\Rightarrow(3)$ of Theorem \ref{maintopthm} directly:

\begin{prop}\label{constructhtpyinv}
Let $f:X\to Y$ be a contractible simplicial map of finite-dimensional locally finite simplicial complexes, then $f\times\id:(X\times\R,j_Y(f\times\id)) \to (Y\times\R,j_Y)$ is an $O(Y^+)$-bounded homotopy equivalence.
\end{prop}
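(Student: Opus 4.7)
The plan is to build the bounded homotopy equivalence from a continuous family $\{g_\ep\}$ of $\ep$-controlled homotopy inverses to $f$, using the fundamental $\ep$-subdivision cellulation $Y_\ep^\prime$ to do the patching. The first step, which the introduction promises to establish in the body of the chapter, is the characterisation of surjective simplicial maps: for each $\sigma\in Y$ we have a PL homeomorphism $f^{-1}(\mathring{\sigma})\cong \mathring{\sigma}\times K(\sigma)$, and contractibility of $f$ is equivalent to contractibility of each $K(\sigma)$. Choose basepoints $p_\sigma\in K(\sigma)$ and null-homotopies $h_\sigma:K(\sigma)\times I\to K(\sigma)$ from $\id_{K(\sigma)}$ to $\mathrm{const}_{p_\sigma}$; these define local sections $\mathring{\sigma}\to f^{-1}(\mathring{\sigma})$, $y\mapsto (y,p_\sigma)$, which fail to patch together continuously precisely on simplex boundaries.

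For each $\ep<\comesh(Y)$ the cellulation $Y_\ep^\prime$ of Definition \ref{Defn:fundamentalcellulation} decomposes $Y$ into higher-homotopy cells $\Gamma_{\sigma_0,\ldots,\sigma_i}(\tau)\cong \tau\times\Delta^i$, indexed by chains $\tau\leqslant\sigma_0<\ldots<\sigma_i$. I will define $g_\ep:Y\to X$ on each such cell by using the $\Delta^i$-coordinate to interpolate between the basepoint sections over $\sigma_0,\ldots,\sigma_i$, via the null-homotopies $h_{\sigma_j}$ and the natural maps of fibres $K(\sigma_0)\to K(\sigma_j)$ coming from the inclusions of faces. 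The boundary relation \eqref{boundaryofgamma} for the cells of $Y_\ep^\prime$ is exactly what is needed to ensure the definitions agree on shared faces and thus assemble to a continuous $g_\ep$. Because each cell $\Gamma_{\sigma_0,\ldots,\sigma_i}(\tau)$ is contained in the $\ep$-neighbourhood of $\tau$ inside $\mathrm{star}(\tau)$, the composite $f\circ g_\ep$ is within $\ep$ of $\id_Y$, and a parallel interpolation construction over $Y_\ep^\prime\times I$ (using Remark \ref{crushinggammas} to relate $Y_\ep^\prime$ to the original triangulation) produces $\ep$-controlled homotopies $g_\ep\circ f\sim\id_X$ and $f\circ g_\ep\sim \id_Y$.

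To upgrade this to the $O(Y^+)$-bounded statement, I exploit two facts: first, the family $\{Y_\ep^\prime\}$ varies continuously in $\ep$; second, distances in $O(Y^+)$ at height $t\geqslant 0$ are scaled by $t$ relative to distances in $Y$. Fix $\ep_0<\comesh(Y)$ and define $g:Y\times\R\to X\times\R$ by $(y,t)\mapsto(g_{\ep(t)}(y),t)$ where $\ep(t)=\ep_0$ for $t\leqslant 1$ and $\ep(t)=\ep_0/t$ for $t\geqslant 1$; continuity in $\ep$ of the cell-wise construction makes $g$ continuous. The slice $g_{\ep(t)}$ has control $\ep(t)$ in $Y$, so its contribution to the discrepancy $d_{O(Y^+)}(j_Y(y,t),j_Y(f\times\id)g(y,t))$ is at most $\max\{t,0\}\cdot\ep(t)\leqslant\ep_0$, giving a uniform bound. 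The same slicing applied to the controlled homotopies above, together with the PL homotopy from $Y_\ep^\prime$ to the triangulation of $Y$ in Remark \ref{crushinggammas}, produces $O(Y^+)$-bounded homotopies witnessing that $f\times\id$ is a bounded homotopy equivalence.

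The main obstacle is the coherent construction of the interpolating maps on the cells $\Gamma_{\sigma_0,\ldots,\sigma_i}(\tau)$. This requires choosing not just null-homotopies $h_\sigma$ but a full system of higher homotopies indexed by chains $\sigma_0<\ldots<\sigma_i$ that is compatible with the face inclusions $K(\sigma_j)\hookrightarrow K(\sigma_k)$; such a system is built by induction on $i$, using contractibility of each $K(\sigma)$ to extend each partial choice across the boundary of $\Delta^i$. Once this coherent system is in place the patching, the continuity in $\ep$, and the bound estimates in $O(Y^+)$ all follow by direct inspection from the product structure of $\Gamma_{\sigma_0,\ldots,\sigma_i}(\tau)$ and the scaling property of the open cone metric.
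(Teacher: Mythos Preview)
Your proposal is essentially the paper's proof: build $g_\ep$ cell-by-cell on $Y_\ep'$ by an induction on the length of the chain $\sigma_0<\cdots<\sigma_i$, using contractibility of the fibre over the bottom simplex to fill $\partial\Delta^i\to K(\sigma_0)$ across $\Delta^i$, then assemble the $g_{\ep(t)}$ into a bounded inverse over $O(Y^+)$ exactly as you describe. Two small corrections: the fibre inclusions run the other way --- for $\sigma_0<\sigma_j$ one has $K(\sigma_j)\subset K(\sigma_0)$, and it is into $K(\sigma_0)$ that all the boundary data lands --- and the homotopy $g_\ep\circ f\sim\id_X$ is not quite a ``parallel interpolation'' but a second obstruction-theoretic induction that needs the extra observation that each closed preimage $f^{-1}(\Gamma_{\sigma_0,\ldots,\sigma_i}(\sigma_0))$ is itself contractible (this follows from the chain of retractions $K(\sigma_0)\to K(\sigma_1)\to\cdots\to K(\sigma_i)$).
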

In order to prove this proposition, we will require a few lemmas:
\begin{lem}\label{surjectivesimplicialmapssimplextosimplex}
Let $f$ be a surjective simplicial map from an $m$-simplex $\sigma=v_0\ldots v_m$ to an $n$-simplex $\tau=w_0\ldots w_n$. Let $X_i = \{v_{i,1},\ldots, v_{i,{r_i}} \}$ denote the set of vertices of $\sigma$ mapped by $f$ to $w_i$ and let $r_i = |X_i|$. Then there is a $PL$ homeomorphism
\[f^{-1}(\mathring{\tau}) \cong \mathring{\tau} \times \prod_{i=0}^n \Delta^{r_i-1}.\]
\end{lem}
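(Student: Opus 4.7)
The plan is a direct computation in barycentric coordinates, using the partition of $V(\sigma)$ induced by $f$ to separate out the $\tau$-direction from the fibre directions. The key observation is that simplicial maps are linear in barycentric coordinates, so the fibre over a point of $\mathring{\tau}$ naturally decomposes as a product of the standard simplices spanned by the preimages of each vertex $w_i$.

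More concretely, first I would write an arbitrary point of $\sigma$ as $x = \sum_{i=0}^{n}\sum_{j=1}^{r_i} s_{i,j}\, v_{i,j}$ with $s_{i,j}\geqslant 0$ and $\sum_{i,j} s_{i,j}=1$. Since $f$ is simplicial and maps each $v_{i,j}$ to $w_i$, linearity in barycentric coordinates gives
\[
f(x) \;=\; \sum_{i=0}^{n}\Bigl(\sum_{j=1}^{r_i} s_{i,j}\Bigr) w_i \;=\; \sum_{i=0}^{n} t_i\, w_i, \qquad t_i := \sum_{j=1}^{r_i} s_{i,j}.
\]
Thus $x\in f^{-1}(\mathring{\tau})$ if and only if $t_i>0$ for every $i$, which lets me rescale by setting $\alpha_{i,j}:= s_{i,j}/t_i$ so that $(\alpha_{i,1},\ldots,\alpha_{i,r_i})\in\Delta^{r_i-1}$ for each $i$.

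Next I define the candidate homeomorphism
\[
\Phi \colon f^{-1}(\mathring{\tau}) \longrightarrow \mathring{\tau}\times\prod_{i=0}^{n}\Delta^{r_i-1}, \qquad x \longmapsto \bigl(f(x),\, (\alpha_{i,j})_{i,j}\bigr),
\]
with explicit two-sided inverse
\[
\Psi\bigl(\textstyle\sum_i t_i w_i,\, (\alpha_{i,j})\bigr) \;=\; \sum_{i,j} t_i\alpha_{i,j}\, v_{i,j}.
\]
A direct check shows $\Phi\circ\Psi=\id$ and $\Psi\circ\Phi=\id$: the compositions just reassemble or disassemble the barycentric coordinates $s_{i,j}=t_i\alpha_{i,j}$.

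The only step requiring any care is verifying that $\Phi$ and $\Psi$ are PL. This is essentially automatic: $\Psi$ is given by polynomial (in fact bilinear) formulas in the coordinates $(t_i,\alpha_{i,j})$, while $\Phi$ is continuous on the open subset $f^{-1}(\mathring{\tau})$ where all $t_i>0$, with each $\alpha_{i,j}=s_{i,j}/t_i$ a rational function whose denominator does not vanish. Restricting to any small convex cell in $f^{-1}(\mathring{\tau})$ on which the $t_i$ are bounded away from $0$, both maps agree with linear maps after a PL change of coordinates, so $\Phi$ is a PL homeomorphism. I do not expect any genuine obstacle here; the lemma is really a coordinate statement that will be used in subsequent sections to reduce statements about preimages of open simplices to statements about the factor $K(\sigma) = \prod_i \Delta^{r_i-1}$.
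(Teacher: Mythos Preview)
Your proof is correct and follows essentially the same approach as the paper: the paper writes down the identical explicit maps $\phi$ and $\psi$ (your $\Phi$ and $\Psi$) using the decomposition $s_{i,j}=t_i\alpha_{i,j}$, with $\Lambda_i=\sum_j \lambda_{i,j}$ playing the role of your $t_i$. You add a little more justification for why the maps are PL, which the paper leaves implicit.
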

\begin{proof}
Let $(t_0,\ldots, t_n)$ be barycentric coordinates for $\tau$ with respect to the vertices $w_0,\ldots, w_n$. Each set $X_i$ spans an $(r_i-1)$-simplex 
\[\{\sum_{j=1}^{r_i}s_{i,j}v_{i,j}\,|\, s_{i,j}\geqslant 0, \sum_{j}s_{i,j} = 1 \} \]
in the preimage over any point $(t_0,\ldots, t_n)$ in $\tau$ for which $t_i\neq 0$. Over the interior of $\tau$ we have that $t_i\neq 0$ for all $i$, so each point inverse is $PL$ homeomorphic to the product 
\[\prod_{i=0}^n \Delta^{r_i-1}\] 
and these fit together by linearity to give \[f^{-1}(\mathring{\tau}) \cong \mathring{\tau} \times \prod_{i=0}^n \Delta^{r_i-1}.\]
Explicitly a $PL$ homeomorphism is given by 
\begin{displaymath}
 \xymatrix{ f^{-1}(\mathring{\tau}) \ar@<0.5ex>[rr]^-{\phi} && \mathring{\tau} \times \Delta^{r_0-1}\times \ldots \Delta^{r_n-1} \ar@<0.5ex>[ll]^-{\psi}
}
\end{displaymath}
with
\begin{eqnarray*}
 \phi(\sum_{i=0}^n\sum_{j=1}^{r_i}{\lambda_{i,j}v_{i,j}}) &=&  (\sum_{i=0}^n{\Lambda_i w_i}, \sum_{j=1}^{r_0}\Lambda_0^{-1}\lambda_{0,j}v_{0,j}, \ldots, \sum_{j=1}^{r_n}\Lambda_n^{-1}\lambda_{n,j}v_{n,j}), \\
\psi(\sum_{i=0}^n{t_iw_i}, \sum_{j=1}^{r_0}s_{0,j}v_{0,j}, \ldots, \sum_{j=1}^{r_n}s_{n,j}v_{n,j}) &=& \sum_{i=0}^n\sum_{j=1}^{r_i}{t_is_{i,j}v_{i,j}} 
\end{eqnarray*}
where $\Lambda_i := \sum_{j=1}^{r_i}{\lambda_{i,j}}$.
\end{proof}

\begin{ex}
Let $X=v_0v_1v_2v_3$ be a $3$-simplex and $Y=w_0w_1$ a $1$-simplex. Let $f:X\to Y$ be the simplicial map defined by sending $v_0,v_1$ to $w_0$ and $v_2,v_3$ to $w_1$. Then
\begin{eqnarray*}
 f^{-1}(\mathring{w}_0) = v_0v_1 &\cong& \mathring{w}_0 \times \Delta^1, \\
 f^{-1}(\mathring{w}_1) = v_2v_3 &\cong& \mathring{w}_1 \times \Delta^1, \\
 f^{-1}(\mathring{Y}) = X \backslash (v_0v_1\cup v_2v_3) &\cong& \mathring{Y} \times \Delta^2.
\end{eqnarray*}
This is illustrated in \Figref{Fig:Last}.
\begin{figure}[ht]
\begin{center}
{
\psfrag{1}[r][r]{$v_0$}
\psfrag{2}[r][r]{$v_1$}
\psfrag{3}[r][r]{$w_0$}
\psfrag{4}[b][b]{$\Delta^1\times\Delta^1$}
\psfrag{5}[l][l]{$v_2$}
\psfrag{6}[l][l]{$v_3$}
\psfrag{7}[l][l]{$w_1$}
\psfrag{8}[l][l]{$f$}
\includegraphics[width=5cm]{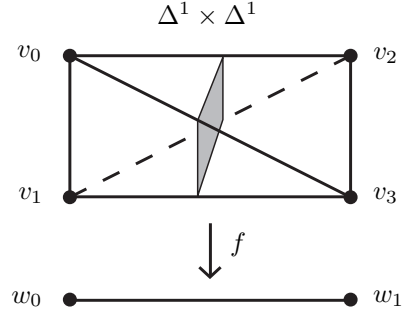}
}
\caption{Seeing that inverse images of open simplices are products.}
\label{Fig:Last}
\end{center}
\end{figure}
\qed\end{ex}

For a simplicial surjection onto a single simplex $\sigma$, we can piece together the PL homeomorphisms given by Lemma \ref{surjectivesimplicialmapssimplextosimplex}.

\begin{lem}\label{Ksigmacontractible}
Let $f:X \to \tau$ be a surjective simplicial map from a finite-dimensional locally finite simplicial complex $X$ to an $n$-simplex $\tau$. Then 
\begin{enumerate}[(i)]
 \item for all $\rho\leqslant \tau$, $f^{-1}(\mathring{\rho}) \cong \mathring{\rho} \times K(\rho)$ for some finite-dimensional locally finite simplicial complex $K(\rho)$,
 \item if $f$ is a contractible map, then $K(\rho)$ is contractible, for all $\rho\leqslant \tau$, 
 \item if $K(\rho)$ is contractible for all $\rho\leqslant \tau$, then $f$ is a contractible map.
\end{enumerate}
\end{lem}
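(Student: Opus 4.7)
The strategy is to reduce everything to Lemma \ref{surjectivesimplicialmapssimplextosimplex} by gluing the local product descriptions it provides. Since $f$ is simplicial, $f^{-1}(\mathring{\rho})$ is the disjoint union of the open simplices $\mathring{\sigma'}$ of $X$ whose closed span $\sigma'$ satisfies $f(\sigma') = \rho$. For each closed simplex $\sigma$ of $X$ with $f(\sigma) = \rho$, Lemma \ref{surjectivesimplicialmapssimplextosimplex} identifies $\sigma \cap f^{-1}(\mathring{\rho})$ with $\mathring{\rho} \times \prod_{i=0}^{n} \Delta^{r_i^\sigma - 1}$, where $r_i^\sigma$ is the number of vertices of $\sigma$ mapping to $w_i$. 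I would define $K(\rho) := f^{-1}(\widehat{\rho})$ with its inherited PL structure: a polyhedron built from product cells $\prod_i \Delta^{r_i^\sigma - 1}$, one for each $\sigma \in X$ with $f(\sigma)=\rho$, which may be triangulated coherently to yield a genuine simplicial complex if desired. The global map is
\[ \phi : f^{-1}(\mathring{\rho}) \longrightarrow \mathring{\rho} \times K(\rho), \qquad x \longmapsto (f(x), \psi(x)), \]
where $\psi$ is the rescaling of the barycentric coordinates of $x$ given by the explicit formula in the proof of Lemma \ref{surjectivesimplicialmapssimplextosimplex}, normalised to land in $f^{-1}(\widehat{\rho})$.

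Parts (ii) and (iii) are then immediate consequences of (i). For (ii), contractibility of the point inverse $f^{-1}(\widehat{\rho})$ forces contractibility of $K(\rho)$, for every $\rho \leqslant \tau$. For (iii), any $y \in \tau$ lies in $\mathring{\rho}$ for a unique face $\rho \leqslant \tau$, and the homeomorphism from (i) restricts to $f^{-1}(y) \cong \{y\} \times K(\rho) \cong K(\rho)$, which is contractible by hypothesis; hence every point inverse of $f$ is contractible. Finite-dimensionality and local finiteness of $K(\rho)$ are inherited directly from those of $X$.

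The only substantive obstacle is the gluing step in (i): checking that the local identifications supplied by Lemma \ref{surjectivesimplicialmapssimplextosimplex} for different simplices $\sigma, \sigma'$ with $f(\sigma) = f(\sigma') = \rho$ agree on any common face. This reduces to verifying that the rescaling formula for $\psi$ depends only on the barycentric coordinates of $x$ in the unique open simplex containing it and on the vertex map of $f$, and in particular is independent of which ambient simplex one uses to express $x$; this is a direct computation from the formula displayed in the proof of Lemma \ref{surjectivesimplicialmapssimplextosimplex}. Once this is in hand, $\phi$ is globally well-defined, PL, and bijective, and hence a PL homeomorphism, completing (i) and thereby the whole statement.
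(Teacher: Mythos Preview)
Your proposal is correct and follows essentially the same approach as the paper: both reduce part (i) to Lemma~\ref{surjectivesimplicialmapssimplextosimplex} applied simplex-by-simplex and then glue, and parts (ii) and (iii) are handled identically. The only difference is cosmetic: the paper builds $K(\rho)$ as an abstract quotient $\coprod_{\sigma} K(\rho,\sigma)/\!\sim$ identifying pieces along common faces, whereas you realise $K(\rho)$ concretely as the fibre $f^{-1}(\widehat{\rho})$ and check that the rescaling maps $\psi$ agree on overlaps---these are two descriptions of the same object, and your version is arguably the cleaner one since the compatibility check is exactly what justifies the paper's quotient being well-defined.
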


\begin{proof}\label{generalsurjectivesimplicialmapstoasimplex}
\begin{enumerate}[(i)]
 \item  For each $\sigma\in X$ that surjects onto $\tau$, let $f_\sigma = f|: \sigma \to \tau$. We know by Lemma \ref{surjectivesimplicialmapssimplextosimplex} that \[f_\sigma^{-1}(\mathring{\tau})\cong \mathring{\tau}\times K(\tau,\sigma)\] where $K(\tau,\sigma)$ is some product of closed simplices. Let $\rho<\sigma$ also surject onto $\tau$, then $K(\tau,\rho)\subset K(\tau,\sigma)$ so we can build up the preimage as
\[ K(\tau):=  \coprod_{\mathring{\sigma}\in f^{-1}(\mathring{\tau})}{K(\tau,\sigma)}\scalebox{1.6}{\raisebox{-1mm}{$\diagup$}}\hspace{-1mm}\raisebox{-3mm}{$\sim$} \]where we identify $K(\tau,\rho)\subset K(\tau,\sigma)$ with $K(\tau,\rho)\subset K(\tau,\sigma^\prime)$ for all $\rho \subset \sigma\cap \sigma^\prime$. $K$ is by construction a cellulation, but by subdividing we can triangulate $K$ to make it a simplicial complex as required. 
\item If $f$ is contractible then $f^{-1}(x)\simeq *$ for all $x\in \tau$. For any $\rho\leqslant\tau$, pick $x\in\mathring{\rho}$. Then $f^{-1}(x)=\{x\}\times K(\rho)$, so $K(\rho)$ is contractible.
\item If $K(\rho)\simeq *$ for all $\rho\leqslant\tau$, then for all $x\in\tau$ there exists a unique $\rho\leqslant\tau$ such that $x\in\mathring{\rho}$. This means that $f^{-1}(x) = \{x\}\times K(\rho)\simeq *$ for all $x$. Thus $f$ is a contractible map.
\end{enumerate}
\end{proof}

With this lemma we now have enough to prove Proposition \ref{constructhtpyinv}. Given a surjective simplicial map $f:X\to Y$ of finite-dimensional locally finite simplicial complexes, for all $\sigma\in Y$ we know that $f^{-1}(\mathring{\sigma})\cong \mathring{\sigma}\times K(\sigma)$ for some contractible $K(\sigma)$. This means $f$ is a trivial fibration over each $\mathring{\sigma}$ so we can define a local section. Roughly speaking, the contractibility of $K(\sigma)$ for all $\sigma$ allows us to piece together the local sections to get a global homotopy inverse $g_\ep$, for all $\ep>0$, that is an \textit{approximate section} in the sense that $f\circ g_\ep \simeq \id_Y$ via homotopy tracks of diameter $<\ep$. This is precisely the notion of an approximate fibration as defined by Coram and Duvall in \cite{CoramDuvall}:

\begin{defn}
Let $p:X\to B$ be a map to a metric space $B$ and $\ep>0$. A map $q:E\to X$ is called a $\ep$-fibration if the lifting problem
\begin{displaymath}
 \xymatrix{
Z\times\{0\} \ar[d]_-{i} \ar[r]^-{f} & E \ar[d]^{q}\\
Z\times [0,1] \ar@{-->}[ur]^-{\widetilde{F}} \ar[r]^-{F} & X
}
\end{displaymath}
has a solution $\widetilde{F}$ such that $\widetilde{F}\circ i = f$ and $d(pq\widetilde{F}(z,t),pF(z,t))<\ep$ for all $(z,t)\in Z\times [0,1]$. The map $q$ is called an \textit{approximate fibration} if it is an $\ep$-fibration for all $\ep$.
\qed\end{defn}

\begin{proof}[Proof of Proposition \ref{constructhtpyinv}]
Let $f:X\to Y$ be a contractible map between finite-dimensional locally finite simplicial complexes. We explicitly construct a one parameter family $g_\ep$ of homotopy inverses with homotopy tracks of diameter at most $\ep$ when measured in the target space $Y$. For all $0<\ep<\comesh(Y)$ we can give $Y$ the fundamental $\ep$-subdivision cellulation as defined in Definition \ref{Defn:fundamentalcellulation}. For all $\sigma\in Y$, by Lemma \ref{surjectivesimplicialmapssimplextosimplex} there exist $PL$ isomorphisms
\begin{displaymath}
 \xymatrix{ \phi_\sigma: \mathring{\sigma}\times K(\sigma) \ar[r]^-{\simeq}  & f^{-1}(\mathring{\sigma}). 
}
\end{displaymath}
For each $\sigma\in Y$, choose a point $\gamma_{\sigma}(1) \in K(\sigma)$. We think of $\gamma_\sigma(1)$ as the image of a map $\gamma_\sigma: \Delta^0 \to K(\sigma)$. Define $g$ on $\Gamma_\sigma(\sigma)$ as the closure of the map 
\begin{displaymath}
 \xymatrix{ \Gamma_\sigma(\mathring{\sigma}) \ar[r]^-{\Gamma_\sigma^{-1}} & \mathring{\sigma}\times\Delta^0 \ar[rrr]^-{\left(\begin{array}{cc} \id_{\mathring{\sigma}} & 0 \\ 0 & \gamma_{\sigma}\end{array} \right)} &&& \mathring{\sigma}\times K(\sigma) \ar[r]^-{\phi_\sigma} & f^{-1}(\mathring{\sigma}). 
}
\end{displaymath}
Suppose now we have defined $g$ continuously on $\Gamma_{\sigma_0,\ldots,\sigma_i}(\mathring{\sigma}_0)$ for all sequences of inclusions $\sigma_0<\ldots<\sigma_i$ for $i\leqslant n$. Let $\sigma_0<\ldots<\sigma_{n+1}$ be a sequence of inclusions, then there is a continuous map defined by
\begin{eqnarray*}
\partial \Delta^{n+1} = S^n &\to& K(\sigma_0) \\
(t_0,\ldots,t_{n+1}) &\mapsto& \gamma_{\sigma_0,\ldots,\widehat{\sigma}_j,\ldots,\sigma_{n+1}}(t_0,\ldots,\widehat{t}_j,\ldots,t_{n+1}),\;\mathrm{for}\; t_j=0
\end{eqnarray*}
noting that for all $\sigma_j\geqslant \sigma_0$, $K(\sigma_j)\subset K(\sigma_0)$. Contractibility of $K(\sigma_0)$ means we may extend this map continuously to the whole of $\Delta^{n+1}$ to obtain a map \[\gamma_{\sigma_0,\ldots, \sigma_{n+1}}:\Delta^{n+1}\to K(\sigma_0).\] We then define $g$ on $\Gamma_{\sigma_0,\ldots,\sigma_{n+1}}(\mathring{\sigma}_0)$ as the closure of the map
\begin{equation}\label{defnofg}
 \xymatrix@R=3mm{ \Gamma_{\sigma_0,\ldots,\sigma_{n+1}}(\mathring{\sigma}_0) \ar[rr]^-{\Gamma_{\sigma_0,\ldots,\sigma_{n+1}}^{-1}} && \mathring{\sigma}_0\times \Delta^{n+1} \ar[rrr]^-{\left(\begin{array}{cc} \id_{\mathring{\sigma}_0} & 0 \\ 0 & \gamma_{\sigma_0,\ldots, \sigma_{n+1}}\end{array} \right)}  &&& \mathring{\sigma}_0\times K(\sigma_0) \ar[r]^-{\phi_{\sigma_0}} & f^{-1}(\mathring{\sigma}_0).
}
\end{equation}
This is continuous and agrees with the definition of $g$ so far on $\partial \Gamma_{\sigma_0,\ldots,\sigma_{n+1}}(\mathring{\sigma}_0)$ by construction. Thus proceeding by induction we get a well defined map $g:Y\to X$ which we claim is an $\ep$-controlled homotopy inverse to $f$. By equation $(\ref{defnofg})$ and the observation that $f\circ\phi_{\sigma_0}$ is projection on the first factor $\pr_1: \mathring{\sigma}_0\times K(\sigma_0) \to \mathring{\sigma}_0$, we see that \[f\circ g| = \pr_1: \mathring{\sigma}_0\times \Delta^{n+1} \to \mathring{\sigma}_0.\] There are straight line homotopies \[h_{\sigma_0,\ldots,\sigma_i}:\id_{\Gamma_{\sigma_0,\ldots,\sigma_i}(\mathring{\sigma}_0)}\simeq \pr_1 : \Gamma_{\sigma_0,\ldots,\sigma_i}(\mathring{\sigma}_0)\times I \to \Gamma_{\sigma_0,\ldots,\sigma_i}(\mathring{\sigma}_0),\] as constructed in Remark \ref{crushinggammas}, which fit together to give a global homotopy $h_1:f \circ g\simeq \id_Y$ with homotopy tracks over a distance of at most $\ep$ measured in $Y$.

Consider $g\circ f$. This sends $f^{-1}(\Gamma_{\sigma_0,\ldots,\sigma_i}(\mathring{\sigma}_0))$ to $g(\Gamma_{\sigma_0,\ldots,\sigma_i}(\mathring{\sigma}_0))= \sigma_0\times \gamma_{\sigma_0,\ldots,\sigma_i}(\Delta^i)$. We define the homotopy $h_2:g\circ f \simeq \id_X$ inductively.

For all $\sigma\in X$ there is a homotopy $k_\sigma:K(\sigma)\simeq\{\gamma_\sigma(1)\}$, so $h_\sigma\times k_\sigma$ is a homotopy \[f^{-1}(\Gamma_{\sigma_0}(\mathring{\sigma}_0)) \cong \Gamma_{\sigma_0}(\mathring{\sigma}_0)\times K(\sigma) \simeq \mathring{\sigma}_0 \times \{\gamma_{\sigma_0}(1)\} \cong g(\Gamma_{\sigma_0}(\mathring{\sigma}_0)). \]
Suppose now that we have defined a homotopy $h_2:g\circ f \simeq \id_X$ for all $\Gamma_{\sigma_0,\ldots,\sigma_i}(\mathring{\sigma}_0)$ for $i\leqslant n$. We extend this to $i=n+1$. Consider \[f^{-1}(\Gamma_{\sigma_0,\ldots,\sigma_{n+1}}(\mathring{\sigma}_0))\cong \Gamma_{\sigma_0,\ldots,\sigma_{n+1}}(\mathring{\sigma}_0)\times K(\sigma_{n+1}).\] We seek a homotopy to $g(\Gamma_{\sigma_0,\ldots,\sigma_{n+1}}(\mathring{\sigma}_0)) \cong \mathring{\sigma}_0\times \gamma_{\sigma_0,\ldots,\sigma_{n+1}}(\Delta^{n+1})$ compatible with the homotopy we already have defined on $\partial \Gamma_{\sigma_0,\ldots,\sigma_{n+1}}(\mathring{\sigma}_0)$. $h_2$ is already defined on \[\partial \overline{f^{-1}(\Gamma_{\sigma_0,\ldots,\sigma_{n+1}}(\mathring{\sigma}_0))}\cong \partial\overline{\Gamma_{\sigma_0,\ldots,\sigma_{n+1}}(\mathring{\sigma}_0)\times K(\sigma_{n+1})} = \partial(\Gamma_{\sigma_0,\ldots,\sigma_{n+1}}(\sigma_0)\times K(\sigma_{n+1})),\]i.e.\ we have a map 
\begin{equation}\label{extendhomotopy}
\partial(\sigma_0\times \Delta^{n+1})\times K(\sigma_{n+1})\times I = S^{n+|\sigma_0|} \times K(\sigma_{n+1})\times I \to f^{-1}(\Gamma_{\sigma_0,\ldots,\sigma_{n+1}}(\sigma_0)).
\end{equation}
The preimage $f^{-1}(\Gamma_{\sigma_0,\ldots,\sigma_{n+1}}(\sigma_0))$ is contractible because \[f^{-1}(\Gamma_{\sigma_0,\ldots,\sigma_{n+1}}(\mathring{\sigma}_0))= \bigcup_{j=0}^{n+1}f^{-1}(\Gamma_{\sigma_0,\ldots,\sigma_j}(\mathring{\sigma}_0))= \bigcup_{j=0}^{n+1}{\Gamma_{\sigma_0,\ldots,\sigma_j}(\mathring{\sigma}_0) \times K(\sigma_j)}.\]
We already saw that $K(\sigma_j)\subset K(\sigma_i)$ for $i\leqslant j$. All the $K(\sigma_j)$ are contractible so we can find deformation retracts \[K(\sigma_0)\to K(\sigma_1)\to \ldots \to K(\sigma_{n+1})\] which shows that \[\bigcup_{j=0}^{n+1}{\Gamma_{\sigma_0,\ldots,\sigma_j}(\mathring{\sigma}_0) \times K(\sigma_j)} \simeq \bigcup_{j=0}^{n+1}{\Gamma_{\sigma_0,\ldots,\sigma_j}(\mathring{\sigma}_0) \times K(\sigma_{n+1})} = \Gamma_{\sigma_0,\ldots,\sigma_j}(\sigma_0)\times K(\sigma_{n+1})\simeq *.\]
Now since $f^{-1}(\Gamma_{\sigma_0,\ldots,\sigma_{n+1}}(\sigma_0))$ is contractible we can extend $(\ref{extendhomotopy})$ from $S^{n+|\sigma_0|}$ to $D^{n+|\sigma_0|+1}$, i.e.\ to $f^{-1}(\Gamma_{\sigma_0,\ldots,\sigma_{n+1}}(\sigma_0)).$ Thus by construction we obtain $h_2:g\circ f \simeq \id_X$. We may construct $h_2$ in such a way that it projects to $h_1$ under $f$, therefore the homotopy tracks have the same length bounded by $\ep$.
%THIS SHOULD BE A SIMPLE HOMOTOPY EQUIVALENCE

For any two $\ep,\ep^\prime < \comesh(Y)$, $g_\ep\simeq g_{\ep^\prime}$ via
\begin{eqnarray*}
 G_{\ep, \ep^\prime}: Y\times I &\to& X \\
 (y,t) &\mapsto& g_{t\ep +(1-t)\ep^\prime}(y),
\end{eqnarray*}
and similarly for the homotopies $h_1$ and $h_2$: 
\begin{eqnarray*}
 (H_1)_{\ep, \ep^\prime}: Y\times I\times I &\to& Y \\
 (y,t,s) &\mapsto& (h_1)_{t\ep +(1-t)\ep^\prime}(y,s),
\end{eqnarray*}
\begin{eqnarray*}
 (H_2)_{\ep, \ep^\prime}: X\times I\times I &\to& X \\
 (x,t,s) &\mapsto& (h_2)_{t\ep +(1-t)\ep^\prime}(x,s),
\end{eqnarray*}
so $f\times\id_\R: X\times \R \to Y\times \R$ is an $O(Y^+)$-bounded homotopy equivalence with inverse 
\begin{eqnarray*}
 g:Y\times\R &\to& X\times\R \\
(y,t) &\mapsto& \brcc{g_1(y),}{t\leqslant 1,}{g_{1/t}(y),}{t>1} 
\end{eqnarray*}
and homotopies 
\begin{eqnarray*}
 h_1:(f\times\id_\R)\circ g \simeq \id_{Y\times\R}: Y\times\R\times I &\to& Y\times\R \\
(y,t,s) &\mapsto& \brcc{(h_2)_1(y,s),}{t\leqslant 1,}{(h_2)_{1/t}(y,s),}{t>1} 
\end{eqnarray*}
\begin{eqnarray*}
 h_2:g\circ (f\times\id_\R) \simeq \id_{X\times\R}: X\times\R\times I &\to& X\times\R \\
(x,t,s) &\mapsto& \brcc{(h_1)_1(x,s),}{t\leqslant 1,}{(h_1)_{1/t}(x,s),}{t>1.} 
\end{eqnarray*}

Note also that $f|:f^{-1}(\tau)\to \tau$ is a homotopy equivalence for all $\tau\in Y$ by restricting $g, h_1$ and $h_2$. See section \ref{Ytriang} for a brief discussion of such homotopy equivalences which we call \textit{$Y$-triangular homotopy equivalences}.
\end{proof}

\begin{ex}
Let $Y=\tau_1\cup\tau_2\cup\tau_3\subset \R^2$ for
\begin{eqnarray*}
 \sigma_1 &:=& \lrangle{(0,0), (2,0), (1,1)},\\
 \sigma_2 &:=& \lrangle{(2,0), (1,1), (2,2)},\\
 \sigma_3 &:=& \lrangle{(1,1), (2,2), (0,2)},
\end{eqnarray*}
where we also label the intersections of these simplices as
\begin{eqnarray*}
 \tau_1 &:=& \lrangle{(2,0), (1,1)},\\
 \tau_2 &:=& \lrangle{(1,1), (2,2)},\\
 \rho &:=& \{(1,1)\},
\end{eqnarray*}
as pictured in \Figref{Fig:workedex1}.
\begin{figure}[ht]
\begin{center}
{
\psfrag{x}{$x$}
\psfrag{y}{$y$}
\psfrag{s1}{$\sigma_1$}
\psfrag{s2}{$\sigma_2$}
\psfrag{s3}{$\sigma_3$}
\psfrag{t1}{$\tau_1$}
\psfrag{t2}{$\tau_2$}
\psfrag{r}{$\rho$}
\psfrag{2}{$2$}
\includegraphics[width=6cm]{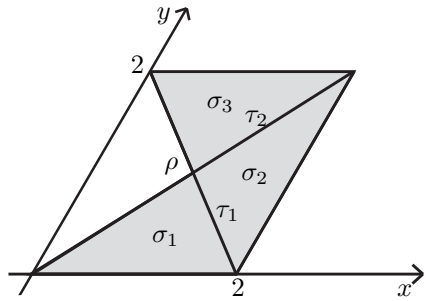}
}
\caption{The space $Y$}
\label{Fig:workedex1}
\end{center}
\end{figure}

Let \[X:= \bigcup_{i=0}^{2}{\sigma_{i+1}\times\{i\}} \cup \bigcup_{i=0}^{1}{\tau_{i+1}\times [i,i+1]}\] where we triangulate the squares as necessary. The space $X$ is illustrated in \Figref{Fig:workedex3}. Define $f:X\to Y$ by vertical projection $(x,y,z)\mapsto (x,y)$. 
\begin{figure}[ht]
\begin{center}
{
\psfrag{x}{$x$}
\psfrag{y}{$y$}
\psfrag{z}{$z$}
\includegraphics[width=6cm]{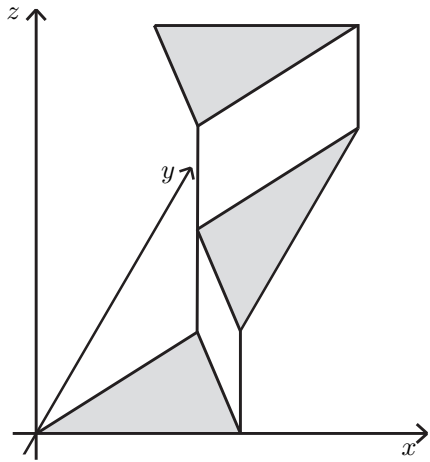}
}
\caption{The space $X$}
\label{Fig:workedex3}
\end{center}
\end{figure}
Following the proof of Proposition \ref{constructhtpyinv}, first we decompose each simplex in $Y$ into the homotopies $\Gamma_{\sigma_1,\ldots,\sigma_i}$ as illustrated in \Figref{Fig:workedex2}.
\begin{figure}[ht]
\begin{center}
{
\psfrag{x}{$x$}
\psfrag{y}{$y$}
\includegraphics[width=6cm]{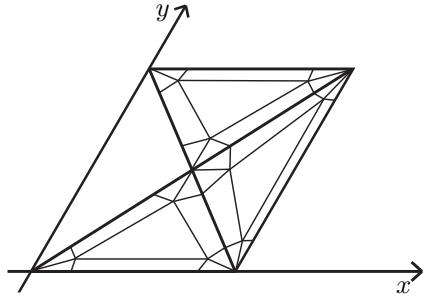}
}
\caption{The space $Y$ decomposed}
\label{Fig:workedex2}
\end{center}
\end{figure}
Then we choose local sections $\gamma_\tau$ for each open simplex $\mathring{\tau}\in Y$. These choices are shown in the table below.
\begin{displaymath}
\begin{array}{|c|c|c|}
\hline
\mathring{\tau} \subset & \mathrm{possible}\;\gamma_\tau & \mathrm{choice} \\
\hline
\hline
\sigma_1\backslash \tau_1 & \{0\} & \{0\}\\
\sigma_2\backslash (\tau_1\cup\tau_2) & \{1\} & \{1\}\\
\sigma_3\backslash \tau_3 & \{2\} & \{2\}\\
\hline
\tau_1\backslash\rho & [0,1] & \{0.5\}\\
\tau_2\backslash\rho & [1,2] & \{1.5\}\\
\hline
\rho & [0,2] & \{1\}\\
\hline
\end{array}
\end{displaymath}
With these choices, the global homotopy inverse we obtain is as illustrated in \Figref{Fig:workedex4}. Note that the homotopy tracks are tall in the space $X$ but after projecting to $X$ they are small.
\begin{figure}[ht]
\begin{center}
{
\psfrag{x}{$x$}
\psfrag{y}{$y$}
\psfrag{z}{$z$}
\psfrag{X}[r][r]{$X$}
\includegraphics[width=7cm]{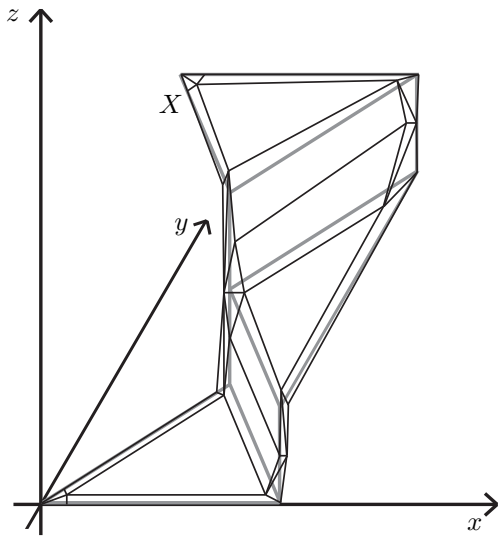}
}
\caption{The $\ep$-bounded homotopy inverse $g_\ep$.}
\label{Fig:workedex4}
\end{center}
\end{figure}
\qed\end{ex}

\section{Finishing the proof}
Given an $O(Y^+)$-bounded homotopy equivalence $f\times\id$ we can simply ``slice it'' further and further up in the $\R$ direction to get homotopy inverse $g_\ep:Y\to X$ to $f$ with control smaller and smaller. This proves $(3)\Rightarrow(2)$ of Theorem \ref{maintopthm}: 

\begin{prop}
Let $f\times\id: X\times\R \to Y\times\R$ be an $O(Y^+)$-bounded homotopy equivalence, then $f:X\to Y$ is an $\ep$-controlled homotopy equivalence, for all $\ep>0$.
\end{prop}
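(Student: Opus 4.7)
The plan is to slice the bounded homotopy equivalence $f\times\id_\R$ high up in the $\R$--direction and exploit the fact that the metric on $O(Y^+)$ scales $Y$--distances by the height. Let $g:Y\times\R\to X\times\R$ be an $O(Y^+)$--bounded homotopy inverse and let $h_1:(f\times\id)\circ g\simeq\id_{Y\times\R}$, $h_2:g\circ(f\times\id)\simeq\id_{X\times\R}$ be bounded homotopies, all with common bound $B<\infty$ measured via the coning map $j_Y$. For each $t\in\R$, writing $i_t(z):=(z,t)$, I would set
\[
g_t:=\pr_X\circ g\circ i_t \colon Y\to X,\quad (h_1)_t(y,s):=\pr_Y h_1(y,t,s),\quad (h_2)_t(x,s):=\pr_X h_2(x,t,s).
\]
Evaluating at $s=0,1$, the identities $h_1(y,t,1)=(y,t)$, $h_1(y,t,0)=(f\times\id)g(y,t)$ and the corresponding ones for $h_2$ show that $(h_1)_t$ is a homotopy $f\circ g_t\sim\id_Y$ and $(h_2)_t$ is a homotopy $g_t\circ f\sim\id_X$, both continuous as compositions of continuous maps.

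Next I would unpack Definition \ref{openconedefn}. The metric on $O(Y^+)$ is
\[
d_{O(Y^+)}((y,t),(y^\prime,t^\prime))=\max\{\min\{t,t^\prime\},0\}\,d_Y(y,y^\prime)+|t-t^\prime|,
\]
so $d_{O(Y^+)}((y,t),(y^\prime,t^\prime))<B$ forces $|t-t^\prime|<B$, and whenever $t>B$ it forces both $t^\prime>0$ and $d_Y(y,y^\prime)<B/(t-B)$. Applying this pointwise to the bound hypotheses on $g$, $h_1$ and $h_2$ (the latter composed with $f\times\id$, so that the control maps line up via $\id_Y$ on $Y$ and $f$ on $X$), I obtain for every $t>B$ and every $y\in Y$, $x\in X$, $s\in I$ the uniform estimates
\[
d_Y(f g_t(y),y)\;<\;\tfrac{B}{t-B},\quad d_Y((h_1)_t(y,s),y)\;<\;\tfrac{B}{t-B},\quad d_Y(f(h_2)_t(x,s),f(x))\;<\;\tfrac{B}{t-B}.
\]

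Given any $\ep>0$, choose $t>B+B/\ep$ so that $B/(t-B)<\ep$. Then the triple $(g_t,(h_1)_t,(h_2)_t)$ exhibits $f:(X,f)\to(Y,\id_Y)$ as an $\ep$--controlled homotopy equivalence, and since $\ep$ was arbitrary this gives $(3)\Rightarrow(2)$. The whole argument is essentially one calculation; the only subtlety to watch for, and the main thing that forces the hypothesis of working in the open cone rather than in $Y\times\R$ with a product metric, is the need to slice at height $t>B$ so that both heights appearing in the $O(Y^+)$--distance remain strictly positive. Only then does the height act as a nontrivial scaling factor $\min\{t,t^\prime\}\geqslant t-B$ that shrinks $Y$--distances, and only then do all three pieces of data become simultaneously $\ep$--controlled.
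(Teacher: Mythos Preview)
Your proof is correct and follows essentially the same slicing approach as the paper: restrict the bounded homotopy inverse and homotopies to height $t$, project away the $\R$-coordinate, and use the open-cone metric to convert the bound $B$ into $Y$-control of order $B/t$. Your version is in fact more careful than the paper's, which only says the resulting control is ``proportional to $B/t$'' without unpacking the metric; your explicit estimate $B/(t-B)$ (together with the observation that one must take $t>B$ so that both heights stay positive and the scaling factor is nontrivial) is exactly the right quantitative statement.
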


\begin{proof}
Let $f\times\id$ have inverse $g$ and homotopies $h_1:g\circ (f\times\id) \sim \id_{X\times\R}$ and $h_2:(f\times\id)\circ g \sim \id_{Y\times\R}$ all with bound $B<\infty$. Consider the slices at height $t$ and the following diagram:
\begin{displaymath}
 \xymatrix{
X\times\{t\} \ar[r]^-{f\times\id} \ar@<0.5ex>[d]^-{1} & Y\times\{t\} \ar[dl]_-{g|} \ar@<-0.5ex>[d]_-{1} \\
X\times\R \ar@<0.5ex>[u]^-{\id\times p_t} \ar@<0.5ex>[r]^-{f\times\id} & Y\times\R \ar@<-0.5ex>[u]_-{\id\times p_t} \ar@<0.5ex>[l]^-{g}
}
\end{displaymath}
where $p_t: \R \to \{t\}$ is projection onto $t\in\R$. 

Define $g_t:= (\id_X\times p_t)\circ g|: Y\times\{t\}\to X\times\{t\}.$ It can be checked that this is a homotopy inverse to $f\times\id: X\times\{t\} \to Y\times\{t\}$ with homotopies
\begin{eqnarray*}
 h_1^\prime &=& (\id_X\times p_t) \circ h_1|_{X\times\{t\}} : g_t\circ (f\times \id)| \sim \id_{X\times\{t\}} \\
 h_2^\prime &=& (\id_Y\times p_t) \circ h_2|_{Y\times\{t\}} : (f\times \id)\circ g_t \sim \id_{Y\times\{t\}}
\end{eqnarray*}
where we have used the commutativity of 
\begin{displaymath}
 \xymatrix{
X\times\{t\} \ar[r]^-{f\times\id} & Y\times\{t\} \\
X\times\R \ar[u]^-{\id_X\times p_t} \ar[r]^-{f\times\id} & Y\times\R \ar[u]_{\id_Y\times p_t}
}
\end{displaymath}
to note that 
\begin{eqnarray*}
 (f\times \id)\circ g_t &=& (f\times \id)\circ (\id_X\times p_t)\circ g| \\
&=& (\id_Y\times p_t)\circ (f\times \id)\circ g| \\
&\sim& (\id_Y\times p_t) \circ \id_{Y\times\{t\}} = \id_{Y\times\{t\}}.
\end{eqnarray*}

The bound of this homotopy equivalence is approximately $B$ when we measure it on $Y\times\{t\} \subset O(Y^+)$. The slice  $Y\times\{t\}$ has a metric $t$ times bigger than $Y=Y\times\{1\}$, so measuring this in $Y$ gives a homotopy equivalence $f:X\to Y$ with control proportional to $\dfrac{B}{t}$ as required.
\end{proof}

To prove the final implication of Theorem \ref{maintopthm} we first need to show that if $f$ is an $\ep$-controlled simplicial homotopy equivalence for all $\ep>0$ then $f$ must necessarily surject. Intuitively this is because every simplex in $Y$ has a non-zero radius so if that simplex is not surjected onto then $f$ cannot be $\ep$-controlled for $\ep$ less than its radius.

\begin{lem}\label{fsurjects}
Let $X$, $Y$ be simplicial complexes with $\comesh(Y)>0$. If $f:X\to Y$ is an $\ep$-controlled simplicial homotopy equivalence for all $\ep>0$ then $f$ must necessarily be surjective.
\end{lem}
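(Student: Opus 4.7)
The plan is a contradiction argument: assume $f$ is not surjective and produce a point $y\in Y$ whose distance to $f(X)$ is at least $\comesh(Y)$, ruling out any $\ep$-controlled homotopy $f\circ g\sim \id_Y$ with $\ep<\comesh(Y)$. The first step is to observe that because $f$ is simplicial, the image $f(X)$ is automatically a subcomplex of $Y$: each $\tau\in X$ is sent to a closed simplex of $Y$, and every face of $f(\tau)$ is the image of a face of $\tau$, so $f(X)$ is closed under taking faces. Non-surjectivity then produces a simplex $\sigma\in Y$ with $\sigma\notin f(X)$, and for any such $\sigma$ the open simplex $\mathring{\sigma}$ is disjoint from $f(X)$ (two distinct open simplices of $Y$ cannot meet).

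To get the desired distance estimate I need such a $\sigma$ of positive dimension. If a missed vertex $v$ is contained in some positive-dimensional simplex $\tau\in Y$, then $\tau\in f(X)$ would force $v\in f(X)$ (subcomplexes are closed under faces), so $\tau$ is missed too and I replace $\sigma$ by $\tau$. The only remaining case is an isolated missed vertex $v$, whose path-component in $Y$ is $\{v\}$ itself; there the continuous homotopy $h_2$ evaluated at $v$ must stay inside this component, forcing $f(g(v))=v\in f(X)$ and contradicting $v\notin f(X)$ directly.

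With $\sigma$ positive-dimensional and missed, set $y:=o(\sigma)$, the incentre of $\sigma$. By definition $d_Y(y,\partial\sigma)=\mathrm{rad}(\sigma)\geq \comesh(Y)$, and since $f(X)\cap\sigma\subseteq\partial\sigma$, every continuous path from $y$ to a point of $f(X)$ must exit $\mathring{\sigma}$ through $\partial\sigma$, giving $d_Y(y,f(X))\geq \comesh(Y)$. Choosing $\ep<\comesh(Y)$ and applying the hypothesis yields an $\ep$-controlled homotopy inverse $g$ together with an $\ep$-controlled homotopy $h_2:f\circ g\sim\id_Y$; evaluating $h_2$ at $y$ produces a track from $f(g(y))\in f(X)$ to $y$ of length $<\ep<\comesh(Y)$, contradicting the distance bound. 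The main obstacle is really just the separate treatment of isolated missed vertices; once a positive-dimensional missed simplex is in hand, the estimate follows immediately from $\comesh(Y)>0$.
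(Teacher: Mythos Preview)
Your argument is correct and follows essentially the same route as the paper's proof: both assume non-surjectivity, upgrade a missed vertex to a positive-dimensional missed simplex (treating isolated vertices as a degenerate case), take the incentre, and derive a contradiction from the $\ep$-controlled homotopy $f\circ g\sim\id_Y$ with $\ep<\comesh(Y)$. The only cosmetic difference is that you spell out why $f(X)$ is a subcomplex and phrase the isolated-vertex case via the homotopy track staying in the singleton component, whereas the paper simply says a homotopy equivalence cannot miss a whole component.
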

\begin{proof}
Suppose $f$ is not surjective, let $\sigma$ be a simplex with $f^{-1}(\mathring{\sigma})=\emptyset$. We can choose such a $\sigma$ with $|\sigma|\geqslant 1$ and hence non-zero radius. This is because if a vertex $v$ is not in the image of $f$, then all simplices containing $v$ have interior not in the image of $f$. If it is not contained in any other simplices it forms a connected component of $Y$ that is not mapped to by the homotopy equivalence, which is absurd. 

So suppose we have chosen such a $\sigma$. Let $x$ denote the incentre of $\sigma$. Choose $\ep$ less than $\comesh(Y)$ and consider the homotopy track of $f\circ g(x)\sim x$. The diameter of the homotopy track must be greater than the radius of $\sigma$ and hence greater than $\ep$, since $f(g(x))\notin \mathring{\sigma}$. This is a contradiction, therefore $f$ must surject.
\end{proof}
Note the same approach also proves that an $O(Y^+)$-bounded homotopy equivalence $f\times\id$ must surject. 
\begin{prop}\label{threeimpliesone}
Let $f:X\to Y$ be a simplicial map between finite-dimensional locally finite simplicial complexes. If $f$ is an $\ep$-controlled homotopy equivalence for all $\ep>0$, then $f$ is a contractible map.
\end{prop}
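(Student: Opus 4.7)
The plan is to reduce the problem to showing that each $K(\sigma)$ coming out of Lemma \ref{Ksigmacontractible}(i) is contractible, and then to extract such a null-homotopy from the small homotopies supplied by the $\ep$-controlled homotopy inverses. Since $Y$ is finite-dimensional and locally finite, $\comesh(Y) > 0$, so Lemma \ref{fsurjects} immediately gives that $f$ is surjective. Lemma \ref{Ksigmacontractible}(i), applied to each restriction $f|:f^{-1}(\sigma) \to \sigma$, then identifies $f^{-1}(\mathring{\sigma})$ with $\mathring{\sigma}\times K(\sigma)$ for a finite-dimensional locally finite simplicial complex $K(\sigma)$. By Lemma \ref{Ksigmacontractible}(iii), applied simplex by simplex, it suffices to prove that every such $K(\sigma)$ is contractible.

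Fix $\sigma\in Y$ and choose $0<\ep<\rad(\sigma)$, so that $B_\ep(o(\sigma)) \subset \mathring{\sigma}$. By hypothesis there exists an $\ep$-controlled homotopy inverse $g:Y\to X$ together with an $\ep$-controlled homotopy $h: g\circ f \sim \id_X$, meaning each track $t\mapsto f(h(x,t))$ has diameter less than $\ep$ when measured in $Y$. Under the identification $f^{-1}(o(\sigma)) = \{o(\sigma)\}\times K(\sigma)$, restrict to obtain $H:= h|_{K(\sigma)\times I}$, a homotopy between the inclusion $K(\sigma)\hookrightarrow X$ and the constant map at $g(o(\sigma))$. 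For any $x\in K(\sigma)$, the path $t\mapsto f(H(x,t))$ has diameter less than $\ep$ and passes through $f(x) = o(\sigma)$, so it lies inside $B_\ep(o(\sigma))\subset \mathring{\sigma}$. Hence the image of $H$ is contained in $f^{-1}(\mathring{\sigma}) \cong \mathring{\sigma}\times K(\sigma)$; in particular $g(o(\sigma))$ lies in this cylinder.

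Composing $H$ with the projection $\pi_2:\mathring{\sigma}\times K(\sigma)\to K(\sigma)$, which is a homotopy inverse to the inclusion $\{o(\sigma)\}\times K(\sigma)\hookrightarrow \mathring{\sigma}\times K(\sigma)$ because $\mathring{\sigma}$ is contractible, yields a null-homotopy of $\id_{K(\sigma)}$. Thus every $K(\sigma)$ is contractible, and Lemma \ref{Ksigmacontractible}(iii) then shows that every point inverse $f^{-1}(y)\cong K(\sigma)$ (where $y\in\mathring{\sigma}$) is contractible, proving $(2)\Rightarrow(1)$. The main obstacle is the passage from $\ep$-smallness measured in $Y$ to genuinely useful geometric information about the topology of $X$; this is circumvented by exploiting the rigid product structure $f^{-1}(\mathring{\sigma}) \cong \mathring{\sigma}\times K(\sigma)$ forced by $f$ being simplicial, which guarantees that a homotopy whose $Y$-shadow stays inside a single open simplex is automatically trapped in a cylinder $\mathring{\sigma}\times K(\sigma)$, from which the desired contraction of $K(\sigma)$ drops out by projection.
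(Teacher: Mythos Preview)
Your proof is correct and follows essentially the same route as the paper's: surjectivity via Lemma~\ref{fsurjects}, the product decomposition $f^{-1}(\mathring{\sigma})\cong\mathring{\sigma}\times K(\sigma)$, then using a sufficiently small homotopy $h:g\circ f\sim\id_X$ restricted to a fibre so that its $Y$-shadow stays inside $\mathring{\sigma}$, and finally projecting to $K(\sigma)$ to exhibit the contraction. The only cosmetic differences are that you anchor the argument at the incentre $o(\sigma)$ with $\ep<\rad(\sigma)$ whereas the paper picks a generic $x\in\mathring{\sigma}$ and an $\ep/2$-inverse, and you are more explicit about why the projection $\pi_2$ does the job.
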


\begin{proof}
By Lemma \ref{fsurjects} $f$ must necessarily be a surjective map. By Lemma \ref{surjectivesimplicialmapssimplextosimplex}, for all $\sigma\in Y$ \[f^{-1}(\mathring{\sigma})\cong \mathring{\sigma}\times K(\sigma)\] for some simplicial complex $K(\sigma)$. Pick $x\in \mathring{\sigma}$, then there exists an $\ep$ such that $x\in B_{\ep}(x)\subset \mathring{\sigma}$, whence $f^{-1}(B_\ep(x))\cong B_\ep(x)\times K(\sigma)$. Since $f$ is an $\ep$-controlled homotopy equivalence for all $\ep$ we can find an $\ep/2$-controlled homotopy inverse $g$ together with homotopies such that $f^{-1}(x) \simeq g(x)=*$ in $f^{-1}(B_\ep(x))$. Projecting this homotopy to $K(\sigma)$ gives a contraction of $K(\sigma)$ to $\pr_2(g(x))=*$. Thus $K(\sigma)\simeq *$ for all $\sigma\in Y$. Applying Lemma \ref{Ksigmacontractible} $(iii)$ we deduce that $f$ is a contractible map.
\end{proof}
This completes the proof of Theorem \ref{maintopthm}.

\section{Topological squeezing}\label{Ytriang}
There is an alternate approach one can take in order to try to prove Theorem \ref{maintopthm}. This brief section introduces a type of homotopy equivalence that aims to mirror the approach we take algebraically in the second half of the thesis;\footnote{It is meant to mirror the notion of a chain complex $C$ being chain contractible in $\A(X)$.} this is included purely to be suggestive.

\begin{defn}
Let $(X,p)$ and $(Y,q)$ be topological spaces with control maps to a finite-dimensional locally finite simplicial complex $Z$ equipped with its standard metric. We say that $f:X\to Y$ is a $Z$-triangular map if for all $\sigma\in Z$, \[q(f(p^{-1}(\mathring{\sigma}))) \subset \sigma.\]
\qed\end{defn}
\begin{rmk}
Note that the composition of $Z$-triangular maps is again $Z$-triangular. 
\qed\end{rmk}
\begin{defn}
A map $f: X \to Y$ is a \textit{$Z$-triangular homotopy equivalence} if there exists a homotopy inverse $g:Y \to X$ and homotopies $h_1:f\circ g\simeq \id_Y$, $h_2:g\circ f\simeq \id_X$ such that all of $f$, $g$, $h_1$ and $h_2$ are $Z$-triangular.
\qed\end{defn}
In this section we are only concerned with maps of finite-dimensional locally finite simplicial complexes measured in the target space. 
\begin{rmk}\label{rmkYtriang}
Let $f:(X,f)\to (Y,\id_Y)$ be a surjective map of finite-dimensional locally finite simplicial complexes. Then
\begin{enumerate}[(i)]
 \item $f$ is automatically $Y$-triangular, since for all $\sigma\in Y$, $id_Y(f(f^{-1}(\mathring{\sigma}))) = \mathring{\sigma} \subset \sigma.$
 \item $f$ is a $Y$-triangular homotopy equivalence if there exists a homotopy inverse $g$ and homotopies $h_1:f\circ g\simeq \id_Y$, $h_2:g\circ f\simeq \id_X$ such that for all $\sigma\in Y$, 
\begin{eqnarray}
 f(g(\mathring{\sigma})) &\subset& \sigma, \label{geqn} \\
 h_1(\mathring{\sigma}) &\subset& \sigma, \label{h1eqn} \\
 f(h_2(f^{-1}(\mathring{\sigma}))) &\subset& \sigma. \label{h2eqn}
\end{eqnarray}
\item $f$ is a $Y$-triangular homotopy equivalence if and only if there exists a homotopy inverse $g:Y\to X$ and homotopies $h_1:f\circ g\simeq \id_Y$, $h_2:g\circ f\simeq \id_X$ such that for all closed simplices $\sigma\in Y$ 
\begin{displaymath}
 \xymatrix{ f^{-1}(\sigma) \ar@<0.5ex>[r]^-{f|} & \sigma \ar@<0.5ex>[l]^-{g|}
}
\end{displaymath}
is a homotopy equivalence with homotopies $h_1|$ and $h_2|$. 
\end{enumerate}
\qed\end{rmk}

One might hope that it is possible to subdivide a $Y$-triangular homotopy equivalence so as to obtain an $Sd\, Y$-triangular homotopy equivalence. This would show that a $Y$-triangular homotopy equivalence is an $\ep$-controlled homotopy equivalence for all $\ep>0$. We can show the homotopy converse:
\begin{thm}[Squeezing]\label{topsqueezing}
 Let $X,Y$ be finite-dimensional locally finite simplicial complexes, then there exists an $\ep=\ep(X,Y)$ such that for any simplicial map $f:X\to Y$, if $f$ is an $\ep$-controlled homotopy equivalence, then $f$ is homotopic to a $Y$-triangular homotopy equivalence.
\end{thm}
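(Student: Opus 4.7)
The strategy is to choose $\ep(X,Y)$ small enough that an $\ep$-controlled homotopy equivalence necessarily has contractible point inverses, and then to invoke Proposition \ref{constructhtpyinv}, whose construction produces a $Y$-triangular homotopy equivalence. Take $\ep(X,Y) := \tfrac{1}{2}\comesh(Y) > 0$, which exists since $Y$ is finite-dimensional and locally finite.

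Suppose $f$ is an $\ep$-controlled homotopy equivalence with inverse $g$ and homotopies $h_1 : fg \simeq \id_Y$, $h_2 : gf \simeq \id_X$, each with tracks of length $<\ep$ measured in $Y$. Since $\ep < \comesh(Y)$, the argument of Lemma \ref{fsurjects} forces $f$ to surject, and Lemma \ref{Ksigmacontractible} then yields PL homeomorphisms $f^{-1}(\mathring{\sigma}) \cong \mathring{\sigma} \times K(\sigma)$ for every $\sigma \in Y$. I claim each $K(\sigma)$ is contractible. For $|\sigma|\geqslant 1$ pick $x \in \mathring{\sigma}$ with $B_\ep(x) \subset \mathring{\sigma}$, which is possible because $\rad(\sigma) \geqslant \comesh(Y) > \ep$; for $y \in f^{-1}(x) = \{x\}\times K(\sigma)$ the path $t \mapsto fh_2(y,t)$ has length $<\ep$ and ends at $x$, so it lies in $B_\ep(x)\subset \mathring{\sigma}$. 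Consequently $h_2$ restricts to a homotopy $f^{-1}(x) \times I \to \mathring{\sigma}\times K(\sigma)$ from the constant $g(x)$ to the identity, and projecting onto the $K(\sigma)$-factor yields a null-homotopy. For a vertex $v$ contained in some positive-dimensional simplex $\tau$, the product decomposition of Lemma \ref{surjectivesimplicialmapssimplextosimplex} gives $K(\tau) \cong K(v) \times P$ with $P$ a product of closed simplices, so $K(\tau) \simeq \ast$ implies $K(v) \simeq \ast$. Isolated vertices map bijectively, so their preimages are single points. Hence by Lemma \ref{Ksigmacontractible}(iii), $f$ is a contractible map.

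Now apply Proposition \ref{constructhtpyinv}: its construction builds a new homotopy inverse $g^\prime$ and homotopies $h_1^\prime$, $h_2^\prime$ cell-by-cell on the fundamental $\delta$-subdivision cellulation, arranging that on each piece $\Gamma_{\sigma_0,\ldots,\sigma_i}(\mathring{\sigma}_0)$ of $Y$ the map $g^\prime$ lands in $f^{-1}(\mathring{\sigma}_0) \subset f^{-1}(\sigma_0)$, with analogous containments for the two homotopies. The observation at the end of the proof of Proposition \ref{constructhtpyinv} records that the restrictions $f|: f^{-1}(\tau)\to \tau$ are homotopy equivalences via the restrictions of $g^\prime, h_1^\prime, h_2^\prime$, and by Remark \ref{rmkYtriang}(iii) this is precisely the statement that $(f, g^\prime, h_1^\prime, h_2^\prime)$ is a $Y$-triangular homotopy equivalence. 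As the underlying map $f$ is unchanged, it is trivially homotopic to this $Y$-triangular homotopy equivalence.

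The main obstacle is the contractibility of each $K(\sigma)$, where one must extract a genuine contraction from the $\ep$-smallness of the homotopy tracks and separately handle the vertex case where $\mathring{\sigma}$ collapses to a point. Once contractibility is secured the $Y$-triangular structure is delivered essentially for free by the inductive construction of Proposition \ref{constructhtpyinv}.
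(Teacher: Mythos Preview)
Your strategy is genuinely different from the paper's. The paper never passes through contractible point inverses; instead it takes the $\ep$-controlled data $(f,g_\ep,h_1,h_2)$ and post-composes with the simplicial retractions $r_X,r_Y$ of Corollary~\ref{setupmapsqueeze}, setting $f'=r_Yf$, $g'=r_Xg_\ep$, and then checks by tracking supports that each of $f',g'$ and the composed homotopies is $Y$-triangular. Your route---show a single small $\ep$ forces contractible point inverses, then invoke the construction inside Proposition~\ref{constructhtpyinv}---is more conceptual and, if correct, would even give $\ep$ depending only on $Y$.

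The argument for $|\sigma|\geqslant 1$ is fine, but the vertex case has a real gap. The claimed factorisation $K(\tau)\cong K(v)\times P$ is false in general: Lemma~\ref{surjectivesimplicialmapssimplextosimplex} gives such a product only simplex-by-simplex in $X$, and the local factors for $K(\tau)$ are indexed by simplices of $X$ surjecting onto $\tau$, whereas those for $K(v)$ are indexed by simplices mapping into $v$---these families differ, so the product does not glue. For a concrete failure take $Y=vw$ an edge and $X=a_1a_2\cup a_1b$ with $f(a_i)=v$, $f(b)=w$; then $K(v)=a_1a_2$ is an interval while $K(\tau)$ is a single point. (Indeed the containment goes the other way, $K(\tau)\subset K(v)$, as used in the proof of Proposition~\ref{constructhtpyinv}.) Your isolated-vertex sentence is also wrong: nothing forces $f^{-1}(v)$ to be a single point.

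The repair is not hard but is different from what you wrote. For $\ep<\comesh(Y)$ one has $B_\ep(v)\subset\st(v)$, so $h_2$ carries $f^{-1}(v)\times I$ into $f^{-1}(\st(v))$; since $f^{-1}(\st(v))$ deformation retracts onto $f^{-1}(v)$ by pushing each simplex of $X$ linearly towards its $v$-face, composing that retraction with $h_2$ yields a genuine contraction of $K(v)$. With this fix your approach goes through.
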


\begin{proof}
Let $\ep^\prime(X)$ and $i(X)$ be chosen as in Corollary \ref{setupmapsqueeze} so that there exist subdivision chain equivalences 
\begin{displaymath}
\xymatrix{ (\Delta^{lf}_*(X), d_{\Delta^{lf}_*(X)},0) \ar@<0.5ex>[rr]^-{s_X} && (\Delta^{lf}_*(Sd^i\,X), d_{\Delta^{lf}_*(Sd^i\,X)}, P_X) \ar@<0.5ex>[ll]^-{r_X} \\
(\Delta^{lf}_*(Y), d_{\Delta^{lf}_*(Y)},0) \ar@<0.5ex>[rr]^-{s_Y} && (\Delta^{lf}_*(Sd^i\,Y), d_{\Delta^{lf}_*(Sd^i\,Y)}, P_Y). \ar@<0.5ex>[ll]^-{r_Y}}
\end{displaymath}
We can think of these chain equivalences on the level of spaces where $s_X=\id_X$, $s_Y=\id_Y$, $r_X$ and $r_Y$ are PL maps, $P_X: r_X\sim \id_X$ and $P_Y:r_Y\sim\id_Y$. Thus we have the following subdivision homotopy equivalences:
\begin{displaymath}
\xymatrix{ (X,P_X) \ar@<0.5ex>[rr]^-{s_X=\id_X} && (X, P_X) \ar@<0.5ex>[ll]^-{r_X} \\
(Y,P_Y) \ar@<0.5ex>[rr]^-{s_Y=\id_Y} && (Y, P_Y). \ar@<0.5ex>[ll]^-{r_Y}}
\end{displaymath}
Setting $\ep = \frac{1}{3}\ep^\prime$, by Corollary \ref{setupmapsqueeze} we have that for all $\sigma\in Y$, 
\begin{eqnarray}
 r_X(N_{k\ep}(f^{-1}(\sigma))) &\subset& f^{-1}(\sigma), \label{eqnrX} \\
 r_Y(N_{k\ep}(\sigma)) &\subset& \sigma, \label{eqnrY} \\
 P_X(N_{k\ep}(f^{-1}(\sigma)),I) &\subset& N_{k\ep}(f^{-1}(\sigma)), \label{eqnPX} \\
 P_Y(N_{k\ep}(\sigma),I) &\subset& N_{k\ep}(\sigma), \label{eqnPY}
\end{eqnarray}
for all $0\leqslant k \leqslant 3$.

Let $g_\ep$ be an $\ep$-controlled homotopy inverse to $f$ together with homotopies $h_1:f\circ g_\ep\simeq\id_Y$ and $h_2:g_\ep\circ f\simeq \id_X$. Let $f^\prime = r_Y\circ f$ and $g^\prime=r_X\circ g_\ep$. Since $r_X$ and $r_Y$ are both simplicial approximations to the identity we have that $r_X\sim \id_X$ and $r_Y\sim \id_Y$. Whence $g^\prime$ is a homotopy inverse to $f^\prime$.

By remark \ref{rmkYtriang} $(i)$, $f$ is automatically a $Y$-triangular map. Also, $r_Y$ retracts a $5\ep$-neighbourhood of each simplex $\sigma\in Y$ to that simplex which means it is also a $Y$-triangular map. Consequently $f^\prime$ is $Y$-triangular as it is the composition of $Y$-triangular maps. We see that $g^\prime$ is $Y$-triangular because
\begin{eqnarray*}
 f(g^\prime(\id_Y^{-1}(\mathring{\sigma}))) &=& f(r_X(g_\ep(\mathring{\sigma}))) \\
 &\subset& f(r_X(N_\ep(f^{-1}(\sigma)))) \\
 &\subset& f(f^{-1}(\sigma)) = \sigma, 
\end{eqnarray*}
where first we used the fact that $g_\ep$ has control $\ep$ and then we used equation $(\ref{eqnrX})$.

Now we check that the new homotopies are also $Y$-triangular:
\begin{eqnarray*}
f^\prime\circ g^\prime &=& r_Y\circ f\circ s_X\circ r_X \circ g_\ep\circ s_Y \\
&\simeq& r_Y\circ f \circ g_\ep\circ s_Y \\
&\simeq& r_Y\circ s_Y \\
&\simeq& \id_Y.
\end{eqnarray*}
Since the composition of $Y$-triangular homotopies is again $Y$-triangular it suffices to check each of the above homotopies is $Y$-triangular. 

The first is $r_Y\circ f\circ P_X \circ g_\ep\circ s_Y$. We verify that it is $Y$-triangular using an arbitrary $\mathring{\sigma}\in Y$:
\begin{eqnarray*}
 f(r_Y\circ f \circ P_X(g_\ep \circ s_Y(\id_Y^{-1}(\mathring{\sigma})),I)) &\subset& f(r_Y(f(P_X(N_\ep(f^{-1}(\sigma)),I)))) \\
 &\subset& f(r_Y(f(N_\ep(f^{-1}(\sigma))))) \\
 &\subset& f(r_Y(N_{2\ep}(f^{-1}(\sigma)))) \\
 &\subset& f(f^{-1}(\sigma)) = \sigma.
\end{eqnarray*}
Here in the same order we have used the fact that $g_\ep$ has control $\ep$, equation $(\ref{eqnPX})$, the fact $f$ has control $\ep$ and equation $(\ref{eqnrY})$. 

The second homotopy, $r_Y\circ h_1 \circ s_Y$, is seen to be $Y$-triangular by the following calculation:
\begin{eqnarray*}
 \id_Y(r_Y\circ h_1 (s_Y(\id_Y^{-1}(\mathring{\sigma})),I)) &=& r_Y\circ h_1 (\mathring{\sigma},I) \\
 &\subset& r_Y(N_\ep(\sigma)) \\
 &\subset& \sigma.
\end{eqnarray*}

The final homotopy $P_Y$ is $Y$-triangular since by equation $(\ref{eqnPY})$, \[\id_Y(P_Y(\id_Y^{-1}(\mathring{\sigma}),I)) \subset \sigma.\]
A similar analysis shows that the composition $g^\prime\circ f^\prime$ is $Y$-triangular homotopic to $\id_X$ which completes the proof.
\end{proof}

One might hope that using this squeezing theorem it is possible to prove a splitting theorem in a similar manner to Chapter \ref{chapten}.
\chapter{Chain complexes over $X$}\label{chapseven}
We now pass from topology to algebra and consider an algebraic analogue of Theorem \ref{maintopthm}. To do this effectively we need to keep track of the simplicial structure with the algebra. Geometric categories are perfectly suited for this task.

The idea to consider algebraic objects parametrised by a topological space was introduced by Coram and Duvall in \cite{ConnHoll} in which they define geometric groups. Quinn extended this concept with his definition of geometric modules in \cite{QuinnGeomAlg}. We work with a further generalisation - geometric categories, whose objects are formal direct sums of objects in an additive category $\A$ parametrised by points in a space $X$, and whose morphisms are collections of morphisms of $\A$ between the summands of the objects. 

\section{Definitions and examples}\label{subsection5pt1}
Let $\A$ be an additive category.
\begin{defn}
A chain complex $C$ of objects and morphisms in $\A$ is called \textit{finite} if $C_i=0$ for all but finitely many $i\in \Z$.
\qed\end{defn}
\begin{defn}\label{definech}
We denote by $ch(\A)$ the additive category of finite chain complexes in $\A$ and chain maps.
\qed\end{defn}
All the chain complexes we consider in this thesis will be finite.

\begin{notn}
Let $C$ be a chain complex in $ch(\A)$. In what follows we will use the following slight abuse of notation: $C\otimes\Z$. This does not mean that $C_n$ is a group for each $n$ and that we are tensoring over its $\Z$ action. It is a notational convenience, and when we replace $\Z$ with a finite free $\Z$-module chain complex that is chain equivalent to $\Z$, we are actually appealing to the property of additive categories that finite direct sums exist (see \cite{maclane} page 250):

For all pairs of objects $A_1,A_2\in \A$ there exists an object $B$ and four morphisms forming a diagram
\begin{displaymath}
 \xymatrix@1{A_1 \ar@<0.5ex>[r]^-{i_1}  & B \ar@<0.5ex>[l]^-{\pi_1} \ar@<-0.5ex>[r]_-{\pi_2} & A_2 \ar@<-0.5ex>[l]_-{i_2} 
}
\end{displaymath}
 with $\pi_1i_1=\id_{A_1}$, $\pi_2i_2=\id_{A_2}$, $i_1\pi_1+i_2\pi_2=\id_{B}$.
\end{notn}

\begin{defn}\label{geometriccategories}
\begin{enumerate}[(i)]
 \item Given a simplicial complex $X$ and an additive category $\A$ define the \textit{$X$-graded} category $\mathbb{G}_X(\A)$ to be the additive category whose objects are collections of objects of $\A$, $\{M(\sigma)\,|\, \sigma \in X \}$, indexed by the simplices of $X$, written as a direct sum \[\sum_{\sigma\in X} M(\sigma) \] and whose morphisms \[f = \{f_{\tau,\sigma}\}: L = \sum_{\sigma\in X}L(\sigma) \to M = \sum_{\tau\in X}M(\tau) \]are collections $\{f_{\tau,\sigma}:L(\sigma) \to M(\tau)\,|\,\sigma,\tau \in X \}$ of morphisms in $\A$ such that for each $\sigma\in X$, the set $\{\tau\in X\,|\, f_{\tau,\sigma}\neq 0 \}$ is finite. It is convenient to regard $f$ as a matrix with one column $\{f_{\tau,\sigma}\,|\,\tau\in X \}$ for each $\sigma\in X$ (containing only finitely many non-zero entries) and one row $\{f_{\tau,\sigma}\,|\,\sigma\in X \}$ for each $\tau\in X$.

The composition of morphisms $f:L\to M$, $g:M\to N$ in $\mathbb{G}_X(\A)$ is the morphism $g\circ f:L\to N$ defined by \[(g\circ f)_{\rho, \sigma} = \sum_{\tau\in X} g_{\rho,\tau}f_{\tau,\sigma}: L(\sigma) \to N(\rho) \]where the sum is actually finite.
 \item Let $\left\{\begin{array}{c} \A^*(X) \\ \A_*(X) \end{array}\right.$ be the additive category with objects $M$ in $\mathbb{G}_X(\A)$ and with morphisms $f:M\to N$ such that $f_{\tau,\sigma}:M(\sigma)\to N(\tau)$ is $0$ unless $\left\{ \begin{array}{c} \tau\leqslant \sigma \\ \tau\geqslant \sigma \end{array}\right.$, i.e.\ satisfying \[\left\{ \begin{array}{c} f(M(\sigma))\subseteq \sum_{\tau\leqslant \sigma}{N(\tau)} \\ f(M(\sigma))\subseteq \sum_{\tau\geqslant \sigma}{N(\tau)}. \end{array}\right. \]
 %\item Forgetting the $X$-graded structure defines the covariant \textit{assembly} functor \[\left\{ \begin{array}{c} \A^*(X)\to \A; M \mapsto M^*(X)= \sum_{\sigma\leqslant X}{M(\sigma)} \\ \A_*(X)\to \A; M \mapsto M_*(X)= \sum_{\sigma\leqslant X}{M(\sigma)}. \end{array}\right.\]
\end{enumerate}
\qed\end{defn}

The categories $\A^*(X)$ and $\A_*(X)$ are originally due to Ranicki and Weiss \cite{ranickiweiss}.
\begin{rmk}\label{lochfyne}
For a finite simplicial complex we can order our simplices by dimension from smallest to largest to observe that morphisms in $\left\{ \begin{array}{c} \A^*(X) \\ \A_*(X) \end{array}\right.$ are $\left\{ \begin{array}{c} \mathrm{upper} \\ \mathrm{lower} \end{array}\right.$ triangular matrices. Not all triangular matrices correspond to valid $\left\{ \begin{array}{c} \A^*(X) \\ \A_*(X) \end{array}\right.$ morphisms, since a component of a morphism cannot be non-zero between two simplices if neither is a face of the other.

Locally finite simplicial complexes are countable, so again we can think of the morphisms as triangular with respect to some choice of ordering of the simplices. We do this carefully by first listing the $0$-simplices arbitrarily and then proceeding inductively. Suppose we have an ordering of all $i$-simplices, for $i<n$, such that all faces of any given simplex appear earlier in the list. We then add each of the $(n+1)$-simplices $\sigma$ to the list at the place immediately after the last simplex in the list contained in $\sigma$. Each simplex is thus put in a finite position in the list.  
\qed\end{rmk}

\begin{notn}\label{eithercategory}
Due to the similar nature of the categories $\A^*(X)$ and $\A_*(X)$ we will often want to consider both categories simultaneously. We therefore introduce the notation $\A(X)$ to denote \textit{either $\A^*(X)$ or $\A_*(X)$}. %CHECK THAT THEREAFTER WE SIMPLIFY THINGS
\end{notn}

\begin{defn}\label{definesupports}
Let $M$ be an object of $\A(X)$. We define the \textit{support of $M$}, denoted by Supp$(M)$, to be the union of all open simplices $\mathring{\sigma}$ such that $M(\sigma)\neq 0$. Similarly for a chain complex $C\in ch(\A(X))$, \[\mathrm{Supp}(C) := \{\mathring{\sigma}\,|\,\sigma\in X, C(\sigma)\neq 0\}.\] 
\qed\end{defn}

\begin{defn}
Often it will be convenient to refer to chain complexes or chain equivalences as \textit{$X$-graded} or \textit{``over $X$''}. This will mean that they are chain complexes or chain equivalences in $\mathbb{G}_X(\A)$. %should this be a proper defn?
\qed\end{defn}

Many examples and applications will involve the simplicial chain and cochain complexes of a simplicial complex, for which our category $\A$ will be the category $\F(R)$ of finitely generated free $R$-modules for some ring $R$. In keeping with \cite{bluebk} we use the following notation:

\begin{defn}
 Let $R$ be a ring. We denote by $\brc{A(R)^*(X)}{A(R)_*(X)}$ the category $\brc{\A^*(X)}{\A_*(X)}$ for $\A=\F(R)$.
\qed\end{defn}

\begin{ex}\label{sup}
The categories $\A^*(X)$ and $\A_*(X)$ are the correct categories to be considering because for $\A = \F(\Z)$ the simplicial $\left\{ \begin{array}{c} \mathrm{chain} \\ \mathrm{cochain} \end{array}\right.$ complex $\left\{ \begin{array}{c} \Delta^{lf}_*(X) \\ \Delta^{-*}(X) \end{array}\right.$ of $X$ is naturally a chain complex in $\left\{ \begin{array}{c} \A^*(X) \\ \A_*(X) \end{array}\right.$ with $\left\{ \begin{array}{cc} \Delta^{lf}_i(X)(\sigma) = \Z\langle \mathring{\sigma} \rangle, & i=|\sigma| \\ \Delta^{-i}(X)(\sigma) = \Z\langle \mathring{\sigma} \rangle, & -i=|\sigma| \end{array}\right.$ and $0$ otherwise.
\qed\end{ex}

\begin{ex}\label{easyex}
Let $C$ be the simplicial chain complex of the unit interval $[0,1]$. Then $C$ is the chain complex 
\begin{displaymath}
 \xymatrix{
0 \ar[r] & \Z \ar[r]^-{\binom{1}{-1}} & \Z^2 \ar[r] & 0.
}
\end{displaymath}
We consider this as a chain complex in $\A^*([0,1])$ as in Example \ref{sup} as follows:
\begin{displaymath}
 \xymatrix@R=3mm{
C(0): & 0 \ar[r] & 0 \ar[r] & \Z \ar[r] & 0 \\
C(01): & 0 \ar[r] & \Z \ar[ru]^-{-1} \ar[r] \ar[dr]_-{1} & 0 \ar[r] & 0 \\
C(1): & 0 \ar[r] & 0 \ar[r] & \Z \ar[r] & 0.
}
\end{displaymath}
\qed\end{ex}

In order to obtain finer and finer control we will need to ``subdivide'' a chain complex in $ch(\A(X))$ yielding one in $ch(\A(Sd\, X))$. First we look at the effect of barycentric subdivision $Sd\, X$ on the simplicial chain and cochain complexes:

\begin{ex}\label{geomcx}
Like in Example \ref{sup} above, $\left\{ \begin{array}{c} \Delta^{lf}_*(Sd\, X) \\ \Delta^{-*}(Sd\, X) \end{array}\right.$ is naturally a chain complex in $\left\{ \begin{array}{c} A(\Z)^*(Sd\, X) \\ A(\Z)_*(Sd\, X) \end{array}\right.$ with $\left\{ \begin{array}{cc} \Delta^{lf}_i(Sd\, X)(\tau) = \Z\langle \tau \rangle, & i=|\tau| \\ \Delta^{-i}(Sd\, X)(\tau) = \Z\langle \tau \rangle, & -i=|\tau| \end{array}\right.$ and $0$ otherwise.
\qed\end{ex}

We may assemble objects, morphisms and hence chain complexes over $Sd\, X$ either covariantly or contravariantly to obtain ones over $X$. These procedures give rise to covariant and contravariant functors. Assembly takes a collection of points on which parts of $C$ are supported and groups these parts of $C$ together by direct summing them and supporting them at a single point. As we shall see in a moment, we have to be slightly careful how we do this in order to produce something in $\A^*(X)$ or $\A_*(X)$. 

\begin{defn}\label{restrictioncomplex}
Let $Y$ be a collection of open simplices in $X$. We define restrictions to $Y$ in the obvious way:
\begin{enumerate}[(i)]
 \item Let $M$ be an object in $\mathbb{G}_{X}(\A)$. We define the \textit{restriction of $M$ to $Y$} to be the object $M|_Y$ in $\mathbb{G}_{X}(\A)$ given by \[(M|_Y)(\sigma):= \brcc{M(\sigma),}{\mathring{\sigma}\in Y}{0,}{\mathrm{o/w}.}\]
 \item Let $f:M\to N$ be a morphism in $\mathbb{G}_{X}(\A)$. We define the \textit{restriction of $f$ to $Y$} to be the morphism $f|_Y: M|_Y \to N|_Y$ in $\mathbb{G}_{X}(\A)$ given by \[ (f|_Y)_{\tau,\sigma} := \brcc{f_{\tau,\sigma},}{\mathring{\sigma},\mathring{\tau}\in Y}{0,}{\mathrm{o/w}.}\]
 \item Let $C$ be a chain complex in $\mathbb{G}_{X}(\A)$. We define the \textit{restriction of $C$ to $Y$}, denoted by $C_Y$, by 
 \begin{eqnarray*}
  (C_Y)_n &:=& C_n|_Y \\
  (d_{C_Y})_n &:=& (d_C)_n|_Y.
 \end{eqnarray*}
In general this restriction is not a chain complex.
\end{enumerate}
%Restriction to $Y$ DOES NOT define a functor $\mathbb{G}_{X}(\A) \to \mathbb{G}_{X}(\A)$. gf may be non-zero Y\to Y but factor through the complement of Y!
\qed\end{defn}

\begin{lem}
Let $C$ be a chain complex in $\A(X)$ and let $Y$ be a collection of open simplices in $X$, then a sufficient condition for $C|_Y$ being a chain complex in $\A(X)$ is that  
\begin{equation}\label{five}
\forall \mathring{\rho},\mathring{\sigma} \in Y \; \mathrm{with} \;\, \rho\leqslant \sigma, \;\, \mathring{\tau}\in Y \;  \forall \rho\leqslant \tau\leqslant \sigma.
\end{equation} 
\end{lem}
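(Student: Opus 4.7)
The plan is to verify two things: first, that $C|_Y$ is genuinely an object in $\A(X)$ with $d_{C|_Y}$ a morphism in $\A(X)$, and second, that $d_{C|_Y}^2 = 0$. The first is essentially automatic, since each component $(d_{C|_Y})_{\tau,\sigma}$ is either inherited unchanged from $(d_C)_{\tau,\sigma}$ or replaced by $0$, so the triangular support conditions defining $\A^*(X)$ or $\A_*(X)$ are immediately inherited from $d_C$. All the real work lies in checking that $d^2=0$ survives the restriction.

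I would handle the $\A^*(X)$ case first; the $\A_*(X)$ case is symmetric under reversing every face relation in what follows. Fix $\rho,\sigma \in X$ with $\mathring{\rho},\mathring{\sigma}\in Y$. By the support condition on morphisms in $\A^*(X)$, the only potentially nonzero terms in
\[(d_C^2)_{\rho,\sigma} \;=\; \sum_{\tau \in X} (d_C)_{\rho,\tau}(d_C)_{\tau,\sigma}\]
are those indexed by $\tau$ with $\rho\leqslant\tau\leqslant\sigma$, and the whole sum vanishes because $C$ is a chain complex. On the other hand, unpacking the definition of the restriction,
\[((d_{C|_Y})^2)_{\rho,\sigma} \;=\; \sum_{\substack{\tau \in X \\ \rho\leqslant\tau\leqslant\sigma \\ \mathring{\tau}\in Y}} (d_C)_{\rho,\tau}(d_C)_{\tau,\sigma},\]
while $((d_{C|_Y})^2)_{\rho,\sigma}=0$ by restriction whenever $\mathring{\rho}$ or $\mathring{\sigma}$ fails to lie in $Y$.

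Hypothesis (\ref{five}) is precisely the statement that these two index sets agree: given $\mathring{\rho},\mathring{\sigma}\in Y$ with $\rho\leqslant\sigma$, every intermediate $\tau$ with $\rho\leqslant\tau\leqslant\sigma$ satisfies $\mathring{\tau}\in Y$, so no terms are dropped when passing from $(d_C^2)_{\rho,\sigma}$ to $((d_{C|_Y})^2)_{\rho,\sigma}$. Hence $((d_{C|_Y})^2)_{\rho,\sigma} = (d_C^2)_{\rho,\sigma} = 0$ in every case. The key conceptual point I would emphasise is that the chain relation $d^2=0$ is local in the face poset: because morphisms in $\A^*(X)$ (respectively $\A_*(X)$) only admit nonzero components from $\sigma$ to its faces (respectively cofaces), the identity $(d^2)_{\rho,\sigma}=0$ only couples indices $\tau$ lying between $\rho$ and $\sigma$. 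Controlling membership of these intermediate $\tau$ in $Y$ is therefore exactly what is required, and (\ref{five}) supplies this control. There is no real obstacle; the lemma is essentially a formal unpacking of the definitions.
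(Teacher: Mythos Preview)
Your argument is correct and follows essentially the same approach as the paper: write $(d_C^2)_{\rho,\sigma}$ as a sum over intermediate $\tau$ with $\rho\leqslant\tau\leqslant\sigma$, observe that restriction to $Y$ drops precisely those terms with $\mathring{\tau}\notin Y$, and note that condition~(\ref{five}) guarantees none are dropped. Your write-up is in fact somewhat more thorough than the paper's, which only sketches this computation and remarks that the $\A_*(X)$ case is similar.
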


\begin{proof}
For $\A^*(X)$, since $C$ is a chain complex we know that $d_C^2=0$. For $\rho\leqslant \sigma$, \[(d_C^2)_{\rho,\sigma} = \sum_{\rho\leqslant \tau\leqslant \sigma}{(d_C)_{\rho\tau}(d_C)_{\tau\sigma}} \] %CHECK NOTN for _{\tau\sigma} vs _{\tau,\sigma}
so if any $\mathring{\tau}$ are missing from $Y$, $(d_C)|_Y^2$ may not be zero and hence $C$ may not be a chain complex. It certainly is, however, if all such $\mathring{\tau}$ are present. A similar argument also holds for $\A_*(X)$.
\end{proof}

\begin{defn}\label{assembleobjects}
Let $Y$, $Y^\prime$ be finite collections of open simplices in $X$. 
\begin{enumerate}[(i)]
 \item Let $M$ be an object in $\mathbb{G}_{X}(\A)$. We define the \textit{assembly of $M$ over $Y$} to be the object $M[Y]$ in $\A$ given by \[M[Y] := \sum_{\mathring{\sigma}\in Y}{M(\sigma)}.\]
 \item Let $f:M\to N$ be a morphism in $\mathbb{G}_{X}(\A)$. We define the \textit{assembly of $f$ from $Y$ to $Y^\prime$} to be the morphism $f_{[Y^\prime],[Y]}: M[Y] \to N[Y^\prime]$ in $\A$ given by the matrix \[ f_{[Y^\prime],[Y]} := \{f_{\mathring{\sigma},\mathring{\tau}}\}_{\mathring{\sigma}\in Y^\prime,\mathring{\tau}\in Y}\] with respect to the direct sum decompositions of $M[Y]$ and $N[Y^\prime]$ as defined above.
 \item Let $C$ be a chain complex in $\A(X)$ and let the collection of open simplices $Y$ additionally satisfy condition $(\ref{five})$, then the \textit{assembly of $C$ over $Y$}, $C[Y]$, given by 
\begin{eqnarray*}
 C[Y]_n &:=& C_n[Y] \\
 (d_{C[Y]})_n &:=& ((d_C)_n)_{[Y],[Y]}
\end{eqnarray*}
is a chain complex in $\A$. 
\end{enumerate}
\qed\end{defn}

For the barycentric subdivision $Sd\, X$ of a locally finite simplicial complex $X$, both of the following collections of open simplices satisfy condition $(\ref{five})$, so for chain complexes in $\A(X)$ we can assemble over these collections to obtain chain complexes in $\A$:
\begin{enumerate}
 \item The collection of all open simplices in the subdivision of an open simplex $\mathring{\sigma}\in X$, %CONVENTION FOR OPEN SIMPLICES VS CLOSED AS ALWAYS MATHRING ETC??
 \item The collection of all open simplices in the open dual cell $\mathring{D}(\sigma,X)$ of any simplex $\sigma\in X$.
\end{enumerate}
Grouping these collections for all simplices in $X$ gives rise to covariant and contravariant assembly functors respectively.

\begin{defn}\label{covariantassembly}
We define the \textit{covariant assembly} functor $\rr: \mathbb{G}_{Sd\,X}(\A) \to \mathbb{G}_{X}(\A)$ as follows:
\begin{itemize}
 \item An object $M$ in $\mathbb{G}_{Sd\,X}(\A)$ is sent to the object $\rr(M)$ in $\mathbb{G}_{X}(\A)$ defined by \[\rr(M)(\sigma):= M[\mathring{\sigma}].\]
 \item A morphism $f:M \to N$ in $\mathbb{G}_{Sd\,X}(\A)$ is sent to the morphism $\rr(f):\rr(M)\to \rr(N)$ in $\mathbb{G}_{X}(\A)$ defined by \[ \rr(f)_{\tau,\sigma,n} := (f_n)_{[\mathring{\sigma}],[\mathring{\tau}]}.\] 
\end{itemize}
Since $\rr$ does not change the direction of morphisms it restricts to give functors
\[ \rr: \brc{\A^*(Sd\, X) \to \A^*(X)}{\A_*(Sd\, X) \to \A_*(X).}\]
\qed\end{defn}

\begin{ex}\label{covregrp}(Covariant assembly) 
Let $C$ be a chain complex in $\A^*(Sd\, \sigma)$ for the $2$-simplex $\sigma$ labelled as usual, i.e.\ $d_C$ only has non-zero components from simplices of $Sd\, \sigma$ to their boundary. Then $\rr(C)$ is assembled as depicted in \Figref{Fig:new22}, where the whole block in the middle is $\rr(C)(\sigma)$. Note in particular that $d_{\rr(C)}$ again only has non-zero components from simplices of $\sigma$ to its boundary, so in particular $\rr(C)$ is a chain complex in $\A^*(\sigma)$ as claimed.
\begin{figure}[ht]
\begin{center}
{
\psfrag{0}[tr][]{$\rr(C)(\rho_0)$}
\psfrag{1}[b][b]{$\rr(C)(\rho_1)$}
\psfrag{2}[tl][]{$\rr(C)(\rho_2)$}
\psfrag{01}[r][r]{$\rr(C)(\tau_0)$}
\psfrag{02}[t][]{$\rr(C)(\tau_2)$}
\psfrag{12}[l][l]{$\rr(C)(\tau_1)$} 
\includegraphics[width=7cm]{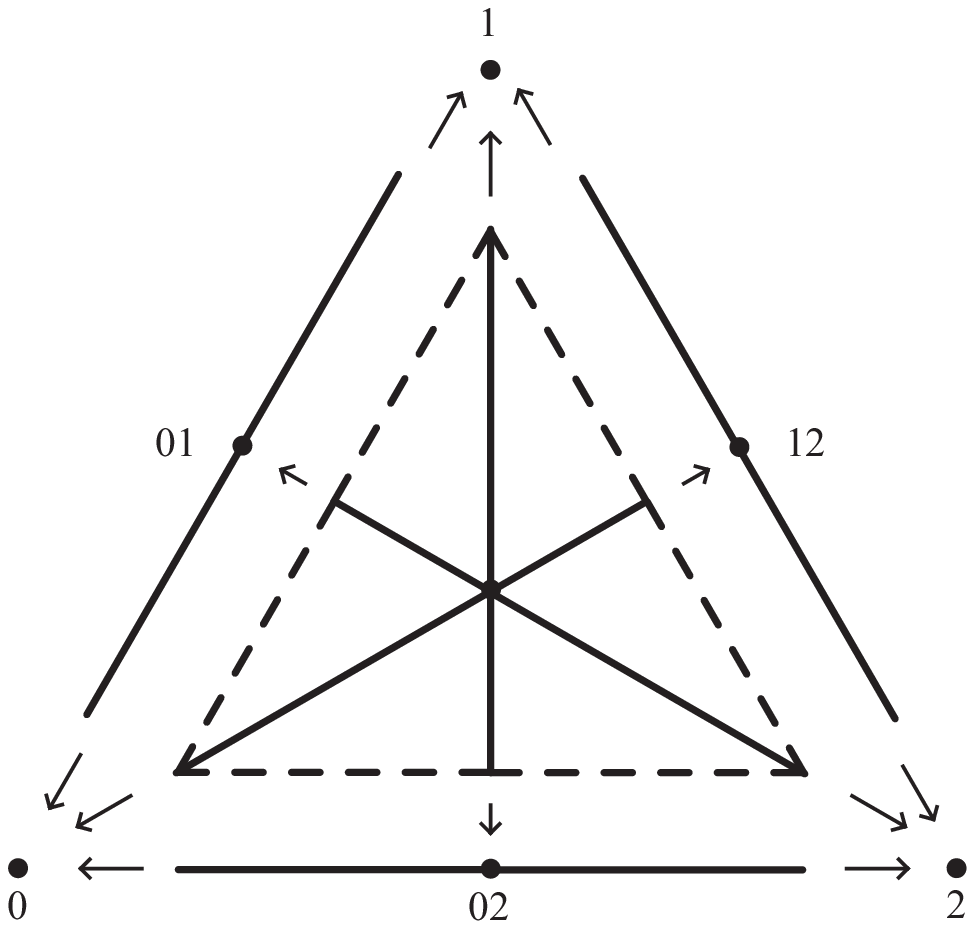}
}
\caption{$\rr(C)$ as a chain complex in $\A^*(\sigma)$.}
\label{Fig:new22}
\end{center}
\end{figure}

Suppose instead that $C$ is in $\A_*(Sd\, \sigma)$. Then assembling gives exactly the same diagram but with all the arrows reversed, and hence $\rr(C)$ would be a chain complex in $\A_*(\sigma)$.
\qed\end{ex}

\begin{defn}\label{contravariantassembly}
We define the \textit{contravariant assembly} functor $\ttt: \mathbb{G}_{Sd\,X}(\A) \to \mathbb{G}_{X}(\A)$ as follows:
\begin{itemize}
 \item An object $M$ in $\mathbb{G}_{Sd\,X}(\A)$ is sent to the object $\ttt(M)$ in $\mathbb{G}_{X}(\A)$ defined by \[\ttt(M)(\sigma):= M[\mathring{D}(\sigma,X)].\]
 \item A morphism $f:M \to N$ in $\mathbb{G}_{Sd\,X}(\A)$ is sent to the morphism $\ttt(f):\ttt(M)\to \ttt(N)$ defined by \[ \ttt(f)_{\tau,\sigma,n} := (f_n)_{[\mathring{D}(\tau,X)],[\mathring{D}(\sigma,X)]}.\]
 \item Since $\ttt$ reverses the direction of morphisms it restricts to give functors
\[ \ttt: \brc{\A^*(Sd\, X) \to \A_*(X)}{\A_*(Sd\, X) \to \A^*(X).}\]
\end{itemize}
\qed\end{defn}

\begin{ex}\label{contraregrp}(Contravariant assembly)
Let $C$ be a chain complex in $\A^*(Sd\, \sigma)$ for the $2$-simplex $\sigma$, i.e.\ $d_C$ only has non-zero components from simplices of $Sd\, \sigma$ to their boundary. Then $\ttt(C)$ is assembled as depicted in \Figref{Fig:new1}. Note in particular that $d_{\ttt(C)}$ now only has non-zero components from boundaries of simplices in $\sigma$ to their interiors, so in particular $\ttt(C)$ is a chain complex in $\A_*(\sigma)$ as claimed.
\begin{figure}[ht]
\begin{center}
{
\psfrag{0}[r][r]{$\ttt(C)(\rho_0)$}
\psfrag{1}[b][b]{$\ttt(C)(\rho_1)$}
\psfrag{2}[l][l]{$\ttt(C)(\rho_2)$}
\psfrag{01}[][r]{$\ttt(C)(\tau_0)$}
\psfrag{02}[t][]{$\ttt(C)(\tau_2)$}
\psfrag{12}[l][l]{$\ttt(C)(\tau_1)$}
\includegraphics[width=7cm]{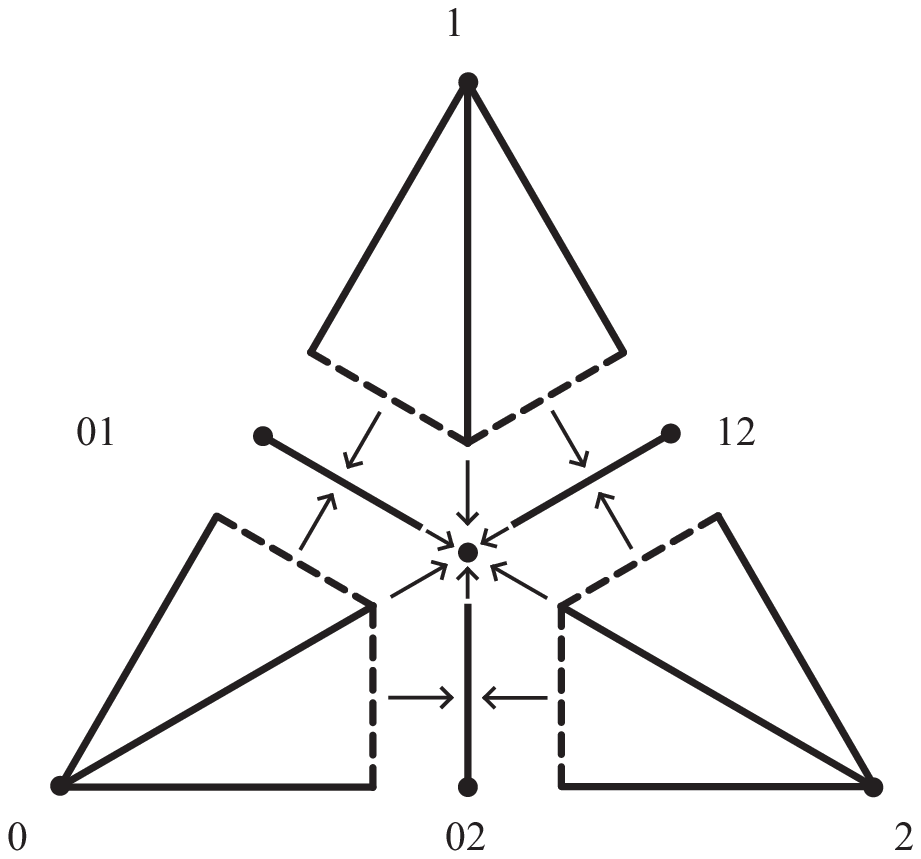}
}
\caption{$\ttt(C)$ as a chain complex in $\A_*(\sigma)$.}
\label{Fig:new1}
\end{center}
\end{figure}

Suppose instead that $C$ is in $\A_*(Sd\, \sigma)$. Then assembling gives exactly the same diagram but with all the arrows reversed, and hence $\ttt(C)$ would be a chain complex in $\A^*(\sigma)$.
\qed\end{ex}
We now give a simple explicit example.
\begin{ex}
Let $X$ be the $1$-simplex $v_0v_1$ so that $Sd\, X = \widehat{v_0}\widehat{v_0v_1} \cup \widehat{v_0v_1}\widehat{v_1}$. Let $C$ be any chain complex in $\A^*(Sd\, X)$, then explicitly $C$ has the form
\begin{displaymath}
 \xymatrix@R=3mm{
C(\widehat{v_0}): & \ldots \ar[r] & C_n(\widehat{v_0}) \ar[r]^-{\alpha} & C_{n-1}(\widehat{v_0}) \ar[r] & \ldots \\
C(\widehat{v_0}\widehat{v_0v_1}): & \ldots \ar[r] \ar[ru] \ar[dr] & C_n(\widehat{v_0}\widehat{v_0v_1}) \ar[r]^-{\gamma} \ar[ru]^-{\beta} \ar[dr]^-{\delta}  & C_{n-1}(\widehat{v_0}\widehat{v_0v_1}) \ar[r] \ar[ru] \ar[dr] & \ldots \\
C(\widehat{v_0v_1}): & \ldots \ar[r] & C_n(\widehat{v_0v_1}) \ar[r]^-{\epsilon} & C_{n-1}(\widehat{v_0v_1}) \ar[r] & \ldots \\
C(\widehat{v_0v_1}\widehat{v_1}): & \ldots \ar[r] \ar[ru] \ar[dr]  & C_n(\widehat{v_0v_1}\widehat{v_1}) \ar[r]^-{\eta} \ar[ru]^-{\zeta} \ar[dr]^-{\theta} & C_{n-1}(\widehat{v_0v_1}\widehat{v_1}) \ar[r] \ar[ru] \ar[dr] & \ldots \\
C(\widehat{v_1}): & \ldots \ar[r] & C_n(\widehat{v_1}) \ar[r]^-{\iota} & C_{n-1}(\widehat{v_1}) \ar[r] & \ldots
}
\end{displaymath}
Assembling covariantly gives $\rr(C)$ as below:
\begin{displaymath}
 \xymatrix{
\rr{C}(v_0) = C(\widehat{v_0}): & \ldots \ar[r] & C_n(\widehat{v_0}) \ar[rr]^-{\alpha} && C_{n-1}(\widehat{v_0}) \ar[r] & \ldots \\
\rr{C}(v_0v_1) = C[(v_0,v_1)]: & \ldots \ar[r] \ar[ur] \ar[dr] & C[(v_0,v_1)]_n \ar[rr]^{\mu} \ar[urr]^-{(\beta,0,0)} \ar[drr]^-{(0,0,\theta)} &&  C[(v_0,v_1)]_{n-1} \ar[r] \ar[ru] \ar[dr] & \ldots \\
\rr{C}(v_1) = C(\widehat{v_1}): & \ldots \ar[r] & C_n(\widehat{v_1}) \ar[rr]^-{\iota} && C_{n-1}(\widehat{v_1}) \ar[r] & \ldots
}
\end{displaymath}
where $C[(v_0,v_1)]_n = C_n(\widehat{v_0}\widehat{v_0v_1})\oplus C_n(\widehat{v_0v_1}) \oplus C_n(\widehat{v_0v_1}\widehat{v_1})$ and \[\mu = \left(\begin{array}{ccc} \gamma & 0 & 0 \\ \delta & \ep & \zeta \\ 0 & 0 & \eta \end{array} \right).\]
Assembling contravariantly gives $\ttt(C)$ as below:

\begin{displaymath}
 \xymatrix@C=0.8cm{
\ttt(C)(v_0) = C[\mathring{D}(v_0,X)]: & \ldots \ar[r] \ar[dr] & C_n[\mathring{D}(v_0,X)] \ar[rr]^-{{\left(\begin{array}{cc} \alpha & \beta \\ 0 & \gamma \end{array}\right)}} \ar[drr]^-{(0,\delta)} && C_{n-1}[\mathring{D}(v_0,X)] \ar[r] \ar[dr] & \ldots \\
\ttt(C)(v_0v_1) = C(\widehat{v_0v_1}): & \ldots \ar[r] & C_n(\widehat{v_0v_1}) \ar[rr]^-{\epsilon} && C_{n-1}(\widehat{v_0v_1}) \ar[r] & \ldots \\
\ttt(C)(v_1) = C[\mathring{D}(v_1,X)]: & \ldots \ar[r] \ar[ur] & C_n[\mathring{D}(v_1,X)] \ar[rr]_-{{\left(\begin{array}{cc} \iota & \theta \\ 0 & \eta \end{array}\right)}} \ar[urr]^-{(0,\zeta)} && C_{n-1}[\mathring{D}(v_1,X)] \ar[r] \ar[ur] & \ldots
}
\end{displaymath}
where $C_n[\mathring{D}(v_0,X)] = C_n(\widehat{v_0})\oplus C_n(\widehat{v_0}\widehat{v_0v_1})$ and $C_n[\mathring{D}(v_1,X)] = C_n(\widehat{v_0v_1}\widehat{v_1})\oplus C_n(\widehat{v_1})$.
\qed\end{ex}

\begin{defn}\label{algmapcone}
Let $f:C\to D$ be a chain map. We define the \textit{algebraic mapping cone of $f$}, denoted by $\C(f:C\to D)$, to be the chain complex with 
\begin{equation}\label{conenchain}
\C(f:C\to D)_n:= C_n\oplus D_{n+1}
\end{equation} 
and boundary maps 
\begin{equation}\label{coneboundarymap}
(d_{\C(f)})_n = \left(\begin{array}{cc} (d_C)_n & 0 \\ f_n & -(d_D)_{n+1} \end{array}\right).
\end{equation} 
\qed\end{defn}

The types of chain maps we will mostly be concerned with are chain equivalences. A chain map $f$ is a chain equivalence if and only if the algebraic mapping cone on $f$ is chain contractible. We will favour working with chain complexes and considering whether or not they are contractible and this will also deal with whether chain maps are chain equivalences or not. Of course it is possible that the chain map is induced by an actual map on the spaces.

\begin{lem}\label{conestillincat}
The algebraic mapping cone of an $\brc{\A^*(X)}{\A_*(X)}$ chain map $f:C\to D$ is a chain complex in $\brc{\A^*(X)}{\A_*(X).}$
\end{lem}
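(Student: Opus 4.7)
The plan is to verify two things separately: first, that $\C(f)$ is indeed a chain complex (i.e., that $d_{\C(f)}^2 = 0$), and second, that as a graded object with differential, it lives in the subcategory $\A^*(X)$ (resp.\ $\A_*(X)$) of $\mathbb{G}_X(\A)$ rather than merely in $\mathbb{G}_X(\A)$ itself.

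First I would give $\C(f)$ its $X$-graded structure. Since $C_n$ and $D_{n+1}$ are objects of $\A(X)$, each is a formal sum $C_n = \sum_{\sigma\in X}C_n(\sigma)$ and $D_{n+1} = \sum_{\sigma\in X}D_{n+1}(\sigma)$, so the natural grading is
\[\C(f)_n(\sigma) := C_n(\sigma)\oplus D_{n+1}(\sigma).\]
With this grading, the component of $(d_{\C(f)})_n$ from $\sigma$ to $\tau$ is the $2\times 2$ block
\[\bigl((d_{\C(f)})_n\bigr)_{\tau,\sigma} = \begin{pmatrix} ((d_C)_n)_{\tau,\sigma} & 0 \\ (f_n)_{\tau,\sigma} & -((d_D)_{n+1})_{\tau,\sigma} \end{pmatrix}.\]

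Next I would verify the triangularity condition. Since $C,D\in\A^*(X)$, the components $((d_C)_n)_{\tau,\sigma}$ and $((d_D)_{n+1})_{\tau,\sigma}$ vanish unless $\tau\leqslant\sigma$; and since $f$ is a chain map in $\A^*(X)$, $(f_n)_{\tau,\sigma}$ also vanishes unless $\tau\leqslant\sigma$. Hence each of the four entries of the block above is zero unless $\tau\leqslant\sigma$, so $(d_{\C(f)})_n$ is a morphism in $\A^*(X)$. The argument for $\A_*(X)$ is identical with $\leqslant$ replaced by $\geqslant$.

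Finally I would check $d_{\C(f)}^2 = 0$ by direct block matrix computation:
\[(d_{\C(f)})_{n-1}\circ(d_{\C(f)})_n = \begin{pmatrix} (d_C)_{n-1}(d_C)_n & 0 \\ f_{n-1}(d_C)_n - (d_D)_n f_n & (d_D)_n (d_D)_{n+1} \end{pmatrix},\]
which is zero using $d_C^2 = 0$, $d_D^2 = 0$, and the chain map identity $f_{n-1}(d_C)_n = (d_D)_n f_n$. The condition on $f_{n-1}(d_C)_n - (d_D)_nf_n$ is the only nontrivial step; the rest is formal. No step here is expected to be an obstacle, as the argument is essentially the observation that passing to mapping cones is a functorial construction that respects the subcategory structure cut out by the triangularity condition on matrix components.
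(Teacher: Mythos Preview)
Your proposal is correct and follows essentially the same approach as the paper: write down the $X$-graded structure $\C(f)_n(\sigma) = C_n(\sigma)\oplus D_{n+1}(\sigma)$, express the boundary map componentwise as the $2\times 2$ block matrix, and observe that each entry vanishes unless $\tau\leqslant\sigma$ (resp.\ $\tau\geqslant\sigma$). The paper's proof is terser and omits the verification that $d_{\C(f)}^2=0$, treating it as a standard fact about algebraic mapping cones; your inclusion of this check is harmless but not strictly necessary.
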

\begin{proof}
$\C(f)(\sigma) = C_n(\sigma)\oplus D_{n+1}(\sigma)$ and the boundary map 
\[(d_{\C(f)})_{\tau,\sigma,n} = \left(\begin{array}{cc} (d_C)_{\tau,\sigma,n} & 0 \\ f_{\tau,\sigma,n} & -(d_D)_{\tau,\sigma,n+1} \end{array}\right)\] is only non-zero for $\brc{\tau \leqslant \sigma}{\tau\geqslant\sigma.}$
\end{proof}

\begin{lem}\label{carvesimpmap}
Let $f:Y \to X$ be a proper simplicial map between locally finite simplicial complexes. Then
\begin{enumerate}[(i)]
 \item $f$ induces an $A(\Z)^*(X)$ chain map \[f_* : \Delta^{lf}_*(Y) \to \Delta^{lf}_*(X) \] by setting for all $\sigma\in X$ \[ \Delta^{lf}_*(Y)(\sigma):= \Delta^{lf}_*(f^{-1}(\mathring{\sigma})),\] and viewing $\Delta^{lf}_*(X)$ as a chain complex in $A(\Z)^*(X)$ in the standard way as explained in Example \ref{sup}.
 \item $f$ induces an $A(\Z)_*(X)$ chain map \[(Sd\,f)_*: \Delta^{lf}_*(Sd\,Y) \to \Delta^{lf}_*(Sd\,X) \] by setting for all $\sigma\in X$ \[ \Delta^{lf}_*(Sd\,Y)(\sigma):= \Delta^{lf}_*(f^{-1}(\mathring{D}(\sigma,X))),\] and viewing $\Delta^{lf}_*(Sd\, X)$ as a chain complex in $A(\Z)_*(X)$ in the standard way as explained in Example \ref{contraregrp}.
\end{enumerate}
\end{lem}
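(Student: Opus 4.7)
The plan is to verify both claims by exploiting the decomposition of $Y$ and $Sd\, Y$ into preimages of open simplices and open dual cells, respectively, and by using the fact that a simplicial map sends faces to faces.

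For (i), I would start by observing that the open simplices of $Y$ partition as $Y = \bigsqcup_{\sigma \in X} f^{-1}(\mathring{\sigma})$, because $X = \bigsqcup_{\sigma \in X} \mathring{\sigma}$ and $f$ is simplicial (so it sends open simplices onto open simplices). This gives the direct sum decomposition
\[
\Delta^{lf}_*(Y) \;=\; \bigoplus_{\sigma \in X} \Delta^{lf}_*(f^{-1}(\mathring{\sigma}))
\]
as ungraded $\Z$-modules, which is the $\mathbb{G}_X(\F(\Z))$-object structure proposed. Properness of $f$ ensures that $f^{-1}(\overline{\sigma})$ is compact for each $\sigma\in X$, hence that $f^{-1}(\mathring{\sigma})$ contains only finitely many open simplices, so each graded piece is a finitely generated free $\Z$-module, i.e.\ an object of $\F(\Z)$. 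To check the $A(\Z)^*(X)$ triangularity of $d_{\Delta^{lf}_*(Y)}$, let $\widetilde{\sigma}\subset Y$ be a simplex with $f(\mathring{\widetilde{\sigma}})=\mathring{\sigma}$ and let $\widetilde{\tau}<\widetilde{\sigma}$. Since $f$ is simplicial, $f(\widetilde{\tau})$ is a face of $\sigma$, so $f(\mathring{\widetilde{\tau}})=\mathring{\tau}$ for some $\tau\leqslant\sigma$; this is exactly the condition that $(d_{\Delta^{lf}_*(Y)})_{\tau,\sigma}$ is non-zero only when $\tau\leqslant\sigma$.

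Having set up the object, I would verify that $f_*$ is an $A(\Z)^*(X)$ chain map. The standard simplicial chain map sends a generator $\widetilde{\sigma}$ to $\pm f(\widetilde{\sigma})$ if $f|_{\widetilde{\sigma}}$ is dimension-preserving and to $0$ otherwise. If $\widetilde{\sigma}$ contributes to $\Delta^{lf}_*(Y)(\sigma)$, then its image is supported on $\mathring{\sigma}\subset X$ itself, so $(f_*)_{\tau,\sigma}$ is non-zero only for $\tau=\sigma$. This is trivially $A(\Z)^*(X)$-triangular, and the fact that $f_*$ commutes with the boundary maps is the familiar verification at the level of simplicial chain complexes.

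For (ii), I would imitate the argument verbatim, replacing the partition of $X$ into open simplices with the partition of $Sd\, X$ into open dual cells $\mathring{D}(\sigma,X)$ of simplices of $X$. Because $Sd\,f:Sd\,Y\to Sd\,X$ is simplicial, each open simplex of $Sd\, Y$ is sent onto a unique open simplex of $Sd\, X$, which lies in exactly one $\mathring{D}(\sigma,X)$, giving the required direct sum decomposition
\[
\Delta^{lf}_*(Sd\,Y)\;=\;\bigoplus_{\sigma\in X}\Delta^{lf}_*(f^{-1}(\mathring{D}(\sigma,X))).
\]
The key point for the $A(\Z)_*(X)$ direction is that $\mathring{D}(\sigma,X)$ consists of simplices $\widehat{\sigma_0}\ldots\widehat{\sigma_k}$ with $\sigma_0=\sigma$, so taking a face of such a simplex either removes an intermediate barycentre (keeping us in $\mathring{D}(\sigma,X)$) or removes $\widehat{\sigma_0}=\widehat{\sigma}$, in which case the resulting face lies in $\mathring{D}(\sigma_1,X)$ with $\sigma_1>\sigma$. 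Thus faces of a simplex in $f^{-1}(\mathring{D}(\sigma,X))$ land in $f^{-1}(\mathring{D}(\tau,X))$ for some $\tau\geqslant\sigma$, which is the $A(\Z)_*(X)$ triangularity of the boundary. The same argument as in (i) then shows that $(Sd\,f)_*$ has components only from $\sigma$ to $\sigma$ and so is automatically $A(\Z)_*(X)$.

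The verifications are essentially bookkeeping, with the only non-formal ingredients being (a) the use of properness to guarantee finitely generated graded pieces and column-finiteness of the boundary morphisms, and (b) the simpliciality of $f$ (and $Sd\,f$) to translate face relations upstairs in $Y$ into face/dual-cell containment relations downstairs in $X$. I do not expect a genuine obstacle here.
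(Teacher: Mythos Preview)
Your argument is correct. The paper actually states this lemma without proof, proceeding directly to its corollary, so there is nothing to compare against; your proposal supplies exactly the routine verification the author leaves implicit. The two essential ingredients you identify---properness of $f$ to ensure each $\Delta^{lf}_*(f^{-1}(\mathring{\sigma}))$ is finitely generated, and simpliciality of $f$ (resp.\ $Sd\,f$) to translate face relations in $Y$ into the inequalities $\tau\leqslant\sigma$ (resp.\ $\tau\geqslant\sigma$) in $X$---are precisely what makes the triangularity conditions hold, and your observation that $(f_*)_{\tau,\sigma}$ and $((Sd\,f)_*)_{\tau,\sigma}$ vanish off the diagonal is the cleanest way to see these are morphisms in the respective categories.
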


\begin{cor}
Taking the algebraic mapping cone of $\brc{f_*}{(Sd\, f)_*}$ we can view $\brc{\C(f_*)}{\C((Sd\,f)_*)}$ as a chain complex in $\brc{A(\Z)^*(X)}{A(\Z)_*(X).}$ 
\end{cor}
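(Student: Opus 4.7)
The plan is to observe that this corollary is an immediate consequence of combining Lemma~\ref{carvesimpmap} with Lemma~\ref{conestillincat}. There is essentially no new work to do; the statement is a direct specialisation.

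First, I would invoke Lemma~\ref{carvesimpmap}(i) to recall that $f_*:\Delta^{lf}_*(Y)\to \Delta^{lf}_*(X)$ is a chain map in $A(\Z)^*(X)$, where $\Delta^{lf}_*(Y)$ is given the $A(\Z)^*(X)$-structure via $\Delta^{lf}_*(Y)(\sigma):=\Delta^{lf}_*(f^{-1}(\mathring{\sigma}))$ and $\Delta^{lf}_*(X)$ has the standard structure of Example~\ref{sup}. Then applying Lemma~\ref{conestillincat} to the chain map $f_*$ produces the algebraic mapping cone $\C(f_*)$, whose modules in degree $n$ are $\Delta^{lf}_n(Y)\oplus\Delta^{lf}_{n+1}(X)$ (graded over $X$ summand-wise by $\sigma$) and whose boundary, as written in~(\ref{coneboundarymap}), is a triangular $2\times 2$ matrix whose entries are all components of $A(\Z)^*(X)$ morphisms. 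Since both $d_{\Delta^{lf}_*(Y)}$, $d_{\Delta^{lf}_*(X)}$ and $f_*$ have only $\tau\leqslant\sigma$ components, so does $d_{\C(f_*)}$, exhibiting $\C(f_*)$ as a chain complex in $A(\Z)^*(X)$.

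For the second case I would argue symmetrically, using Lemma~\ref{carvesimpmap}(ii) to view $(Sd\,f)_*:\Delta^{lf}_*(Sd\,Y)\to \Delta^{lf}_*(Sd\,X)$ as a chain map in $A(\Z)_*(X)$ with $\Delta^{lf}_*(Sd\,Y)(\sigma):=\Delta^{lf}_*(f^{-1}(\mathring{D}(\sigma,X)))$ and $\Delta^{lf}_*(Sd\,X)$ having the standard contravariantly-assembled structure from Example~\ref{contraregrp}. Lemma~\ref{conestillincat}, applied in the $\A_*(X)$ case, then immediately gives $\C((Sd\,f)_*)$ as a chain complex in $A(\Z)_*(X)$.

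There is no genuine obstacle here; the only thing to verify, and this is already built into Lemma~\ref{conestillincat}, is that the triangularity condition satisfied by each of $d_C$, $d_D$ and the chain map $f$ individually is inherited by the block boundary matrix of $\C(f)$. I would therefore present this proof as a one-line deduction: \emph{combine Lemma~\ref{carvesimpmap} with Lemma~\ref{conestillincat}}.
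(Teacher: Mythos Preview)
Your proposal is correct and matches the paper's approach exactly: the corollary is stated without proof in the paper because it is an immediate consequence of combining Lemma~\ref{carvesimpmap} with Lemma~\ref{conestillincat}, precisely as you say.
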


\begin{ex}\label{chainmapoverX}
Consider the simplicial map 
\begin{eqnarray*}
 f: \sigma=v_0v_1v_2 &\to& \tau=w_0w_1 \\
\{v_0, v_1\} &\mapsto& \{w_0\} \\
\{v_2 \} &\mapsto& \{w_1\}.
\end{eqnarray*}
Then with respect to the decomposition $[0,1] = \{0\}\cup (0,1)\cup \{1\}$ the map $\alpha=(Sd\, f)_*$ is a diagonal matrix \[\left(\begin{array}{ccc}a&0&0 \\ 0&b&0 \\ 0&0&c \end{array}\right) \] for the maps $a,b,c$ as pictured in \Figref{Fig:Sdf}: 
\begin{figure}[ht]
\begin{center}
{
\psfrag{f0}[r][r]{$a$}
\psfrag{f1}[r][r]{$b$}
\psfrag{f2}[l][l]{$c$}
\includegraphics[width=7cm]{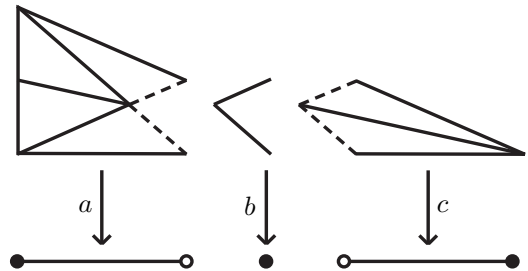}
}
\caption{How to think of $(Sd\,f)_*$ as a morphism in $A(\Z)_*(X)$.}
\label{Fig:Sdf}
\end{center}
\end{figure}
\qed\end{ex}
For the rest of the thesis we will consider chain maps on the level of chains irrespective of whether they are induced by actual maps on spaces or not.

\section{Contractibility in categories over simplicial complexes}
In this section we demonstrate the significance of the categories $\A(X)$, namely that a chain complex in $\A(X)$ is globally chain contractible if and only if it is locally chain contractible in $\A$ over each simplex. This result is instrumental in defining subdivision functors as it allows us to replace each piece of a chain complex in $\A(X)$ with something chain equivalent in $\A$ that is supported more finely over the barycentric subdivision. Morally this result is true because morphisms in $\A(X)$ are triangular matrices and the result is analogous to the statement that a triangular matrix is invertible if and only if all its diagonal entries are. 

\begin{prop}\label{chcont}
Let $X$ be a locally finite simplicial complex, and let $C\in\A(X)$. %have direct sum decomposition \[C=\bigoplus_{\sigma\leqslant X}{C(\sigma)}.\]
Then, 
\begin{enumerate}[(i)]
 \item $C$ is chain contractible in $\A(X)$ if and only if $C(\sigma)$ is chain contractible in $\A$ for all $\sigma\in X$.
 \item A chain map $f:C\to D$ of chain complexes in $\A(X)$ is a chain equivalence if and only if $f_{\sigma,\sigma}:C(\sigma)\to D(\sigma)$ is a chain equivalence in $\A$ for all $\sigma\in X$.
\end{enumerate}
\end{prop}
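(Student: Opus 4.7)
The plan is to exploit the triangular shape of morphisms in $\A(X)$---in $\A^*(X)$ a component $f_{\tau,\sigma}$ vanishes unless $\tau\leqslant\sigma$, and dually in $\A_*(X)$---so that the defining equation $d\Gamma+\Gamma d=1$ for a chain contraction can be solved one entry at a time by a back-substitution-style induction. I will treat the $\A^*(X)$ case explicitly; the $\A_*(X)$ case is formally identical after reversing inequalities throughout.

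For part (i), the forward direction is immediate: restricting any global chain contraction $\Gamma$ to its diagonal block $\Gamma_{\sigma,\sigma}$ gives a chain contraction of $C(\sigma)$ in $\A$, because the $(\sigma,\sigma)$-component of $d\Gamma+\Gamma d=1$ reads $d_{\sigma,\sigma}\Gamma_{\sigma,\sigma}+\Gamma_{\sigma,\sigma}d_{\sigma,\sigma}=1$. For the converse, suppose we are given chain contractions $\Gamma_\sigma$ of $C(\sigma)$ for every $\sigma\in X$. I propose to build a global $\Gamma$ by first setting $\Gamma_{\sigma,\sigma}:=\Gamma_\sigma$, and then defining the off-diagonal components $\Gamma_{\tau,\sigma}$ for $\tau<\sigma$ by induction on $k=|\sigma|-|\tau|$. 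At the $k$-th stage, expanding $(d\Gamma+\Gamma d)_{\tau,\sigma}=0$ and isolating the unknown $\Gamma_{\tau,\sigma}$ yields the equation
\[
d_{\tau,\tau}\,\Gamma_{\tau,\sigma}+\Gamma_{\tau,\sigma}\,d_{\sigma,\sigma}\;=\;X_{\tau,\sigma},
\]
where
\[
X_{\tau,\sigma}\;=\;-\Gamma_\tau d_{\tau,\sigma}-d_{\tau,\sigma}\Gamma_\sigma-\sum_{\tau<\rho<\sigma}\bigl(d_{\tau,\rho}\Gamma_{\rho,\sigma}+\Gamma_{\tau,\rho}d_{\rho,\sigma}\bigr)
\]
is assembled entirely from data fixed at earlier inductive stages (the inner sum involves only $\Gamma_{\rho,\sigma}$ with $|\sigma|-|\rho|<k$). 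The key subclaim is that $X_{\tau,\sigma}$ is a chain map $C(\sigma)_*\to C(\tau)_{*+1}$, i.e.\ $d_{\tau,\tau}X_{\tau,\sigma}=X_{\tau,\sigma}d_{\sigma,\sigma}$; granting this, I set $\Gamma_{\tau,\sigma}:=\Gamma_\tau X_{\tau,\sigma}$ and check
\[
d_{\tau,\tau}\Gamma_\tau X_{\tau,\sigma}+\Gamma_\tau X_{\tau,\sigma}d_{\sigma,\sigma}\;=\;(1-\Gamma_\tau d_{\tau,\tau})X_{\tau,\sigma}+\Gamma_\tau X_{\tau,\sigma}d_{\sigma,\sigma}\;=\;X_{\tau,\sigma}
\]
using the chain contraction identity for $\Gamma_\tau$. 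All sums in sight are finite because each $\sigma\in X$ has only finitely many faces, so the column-finiteness condition built into $\mathbb{G}_X(\A)$ is automatic for the resulting $\Gamma$.

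For part (ii) I would simply invoke (i) for the algebraic mapping cone. Since $f$ is a chain map in $\A(X)$, Lemma \ref{conestillincat} places $\C(f)$ in $\A(X)$, and the formulas of Definition \ref{algmapcone} show that its diagonal block at $\sigma$ is precisely $\C(f_{\sigma,\sigma}\colon C(\sigma)\to D(\sigma))$. A chain map is a chain equivalence if and only if its algebraic mapping cone is chain contractible, so applying part (i) to $\C(f)$ delivers (ii) at once. The main obstacle in this whole argument is the verification that $X_{\tau,\sigma}$ is a chain map at each inductive stage: this is a sign-tracking exercise using $d_C^2=0$ expanded on the $(\tau,\sigma)$-component together with the inductive hypothesis on lower-codimension off-diagonal blocks, and getting the telescoping cancellations right---especially when $\tau$ and $\sigma$ are separated by several intermediate faces---is the only genuinely delicate point.
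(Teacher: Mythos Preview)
Your approach is correct and genuinely different from the paper's. Whereas you build $\Gamma$ by back-substitution, solving $d_{\tau,\tau}\Gamma_{\tau,\sigma}+\Gamma_{\tau,\sigma}d_{\sigma,\sigma}=X_{\tau,\sigma}$ inductively on $|\sigma|-|\tau|$ via the ansatz $\Gamma_{\tau,\sigma}=\Gamma_\tau X_{\tau,\sigma}$, the paper instead writes down a closed formula
\[
P_{\tau\sigma}=\sum_{i}\sum_{\tau=\sigma_0<\cdots<\sigma_i=\sigma}(-1)^iP_{\sigma_0}d_{\sigma_0\sigma_1}P_{\sigma_1}\cdots d_{\sigma_{i-1}\sigma_i}P_{\sigma_i}
\]
and verifies directly that this $P$ works. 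The two constructions are not the same morphism (already at codimension one, the paper gives $-P_\tau d_{\tau\sigma}P_\sigma$ while yours gives $-\Gamma_\tau^2 d_{\tau,\sigma}-\Gamma_\tau d_{\tau,\sigma}\Gamma_\sigma$). The paper's formula buys something extra: it shows $(dP)_{\tau\sigma}=0$ and $(Pd)_{\tau\sigma}=0$ \emph{separately} off the diagonal, not merely their sum, and this explicit $P$ is reused verbatim later in the thesis (e.g.\ in constructing $s_*$ in Proposition~\ref{Prop:sstardefined}). Your inductive route is more conceptual and avoids guessing the formula, but leaves the chain-map verification of $X_{\tau,\sigma}$ as an exercise; this verification is routine (expand $d_{\tau,\tau}X_{\tau,\sigma}-X_{\tau,\sigma}d_{\sigma,\sigma}$, use $(d^2)_{\tau,\sigma}=0$ and the inductive equations $(d\Gamma+\Gamma d)_{\rho,\sigma}=(d\Gamma+\Gamma d)_{\tau,\rho}=0$ for intermediate $\rho$, and watch everything telescope), but you should carry it out rather than defer it. One small slip: $X_{\tau,\sigma}$ is a degree-$0$ map $C(\sigma)_*\to C(\tau)_*$, not $C(\tau)_{*+1}$; the chain-map equation you state is nonetheless the right one. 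Part (ii) via the mapping cone is exactly what the paper does.
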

This is a well known result (see Prop. 4.7. \cite{bluebk}) for which we present a new proof.
\begin{proof}
We prove $(i)$ for the category $\A^*(X)$. Exactly the same proof works for $\A_*(X)$ replacing lower triangular matrices with upper. Part $(ii)$ follows by applying part $(i)$ to the algebraic mapping cone of the chain equivalence $f:C\to D$ which is a chain complex in $\A_*(X)$ by Lemma \ref{conestillincat}.

($\Rightarrow$)
Suppose we have a chain contraction $P_C:C\simeq 0\in\A^*(X)$. Then for all $\tau,\sigma\in X$, \[(d_CP_C + P_Cd_C)_{\tau\sigma} = \brcc{1_{C(\sigma)},}{\tau=\sigma}{0,}{\mathrm{otherwise}} \]so for the case $\tau=\sigma$ we observe that 
\[d_{\sigma\sigma}P_{\sigma\sigma} + P_{\sigma\sigma}d_{\sigma\sigma} = 1_{C(\sigma)}, \]
thus $P_{\sigma}:= (P_C)_{\sigma\sigma}$ is a chain contraction in $\A$ for $C(\sigma)$.

($\Leftarrow$)
\noindent 
Since $C(\sigma)$ is chain contractible for all $\sigma\in X$ there exist $P_{\sigma}$ such that
\begin{equation}
d_{\sigma\sigma}P_{\sigma} + P_{\sigma}d_{\sigma\sigma} = 1_{C(\sigma)}.
\end{equation}
We construct an $\A^*(X)$ chain contraction $P_C$ by setting
\[ P_{\tau\sigma}:= \sum_{i=0}^{|\sigma|-|\tau|}\sum_{\tau=\sigma_0<\ldots< \sigma_i=\sigma}{(-1)^iP_{\sigma_0}d_{\sigma_0\sigma_1}P_{\sigma_1}\ldots d_{\sigma_{i-1}\sigma_i}P_{\sigma_i}}.\]
First we check that $dP+Pd$ gives the identity down the diagonal. For $\tau=\sigma$, $i=0$ and \[ (d_CP_C+P_Cd_C)_{\tau\sigma} = d_{\sigma\sigma}P_{\sigma} + P_{\sigma}d_{\sigma\sigma} = 1_{C(\sigma)}.\]
Next we show that away from the diagonal $dP=Pd=0$ and so in particular $dP+Pd=0$. Suppose $\rho<\sigma$, then
\begin{eqnarray*}
(d_CP_C)_{\rho\sigma} &=& \sum_{\rho\leqslant \tau\leqslant \sigma}{d_{\rho\tau}P_{\tau\sigma}}\\
&=& d_{\rho\rho}P_{\rho\sigma} + \sum_{\rho< \tau\leqslant \sigma}{d_{\rho\tau}P_{\tau\sigma}}\\
&=& d_{\rho\rho}\sum_{i=0}^{|\sigma|-|\rho|}\sum_{\rho=\sigma_0<\ldots<\sigma_i=\sigma}{(-1)^iP_{\sigma_0}d_{\sigma_0\sigma_1}\ldots d_{\sigma_{i-1}\sigma_i}P_{\sigma_i}} + \sum_{\rho< \tau\leqslant \sigma}{d_{\rho\tau}P_{\tau\sigma}} \\
&=& d_{\rho\rho}P_{\rho}\sum_{\rho<\sigma_1\leqslant\sigma}{d_{\rho\sigma_1}\sum_{i=0}^{|\sigma|-|\sigma_i|}\sum_{\sigma_1<\ldots<\sigma_i=\sigma}{(-1)^iP_{\sigma_1}d_{\sigma_1\sigma_2}\ldots d_{\sigma_{i-1}\sigma_i}P_{\sigma_i}}}  \\
&& + \sum_{\rho< \tau\leqslant \sigma}{d_{\rho\tau}P_{\tau\sigma}} \\
&=& (1-P_{\rho\rho}d_{\rho\rho})(\sum_{\rho<\sigma_1\leqslant\sigma}{d_{\rho\sigma_1}(-1)P_{\sigma_1\sigma}}) + \sum_{\rho< \tau\leqslant \sigma}{d_{\rho\tau}P_{\tau\sigma}} \\
&=& -\sum_{\rho<\sigma_1\leqslant \sigma}{d_{\rho\sigma_1}P_{\sigma_1\sigma}} + \sum_{\rho< \tau\leqslant \sigma}{d_{\rho\tau}P_{\tau\sigma}} \\
&=& 0
\end{eqnarray*}

The proof that $(P_Cd_C)_{\rho\sigma}=0$ is essentially the same. Note that this formula for $P_{\tau\sigma}$ can only be non-zero if $\tau\leqslant\sigma$ because otherwise there are no inclusions $\tau=\sigma_0<\ldots< \sigma_i=\sigma$ to sum over. Thus $P$ as constructed is a chain contraction in $\A^*(X)$ as required.
\end{proof}

% \begin{proof}[Sketch of alternate proof]\footnote{My thanks to Prof. Ranicki for suggesting this approach.}
% ($\Leftarrow$)
% 
% \noindent Assuming that $C(\sigma_i)$ is chain contractible in $\A$ for all $\sigma_i\leqslant X$ then we obtain chain contractions $\Gamma_i$ satisfying $$d_{\sigma_i}\Gamma_{\sigma_i}+\Gamma_{\sigma_i}d_{\sigma_i} = 1_{\sigma_i}\;\forall\sigma_i\leqslant X.$$
% Define $\Delta(i):=d\Gamma_i +\Gamma_i d$ where $d$ is the boundary map of the total chain $C$. This is an automorphism $\forall\sigma_i\leqslant X$.
% Set $$\Delta:= \sum_{\sigma_i\leqslant X}{\Delta(i)}.$$ This is an automorphism in $\A_*(X)$ and hence is invertible.
% $$\Gamma:=\Delta^{-1}(\sum_{\sigma_i\leqslant X}{\Gamma_i})$$ is a chain contraction of $C$ in $\A_*(X)$.
% \end{proof}

\chapter{Algebraic subdivision}\label{chapeight}
Simplicial complexes satisfy the following basic properties:

\begin{enumerate}[(i)]
 \item Any simplicial complex $X$ can be decomposed into a disjoint union of contractible spaces, namely its open simplices.
 \item Any bounded open simplex can be decomposed into the disjoint union of a collection of contractible spaces (the open simplices in its barycentric subdivision) which are strictly smaller\footnote{Smaller in the sense that the diameters of all the simplices in the subdivision are smaller than the diameter of the open simplex, and importantly they are smaller by a fixed scale factor dependent on the dimension of $X$.} than the original open simplex.
 \item Repeating this as many times as we like we can decompose a finite-dimensional simplicial complex, which initially had uniformly bounded simplex diameters, into one with arbitrarily small simplex diameters. 
\end{enumerate}
We would like chain complexes in $ch(\A(X))$ to have analogous properties. A chain complex in $ch(\A(X))$ already satisfies $(i)$ in that it has a piece over each simplex. In this chapter we strive to obtain analogues of properties $(ii)$ and $(iii)$ by defining an algebraic subdivision functor 
\[Sd: \brc{ch(\A^*(X)) \to ch(\A^*(Sd\, X))}{ch(\A_*(X)) \to ch(\A_*(Sd\, X))}\]which generalises the algebraic effect that barycentric subdivision has on the simplicial $\brc{\mathrm{chain}}{\mathrm{cochain}}$ complex.

In addition, if we seek to extend this work to a more general class of nice spaces, the spaces should satisfy analogues of the properties $(i)-(iii)$. In particular the same approach should work for finite-dimensional locally finite CW complexes.

\section{Algebraic subdivision functors}
The goal of this section is to define algebraic subdivision:
\begin{thm}
Given a barycentric subdivision chain equivalence $s:\Delta_*(X) \to \Delta_*(Sd\, X)$ together with a choice of chain inverse $r$ and (canonical) chain homotopy $P$, there are defined algebraic subdivision functors \[Sd_r: \brc{ch(\A^*(X)) \to ch(\A^*(Sd\, X))}{ch(\A_*(X)) \to ch(\A_*(Sd\, X))}\]depending on $r$.
\end{thm}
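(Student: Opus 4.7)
The plan is to construct $Sd_r$ explicitly on objects and morphisms, then verify the functor axioms. Motivated by the sketch in the introduction and by Lemma \ref{constructgam}, which uses the chain inverse $r$ to decompose $Sd\,X$ into higher homotopies $\Gamma_{\sigma_0,\ldots,\sigma_i}(\sigma_0)$, I would define the object $Sd_r C$ on each simplex $\widetilde{\sigma} \in Sd\,X$ as a direct sum of shifted copies of the pieces $C(\sigma)$ for those simplices $\sigma \in X$ whose higher-homotopy cells $\gam{\sigma_0,\ldots,\sigma_i}{\sigma_0}$ contain $\widetilde{\sigma}$. In the $\A^*(X)$ case this formalises the slogan ``replace $C(\sigma) \otimes \Sigma^{-|\sigma|}\Delta_*(\mathring{\sigma})$ with $C(\sigma) \otimes \Sigma^{-|\sigma|}\Delta_*(Sd\,\mathring{\sigma})$'' from the introduction; in the $\A_*(X)$ case one applies the dual procedure using $\Sigma^{|\sigma|}\Delta^{-*}(\mathring{\sigma})$.

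Second, I would define the differential $d_{Sd_r C}$ as a sum of three sorts of components: the internal differential of $C$ acting on each piece, the simplicial boundary of $Sd\,X$ (with signs dictated by the standard orientation of Definition \ref{epsurgcellorient}), and interpolating terms built from the subdivision data $(s,r,P)$ that allow the differential to cross from a piece sitting over $\sigma$ to a piece sitting over $\sigma' < \sigma$ in $X$. The heart of the construction is that the interpolating terms use the canonical chain homotopy $P$ in precisely the way needed so that $d_{Sd_r C}^{\,2} = 0$ follows from the relation $d_\Delta P + P d_\Delta = 1 - sr$. Triangularity of $d_{Sd_r C}$ as a morphism in $\A^*(Sd\,X)$ — components vanish unless the target simplex is a face of the source simplex in $Sd\,X$ — is then a consequence of the fact that each building block $d_C$, $d_\Delta$, $r$, and $P$ respects the appropriate triangular structure, together with Proposition \ref{chcont}.

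Third, for a chain map $f: C \to D$ in $\A^*(X)$, I would define $Sd_r f: Sd_r C \to Sd_r D$ piece by piece from the components $f_{\tau\sigma}$, with cross-simplex components again built from $r$ and $P$. The verifications that $Sd_r f$ is a chain map, that $Sd_r(\id_C) = \id_{Sd_r C}$, and that $Sd_r(g \circ f) = Sd_r g \circ Sd_r f$ are diagram chases using the chain-homotopy identities for $P$. The $\A_*(X)$ version is obtained from the construction above by applying it to the dual chain equivalence $(s^*, r^*, P^*)$ of Proposition \ref{setupthedualsqueeze}, exchanging covariant assembly for the contravariant assembly over dual cells.

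The main obstacle I expect is the second step: organising the signs and the interpolating $P$-terms so that $d_{Sd_r C}^{\,2} = 0$ holds cleanly, especially the higher-order corrections arising from chains $\sigma_0 < \sigma_1 < \cdots < \sigma_k$ of faces, where telescoping cancellations involving the relation $d_\Delta P + P d_\Delta = 1 - sr$ must be tracked across multiple levels. This is also where the \emph{canonical} nature of $P$ from Lemma \ref{PFromTrees} becomes essential, since for a non-canonical chain homotopy the higher cancellations would fail; once Step 2 is set up correctly, the remaining verifications of functoriality and of membership in $\A^*(Sd\,X)$ (resp.\ $\A_*(Sd\,X)$) reduce to bookkeeping.
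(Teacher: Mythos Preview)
Your proposal misreads the construction in a way that would make the proof both harder and incorrect in its claims. The paper's definition of $Sd_r$ (Definition/Lemma~\ref{Deflem:SdtildeFunctor} and~\ref{Deflem:SdFunctor}) uses \emph{only} the chain inverse $r$, not the chain homotopy $P$. The point is that $r$ alone determines the partition of $Sd\,X$ into the open sets $I_\sigma$ (Definition~\ref{Isigma}), and every $\widetilde{\sigma}\in Sd\,X$ lies in a \emph{unique} $I_\sigma$ (Remark~\ref{Isigmadecomp}); hence $Sd_r C(\widetilde{\sigma})$ is a single shifted copy $C(\sigma)_{n+|\sigma|-|\widetilde{\sigma}|}$, not a direct sum over several $\sigma$. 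The differential $d_{Sd_r C}$ then has only \emph{two} kinds of components, not three: the internal differential $(d_C)_{\tau,\sigma}$ when $\widetilde{\tau}\leqslant\widetilde{\sigma}$ lie at the same ``depth'' in their respective $I_\tau,I_\sigma$, and $(-1)^n$ times the simplicial boundary of $I_\sigma$ when both lie in the same $I_\sigma$. The relation $d_{Sd_r C}^2=0$ is then the standard bicomplex totalisation argument (a one-line sign check of anticommutativity), with no appeal to the identity $d_\Delta P + P d_\Delta = 1 - sr$. Likewise $Sd_r f$ is defined using only the components $f_{\tau,\sigma}$, with no $P$-corrections, and functoriality is immediate.

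Consequently your third-step worry --- that the \emph{canonical} nature of $P$ is essential for higher telescoping cancellations --- is misplaced: $P$ plays no role whatsoever in establishing that $Sd_r$ is a functor, which is why the statement says the functor depends on $r$ alone. (The chain homotopy $P$ only enters later, in Theorem~\ref{algsubdiv}, to build the chain equivalence $s_*:C\to Sd_r C$.) Your invocation of Proposition~\ref{chcont} for triangularity is also off: that proposition concerns local versus global contractibility, whereas the triangularity of $d_{Sd_r C}$ in $\A^*(Sd\,X)$ is read off directly from the case analysis in equation~(\ref{dSdconcise}). The genuine content you are missing is that the $I_\sigma$-decomposition coming from $r$ is already compatible with the face relations in both $X$ and $Sd\,X$ (Remark~\ref{Rmk:ExamineMaps}), so no interpolation is needed.
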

More is true in fact; in Section \ref{svptthr} it is proven that if $\rr_r$ is the covariant assembly functor that assembles all the $I_\sigma$, then for all chain complexes in $ch(\A(X))$ we have \[\rr_rSd_r\, C \simeq C \in \A(X).\]

Before diving into a rather technical proof we first attempt to convince the reader why algebraic subdivision is possible by explaining the approach taken. The strategy is to mimic the effect that barycentric subdivision has on the simplicial $\brc{\mathrm{chain}}{\mathrm{cochain}}$ complexes, where 
\begin{eqnarray*}
 \Delta_*(\mathring{\sigma}) &\simeq& \Delta_*(I_\sigma), \\
 \Delta^{-*}(\mathring{\sigma}) &\simeq& \Delta^{-*}(I_\sigma),
\end{eqnarray*}
since $\mathring{\sigma}$ is homotopy equivalent to $I_\sigma$ for all $\sigma\in X$. Now, 
\[\Z = \brc{\Sigma^{-|\sigma|}\Delta_*(\mathring{\sigma})}{\Sigma^{-|\sigma|}\Delta^{-*}(\mathring{\sigma})}\]
so locally we think of $C(\sigma)$ as 
\begin{eqnarray*}
C(\sigma)\otimes_\Z\Z &=& \brc{C(\sigma)\otimes_\Z\Sigma^{-|\sigma|}\Delta_*(\mathring{\sigma})}{C(\sigma)\otimes_\Z\Sigma^{-|\sigma|}\Delta^{-*}(\mathring{\sigma})} \\
 &\simeq& \brc{C(\sigma)\otimes_\Z\Sigma^{-|\sigma|}\Delta_*(I_\sigma) }{C(\sigma)\otimes_\Z\Sigma^{|\sigma|}\Delta^{-*}(I_\sigma) } 
\end{eqnarray*}
thought of as spread over $I_\sigma$. We then piece together these local replacements carefully.

First we define a functor $\widetilde{Sd}_r: \A(X) \to ch(\A(Sd\,X))$. This functor will map morphisms of $\A(X)$ to chain maps, therefore it will send a chain complex $C\in ch(\A(X))$ to a bicomplex $(\widetilde{Sd}_r\, (C_*))_{i,j} = \widetilde{Sd}_r\, (C_i)_j$ with differentials 
\begin{eqnarray*}
 (d_{\widetilde{Sd}_r\, (C_i)})_j: \widetilde{Sd}_r\, (C_*)_{i,j} &\to& \widetilde{Sd}_r\, (C_*)_{i,j-1} \\
 \widetilde{Sd}_r\,((d_C)_i)_j: \widetilde{Sd}_r\, (C_*)_{i,j} &\to& \widetilde{Sd}_r\, (C_*)_{i-1,j}.
\end{eqnarray*}
We then define a functor $Sd_r: ch(\A(X)) \to ch(\A(Sd\,X))$ by sending $(C,d_C)\in ch(\A(X))$ to the chain complex obtained from the bicomplex $(\widetilde{Sd}_r\, (C_*))_{i,j}$ in the standard way, i.e.\
\begin{eqnarray*}
 (Sd_r\, C)_n &:=& \sum_{i+j=n}\widetilde{Sd}_r\, (C_*)_{i,j}, \\
 (d_{Sd_r\, C})_n &:=& \sum_{i+j=n}\{\widetilde{Sd}_r\,((d_C)_i)_j + (-1)^n(d_{\widetilde{Sd}_r\, (C_i)})_j\},
\end{eqnarray*}
where all the direct sums will be finite.

\begin{deflem}\label{Deflem:SdtildeFunctor}
\begin{enumerate}[(i)]
 \item Define $\widetilde{Sd}_r: \A^*(X) \to ch(\A^*(Sd\, X))$ by $M \mapsto \widetilde{Sd}_r\, M$ where
\begin{eqnarray*}
 \widetilde{Sd}_r\,M(\widetilde{\sigma}) &:=& M(\sigma)\otimes \Sigma^{-|\sigma|}\Delta_*(\mathring{\widetilde{\sigma}}) = M(\sigma)_{*+|\sigma|-|\widetilde{\sigma}|}, \quad \mathring{\widetilde{\sigma}}\in I_\sigma \\
(d_{\widetilde{Sd}_r\, M})_{\widetilde{\tau},\widetilde{\sigma}} &:=& \brcc{\id_{M(\sigma)}\otimes(d_{\Sigma^{-|\sigma|}\Delta_*(I_\sigma)})_{\widetilde{\tau},\widetilde{\sigma}},}{\mathring{\widetilde{\tau}},\mathring{\widetilde{\sigma}}\in I_\sigma}{0,}{\mathrm{otherwise}} 
\end{eqnarray*}
and $\{ f: M \to N\} \mapsto \{\widetilde{Sd}_r\, f: \widetilde{Sd}_r\, M \to \widetilde{Sd}_r\, N \}$ where
\[(\widetilde{Sd}_r\, f)_{\widetilde{\tau},\widetilde{\sigma},n} := \brcc{f_{\tau,\sigma,n+|\sigma|-|\widetilde{\sigma}|,}}{I_\tau\ni \widetilde{\tau} \leqslant \widetilde{\sigma} \in I_\sigma, \quad |\widetilde{\tau}|-|\tau| = |\widetilde{\sigma}|-|\sigma|   }{0,}{\mathrm{otherwise}.} \]
\item Define $\widetilde{Sd}: \A_*(X) \to ch(\A_*(Sd\, X))$ by $M \mapsto \widetilde{Sd}_r\, M$ where
\begin{eqnarray*}
 \widetilde{Sd}_r\,M(\widetilde{\sigma}) &:=& M(\sigma)\otimes \Sigma^{|\sigma|}\Delta^{-*}(\mathring{\widetilde{\sigma}}) = M(\sigma)_{*+|\widetilde{\sigma}|-|\sigma|}, \quad \mathring{\widetilde{\sigma}}\in I_\sigma \\
(d_{\widetilde{Sd}_r\, M})_{\widetilde{\tau},\widetilde{\sigma}} &:=& \brcc{\id_{M(\sigma)}\otimes(\delta^{\Sigma^{|\sigma|}\Delta^{-*}(I_\sigma)})_{\widetilde{\tau},\widetilde{\sigma}},}{\mathring{\widetilde{\tau}},\mathring{\widetilde{\sigma}}\in I_\sigma}{0,}{\mathrm{otherwise}} 
\end{eqnarray*}
and $\{ f: M \to N\} \mapsto \{\widetilde{Sd}_r\, f: \widetilde{Sd}_r\, M \to \widetilde{Sd}_r\, N \}$ where
\[(\widetilde{Sd}_r\, f)_{\widetilde{\tau},\widetilde{\sigma},n} := \brcc{f_{\tau,\sigma,n+|\widetilde{\sigma}|-|\sigma|,}}{I_\tau\ni \widetilde{\tau} \geqslant \widetilde{\sigma} \in I_\sigma, \quad |\widetilde{\tau}|-|\tau| = |\widetilde{\sigma}|-|\sigma|}{0,}{\mathrm{otherwise}.} \]
\end{enumerate}
\noindent Then in each case $\widetilde{Sd}_r$ is a functor.
\end{deflem}

\begin{rmk}\label{Rmk:ExamineMaps}
For $\widetilde{Sd}_r: \A^*(X) \to ch(\A^*(Sd\, X))$ we have 
 \begin{enumerate}[(i)]
  \item $d_{\widetilde{Sd}_r\, N}$ is only non-zero from $\widetilde{\sigma}$ to $\widetilde{\tau}$ if they are both in the same $I_\sigma$ \textit{and} $|\widetilde{\tau}|=|\widetilde{\sigma}|-1$, i.e.\ $d_{\widetilde{Sd}_r\, N}$ reduces the quantity $|\widetilde{\sigma}|-|\sigma|$ by $1$.
  \item If $d_{\widetilde{Sd}_r\, N}$ is non-zero from $\widetilde{\sigma}$ to $\widetilde{\tau}$ then $\widetilde{\sigma}\in\Gamma_{\sigma_0, \ldots, \sigma_m}(\mathring{\sigma})$ for some sequence of inclusions $\sigma\leqslant\sigma_0<\ldots< \sigma_m$ and $\widetilde{\tau}$ is necessarily in $\Gamma_{\sigma_0, \ldots,\widehat{\sigma_j},\ldots, \sigma_m}(\mathring{\sigma})$ for some $j$.
  \item $\widetilde{Sd}_r\, f$ is only non-zero from $\widetilde{\sigma}$ to $\widetilde{\tau}$ if $I_\tau\ni \widetilde{\tau} \leqslant \widetilde{\sigma} \in I_\sigma$ and $|\widetilde{\sigma}|-|\sigma|=|\widetilde{\tau}|-|\tau|$, i.e.\ $\widetilde{Sd}_r\, f$ does not change the quantity $|\widetilde{\sigma}|-|\sigma|$.
  \item If $\widetilde{Sd}_r\, f$ is non-zero from $\widetilde{\sigma}$ to $\widetilde{\tau}$ then $\widetilde{\sigma}\in\Gamma_{\sigma_0, \ldots, \sigma_m}(\mathring{\sigma})$ for some sequence of inclusions $\sigma\leqslant\sigma_0<\ldots< \sigma_m$ and $\widetilde{\tau}$ is necessarily in $\Gamma_{\sigma_0, \ldots, \sigma_m}(\mathring{\tau})$ for the same sequence of inclusions.
 \end{enumerate}
For $\widetilde{Sd}_r: \A_*(X) \to ch(\A_*(Sd\, X))$, conditions $(i)-(iv)$ above hold switching the direction of the non-zero component to be from $\widetilde{\tau}$ to $\widetilde{\sigma}$.
\qed\end{rmk}

\begin{proof}[Proof of \ref{Deflem:SdtildeFunctor}]
\begin{enumerate}[(i)]
 \item Note that $(\widetilde{Sd}_r\, f)_{\widetilde{\tau},\widetilde{\sigma}, n}$ is only non-zero if $\widetilde{\tau}\leqslant \widetilde{\sigma}$, so $(\widetilde{Sd}_r\, f)_n$ is a morphism in $\A^*(Sd\, X)$.

By Remark \ref{Rmk:ExamineMaps} we know that $\widetilde{Sd}_r\, f\circ d_{\widetilde{Sd}_r\, M}$ and $d_{\widetilde{Sd}_r\, N} \circ \widetilde{Sd}_r\, f$ can only be non-zero between $\widetilde{\sigma}$ and $\widetilde{\tau}$ if $|\widetilde{\tau}|-|\tau|=|\widetilde{\sigma}|-|\sigma|-1$ and $I_\tau\ni \widetilde{\tau} \leqslant \widetilde{\sigma} \in I_\sigma$. This means that we must have $\widetilde{\sigma}\in\Gamma_{\sigma_0, \ldots, \sigma_m}(\mathring{\sigma})$ for some sequence of inclusions $\sigma\leqslant\sigma_0<\ldots< \sigma_m$ and $\widetilde{\tau}$ in $\Gamma_{\sigma_0, \ldots,\widehat{\sigma_j},\ldots, \sigma_m}(\mathring{\tau})$ for some $j$. Then $\widetilde{Sd}_r\, f$ being a chain map is given by the commutativity of the following diagram:
\begin{displaymath}
 \xymatrix@C=10.5mm{ M_{n+|\sigma|-|\widetilde{\sigma}|}[\Gamma_{\sigma_0,\ldots,\sigma_m}(\mathring{\sigma})] \ar[rr]^-{(d_{\widetilde{Sd}_r\, M})_{\widetilde{\tau},\widetilde{\sigma}}=(-1)^j} \ar[d]^-{(\widetilde{Sd}_r\, f)_{\widetilde{\tau}^\prime,\widetilde{\sigma}, n}=f_{\tau,\sigma,n+|\sigma|-|\widetilde{\sigma}|}} && M_{n+|\sigma|-|\widetilde{\sigma}|-1}[\Gamma_{\sigma_0,\ldots,\widehat{\sigma_j},\ldots,\sigma_m}(\mathring{\sigma})] \ar[d]^-{(\widetilde{Sd}_r\, f)_{\widetilde{\rho},\widetilde{\tau}, n-1}=f_{\tau,\sigma,n+|\sigma|-|\widetilde{\sigma}|}}  \\
 N_{n+|\sigma|-|\widetilde{\sigma}|}[\Gamma_{\sigma_0,\ldots,\sigma_m}(\mathring{\tau})] \ar[rr]_-{(d_{\widetilde{Sd}_r\, N})_{\widetilde{\rho},\widetilde{\tau}^\prime}=(-1)^j}&& N_{n+|\sigma|-|\widetilde{\sigma}|-1}[\Gamma_{\sigma_0,\ldots,\widehat{\sigma_j},\ldots,\sigma_m}(\mathring{\tau})]
}
\end{displaymath}
where $\widetilde{\sigma}$,$\widetilde{\tau}\in I_\sigma$, $\widetilde{\tau}^\prime,\widetilde{\rho}\in I_\tau$ and 
\begin{displaymath}
\xymatrix@R=5mm@C=5mm{
\widetilde{\sigma} \ar@{}[r]|-{\geqslant}  \ar@{}[d]|-{\leqslantup} & \widetilde{\tau}  \ar@{}[d]|-{\leqslantup} \\
\widetilde{\tau}^\prime \ar@{}[r]|-{\geqslant} & \widetilde{\rho}. \\
}
\end{displaymath}
\noindent Next we verify functoriality. Let $f:M\to N$ and $g:N\to P$ be morphisms in $\A^*(X)$, then
\begin{eqnarray*}
\widetilde{Sd}_r\, (g\circ f)_{\widetilde{\rho}, \widetilde{\sigma}, n} &:=& (g\circ f)_{\rho, \sigma,n+|\sigma|-|\widetilde{\sigma}|}: M_{n+|\sigma|-|\widetilde{\sigma}|}[\Gamma_{\sigma_0,\ldots,\sigma_m}(\mathring{\sigma})] \\
&& \hspace{1.8cm} \to P_{n+|\sigma|-|\widetilde{\sigma}|}[\Gamma_{\sigma_0,\ldots,\sigma_m}(\mathring{\sigma})]\\
&=& \sum_{\rho\leqslant\tau\leqslant\sigma}\left(g_{\rho, \tau, n+|\sigma|-|\widetilde{\sigma}|}\circ f_{\tau, \sigma, n+|\sigma|-|\widetilde{\sigma}|}:M_{n+|\sigma|-|\widetilde{\sigma}|}[\Gamma_{\sigma_0,\ldots,\sigma_m}(\mathring{\sigma})]\right. \\
&& \hspace{1.8cm} \to \left.N_{n+|\sigma|-|\widetilde{\sigma}|}[\Gamma_{\sigma_0,\ldots,\sigma_m}(\mathring{\tau})] \to P_{n+|\sigma|-|\widetilde{\sigma}|}[\Gamma_{\sigma_0,\ldots,\sigma_m}(\mathring{\rho})]\right) \\
&=& \sum_{\widetilde{\rho}\leqslant\widetilde{\tau}\leqslant\widetilde{\sigma}}((\widetilde{Sd}_r\, g)_{\widetilde{\rho}, \widetilde{\tau}, n}(\widetilde{Sd}_r\, f)_{\widetilde{\tau}, \widetilde{\sigma}, n}) \\
&=& (\widetilde{Sd}_r\, g \circ \widetilde{Sd}_r\, f)_{\widetilde{\rho}, \widetilde{\sigma}, n}.
\end{eqnarray*}
\item The same analysis holds.
\end{enumerate}
\end{proof}

\begin{deflem}\label{Deflem:SdFunctor}
Using the definition of $\widetilde{Sd}_r: \A^*(X) \to ch(\A^*(Sd\, X))$ we define a functor $Sd_r: ch(\A^*(X)) \to ch(\A^*(Sd\, X))$ by 
\begin{enumerate}[(i)]
 \item Send a chain complex $(C, d_C)\in ch(\A^*(X))$ to the chain complex $(Sd_r\, C, d_{Sd_r\, C})$ defined by:
\begin{equation}\label{SdCn}
 Sd_r\, C(\widetilde{\sigma})_n := (C(\sigma)\otimes \Sigma^{-|\sigma|}\Delta_*(\mathring{\widetilde{\sigma}}))_n = C(\sigma)_{n+|\sigma|-|\widetilde{\sigma}|}, \quad \widetilde{\sigma}\in I_\sigma,
 \end{equation}
\begin{equation}\label{dSdconcise}
 (d_{Sd_r\, C})_{\widetilde{\tau},\widetilde{\sigma},n} := \left\{\begin{array}{cc} 
(d_C)_{\tau,\sigma,n+|\sigma|-|\widetilde{\sigma}|}, & \mathrm{case}\, 1, \\ 
(-1)^n(\id_{C(\sigma)}\otimes d_{\Sigma^{-|\sigma|}\Delta_*(I_\sigma)})_{\widetilde{\tau},\widetilde{\sigma},n}, & \mathrm{case}\, 2, \\
0, & \mathrm{otherwise},
\end{array} \right.
\end{equation}
where case $1$ is $I_\tau\ni \widetilde{\tau} \leqslant \widetilde{\sigma} \in I_\sigma$ and $|\widetilde{\tau}|-|\tau|=|\widetilde{\sigma}|-|\sigma|$ so that \[(d_C)_{\tau,\sigma,n+|\sigma|-|\widetilde{\sigma}|}: C(\sigma)_{n+|\sigma|-|\widetilde{\sigma}|}\to C(\tau)_{n+|\sigma|-|\widetilde{\sigma}|-1}\] and case $2$ is $\widetilde{\sigma}\in I_\sigma = I_\tau \ni \widetilde{\tau}$, $\widetilde{\tau}\leqslant\widetilde{\sigma}$ and $|\widetilde{\tau}|-|\tau|=|\widetilde{\sigma}|-|\sigma|-1$ so that \[(-1)^n(\id_{C(\sigma)}\otimes d_{\Sigma^{-|\sigma|}\Delta_*(I_\sigma)})_{\widetilde{\tau},\widetilde{\sigma},n} = (-1)^n(\id_{C(\sigma)})_{n+|\sigma|-|\widetilde{\sigma}|}\otimes (d_{\Delta_*(Sd\, X)})_{\widetilde{\tau}, \widetilde{\sigma},|\widetilde{\tau}|}:\] 
\[Sd_r\, C(\widetilde{\sigma})_n =C(\sigma)_{n+|\sigma|-|\widetilde{\sigma}|}\otimes \Delta_*(\mathring{\widetilde{\sigma}})_{|\widetilde{\sigma}|}  \to Sd_r\, C(\widetilde{\tau})_{n-1}= C(\sigma)_{n+|\sigma|-|\widetilde{\sigma}|}\otimes \Delta_*(\mathring{\widetilde{\tau}})_{|\widetilde{\tau}|}\] which is just the map \[(-1)^n(d_{\Delta_*(Sd\, X)})_{\widetilde{\tau}, \widetilde{\sigma},|\widetilde{\tau}|}:C(\sigma)_{n+|\sigma|-|\widetilde{\sigma}|} \to C(\sigma)_{n+|\sigma|-|\widetilde{\sigma}|}\] where $(d_{\Delta_*(Sd\, X)})_{\widetilde{\tau}, \widetilde{\sigma},|\widetilde{\tau}|}$ is $(-1)^j$ for the $j$ such that $\widetilde{\sigma}\in \Gamma_{\sigma_0,\ldots, \sigma_m}(\mathring{\sigma})$ and $\widetilde{\tau}\in\Gamma_{\sigma_0,\ldots,\widehat{\sigma_j},\ldots, \sigma_m}(\mathring{\sigma})$.
\item Send a chain map $f:C\to D$ to the chain map $Sd_r\, f : Sd_r\, C \to Sd_r\, D$ defined by 
\begin{equation}\label{Sdfconcise}
(Sd_r\, f)_{\widetilde{\tau},\widetilde{\sigma},n} := \brcc{f_{\tau,\sigma,n+|\sigma|-|\widetilde{\sigma}|,}}{\mathrm{case}\, 1,}{0,}{\mathrm{otherwise}.}
\end{equation}
\end{enumerate}
Analogously we define $Sd_r: ch(\A_*(X)) \to ch(\A_*(Sd\, X))$ using the simplicial cochain complex in place of the simplicial chain complex.
\end{deflem}

\begin{proof}
We prove the case $Sd_r: ch(\A^*(X)) \to ch(\A^*(Sd\, X))$; the same treatment works for $Sd_r: ch(\A_*(X)) \to ch(\A_*(Sd\, X))$.

The introduction of the sign $(-1)^n$ in the definition of $d_{Sd_r\, C}$ makes $Sd_r\, C$ into a chain complex. Explicitly $Sd_r\, C$ is a chain complex by the anticommutativity of the following diagram:
\begin{displaymath}
 \xymatrix{ Sd_r\, C_n(\Gamma_{\sigma_0,\ldots, \sigma_m}(\mathring{\sigma})) \ar[rrr]^-{(d_C)_{\tau,\sigma,n+|\sigma|-|\widetilde{\sigma}|}} \ar[d]_-{(-1)^n(-1)^j} &&& Sd_r\, C_{n-1}(\Gamma_{\sigma_0,\ldots, \sigma_m}(\mathring{\tau})) \ar[d]^-{(-1)^{n-1}(-1)^j}  \\
Sd_r\, C_{n-1}(\Gamma_{\sigma_0,\ldots,\widehat{\sigma_j},\ldots, \sigma_m}(\mathring{\sigma})) \ar[rrr]^-{(d_C)_{\tau,\sigma,(n-1)+|\sigma|-(|\widetilde{\sigma}|-1)}} &&& Sd_r\, C_{n-2}(\Gamma_{\sigma_0,\ldots,\widehat{\sigma_j},\ldots, \sigma_m}(\mathring{\tau})). \\
}
\end{displaymath}

As with $\widetilde{Sd}_r\, f$, the map $(Sd_r\, f)_n$ is a morphism in $\A^*(Sd\, X)$ and is functorial. We are just left to check that $(Sd_r\, f): Sd_r\, C\to Sd_r\, D$ is a chain map, i.e.\ that \[(Sd_r\, f\circ d_{Sd_r\, C})_{\widetilde{\rho},\widetilde{\sigma}, n} = (d_{Sd_r\, D} \circ Sd_r\, f)_{\widetilde{\rho},\widetilde{\sigma}, n}, \; \forall \widetilde{\rho},\widetilde{\sigma}, n.\]

Suppose that $\widetilde{\sigma}\in \Gamma_{\sigma_0,\ldots, \sigma_m}(\mathring{\sigma})$ for some $\sigma\leqslant \sigma_0< \ldots< \sigma_m$ where $\widetilde{\sigma}\in I_\sigma$. Then $Sd_r\, f$ maps to $\Gamma_{\sigma_0,\ldots, \sigma_m}(\mathring{\tau})$ for all $\tau\leqslant\sigma$, and $d_{Sd_r\, C}$ maps to 
\begin{enumerate}[(i)]
 \item $\Gamma_{\sigma_0,\ldots,\widehat{\sigma_j},\ldots, \sigma_m}(\mathring{\sigma})$ for all $j=0,\ldots,m$, 
 \item $\Gamma_{\sigma_0,\ldots, \sigma_m}(\mathring{\tau}^\prime)$ for all $\tau^\prime\leqslant\sigma$, 
\end{enumerate}
so we have two cases to check. 
\begin{enumerate}[(i)]
 \item 
\begin{displaymath}
 \xymatrix@C=4.5cm{ Sd_r\, C_n(\widetilde{\sigma}) \ar[r]^-{(Sd_r\,f)_{\widetilde{\tau}, \widetilde{\sigma},n}= f_{\tau,\sigma,n+|\sigma|-|\widetilde{\sigma}|}} \ar[d]_-{(-1)^n(-1)^j} & Sd_r\, D_n(\widetilde{\tau}) \ar[d]^-{(-1)^n(-1)^j}  \\
Sd_r\, C_{n-1}(\widetilde{\tau}^\prime) \ar[r]^-{(Sd_r\,f)_{\widetilde{\rho}, \widetilde{\tau}^\prime,n-1}= f_{\tau,\sigma,n+|\sigma|-|\widetilde{\sigma}|}} & Sd_r\, D_{n-1}(\widetilde{\rho}) \\
}
\end{displaymath}
for $\widetilde{\tau} \in \Gamma_{\sigma_0,\ldots,\sigma_m}(\mathring{\tau})$, $\widetilde{\tau}^\prime \in \Gamma_{\sigma_0,\ldots,\widehat{\sigma_j},\ldots, \sigma_m}(\mathring{\sigma})$ and $\widetilde{\rho} \in \Gamma_{\sigma_0,\ldots,\widehat{\sigma_j},\ldots, \sigma_m}(\mathring{\tau})$. The above diagram commutes so case $(i)$ is verified.
\item 
\begin{displaymath}
 \xymatrix@C=4.5cm{ Sd_r\, C_n(\widetilde{\sigma}) \ar[r]^-{(Sd_r\,f)_{\widetilde{\tau}, \widetilde{\sigma},n}= f_{\tau,\sigma,n+|\sigma|-|\widetilde{\sigma}|}} \ar[d]_-{(d_C)_{\tau,\sigma,n+|\sigma|-|\widetilde{\sigma}|}} & Sd_r\, D_n(\widetilde{\tau}) \ar[d]^-{(d_D)_{\rho,\tau,n+|\sigma|-|\widetilde{\sigma}|}}  \\
Sd_r\, C_{n-1}(\widetilde{\tau}^\prime) \ar[r]^-{(Sd_r\,f)_{\widetilde{\rho}, \widetilde{\tau}^\prime,n-1}= f_{\tau,\sigma,n+|\sigma|-|\widetilde{\sigma}|}} & Sd_r\, D_{n-1}(\widetilde{\rho}) \\
}
\end{displaymath}
for $\widetilde{\tau} \in \Gamma_{\sigma_0,\ldots,\sigma_m}(\mathring{\tau})$, $\widetilde{\tau}^\prime \in \Gamma_{\sigma_0,\ldots, \sigma_m}(\mathring{\tau}^\prime)$ and $\widetilde{\rho} \in \Gamma_{\sigma_0,\ldots, \sigma_m}(\mathring{\rho})$. Restricting to case $(ii)$ contributions,
\begin{eqnarray*}
(d_{Sd_r\, D}\circ Sd_r\, f)_{\widetilde{\rho}, \widetilde{\sigma}, n} &=& \sum_{\rho\leqslant\tau\leqslant\sigma}((d_{Sd_r\, D})_{\widetilde{\rho}, \widetilde{\tau}, n}\circ Sd_r\, f_{\widetilde{\tau}, \widetilde{\sigma}, n}) \\ 
&=& \sum_{\rho\leqslant\tau\leqslant\sigma}((d_D)_{\rho, \tau, n+|\sigma|-|\widetilde{\sigma}|}\circ f_{\tau, \sigma, n+|\sigma|-|\widetilde{\sigma}|}) \\
&=& (d_D \circ f)_{\rho,\sigma,n+|\sigma|-|\widetilde{\sigma}|} \\
&=& (f \circ d_C)_{\rho,\sigma,n+|\sigma|-|\widetilde{\sigma}|} \\
&=& \sum_{\rho\leqslant\tau\leqslant\sigma}(f_{\rho, \tau, n+|\sigma|-|\widetilde{\sigma}|}\circ (d_C)_{\tau, \sigma, n+|\sigma|-|\widetilde{\sigma}|}) \\
&=& \sum_{\rho\leqslant\tau\leqslant\sigma}((Sd_r\, f)_{\widetilde{\rho}, \widetilde{\tau}, n}\circ (d_{Sd_r\, C})_{\widetilde{\tau}, \widetilde{\sigma}, n}) \\ 
&=& (Sd_r\, f\circ d_{Sd_r\, C})_{\widetilde{\rho}, \widetilde{\sigma}, n}. 
\end{eqnarray*}
\end{enumerate}
Thus $Sd_r\, f$ is a chain map and so $Sd_r: ch(\A^*(X)) \to ch(\A^*(Sd\, X))$ is a functor as required.
\end{proof}
For ease of notation we will sometimes suppress the dependence of $Sd_r$ on $r$ and just write $Sd$. Later we will see that for any choice of $r$ the functor $Sd_r$ generalises barycentric subdivision of the simplicial complex so this notational convenience will be justified.

Before showing that a subdivided chain complex may be reassembled to give one chain equivalent to the one we started with we take a quick break to present some examples of algebraic subdivision.

\section{Examples of algebraic subdivision}
So far, from the formulae defining the functor $Sd$, it is not so easy to get a feel for what the functor is doing; the aim of this section is to make this clear with some worked examples.
\begin{ex}\label{Ex:simpleexofsubdivonchains}
 Let $X = v_0v_1$ with orientations $[v_0,v_1]$, $[v_0]=[v_1]=+1$. Consider a general chain complex in $\A^*(X)$:
\begin{displaymath}
 \xymatrix@R=3mm{
C(v_0): & \ldots \ar[r] & C_n(v_0) \ar[r] & C_{n-1}(v_0) \ar[r] & \ldots \\
C(v_0v_1): & \ldots \ar[r] \ar[ur]^-{\alpha} \ar[dr]_-{\beta} & C_n(v_0v_1) \ar[r]  \ar[ur]^-{\alpha} \ar[dr]_-{\beta} & C_{n-1}(v_0v_1) \ar[r] \ar[ur]^-{\alpha} \ar[dr]_-{\beta} & \ldots \\
C(v_1): & \ldots \ar[r] & C_n(v_1) \ar[r] & C_{n-1}(v_1) \ar[r] & \ldots \\
}
\end{displaymath}
which we express diagrammatically as
\begin{displaymath}
 \xymatrix@R=3mm{
C(v_0)_* & C(v_0v_1)_* & C(v_1)_* \\
\stackrel{+}{\bullet} \ar@{-}[rr]|-{>} && \stackrel{+}{\bullet} \\
v_0 & v_0v_1 & v_1
}
\end{displaymath}
\noindent then using the chain inverse 
\begin{eqnarray*}
 r: Sd\, X &\to& X \\
\{ \widehat{v_0}, \widehat{v_0v_1}\} &\mapsto& \{v_0\} \\
\{ \widehat{v_1}\} &\mapsto& \{v_1\}
\end{eqnarray*}
we get 
\begin{eqnarray*}
 I_{\{v_0\}} &=& [\widehat{v_0}, \widehat{v_0v_1}]\\
 I_{(v_0,v_1)} &=& (\widehat{v_0v_1}, \widehat{v_1})\\
 I_{\{v_1\}} &=& [\widehat{v_1}].
\end{eqnarray*}
So we get $Sd\,C$ as 
\begin{eqnarray*}
 Sd\, C[I_{\{v_0\}}] &=& C(v_0) \otimes \Sigma^{0}\Delta_*(I_{\{v_0\}}) \\
 Sd\, C[I_{(v_0,v_1)}] &=& C(v_0v_1) \otimes \Sigma^{-1}\Delta_*(I_{(v_0,v_1)}) \\
 Sd\, C[I_{\{v_0\}}] &=& C(v_1) \otimes \Sigma^{0}\Delta_*(I_{\{v_1\}})  
\end{eqnarray*}
which we express diagrammatically as
\begin{displaymath}
 \xymatrix@R=3mm{
C(v_0)_* & C(v_0)_{*-1} & C(v_0)_* & C(v_0v_1)_* & C(v_1)_* \\
\stackrel{+}{\bullet} \ar@{-}[rr]|-{>} && \stackrel{+}{\bullet} \ar@{-}[rr]|-{>} && \stackrel{+}{\bullet} \\
\widehat{v_0} & (\widehat{v_0}, \widehat{v_0v_1}) & \widehat{v_0v_1} & (\widehat{v_0v_1},\widehat{v_1}) & \widehat{v_1}
}
\end{displaymath}
which can be seen explicitly as a chain complex in $\A^*(Sd\, X)$ as follows:
\begin{displaymath}
 \xymatrix@R=3mm{
Sd\,C(\widehat{v_0})=C(v_0)_*: & \ldots \ar[r] & C_n(v_0) \ar[r] & C_{n-1}(v_0) \ar[r] & \ldots \\
Sd\,C([\widehat{v_0},\widehat{v_0v_1}])= C(v_0)_{*-1}: & \ldots \ar[r] \ar[ur]^-{-1} \ar[dr]_-{1} & C_{n-1}(v_0) \ar[r]  \ar[ur]^-{-1} \ar[dr]_-{1} & C_{n-2}(v_0) \ar[r] \ar[ur]^-{-1} \ar[dr]_-{1} & \ldots \\
Sd\,C(\widehat{v_0v_1})= C(v_0)_*: & \ldots \ar[r] & C_n(v_1) \ar[r] & C_{n-1}(v_1) \ar[r] & \ldots \\
Sd\,C([\widehat{v_0v_1},\widehat{v_1}])= C(v_0v_1)_*: & \ldots \ar[r] \ar[ur]^-{\alpha} \ar[dr]_-{\beta} & C_n(v_0v_1) \ar[r]  \ar[ur]^-{\alpha} \ar[dr]_-{\beta} & C_{n-1}(v_0v_1) \ar[r] \ar[ur]^-{\alpha} \ar[dr]_-{\beta} & \ldots \\
Sd\,C(\widehat{v_1})= C(v_1)_*: & \ldots \ar[r] & C_n(v_1) \ar[r] & C_{n-1}(v_1) \ar[r] & \ldots \\
}
\end{displaymath}
\qed\end{ex}

\begin{ex}\label{Ex:lesssimpleexofsubdivonchains}
 Let $X$ be the $2$-simplex $\sigma$ with orientations as in Example \ref{Ex:SigmaOrients}. We express a general chain complex in $\A^*(\sigma)$ diagrammatically as
\begin{figure}[ht]
\begin{center}
{
\psfrag{p1}[tr][tr]{$C(\rho_0)_*$}
\psfrag{p2}[b][b]{$C(\rho_1)_*$}
\psfrag{p3}[tl][tl]{$C(\rho_2)_*$}
\psfrag{t1}[r][]{$C(\tau_0)_*$}
\psfrag{t2}[l][l]{$C(\tau_1)_*$}
\psfrag{t3}[t][t]{$C(\tau_2)_*$}
\psfrag{s}[l][r]{$C(\sigma)_*$}
\psfrag{+}[][]{$+$}
\includegraphics[width=7cm]{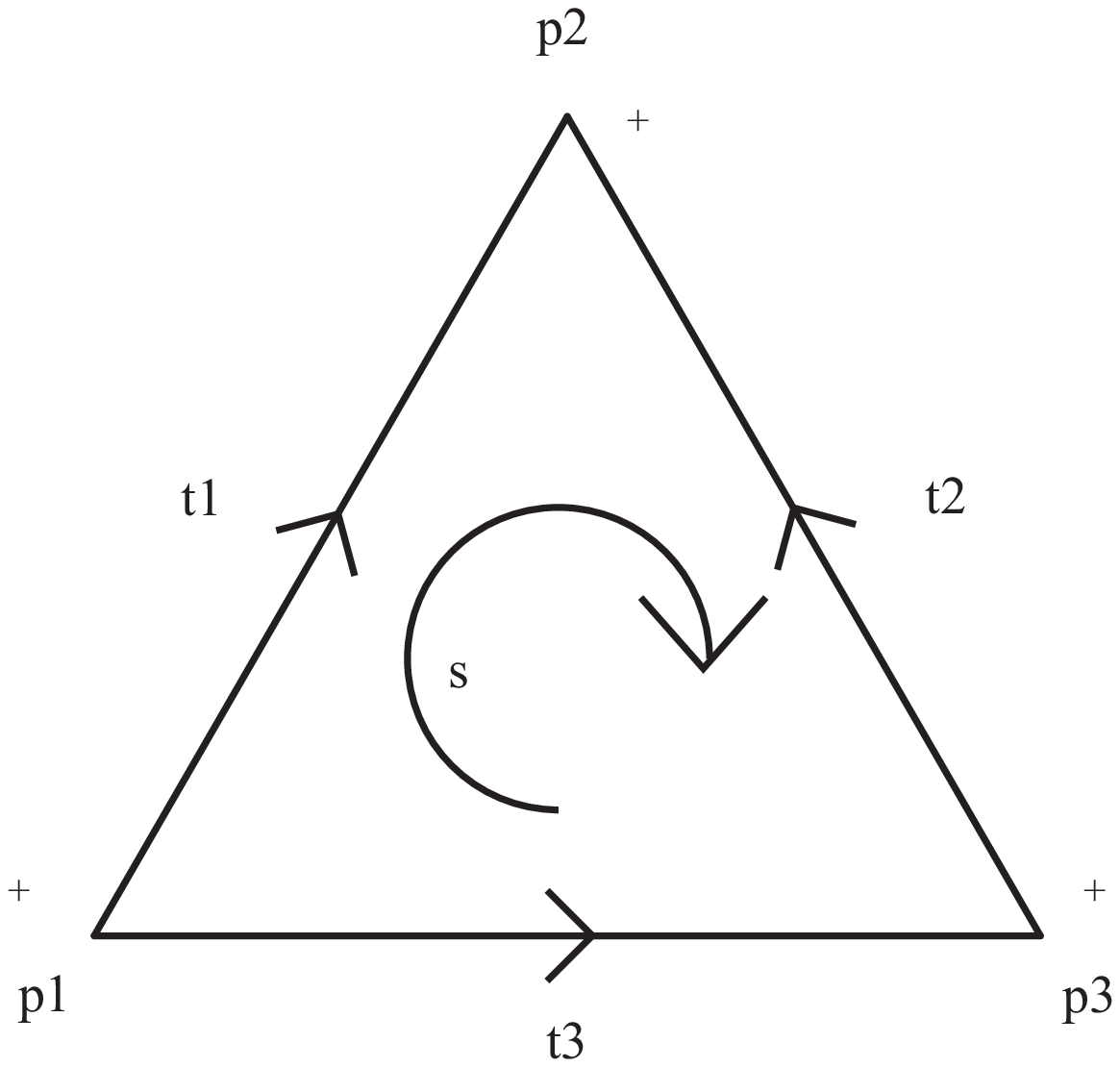}
}
\label{Fig:prethesis55}
\end{center}
\end{figure}

\noindent then using the chain inverse 
\begin{eqnarray*}
 r: Sd\, \sigma &\to& \sigma \\
\{ \widehat{\rho_0}, \widehat{\tau_0}, \widehat{\tau_2}, \widehat{\sigma}\} &\mapsto& \{\rho_0\} \\
\{ \widehat{\rho_1}, \widehat{\tau_1}\} &\mapsto& \{\rho_1\} \\
\{ \widehat{\rho_2} \} &\mapsto& \{\rho_2\}
\end{eqnarray*}

\noindent we get the following decomposition of $Sd\, \sigma$:
\begin{figure}[ht]
\begin{center}
{
\psfrag{s}[l][r]{$I_{\sigma}$}
\psfrag{p1}[l][r]{$I_{\rho_0}$}
\psfrag{p2}{$I_{\rho_1}$}
\psfrag{p3}[t][]{$I_{\rho_2}$}
\psfrag{t1}{$I_{\tau_0}$}
\psfrag{t2}{$I_{\tau_1}$}
\psfrag{t3}[tl][tr]{$I_{\tau_2}$}
\includegraphics[width=7cm]{images/prethesis4.eps}
}
\caption{The $I_\tau$'s for $\sigma$.}
\label{Fig:Sdsigmadecomposed}
\end{center}
\end{figure}

oriented as in \Figref{Fig:constructgamorient}. $Sd\, C$ is then the chain complex expressed diagrammatically in \Figref{Fig:Chaincxover2simplexafter}.
\begin{figure}[ht]
\begin{center}
{
\psfrag{s}{$C(\sigma)_n$}
\psfrag{p1}{$C(\rho_0)_n$}
\psfrag{p2}{$C(\rho_1)_n$}
\psfrag{p3}{$C(\rho_2)_n$}
\psfrag{t1}{$C(\tau_0)_n$}
\psfrag{t2}{$C(\tau_1)_n$}
\psfrag{t3}{$C(\tau_2)_n$}
\psfrag{p1+}{$C(\rho_0)_{n-1}$}
\psfrag{p1++}{$C(\rho_0)_{n-2}$}
\psfrag{p2+}{$C(\rho_1)_{n-1}$}
\psfrag{t1+}{$C(\tau_0)_{n-1}$}
\psfrag{t3+}{$C(\tau_2)_{n-1}$}
\includegraphics[width=13cm]{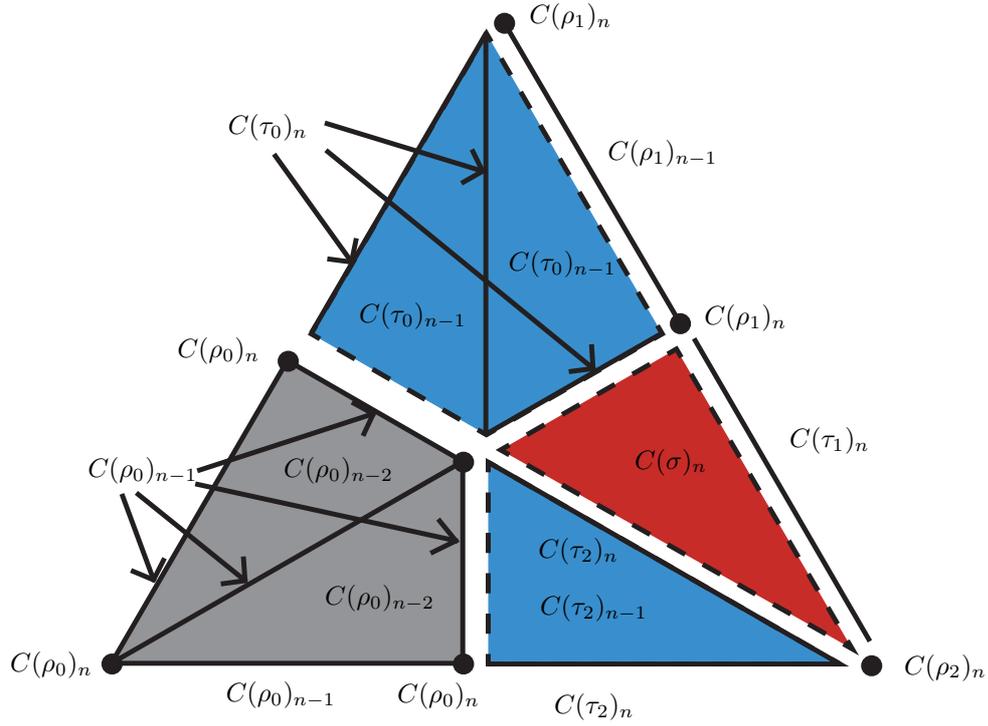}
}
\caption{The $n$-chains $(Sd\, C)_n$ for the $2$-simplex $\sigma$.}
\label{Fig:Chaincxover2simplexafter}
\end{center}
\end{figure}

\noindent Letting $d_{\tau^\prime,\tau}$ denote the component of the boundary map $(d_C)_{\tau^\prime,\tau, n}: C(\tau)_n \to C(\tau^\prime)_{n-1}$, the components of the boundary map $d_{Sd\, C}$ of the subdivision chain complex are as in \Figref{Fig:Chaincxover2simplexafterboundaries}. The components of $d_{Sd\, C}$ within each $I_\tau$ have been suppressed. We should also have $(-1)^n(d_{\Delta_*(Sd\, \sigma)})_{\widetilde{\rho},\widetilde{\tau}}$ going from any $\widetilde{\tau}$ to a codimension one $\widetilde{\rho}$ in the same $I_{\tau}$ and $(d_C)_{\sigma,\sigma}$ from each $\widetilde{\sigma}\in I_\sigma$ to itself.
\begin{figure}[ht]
\begin{center}
{
\psfrag{p1t3}{$d_{\rho_0\tau_2}$}
\psfrag{p1t1}{$d_{\rho_0\tau_0}$}
\psfrag{p2t1}{$d_{\rho_1\tau_0}$}
\psfrag{p2t2}{$d_{\rho_1\tau_1}$}
\psfrag{p3t2}{$d_{\rho_2\tau_1}$}
\psfrag{p3t3}{$d_{\rho_2\tau_2}$}
\psfrag{t1s}{$d_{\tau_0\sigma}$}
\psfrag{t3s}{$d_{\tau_2\sigma}$}
\psfrag{t2s}{$d_{\tau_1\sigma}$}
\psfrag{p1s}{$d_{\rho_0\sigma}$}
\psfrag{p3s}{$d_{\rho_2\sigma}$}
\psfrag{p2s}{$d_{\rho_1\sigma}$}
\includegraphics[width=11cm]{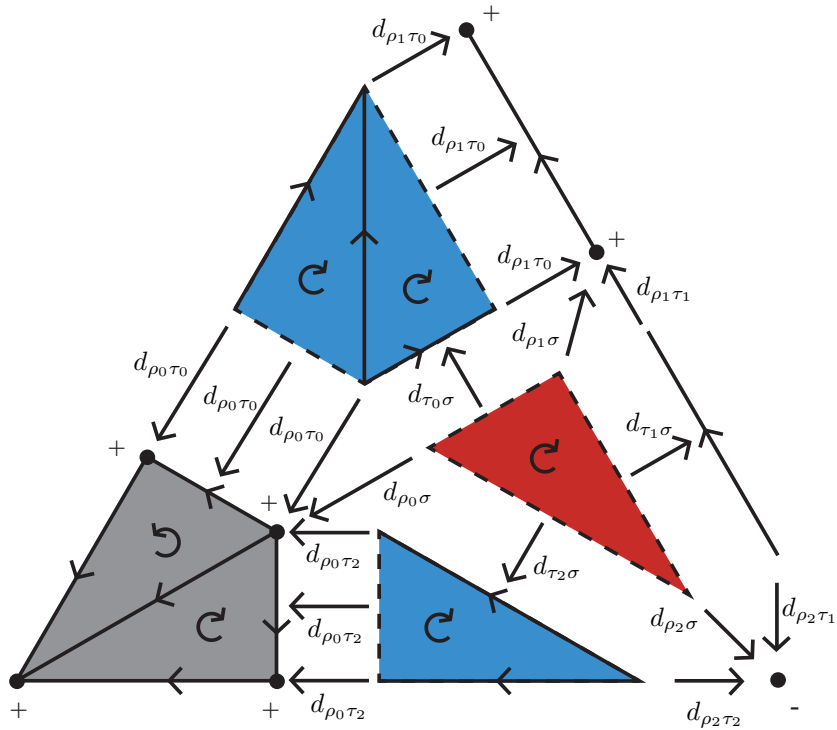}
}
\caption{The boundary map $d_{Sd\, C}$ for the $2$-simplex $\sigma$.}
\label{Fig:Chaincxover2simplexafterboundaries}
\end{center}
\end{figure}
\qed\end{ex}

\section{Algebraic subdivision chain equivalences}\label{svptthr}
Consider the simplicial $\brc{\mathrm{chain}}{\mathrm{cochain}}$ complex and the subdivision chain map induced by barycentric subdivision: \[s: \brc{\Delta^{lf}_*(X) \to \Delta^{lf}_*(Sd\, X)}{\Delta^{-*}(X) \to \Delta^{-*}(Sd\, X).}\]Assembling $\brc{\Delta^{lf}_*(Sd\, X)}{\Delta^{-*}(Sd\, X)}$ covariantly with $\rr$ (as in Definition \ref{covariantassembly}) we can view $s$ as a chain equivalence in $\brc{ch(\A^*(X))}{ch(\A_*(X)).}$ 

The same is true algebraically; for any choice of chain inverse $r$, the subdivision chain map $s$ induces a chain map $s_*:C \to Sd_r\, C$ of chain complexes in $\brc{ch(\A^*(X))}{ch(\A_*(X))}$ where we view $Sd_r\, C$ as a chain complex in $\brc{ch(\A^*(X))}{ch(\A_*(X))}$ by assembling covariantly over all the $I_\tau$'s with a functor which we shall call $\rr_r$ to show the dependence on $r$. 

\begin{thm}\label{algsubdiv}
Let 
\begin{displaymath}
\xymatrix@1{ (\Delta^{lf}_*(X), d_{\Delta^{lf}_*(X)},0) \ar@<0.5ex>[r]^-{s} & (\Delta^{lf}_*(Sd\, X), d_{\Delta^{lf}_*(Sd\, X)},P) \ar@<0.5ex>[l]^-{r}
}
\end{displaymath}
be the subdivision chain equivalence between the simplicial chain complex of $X$ and that of its barycentric subdivision, together with a choice of chain inverse $t$ and chain homotopy \[d_{\Delta^{lf}_*(Sd\, X)} P + P d_{\Delta^{lf}_*(Sd\, X)} = 1_{\Delta^{lf}_*(Sd\, X)}\] constructed as in chapter \ref{chapfive}. Then this induces a subdivision chain equivalence 
\begin{displaymath}
\xymatrix@1{ (C, d_C,0) \ar@<0.5ex>[r]^-{s_*} & (Sd_r\, C, d_{Sd_r\, C},P_*) \ar@<0.5ex>[l]^-{r_*}
}
\end{displaymath}
such that for the assembly functor $\rr_r$ that assembles the $I_\tau$, \[\rr_r Sd_r\, C \simeq C \in \A^*(X)\] for all $C\in \A^*(X)$. 
\end{thm}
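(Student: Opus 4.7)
The plan is to lift the simplicial subdivision chain equivalence $(s,r,P)$ of Proposition~\ref{setupthesqueeze} level-by-level, using the observation that locally $Sd_r\, C[I_\sigma] = C(\sigma)\otimes\Sigma^{-|\sigma|}\Delta_*(I_\sigma)$, so that the local pieces of $s, r, P$ on $I_\sigma$, tensored with $\id_{C(\sigma)}$, should give the required chain equivalence between $C(\sigma)$ (viewed as $C(\sigma)\otimes\Sigma^{-|\sigma|}\Delta_*(\mathring{\sigma})$) and $Sd_r\, C[I_\sigma]$. Patching these local pieces together using the triangular structure of $\A^*(X)$ will yield the global equivalence.

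Concretely, for each $\widetilde{\sigma}\in I_\sigma$ I would define
\[ (s_*)_{\widetilde{\sigma},\sigma,n} = \brcc{\lambda_{\widetilde{\sigma}}\cdot\id_{C(\sigma)_n},}{|\widetilde{\sigma}|=|\sigma|,}{0,}{\mathrm{otherwise},} \]
where $\lambda_{\widetilde{\sigma}}\in\{\pm 1\}$ is the coefficient of $[\widetilde{\sigma}]$ in $s([\sigma])$; and dually $(r_*)_{\sigma,\widetilde{\sigma},n}$ is the relevant scalar multiple of $\id$ when $r$ is a bijection on vertices of $\widetilde{\sigma}$ and zero otherwise. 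The chain homotopy $P_*$ is defined similarly from $P$, with $(P_*)_{\widetilde{\tau},\widetilde{\sigma},n}$ supported on pairs with $\widetilde{\tau},\widetilde{\sigma}\in I_\sigma$ for the same $\sigma$ (using the containment property from Lemma~\ref{keylemma}, ensuring $P$ respects the $I_\sigma$ decomposition). Off-diagonal components of $s_*$ and $r_*$ (between different parent simplices $\sigma\neq\sigma'$) are set to zero.

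The verification of the chain identities proceeds by splitting $d_{Sd_r\, C}$ according to the two cases in (\ref{dSdconcise}): the ``external'' component which encodes $d_C$ between different parent simplices, and the ``internal'' component $(-1)^n(\id\otimes d_{\Delta_*(I_\sigma)})$ acting within a single $I_\sigma$. The external contributions of $s_*\circ d_C = d_{Sd_r\, C}\circ s_*$, $r_*\circ d_{Sd_r\, C}= d_C\circ r_*$, $r_*\circ s_* = \id$, and the chain homotopy relation $s_*\circ r_* = \id - d_{Sd_r\, C}\circ P_* - P_*\circ d_{Sd_r\, C}$ are immediate because $s_*, r_*, P_*$ act as (signed) identities on the $C(\sigma)$ factor, so they commute with $d_C$'s components. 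The internal contributions reduce in each case to the simplicial identities $ds=sd$, $dr=rd$, $rs=1$, and $dP+Pd=1-sr$ applied to $\Delta_*(I_\sigma)\simeq \Delta_*(\mathring\sigma)$; the sign $(-1)^n$ is absorbed by the standard Koszul convention for tensor products.

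Finally, for the assembly statement, I would observe that because $s_*$, $r_*$, $P_*$ respect the partition $Sd\, X = \bigsqcup_\sigma I_\sigma$ (by Remark~\ref{Isigmadecomp}) and have off-diagonal components only of the form inherited from $d_C$ going between $I_\tau$ and $I_\sigma$ for $\tau\leqslant\sigma$, their assemblies $\rr_r(s_*), \rr_r(r_*), \rr_r(P_*)$ are well-defined morphisms in $\A^*(X)$ — this uses the fact that assembly over the $I_\sigma$ is compatible with the triangular condition (non-zero components only for $\tau\leqslant\sigma$). The resulting triple is a chain equivalence in $\A^*(X)$ exhibiting $\rr_r Sd_r\, C\simeq C$. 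I expect the main obstacle to be purely combinatorial: correctly tracking all the signs coming from orientations of simplices in both $X$ and $Sd\, X$, the Koszul rule in the tensor product, and the sign $(-1)^n$ introduced in $d_{Sd_r\, C}$; the verification at this level of detail is tedious but mechanical once the definitions are set up correctly.
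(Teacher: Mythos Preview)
Your proposal has a genuine gap: the diagonal-only definition of $s_*$ is \emph{not} a chain map, so the argument breaks down at the very first verification.

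Concretely, take $\tau<\sigma$ and trace the two sides of $d_{Sd_r\,C}\circ s_* = s_*\circ d_C$ applied to $C(\sigma)$. With your definition, $s_*$ sends $C(\sigma)$ only to $\Gamma_\sigma(\mathring{\sigma})\subset I_\sigma$. The external (case~1) part of $d_{Sd_r\,C}$ then carries this to $\Gamma_\sigma(\mathring{\tau})\subset I_\tau$ via $(d_C)_{\tau,\sigma}$. On the other side, $d_C$ sends $C(\sigma)$ to $C(\tau)$ and then your $s_*$ lands in $\Gamma_\tau(\mathring{\tau})\subset I_\tau$. But $\Gamma_\sigma(\mathring{\tau})$ and $\Gamma_\tau(\mathring{\tau})$ are in general \emph{different} simplices of $I_\tau$ (look at Figure~\ref{Fig:constructgam}: $\Gamma_\sigma(\rho_0)$ is the barycentre $\widehat{\sigma}$, while $\Gamma_{\rho_0}(\rho_0)$ is the vertex $\rho_0$ itself). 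So $s_*d_C$ and $d_{Sd_r\,C}s_*$ land in different summands of $Sd_r\,C$ and cannot agree. Your claim that the external contributions ``are immediate because $s_*,r_*,P_*$ act as (signed) identities on the $C(\sigma)$ factor'' overlooks that the external part of $d_{Sd_r\,C}$ respects the $\Gamma_{\sigma_0,\ldots,\sigma_m}$-labelling, not the $I_\sigma$-partition in the way you need.

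The paper fixes this by introducing explicit off-diagonal components for $s_*$ (Proposition~\ref{Prop:sstardefined}): one sets $(s_*)_{\gam{\rho_0,\ldots,\rho_i}{\rho_0},\sigma,n} = (-1)^{(n+1)i}d_{\rho_0\rho_1}\cdots d_{\rho_{i-1}\rho_i}$ when $\rho_i=\sigma$ and $\rho_0<\sigma$, precisely the correction terms needed to interpolate between $\Gamma_\tau(\mathring{\tau})$ and $\Gamma_\sigma(\mathring{\tau})$ through the higher homotopies. The verification that this $s_*$ is a chain map is the substantial computation of that proposition and uses equation~(\ref{slidealong}) repeatedly; it is not ``tedious but mechanical'' from your diagonal definition alone. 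By contrast, your diagonal definition of $r_*$ \emph{does} work (Proposition~\ref{partsofr}), essentially because $r_\sigma$ is supported on top-dimensional simplices where the two routes do agree. Once $s_*$ and $r_*$ are correctly defined as global chain maps, the paper avoids building $P_*$ globally altogether: it simply invokes Proposition~\ref{chcont}(ii), since the diagonal components $(s_*)_{\sigma,\sigma}=(s_\sigma)_*$ and $(r_*)_{\sigma,\sigma}=(r_\sigma)_*$ are local chain equivalences.
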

First we deal with $C\in ch(\A^*(X))$:
\begin{rmk}
The global chain equivalence given by $s$, $r$ and $P$ restricts to a local chain equivalence for all $\sigma\in X$:
\begin{displaymath}
\xymatrix@1{ (\Delta_*(\mathring{\sigma}), 0,0) \ar@<0.5ex>[r]^-{s_\sigma:=s|} & (\Delta_*(I_\sigma), d_{\Delta_*(Sd\, X)}|_{I_\sigma},P_\sigma:=P|_{I_\sigma}) \ar@<0.5ex>[l]^-{r_\sigma:= r|}
}
\end{displaymath}
since $s(\mathring{\sigma}) = \gam{\sigma}{\sigma} \subset I_\sigma$, $t(I_\sigma) \subset \mathring{\sigma}$ by the definition of $I_\sigma$. This induces a local chain equivalence 
\begin{displaymath}
\xymatrix@C=3cm{ (C(\sigma), (d_C)_{\sigma,\sigma},0) \ar@<0.5ex>[r]^-{(s_\sigma)_*= \id_{C(\sigma)}\otimes \Sigma^{-|\sigma|}s_\sigma} & (Sd_r\, C|_{I_\sigma}, d_{Sd_r\, C}|_{I_\sigma},(P_\sigma)_*) \ar@<0.5ex>[l]^-{(r_\sigma)_*= \id_{C(\sigma)}\otimes \Sigma^{-|\sigma|}r_\sigma}
}
\end{displaymath}
viewing $C(\sigma)$ as $C(\sigma)\otimes \Z = C(\sigma)\otimes\Sigma^{-|\sigma|}\Delta_*(\mathring{\sigma})$ and noting that $Sd_r\, C|_{I_\sigma}=C(\sigma)\otimes \Sigma^{-|\sigma|}\Delta_*(I_\sigma)$.
\qed\end{rmk}

If the local chain maps $(s_\sigma)_*$ and $(r_\sigma)_*$ can be extended to give global chain maps $s_*$ and $r_*$ then we can apply Proposition \ref{chcont} to obtain chain equivalences in $\brc{\A^*(X)}{\A_*(X)}$. To extend the local $(s_\sigma)_*$ and $(r_\sigma)_*$ we need to introduce some notation:
\begin{notn}\label{partsofs}
\begin{enumerate}
 \item We will use \[(s_*)_{\gam{\rho_0,\ldots,\rho_i}{\rho},\sigma,n}: C(\sigma)_n \to Sd\, C_n[\gam{\rho_0,\ldots,\rho_i}{\rho}]\] to denote the component of $s_*$ from $C(\sigma)_n$ to $Sd\, C$ of \textbf{every $(|\rho|+i)$-dimensional simplex in $\gam{\rho_0,\ldots,\rho_i}{\rho}$} with respect to the given orientation of $\sigma$ and the standard orientations of simplices in $Sd\, X$. 

 This map will be zero in the case that there are no $(|\rho|+i)$-dimensional simplices in $\gam{\rho_0,\ldots,\rho_i}{\rho}$, otherwise it will be a column vector \[ (s_*)_{\gam{\rho_0,\ldots,\rho_i}{\rho},\sigma,n} \left(\hspace{-1mm} \begin{array}{c} 1\\ \vdots \\ 1 \end{array}\hspace{-1mm}\right) : C(\sigma)_n \to \hspace{-2mm}\sum_{ \begin{array}{c} \widetilde{\tau}\in \gam{\rho_0,\ldots,\rho_i}{\rho} \\ |\widetilde{\tau}|=|\rho|+i \end{array} }\hspace{-5mm} Sd\, C_n(\widetilde{\tau}) = \hspace{-2mm}\sum_{\begin{array}{c} \widetilde{\tau}\in \gam{\rho_0,\ldots,\rho_i}{\rho} \\ |\widetilde{\tau}|=|\rho|+i \end{array}} \hspace{-5mm} C(\rho)_{n+|\rho|-|\widetilde{\tau}|}.\]

 \item $d_{\rho\tau}$ will be shorthand for $(d_C)_{\rho,\tau}$. 
 \item $d_{\rho_0\rho_1\ldots\rho_i}$ will be shorthand for $d_{\rho_0\rho_1}d_{\rho_1\rho_2}\ldots d_{\rho_{i-1}\rho_i}$.
\end{enumerate}
\end{notn}

\begin{rmk}\label{Rmk:dsquarediszero}
In verifying our definition of $s_*$ works we will make \textbf{repeated use} of the fact that for all $\rho\leqslant\sigma$ \[0 = d^2_{\rho\sigma} = \sum_{\rho\leqslant\tau\leqslant\sigma}d_{\rho\tau}d_{\tau\sigma}\]
and so in particular that 
\begin{equation}\label{slidealong}
d_{\rho\rho}d_{\rho\sigma} =  - d_{\rho\sigma}d_{\sigma\sigma} + \sum_{\rho<\tau<\sigma}d_{\rho\tau}d_{\tau\sigma}.
\end{equation}
\qed\end{rmk}

\begin{prop}\label{Prop:sstardefined}
Set \[(s_*)_{\gam{\rho_0,\ldots,\rho_i}{\rho},\sigma,n}:= \left\{ \begin{array}{cc} \id_{C(\sigma)}, & \rho=\rho_0=\rho_i=\sigma, \\ (-1)^{(n+1)i}d_{\rho_0\rho_1\ldots\rho_i},& \rho=\rho_0\neq\rho_i=\sigma, \\ 0, & \mathrm{otherwise.} \end{array} \right. \] This defines a chain map $s_*: C\to Sd\, C$ which extends the local $\{s_\sigma\}_{\sigma\in X}$.
\end{prop}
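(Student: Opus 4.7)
The plan is a direct verification in three steps. First, observe that the formula defines a morphism in $\A^*(Sd\,X)$: each non-zero component $(s_*)_{\gam{\rho_0,\ldots,\rho_i}{\rho_0},\sigma,n}$ sends $C(\sigma)$ into a piece of $Sd_r\,C$ based at $\rho_0\leqslant\sigma$, respecting the triangular structure demanded by $\A^*$. Then I would verify the extension property by restricting $s_*$ to $C(\sigma)$ mapping into $Sd_r\,C|_{I_\sigma}$: the only chains satisfying both $\rho_0=\sigma$ and $\rho_i=\sigma$ with $\rho_0<\cdots<\rho_i$ are singletons, so the only non-zero contribution is $\id$ on $\gam{\sigma}{\sigma}=\mathring{\Gamma_\sigma(\sigma)}$. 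This matches $(s_\sigma)_*=\id_{C(\sigma)}\otimes\Sigma^{-|\sigma|}s_\sigma$ because $s_\sigma$ sends the generator $[\mathring{\sigma}]$ to $\pm[\Gamma_\sigma(\sigma)]$, the unique top-dimensional simplex in $I_\sigma$.

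The main work is showing $d_{Sd_r\,C}\circ s_* = s_*\circ d_C$ component by component. I would fix a chain $\rho_0<\cdots<\rho_k$ and target component $\gam{\rho_0,\ldots,\rho_k}{\rho_0}$, then compare both sides applied to $C(\sigma)_n$. The left-hand side collapses via the $s_*$-formula to a single term, since only $\tau=\rho_k$ makes $(s_*)_{\gam{\rho_0,\ldots,\rho_k}{\rho_0},\tau,n-1}$ non-zero, yielding $(-1)^{nk}d_{\rho_0\ldots\rho_k}\,d_{\rho_k,\sigma}$ (or $d_{\rho_0,\sigma}$ when $k=0$). The right-hand side has horizontal case 1 contributions from $s_*$-images composed with $d_{\rho_0,\sigma'}$ for $\sigma'\geqslant\rho_0$, and vertical case 2 contributions from $s_*$ applied at longer chains obtained by inserting an extra simplex $\rho_l^*$ between $\rho_{l-1}$ and $\rho_l$ for some $1\leqslant l\leqslant k$, or by appending $\sigma'\in(\rho_k,\sigma]$ at the end. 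Applying $d_C^2=0$ in the form of Remark~\ref{Rmk:dsquarediszero}, the sums over inserted simplices $\sum_{\rho_{l-1}<\rho_l^*<\rho_l}d_{\rho_{l-1},\rho_l^*}d_{\rho_l^*,\rho_l}$ collapse to $-d_{\rho_{l-1},\rho_{l-1}}d_{\rho_{l-1},\rho_l}-d_{\rho_{l-1},\rho_l}d_{\rho_l,\rho_l}$. These combine telescopically with the horizontal contributions, and the terminal insertions $l=k+1$ likewise collapse via $\sum_{\rho_k<\sigma'\leqslant\sigma}d_{\rho_k,\sigma'}d_{\sigma',\sigma}=-d_{\rho_k,\rho_k}d_{\rho_k,\sigma}-d_{\rho_k,\sigma}d_{\sigma,\sigma}$, so that after all cancellations only the LHS term survives.

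The hardest part will be the sign bookkeeping. The factor $(-1)^{(n+1)i}$ in the definition of $s_*$, the global $(-1)^n$ in case 2 of $d_{Sd_r\,C}$, and the local $(-1)^j$ from the position of the removed chain vertex all enter the computation, and they must align with the signs arising from each expansion of $d_C^2=0$. My approach would be to first verify the base cases $k=0$ and $k=1$ explicitly to fix sign conventions and to check compatibility with the standard orientations of $X_\ep^\prime$ chosen in Chapter~\ref{chapfour}, then proceed by induction on $k$, treating internal insertions ($1\leqslant l\leqslant k$) and the terminal insertion ($l=k+1$) separately so that each sign-cancellation mechanism can be isolated.
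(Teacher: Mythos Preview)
Your approach is essentially the same as the paper's: verify the extension to the local maps by restriction, then check $s_*d_C=d_{Sd\,C}s_*$ componentwise, with all cancellations coming from $d_C^2=0$ in the form of Remark~\ref{Rmk:dsquarediszero}. The paper organises the verification slightly differently---rather than inducting on $k$, it splits into three cases according to whether $\rho=\rho_0$ and whether $\rho_i=\sigma$ and treats each directly---but the mechanism (insertions into the chain, telescoping via equation~\eqref{slidealong}) is identical to what you describe.

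One genuine omission in your sketch: you only examine target components of the form $\gam{\rho_0,\ldots,\rho_k}{\rho_0}$, i.e.\ with base $\rho=\rho_0$. But $d_{Sd\,C}s_*$ also has components landing in $\gam{\rho_0,\ldots,\rho_k}{\rho}$ for $\rho<\rho_0$, via the case-1 part of $d_{Sd\,C}$ applied to the image of $s_*$. At such a target $s_*d_C$ vanishes, so you must check that $d_{Sd\,C}s_*$ vanishes there too. This is the paper's Case~3: when $\rho_i=\sigma$ there are exactly two contributions---the case-1 term $(d_C)_{\rho,\rho_0}\cdot(s_*)_{\gam{\rho_0,\ldots,\rho_i}{\rho_0},\sigma,n}$ and the case-2 term coming from $(s_*)_{\gam{\rho,\rho_0,\ldots,\rho_i}{\rho},\sigma,n}$ followed by deleting the prepended $\rho$---and a short sign check shows they cancel. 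This is routine once noticed, but your plan as written does not cover it.

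A smaller point: your description of the ``horizontal case~1 contributions from $s_*$-images composed with $d_{\rho_0,\sigma'}$ for $\sigma'\geqslant\rho_0$'' is slightly off. Since $\gam{\rho_0,\ldots,\rho_k}{\sigma'}$ requires $\sigma'\leqslant\rho_0$, the only case-1 source with target base $\rho_0$ has $\sigma'=\rho_0$, contributing just the single term $(d_C)_{\rho_0,\rho_0}\cdot(s_*)_{\gam{\rho_0,\ldots,\rho_k}{\rho_0},\sigma,n}$.
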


\begin{proof}
First we verify that this definition extends the local chain maps $(s_\sigma)_*$, i.e.\ that \[(s_*)|= (s_\sigma)_*: C(\sigma) \to Sd\, C|_{I_\sigma}. \]
Recall that $(s_\sigma)_* = \id_{C(\sigma)}\otimes \Sigma^{-|\sigma|}s_\sigma$ where $s_\sigma$ maps $\sigma$ to $\gam{\sigma}{\sigma}$. Hence,
\[((s_\sigma)_*)_{\widetilde{\tau},\sigma} = \brcc{\id_{C(\sigma)}: C(\sigma) \to Sd\, C(\widetilde{\tau}),}{\widetilde{\tau} = \gam{\sigma}{\sigma},}{0:C(\sigma) \to Sd\, C(\widetilde{\tau}),}{\widetilde{\tau}\in I_\sigma\backslash \gam{\sigma}{\sigma}.} \]
All $\widetilde{\tau}\in I_\sigma$ are in $\gam{\rho_0,\ldots, \rho_i}{\sigma}$ for some sequence of inclusions $\sigma\leqslant \rho_0<\ldots< \rho_i$. For $(s_*)_{\widetilde{\tau},\sigma,n}$ to be non-zero we need $\rho_0=\sigma$ and $\rho_i=\sigma$, so $s_*$ is $0$ to all $\widetilde{\tau}\in I_\sigma$ except for $\gam{\sigma}{\sigma}$ for which the component is $\id_{C(\sigma)}$. Thus $s_*$ defined as in the proposition extends the local $(s_\sigma)_*$'s.

Next we seek to prove that $s_*$ defines a chain map, i.e.\ that \[s_*d_C - d_{Sd\, C}s_*=0.\] We prove this componentwise, showing that \[(s_*d_C - d_{Sd\, C}s_*)_{\gam{\rho_0,\ldots, \rho_i}{\rho},\sigma,n}=0: C(\sigma)_n \to Sd\, C_{n-1}[\gam{\rho_0,\ldots, \rho_i}{\rho}]\]for all $\rho\leqslant \rho_0<\ldots<\rho_i\leqslant \sigma$. Now we examine each term:

Consider $d_C$. This maps to all $\tau\leqslant \sigma$ and $(s_*)_{\gam{\rho_0,\ldots, \rho_i}{\rho},\tau,n-1}$ is only non-zero if $\rho_i\leqslant \tau$. Thus \[ 
(s_*d_C)_{\gam{\rho_0,\ldots, \rho_i}{\rho},\sigma,n} = \sum_{\rho_i\leqslant\tau\leqslant\sigma}{(s_*)_{\gam{\rho_0,\ldots,\rho_i}{\rho},\tau,n-1}(d_{\tau\sigma})_n.}
\]

Now consider $-d_{Sd\, C}s_*$. This can be non-zero in several different ways:
\begin{enumerate}[(i)]
 \item It can go via $\gam{\rho_0,\ldots, \rho_i}{\tau}$ for any $\rho\leqslant \tau\leqslant \rho_0$. This contributes \[- \sum_{\rho\leqslant\tau\leqslant\rho_0}(d_{Sd\, C})_{\gam{\rho_0,\ldots,\rho_i}{\rho},\gam{\rho_0,\ldots,\rho_i}{\tau},n}(s_*)_{\gam{\rho_0,\ldots,\rho_i}{\tau},\sigma,n }.\]
 \item It can go via $\gam{\rho^\prime_0,\ldots, \rho^\prime_{i+1}}{\rho}$ for $\{\rho^\prime_0,\ldots,\rho^\prime_{i+1}\}$ such that $\{\rho_0,\ldots,\rho_i\} = \{\rho_0^\prime,\ldots,\widehat{\rho}_j^\prime.\ldots,\rho_{i+1}^\prime\}$ for some $j$. Summing over all possible places the extra $\tau:=\rho^\prime_j$ can appear, this contributes 
\begin{eqnarray*}
&& - \sum_{\rho\leqslant\tau< \rho_0} (d_{Sd\, C})_{\gam{\rho_0,\ldots,\rho_i}{\rho},\gam{\tau,\rho_0,\ldots,\rho_i}{\rho},n}(s_*)_{\gam{\tau,\rho_0,\ldots,\rho_i}{\rho},\sigma,n} \\
&& - \sum_{j=1}^{i}\sum_{\rho_{j-1}<\tau< \rho_j} (d_{Sd\, C})_{\gam{\rho_0,\ldots,\rho_i}{\rho},\gam{\rho_0,\ldots,\rho_{j-1},\tau,\rho_j,\ldots,\rho_i}{\rho},n}(s_*)_{\gam{\rho_0,\ldots,\rho_{j-1},\tau,\rho_j,\ldots,\rho_i}{\rho},\sigma,n} \\
&& - \sum_{\rho_i< \tau \leqslant \sigma} (d_{Sd\, C})_{\gam{\rho_0,\ldots,\rho_i}{\rho},\gam{\rho_0,\ldots,\rho_i,\tau}{\rho},n}(s_*)_{\gam{\rho_0,\ldots,\rho_i,\tau}{\rho},\sigma,n}.
\end{eqnarray*}
\end{enumerate}
If $\rho<\rho_0$ and $\rho_i<\sigma$, then all the $(s_*)$ terms in $(s_*d_C - d_{Sd\, C}s_*)_{\gam{\rho_0,\ldots, \rho_i}{\rho},\sigma,n}$ are $0$. We check the other $3$ cases explicitly summing the terms in the order they were introduced. We will use $(s_*d_C - d_{Sd\, C}s_*)|$ as shorthand for $(s_*d_C - d_{Sd\, C}s_*)_{\gam{\rho_0,\ldots, \rho_i}{\rho},\sigma,n}$.

\noindent \underline{Case 1}: $\rho = \rho_0$ and $\rho_i=\sigma$.
\begin{eqnarray*}
 (s_*d_C - d_{Sd\, C}s_*)| &=& (-1)^{ni}d_{\rho_0\ldots\rho_{i-1}\rho_i^2} - (-1)^{(n+1)i}d_{\rho_0^2\rho_1\ldots\rho_i} \\
&& -0 - \sum_{j=1}^i\sum_{\rho_{j-1}<\tau<\rho_j}(-1)^j(-1)^n(-1)^{(n+1)(i+1)}d_{\rho_0\ldots\rho_{j-1}\tau\rho_j\ldots\rho_i} -0.
\end{eqnarray*}
This sum is zero because by applying equation $(\ref{slidealong})$ $i-1$ times we see that
\begin{eqnarray*}
 d_{\rho_0^2\rho_1\ldots\rho_i} &=& - \sum_{\rho_0<\tau<\rho_1}d_{\rho_0\tau\rho_1\ldots\rho_i} - d_{\rho_0\rho_1^2\rho_2\ldots\rho_i} \\
&=& - \sum_{\rho_0<\tau<\rho_1}d_{\rho_0\tau\rho_1\ldots\rho_i} + \sum_{\rho_1<\tau<\rho_2}d_{\rho_0\rho_1\tau\rho_2\ldots\rho_i} +    d_{\rho_0\rho_1\rho_2^2\rho_3\ldots\rho_i} \\
&=& \ldots \\
&=& \sum_{j=1}^i\sum_{\rho_{j-1}<\tau<\rho_j}(-1)^jd_{\rho_0\ldots\rho_{j-1}\tau\rho_j\ldots\rho_i} + (-1)^id_{\rho_0\ldots \rho_{i-1}\rho_i^2}.
\end{eqnarray*}
Substituting this in we obtain
\begin{eqnarray*}
 (s_*d_C - d_{Sd\, C}s_*)| &=& \{(-1)^{ni} - (-1)^{(n+1)i}(-1)^i\}d_{\rho_0\ldots\rho_i^2} \\
&& - \sum_{j=1}^i\sum_{\rho_{j-1}<\tau<\rho_j} \{(-1)^{j+n+(n+1)(i+1)} + (-1)^{(n+1)i+j}\}d_{\rho_0\ldots\rho_{j-1}\tau\rho_j\ldots\rho_i} \\ &=& 0. 
\end{eqnarray*}
\underline{Case 2}: $\rho = \rho_0$ and $\rho_i < \sigma$.
\begin{eqnarray*}
 (s_*d_C - d_{Sd\, C}s_*)| &=& 0 - (-1)^{(n+1)i}d_{\rho_0^2\rho_1\ldots\rho_i} \\
 && - (-1)^0(-1)^n(-1)^{(n+1)(i+1)}d_{\rho_0^2\rho_1\ldots\rho_i} \\
 && -0 -0 \\
 &=& 0.
\end{eqnarray*}
\underline{Case 3}: $\rho < \rho_0$ and $\rho_i=\sigma$.
\begin{eqnarray*}
 (s_*d_C - d_{Sd\, C}s_*)| &=& (-1)^{ni}d_{\rho_0\ldots\rho_i\sigma} - 0 \\
 && -0 -0 - (-1)^{i+1}(-1)^n(-1)^{(n+1)(i+1)}d_{\rho_0\ldots\rho_i\sigma} \\
 &=& 0.
\end{eqnarray*}
\end{proof}

\begin{rmk}
We need to be a little bit careful in the above proof in the case that $\gam{\rho_0,\ldots,\rho_i}{\rho}$ is not PL-homeomorphic to $\mathring{\rho}\times\Delta^i$, i.e.\ if $\gam{\rho_0,\ldots, \rho_i}{\rho}=\gam{\rho_0,\ldots, \rho_{i-1}}{\rho}$ for example. In this case the component of $(s_*)$ mapping to $\gam{\rho_0,\ldots, \rho_i}{\rho}$ is zero as is the component of $d_{Sd\, C}$ mapping to it. A case by case analysis verifies these pathological cases.
\qed\end{rmk}

We now extend the local chain maps $\{r_\sigma\}_{\sigma\in X}$ to a global chain map $r:Sd\, C \to C$. This is considerably easier than extending the $\{s_\sigma\}_{\sigma\in X}$ because defining $r_*$ with $r_\sigma$ down the diagonal is already a chain map without needing to introduce any off-diagonal terms like we had to with the $\{s_\sigma\}_{\sigma\in X}$.

\begin{prop}\label{partsofr}
Define $r_*:Sd\, C \to C$ by \[ (r_*)_{\tau,\widetilde{\sigma},n}:= \brcc{((r_\sigma)_*)_n: Sd\, C_n(\widetilde{\sigma}) \to C_n(\tau),}{\widetilde{\sigma}\subset I_\tau}{0,}{\mathrm{otherwise}}\] then $r_*: Sd\, C \to C$ is a chain map.
\end{prop}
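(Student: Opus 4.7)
The plan is to verify the chain-map identity $r_* \circ d_{Sd\, C} = d_C \circ r_*$ componentwise at each triple $(\rho, \widetilde{\sigma}, n)$. By Remark \ref{Isigmadecomp}, every $\widetilde{\sigma}$ lies in a unique $I_\sigma$, so fix that $\sigma$; the column of $r_*$ at $\widetilde{\sigma}$ is supported only at the row $\sigma$, reducing the non-trivial verifications to $\rho \leq \sigma$. A parallel observation (tracing where $d_{Sd\, C}$ can map from $\widetilde{\sigma}$ into an $I_\rho$) shows the left-hand side also vanishes for incomparable $\rho$.

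First I would handle the diagonal case $\rho = \sigma$, where only $\widetilde{\tau} \in I_\sigma$ survive on the left. The Case 1 contributions of $d_{Sd\, C}$ with $\tau = \sigma$ collapse, by the dimension constraint $|\widetilde{\tau}| = |\widetilde{\sigma}|$, to $(d_C)_{\sigma,\sigma}$ from $\widetilde{\sigma}$ to itself; the Case 2 contributions are $(-1)^n \id \otimes d_{\Sigma^{-|\sigma|}\Delta_*(I_\sigma)}$. Together they constitute the differential of the tensor-product complex $Sd_r\, C|_{I_\sigma} = C(\sigma) \otimes \Sigma^{-|\sigma|}\Delta_*(I_\sigma)$, and the identity reduces to $(r_\sigma)_* \circ d_{Sd_r\, C|_{I_\sigma}} = (d_C)_{\sigma,\sigma} \circ (r_\sigma)_*$. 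This is precisely the local chain-map property of $(r_\sigma)_* = \id_{C(\sigma)} \otimes \Sigma^{-|\sigma|}r_\sigma$, which is built into the restriction of the subdivision chain equivalence recalled just before the proposition.

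The off-diagonal case $\rho < \sigma$ is the main obstacle. Case 2 contributions preserve $I_\sigma$ and so are killed by $(r_*)_{\rho, -}$ (vanishing off $I_\rho$), leaving only Case 1 with $\tau = \rho$. For $(r_\rho)_*|_{\widetilde{\tau}}$ to be non-zero one needs $|\widetilde{\tau}| = |\rho|$, as $r_\rho$ is concentrated in degree $|\rho|$; combined with $|\widetilde{\tau}| - |\rho| = |\widetilde{\sigma}| - |\sigma|$ this forces $|\widetilde{\sigma}| = |\sigma|$. So the equation is vacuous except when $\widetilde{\sigma}$ is a $|\sigma|$-simplex in $I_\sigma$, i.e.\ when $r|_{\widetilde{\sigma}}$ is a bijection onto $\sigma$. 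For such $\widetilde{\sigma}$, the induced vertex bijection picks out a unique $|\rho|$-face $\widetilde{\tau}_0 \leq \widetilde{\sigma}$ lying in $I_\rho$ (the one spanned by the preimages of $V(\rho)$), and the identity collapses to the scalar equality $\epsilon_{\widetilde{\tau}_0} = \epsilon_{\widetilde{\sigma}}$, where $\epsilon_{(-)}$ is the sign with which $r$ carries the chosen orientation onto its image.

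This final sign identity is where the orientation conventions of Chapter \ref{chapfive} pay off. Matching coefficients of $\rho$ in the simplicial chain-map equation $r(d_{Sd\, X}\widetilde{\sigma}) = d_X(r\,\widetilde{\sigma})$ yields $[\widetilde{\tau}_0 : \widetilde{\sigma}] \cdot \epsilon_{\widetilde{\tau}_0} = \epsilon_{\widetilde{\sigma}} \cdot [\rho : \sigma]$, and the standard orientations of $Sd\, X$ obtained by pushing forward the product orientations on the $\Gamma$-cells (Lemma \ref{constructgam}) are calibrated precisely so that $r|_{\widetilde{\sigma}}$ identifies the incidence signs, $[\widetilde{\tau}_0 : \widetilde{\sigma}] = [\rho : \sigma]$, giving $\epsilon_{\widetilde{\tau}_0} = \epsilon_{\widetilde{\sigma}}$. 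Assembling the diagonal and off-diagonal verifications proves $r_*$ is a chain map in $\A^*(X)$; the $\A_*(X)$ case is dual, interchanging chains with cochains and reversing the inequality in Case 1, and proceeds by an identical argument.
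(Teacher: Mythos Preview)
Your argument is correct and follows the same diagonal/off-diagonal split as the paper. The paper's proof is much terser in the off-diagonal case: having set up the standard orientations on $Sd\,X$ via Lemma~\ref{constructgam}, it simply asserts that $r_\sigma$ is the map $1$ on every $|\sigma|$-simplex of $I_\sigma$ (each such simplex is some $\Gamma_{\sigma_0}(\mathring{\sigma})$, carrying the orientation $[\sigma]$, and $r|$ is the inverse of $\Gamma_{\sigma_0}$), so the check collapses immediately to $1\cdot d_{\tau\sigma}-d_{\tau\sigma}\cdot 1=0$. Your sign-chasing via incidence numbers reaches the same conclusion but is more work than needed, and as written the identity $[\widetilde{\tau}_0:\widetilde{\sigma}]=[\rho:\sigma]$ only makes literal sense when $\rho$ is a codimension-one face of $\sigma$; for general $\rho<\sigma$ you would need to iterate it or, more simply, invoke the orientation convention directly to get $\epsilon_{\widetilde{\tau}_0}=\epsilon_{\widetilde{\sigma}}=1$.
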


\begin{proof}
We already know each $r_\sigma$ is a local chain map, so we just need to check that the components of $(r_*d_{Sd\, C} - d_C r_*)$ from $I_\sigma$ to $\tau \lneq \sigma$ are zero.

Suppose $\tau\lneq\sigma$ and $\widetilde{\sigma}\in I_\sigma$. Then $r_\sigma$ is only non-zero on the $|\sigma|$-simplices in $I_\sigma$ where it is the map $1$ with respect to the chosen orientations. Let $\widetilde{\sigma}$ be such a simplex, then \[ (r_*d_{Sd\, C} - d_C r_*)|_{\tau,\widetilde{\sigma}}= 1.d_{\tau\sigma} - d_{\tau\sigma}.1 =0.\]
Thus $r_*:Sd\, C\to C$ is a chain map. 
\end{proof}

%Do we care about defining $Sd_t$ on chain maps? It won't be a functor but may have useful properties.
The open subsets $I_\tau$ satisfy condition $(\ref{five})$ for all $\tau$, thus we are able to define a functor by assembling each $I_\tau$:
\begin{defn}\label{rrr}
Given a subdivision chain equivalence
\begin{displaymath}
\xymatrix@1{ (\Delta^{lf}_*(X), d_{\Delta^{lf}_*(X)},0) \ar@<0.5ex>[r]^-{s} & (\Delta^{lf}_*(Sd\, X), d_{\Delta^{lf}_*(Sd\, X)},P) \ar@<0.5ex>[l]^-{r}
}
\end{displaymath}
define an assembly functor \[\rr_r:\A^*(Sd\, X)\to \A^*(X)\] by \[\rr_r(C)(\sigma):= C[I_\sigma].\]Just like the assembly functors $\rr$ and $\ttt$ this is clearly functorial.
\qed\end{defn}
We have now done all the hard work and can easily prove Theorem \ref{algsubdiv}:
\begin{proof}[Proof of Theorem \ref{algsubdiv}]
The maps $s_*$ and $r_*$ defined as above are chain maps, such that for all $\sigma\in X$, \begin{displaymath}
\xymatrix@1{ (C(\sigma), (d_C)_{\sigma,\sigma},0) \ar@<0.5ex>[r]^-{s_*|=s_\sigma} & (\rr_r Sd_r\, C, d_{\rr_r Sd_r\, C}, (P_\sigma)_*) \ar@<0.5ex>[l]^-{r_*|=r_\sigma}
}
\end{displaymath}
is a chain equivalence in $\A$. Therefore by Proposition \ref{chcont}, \begin{displaymath}
\xymatrix@1{ (C, d_C,0) \ar@<0.5ex>[r]^-{s_*} & (\rr_r Sd_r\, C, d_{\rr_r Sd_r\, C},P_*) \ar@<0.5ex>[l]^-{r_*}
}
\end{displaymath}
is a chain equivalence in $\A^*(X)$ as required.
\end{proof}
Now we deal with the case $C\in \A_*(X)$:
\begin{thm}\label{Dualsubdivision}
The dual chain equivalence 
\begin{displaymath}
\xymatrix{ (\Delta^{-*}(X), \delta^{\Delta^{-*}(X)},0) \ar@<0.5ex>[rr]^-{r^*} && (\Delta^{-*}(Sd^i\,X), d^{\Delta^{-*}(Sd^i\,X)}, P^*) \ar@<0.5ex>[ll]^-{s^*} }
\end{displaymath}
induces a subdivision chain equivalence
\begin{displaymath}
\xymatrix@1{ (C, d_C,0) \ar@<0.5ex>[r]^-{(r^*)_*} & (Sd_r\, C, d_{Sd_r\, C},(P^*)_*) \ar@<0.5ex>[l]^-{(s^*)_*}
}
\end{displaymath}
such that
\[\rr_r Sd_r\, C \simeq C \in \A_*(X)\] for all $C\in \A_*(X)$.
\end{thm}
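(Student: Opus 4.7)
The plan is to run the proof of Theorem \ref{algsubdiv} in the dual setting, with the roles of chain and cochain interchanged throughout. Recall that for $C\in\A_*(X)$, Definition/Lemma \ref{Deflem:SdtildeFunctor}(ii) gives \[Sd_r\,C(\widetilde{\sigma}) = C(\sigma)\otimes\Sigma^{|\sigma|}\Delta^{-*}(\mathring{\widetilde{\sigma}}),\qquad \widetilde{\sigma}\in I_\sigma,\] so $Sd_r\,C$ is built from local cochain complexes $\Delta^{-*}(I_\sigma)$ rather than local chain complexes $\Delta_*(I_\sigma)$. The global dual chain equivalence of Theorem \ref{algsubdiv} restricts for each $\sigma\in X$ to a local chain equivalence \[\xymatrix@1{(\Delta^{-*}(\mathring{\sigma}),0,0) \ar@<0.5ex>[r]^-{r^*_\sigma} & (\Delta^{-*}(I_\sigma),\delta|_{I_\sigma},P^*_\sigma) \ar@<0.5ex>[l]^-{s^*_\sigma} }\] because $r^*(\mathring{\sigma})$ is supported in $I_\sigma$ and $s^*(I_\sigma)\subset\mathring{\sigma}$ by Proposition \ref{setupthedualsqueeze}. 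Tensoring with $\id_{C(\sigma)}$ produces local $\A$-chain equivalences between $C(\sigma)$ and the piece $Sd_r\,C|_{I_\sigma}=C(\sigma)\otimes\Sigma^{|\sigma|}\Delta^{-*}(I_\sigma)$.

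Next I would extend these local pieces to global chain maps in $\A_*(X)$. Mimicking Proposition \ref{Prop:sstardefined}, define the component of $(r^*)_*$ landing on \emph{every} $(|\rho|-i)$-dimensional simplex of $\gam{\rho_0,\ldots,\rho_i}{\rho}$ (where now $\sigma\leqslant\rho_i<\cdots<\rho_0\leqslant\rho$, reflecting that morphisms in $\A_*(X)$ raise dimension) by the formula \[((r^*)_*)_{\gam{\rho_0,\ldots,\rho_i}{\rho},\sigma,n} = \left\{\begin{array}{cl}\id_{C(\sigma)}, & \rho=\rho_0=\rho_i=\sigma,\\[2pt] (-1)^{(n+1)i}d_{\rho_i\rho_{i-1}\cdots\rho_0},& \rho=\rho_0\neq\rho_i=\sigma,\\[2pt] 0,& \text{otherwise,}\end{array}\right.\] where $d_{\rho_i\cdots\rho_0}:=d_{\rho_i\rho_{i-1}}\cdots d_{\rho_1\rho_0}$. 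Dually, following Proposition \ref{partsofr}, define $(s^*)_*:Sd_r\,C\to C$ to be $(s^*_\sigma)_*$ on each $I_\sigma$ and zero elsewhere. That $(s^*)_*$ is a chain map is straightforward, exactly as before: on a maximal-dimensional $\widetilde{\sigma}\in I_\sigma$ it is the identity with respect to standard orientations, and the simplicial coboundary is trivial off the diagonal.

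The verification that $(r^*)_*$ is a chain map is the main obstacle and proceeds componentwise by exactly the three-case analysis of Proposition \ref{Prop:sstardefined} (the cases $\rho=\rho_0$, $\rho_i=\sigma$; $\rho=\rho_0$, $\rho_i<\sigma$; $\rho<\rho_0$, $\rho_i=\sigma$), with the roles of source and target exchanged and the inductive identity \[d_{\rho_i\rho_i}d_{\rho_i\cdots\rho_0}=-d_{\rho_i\cdots\rho_0}d_{\rho_0\rho_0}+\sum_{\rho_0<\tau<\rho_i\text{ one step}}(\cdots)\]derived from $d^2=0$ (Remark \ref{Rmk:dsquarediszero}) used repeatedly to cancel the contributions of the cochain differential $d_{Sd_r\,C}$ against the newly introduced interior face contributions from $(r^*)_*d_C$. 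The sign bookkeeping is identical to that of Proposition \ref{Prop:sstardefined} after reversing the order of indices, since the cochain boundary of $\widehat{\sigma_0}\ldots\widehat{\sigma_i}$ introduces the same $(-1)^j$ signs as the chain boundary.

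Finally, for the covariant assembly functor $\rr_r$ assembling the $I_\sigma$, the local chain equivalences assemble to \[\xymatrix@1{(C(\sigma),(d_C)_{\sigma\sigma},0) \ar@<0.5ex>[r]^-{r^*_\sigma} & (\rr_r Sd_r\,C(\sigma),d|,(P^*_\sigma)_*) \ar@<0.5ex>[l]^-{s^*_\sigma} }\] for every $\sigma\in X$. Since $\rr_r Sd_r\,C$ lies in $\A_*(X)$ (contravariance of the cochain construction is absorbed into $\Sigma^{|\sigma|}\Delta^{-*}(\mathring{\widetilde{\sigma}})$, so all nonzero components of $d_{\rr_r Sd_r\,C}$ raise dimension), Proposition \ref{chcont}(ii) upgrades these pointwise $\A$-chain equivalences to a global chain equivalence \[\rr_r Sd_r\,C\simeq C\in\A_*(X),\] completing the proof.
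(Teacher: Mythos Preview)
Your overall strategy (restrict $r^*$, $s^*$, $P^*$ to each $I_\sigma$, tensor with $\id_{C(\sigma)}$, extend to global chain maps, then apply Proposition \ref{chcont}) is exactly the paper's approach. However, you have swapped which of $(r^*)_*$ and $(s^*)_*$ is the ``simple diagonal'' map and which needs off-diagonal corrections, and this swap is not harmless.

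In the $\A^*$ case, $r_*$ extended diagonally because $r_\sigma$ is supported on \emph{every} $|\sigma|$-simplex of $I_\sigma$; this is what makes the verification in Proposition \ref{partsofr} go through. Dually, $r^*_\sigma:\Delta^{-*}(\mathring{\sigma})\to\Delta^{-*}(I_\sigma)$ hits every $|\sigma|$-dimensional cochain generator of $I_\sigma$, and it is this feature that allows the diagonal extension
\[((r^*)_*)_{\widetilde{\sigma},\tau,n}=\begin{cases}(r^*_\tau)_*,&\widetilde{\sigma}\in I_\tau,\ |\widetilde{\sigma}|=|\tau|,\\0,&\text{otherwise}\end{cases}\]
to be a chain map. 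By contrast $s^*_\sigma$ is supported only on the single simplex $\Gamma_\sigma(\mathring{\sigma})\subset I_\sigma$ (it is the dual of $s_\sigma$, whose image is $\Gamma_\sigma(\mathring{\sigma})$). Your diagonal $(s^*)_*$ then fails to be a chain map: for $\tau>\sigma$, the component of $d_C(s^*)_*$ from $\Gamma_\sigma(\mathring{\sigma})$ to $C(\tau)$ is $d_{\tau\sigma}$, whereas the corresponding component of $(s^*)_*d_{Sd\,C}$ vanishes, since from $\Gamma_\sigma(\mathring{\sigma})$ the $(d_C)_{\tau\sigma}$-part of $d_{Sd\,C}$ would have to land in $\Gamma_\sigma(\mathring{\tau})$, which requires $\tau\leqslant\sigma$. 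So your justification ``on a maximal-dimensional $\widetilde{\sigma}\in I_\sigma$ it is the identity'' is precisely where the argument breaks.

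The paper does it the other way round: $(r^*)_*:C\to Sd_r\,C$ is the simple diagonal extension (the genuine analogue of Proposition \ref{partsofr}), and it is $(s^*)_*:Sd_r\,C\to C$ that carries the correction terms
\[((s^*)_*)_{\sigma,\gam{\rho_0,\ldots,\rho_i}{\rho},n}=\begin{cases}\id_{C(\sigma)},&\rho=\rho_0=\rho_i=\sigma,\\(-1)^{(n+1)i}d_{\rho_i\cdots\rho_1\rho_0},&\rho=\rho_0\neq\rho_i=\sigma,\\0,&\text{otherwise},\end{cases}\]
the direct dual of Proposition \ref{Prop:sstardefined}. Note also that your indexing ``$\sigma\leqslant\rho_i<\cdots<\rho_0\leqslant\rho$'' and ``$(|\rho|-i)$-dimensional simplex of $\gam{\rho_0,\ldots,\rho_i}{\rho}$'' are garbled: $\gam{\rho_0,\ldots,\rho_i}{\rho}$ has no simplices of dimension below $|\rho|$, and with your ordering $d_{\rho_i\rho_{i-1}}$ would be zero in $\A_*(X)$. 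Once you reverse the roles, the three-case sign check and the concluding appeal to Proposition \ref{chcont} go through exactly as you outlined.
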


\begin{proof}
This proof is dual to the proof for $\A^*$. The global chain equivalence given by $s^*$, $r^*$ and $P^*$ restricts to a local chain equivalence for all $\sigma\in X$:
\begin{displaymath}
\xymatrix@1{ (\Delta^{-*}(\mathring{\sigma}), 0,0) \ar@<0.5ex>[rr]^-{r^*_\sigma:=r^*|} && (\Delta^{-*}(I_\sigma), \delta^{\Delta^{-*}(Sd\, X)}|_{I_\sigma},P^*_\sigma:=P^*|_{I_\sigma}). \ar@<0.5ex>[ll]^-{s^*_\sigma:= s^*|}
}
\end{displaymath}
This induces a local chain equivalence 
\begin{displaymath}
\xymatrix@C=3cm{ (C(\sigma), (d_C)_{\sigma,\sigma} ,0) \ar@<0.5ex>[r]^-{(r^*_\sigma)_*= \id_{C(\sigma)}\otimes \Sigma^{|\sigma|}r^*_\sigma} & (Sd_r\, C|_{I_\sigma}, d_{Sd_r\, C}|_{I_\sigma},(P^*_\sigma)_*) \ar@<0.5ex>[l]^-{(s^*_\sigma)_*= \id_{C(\sigma)}\otimes \Sigma^{|\sigma|}s^*_\sigma}
}
\end{displaymath}
viewing $C(\sigma)$ as $C(\sigma)\otimes \Z = C(\sigma)\otimes\Sigma^{|\sigma|}\Delta^{-*}(\mathring{\sigma})$ and noting that $Sd_r\, C|_{I_\sigma}=C(\sigma)\otimes \Sigma^{|\sigma|}\Delta^{-*}(I_\sigma)$.

Define $(r^*)_*:C \to Sd\, C$ by \[ ((r^*)_*)_{\widetilde{\sigma},\tau,n}:= \brcc{((r^*_\sigma)_*)_n: C_n(\tau) \to Sd\, C_n(\widetilde{\sigma}),}{\widetilde{\sigma}\subset I_\tau,|\widetilde{\sigma}|=|\tau|,}{0,}{\mathrm{otherwise}}\] then $(r^*)_*: C \to Sd\, C$ is a chain map.

Set \[((s^*)_*)_{\sigma,\gam{\rho_0,\ldots,\rho_i}{\rho},n}:= \left\{ \begin{array}{cc} \id_{C(\sigma)}, & \rho=\rho_0=\rho_i=\sigma, \\ (-1)^{(n+1)i}d_{\rho_i\ldots\rho_1\rho_0},& \rho=\rho_0\neq\rho_i=\sigma\ \\ 0, & \mathrm{otherwise.} \end{array} \right. \] This defines a chain map $(s^*)_*: Sd\, C\to C$ which extends the local $\{s^*_\sigma\}_{\sigma\in X}$'s. 

Therefore by Proposition \ref{chcont}, \begin{displaymath}
\xymatrix@1{ (C, d_C,0) \ar@<0.5ex>[r]^-{(r^*)_*} & (\rr_r Sd_r\, C, d_{\rr_r Sd_r\, C},(P^*)_*) \ar@<0.5ex>[l]^-{(s^*)_*}
}
\end{displaymath}
is a chain equivalence in $\A_*(X)$ as required.
\end{proof}

\section{Examples of chain equivalences}
In this section are a couple of low dimensional worked examples of how the global definition of $s_*$ is a chain equivalence.
\begin{ex}
Consider $X$ the closed $1$-simplex $\sigma=v_0v_1$ oriented as in Example \ref{Ex:simpleexofsubdivonchains} with chain inverse $r$ also given as in Example \ref{Ex:simpleexofsubdivonchains}. In the following diagrams the labelling of each simplex $\widetilde{\tau}$ will denote the map from $C(\sigma)_n$ to the chain complex $Sd\, C_n(\widetilde{\tau})$ or $Sd\, C_{n-1}(\widetilde{\tau})$. Following the definition of $s_*$ in Proposition \ref{Prop:sstardefined} the components of $s_*$ from $C(\sigma)$ to the various simplices in $Sd\, X$ are as in \Figref{Fig:sstaronesimplex}.
\begin{figure}[ht]
\begin{center}
{
\psfrag{p0p0}[][]{$\Gamma_{v_0}(v_0)$}
\psfrag{p0tp0}[][]{$\Gamma_{v_0,\sigma}(v_0)$}
\psfrag{tp0}[][]{$\Gamma_{\sigma}(v_0)$}
\psfrag{tt}[][]{$\Gamma_{\sigma}(\sigma)$}
\psfrag{tp1}{$\Gamma_{\sigma}(v_1)= \Gamma_{v_1}(v_1)$}
\psfrag{dp0t}[][]{$(-1)^{n+1}d_{v_0\sigma}$}
\psfrag{1}[][]{$1$}
\includegraphics[width=7cm]{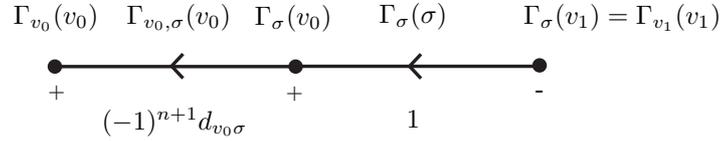}
}
\caption{$(s_*)_n$ for $\sigma=v_0v_1$.}
\label{Fig:sstaronesimplex}
\end{center}
\end{figure}

We verify explicitly that this $s_*$ is a chain map. Applying $d_{Sd\,C}$ to it, we get the map $(d_{Sd\,C}\circ s_*)_n|_{C(\sigma)}$ in Figure \ref{Fig:dSdCsstaronesimplex}.
\begin{figure}[ht]
\begin{center}
{
\psfrag{p0p0}[r][r]{$-(-1)^n(-1)^{n+1}d_{v_0\sigma}$}
\psfrag{p0tp0}{}
\psfrag{tp0}[][]{$d_{v_0\sigma}.1 + (-1)^n(-1)^{n+1}d_{v_0\sigma}$}
\psfrag{tt}{}
\psfrag{tp1}[l][l]{$d_{v_1\sigma}.1$}
\psfrag{dp0t}[][]{$(-1)^{n+1}d_{v_0v_0}d_{v_0\sigma}$}
\psfrag{1}[][]{$d_{\sigma\sigma}.1$}
\includegraphics[width=7cm]{images/prethesis9.eps}
}
\caption{$(d_{Sd\,C}s_*)_n$ for $\sigma=v_0v_1$.}
\label{Fig:dSdCsstaronesimplex}
\end{center}
\end{figure}
Then $(d_C)|_{C(\sigma)}$ is the map in Figure \ref{Fig:dConesimplex}, 
\begin{figure}[ht]
\begin{center}
{
\psfrag{t}[t][t]{$d_{\sigma\sigma}$}
\psfrag{p0}[t][t]{$d_{v_0\sigma}$}
\psfrag{p1}[t][t]{$d_{v_1\sigma}$}
\includegraphics[width=7cm]{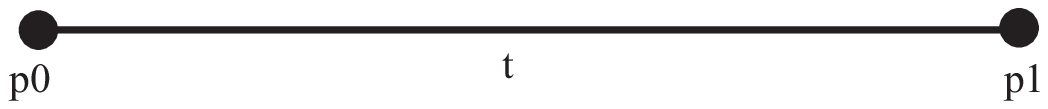}
}
\caption{$d_C$ for $\sigma=v_0v_1$.}
\label{Fig:dConesimplex}
\end{center}
\end{figure}
so that $(s_*d_C)|_{C(\sigma)}$ is the map in \Figref{Fig:sstardConesimplex},
\begin{figure}[ht]
\begin{center}
{
\psfrag{p0p0}[r][r]{$1.d_{v_0\sigma}$}
\psfrag{p0tp0}{}
\psfrag{tp0}[][]{$0$}
\psfrag{tt}{}
\psfrag{tp1}[l][l]{$1.d_{v_1\sigma}$}
\psfrag{dp0t}[][]{$(-1)^{n}d_{v_0\sigma}d_{\sigma\sigma}$}
\psfrag{1}[][]{$1.d_{\sigma\sigma}$}
\includegraphics[width=7cm]{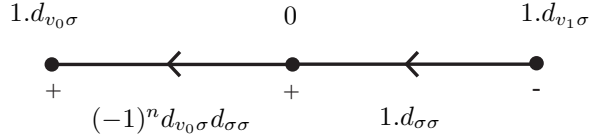}
}
\caption{$(s_*d_C)_n$ for $\sigma=v_0v_1$.}
\label{Fig:sstardConesimplex}
\end{center}
\end{figure}
which we see to be the same as $(d_{Sd\,C}s_*)_n$ using the fact that \[(-1)^{n+1}d_{{v_0}{v_0}}d_{{v_0}\sigma} = -(-1)^nd_{{v_0}\sigma}d_{\sigma\sigma}.\]
\qed\end{ex}

\begin{ex}\label{sstarchainmaptwod}
Let $X$ be the $2$-simplex $\sigma$ oriented as in Example \ref{Ex:SigmaOrients}, and let $r$ be the chain inverse as in Example \ref{Ex:lesssimpleexofsubdivonchains}, so that $Sd\, X$ decomposes into homotopies $\Gamma$ as in \Figref{Fig:constructgamhigher}, then $(s_*)_n|: C(\sigma)_n \to Sd\,C_n(Sd\, X)$ is as labelled in \Figref{Fig:sstartwosimplex}.
\begin{figure}[ht]
\begin{center}
{
\psfrag{p0p0}[tr][tr]{}
\psfrag{p0t1}[r][r]{}
\psfrag{p0t1s}[][]{$d_{\rho_0\tau_0\sigma}$}
\psfrag{p0s}[r][r]{$(-1)^{n+1}d_{\rho_0\sigma}$}
\psfrag{p0t3s}[][]{$d_{\rho_0\tau_2\sigma}$}
\psfrag{p0t3}[t][t]{}
\psfrag{0}[][]{}
\psfrag{t3t3s}[][]{$(-1)^{n+1}d_{\tau_2\sigma}$}
\psfrag{t3s}[t][t]{}
\psfrag{p2p2}[tl][tl]{}
\psfrag{t2s}[l][l]{}
\psfrag{ss}[][]{$\id_{C(\sigma)}$}
\psfrag{t1t1s}[r][r]{\small{$(-1)^{n+1}d_{\tau_0\sigma}$}}
\psfrag{t1t1s2}[l][l]{\small{$(-1)^{n+1}d_{\tau_0\sigma}$}}
\psfrag{p1s}[bl][bl]{$(-1)^{n+1}d_{\rho_1\sigma}$}
\psfrag{p1p1}[b][b]{}
\psfrag{t1s}[br][br]{}
\includegraphics[width=10cm]{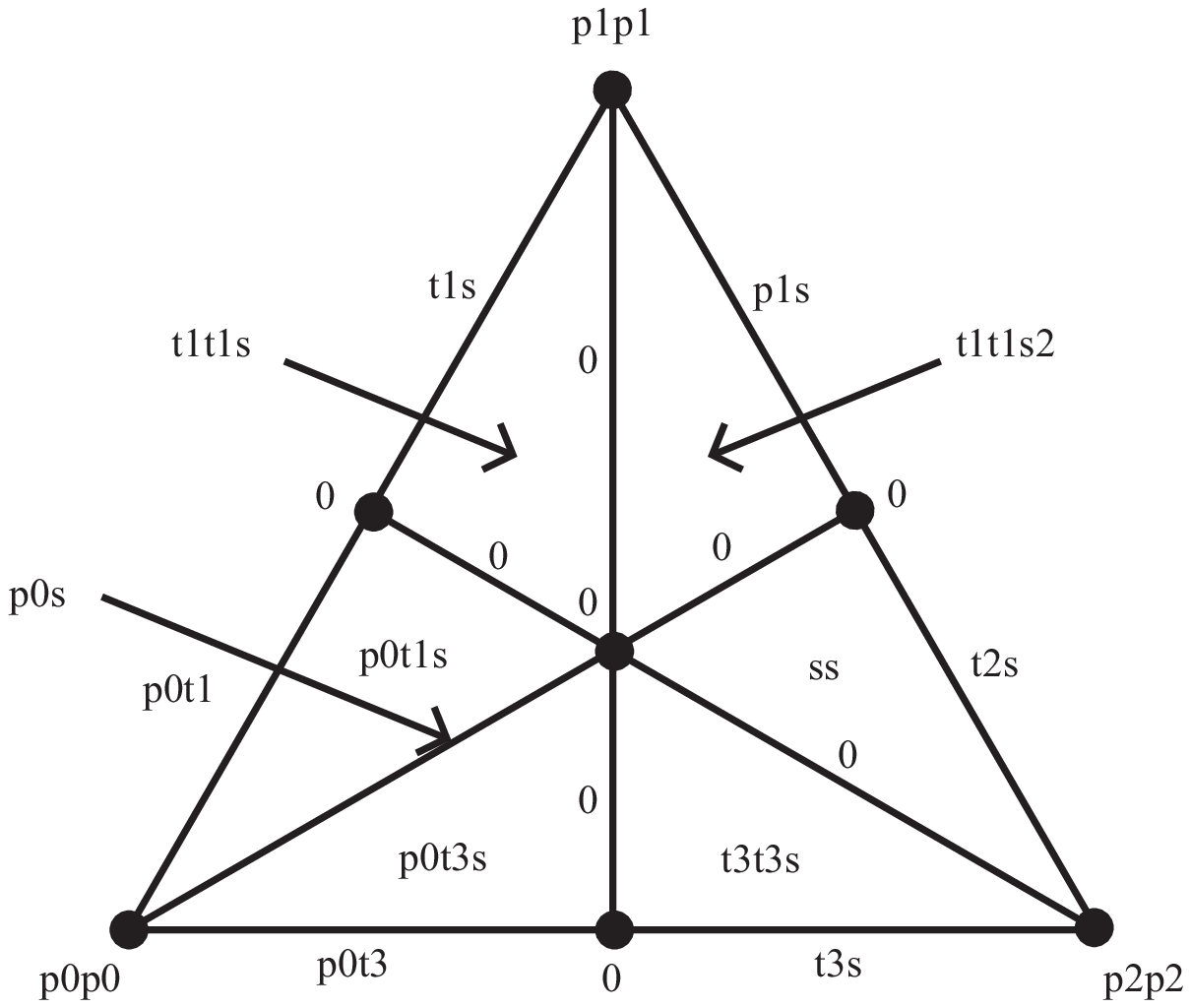}
}
\caption{$(s_*)_n$ for the $2$-simplex $\sigma$.}
\label{Fig:sstartwosimplex}
\end{center}
\end{figure}
Applying $d_{Sd\, C}$ we obtain $(d_{Sd\, C}s_*)_n$ as in \Figref{Fig:dSdCsstartwosimplex}.
\begin{figure}[ht]
\begin{center}
{
\psfrag{p0p0}[tr][tr]{$-(-1)^n(-1)^{n+1}d_{\rho_0\sigma}$}
\psfrag{p0t1}[r][r]{$(-1)^nd_{\rho_0\tau_0\sigma}$}
\psfrag{p0t1s}[][]{$d_{\rho_0^2\tau_0\sigma}$}
\psfrag{p0s}[r][r]{{\small{\shortstack{$(-1)^{n+1}d_{\rho_0^2\sigma} - (-1)^nd_{\rho_0\tau_0\sigma}$\\ $- (-1)^nd_{\rho_0\tau_2\sigma}$}}}}
\psfrag{p0t3s}[][]{$d_{\rho_0^2\tau_2\sigma}$}
\psfrag{p0t3}[t][t]{$(-1)^nd_{\rho_0\tau_2\sigma}$}
\psfrag{0}[][]{}
\psfrag{t3t3s}[][]{$(-1)^{n+1}d_{\tau_2^2\sigma}$}
\psfrag{t3s}[t][t]{$-(-1)^n(-1)^{n+1}d_{\tau_2\sigma}$}
\psfrag{p2p2}[tl][tl]{$d_{\rho_2\sigma}.1$}
\psfrag{t2s}[l][l]{$d_{\tau_1\sigma}.1$}
\psfrag{ss}[][]{$d_{\sigma\sigma}.1$}
\psfrag{t1t1s}[r][r]{$-(-1)^n(-1)^{n+1}d_{\tau_0\sigma}$}
\psfrag{t1t1s2}[l][l]{$-(-1)^n(-1)^{n+1}d_{\tau_0\sigma}$}
\psfrag{p1s}[bl][bl]{$(-1)^{n+1}d_{\rho_1^2\sigma}+(-1)^{n+1}d_{\rho_1\tau_0\sigma}$}
\psfrag{p1p1}[b][b]{$-(-1)^n(-1)^{n+1}d_{\rho_1\sigma}$}
\psfrag{t1s}[br][br]{$(-1)^{n+1}d_{\rho_1\tau_0\sigma}$}
\includegraphics[width=13cm]{images/NextGenn5.eps}
}
\caption{$(d_{Sd\, C}s_*)_n$ for the $2$-simplex $\sigma$.}
\label{Fig:dSdCsstartwosimplex}
\end{center}
\end{figure}
The map $d_C|:C(\sigma)_n \to C_{n-1}(X)$ is as in \Figref{Fig:dCtwosimplex},
\begin{figure}[ht]
\begin{center}
{
\psfrag{p0p0}[tr][tr]{$d_{\rho_0\sigma}$}
\psfrag{t1}[r][r]{$d_{\tau_0\sigma}$}
\psfrag{ss}[][]{$d_{\sigma\sigma}$}
\psfrag{t3}[t][t]{$d_{\tau_2\sigma}$}
\psfrag{p2p2}[tl][tl]{$d_{\rho_2\sigma}$}
\psfrag{t2}[l][l]{$d_{\tau_1\sigma}$}
\psfrag{p1p1}[b][b]{$d_{\rho_1\sigma}$}
\includegraphics[width=5cm]{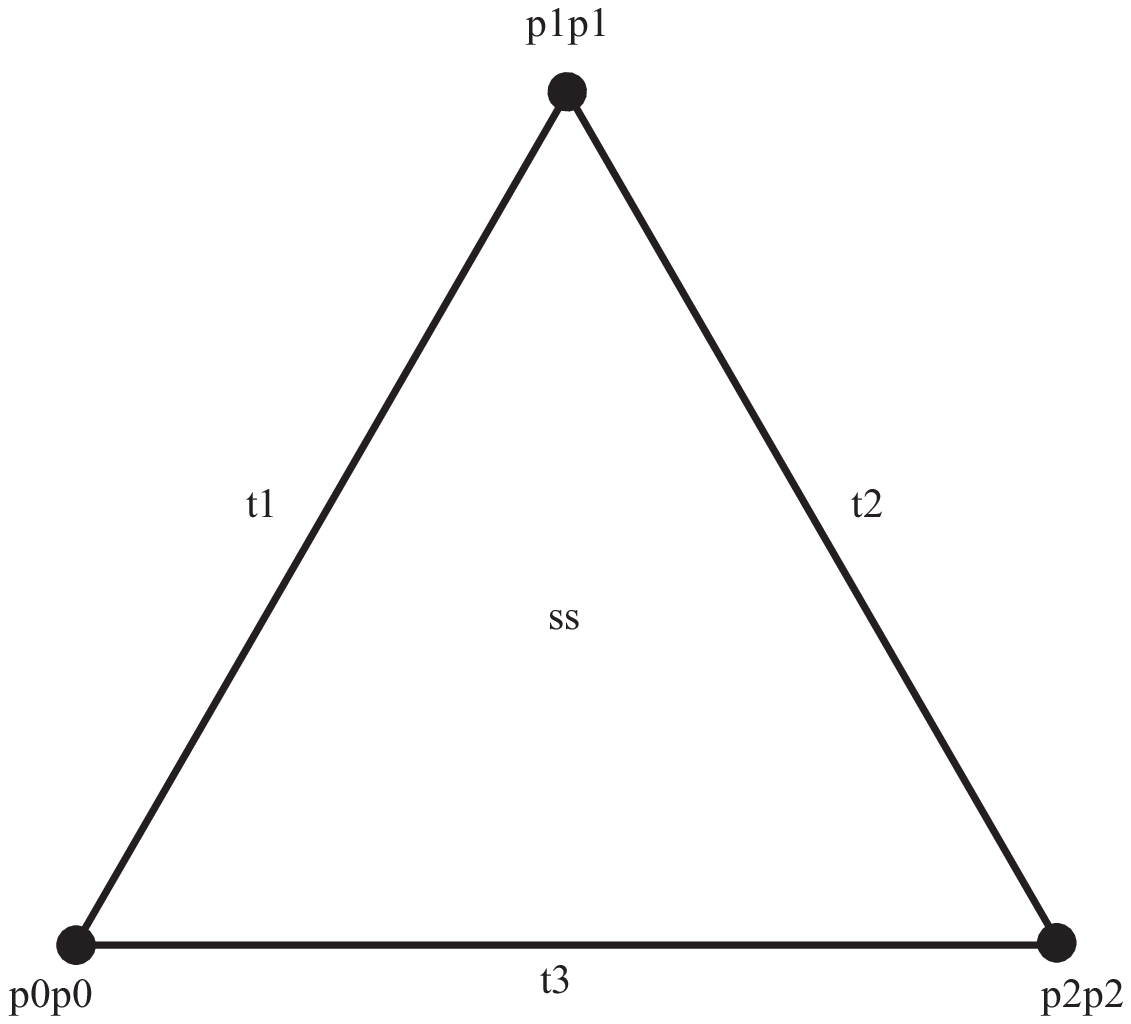}
}
\caption{$(d_C)_n$ for the $2$-simplex $\sigma$.}
\label{Fig:dCtwosimplex}
\end{center}
\end{figure}
thus $(s_*d_C)_n$ is as in \Figref{Fig:sstardCtwosimplex},
\begin{figure}[ht]
\begin{center}
{
\psfrag{p0p0}[tr][tr]{$d_{\rho_0\sigma}$}
\psfrag{p0t1}[r][r]{$(-1)^nd_{\rho_0\tau_0\sigma}$}
\psfrag{p0t1s}[][]{$d_{\rho_0\tau_0\sigma^2}$}
\psfrag{p0s}[r][r]{$(-1)^nd_{\rho_0\sigma^2}$}
\psfrag{p0t3s}[][]{$d_{\rho_0\tau_2\sigma^2}$}
\psfrag{p0t3}[t][t]{$(-1)^nd_{\rho_0\tau_2\sigma}$}
\psfrag{0}[][]{}
\psfrag{t3t3s}[][]{$(-1)^nd_{\tau_2\sigma^2}$}
\psfrag{t3s}[t][t]{$1.d_{\tau_2\sigma}$}
\psfrag{p2p2}[tl][tl]{$d_{\rho_2\sigma}$}
\psfrag{t2s}[l][l]{$1.d_{\tau_1\sigma}$}
\psfrag{ss}[][]{$1.d_{\sigma\sigma}$}
\psfrag{t1t1s}[][]{$(-1)^nd_{\tau_0\sigma^2}$}
\psfrag{t1t1s2}[][]{$(-1)^nd_{\tau_0\sigma^2}$}
\psfrag{p1s}[bl][bl]{$(-1)^n(d_{\rho_1\sigma^2}+d_{\rho_1\tau_1\sigma})$}
\psfrag{p1p1}[b][b]{$d_{\rho_1\sigma}$}
\psfrag{t1s}[br][br]{$1.d_{\tau_0\sigma}$}
\includegraphics[width=13cm]{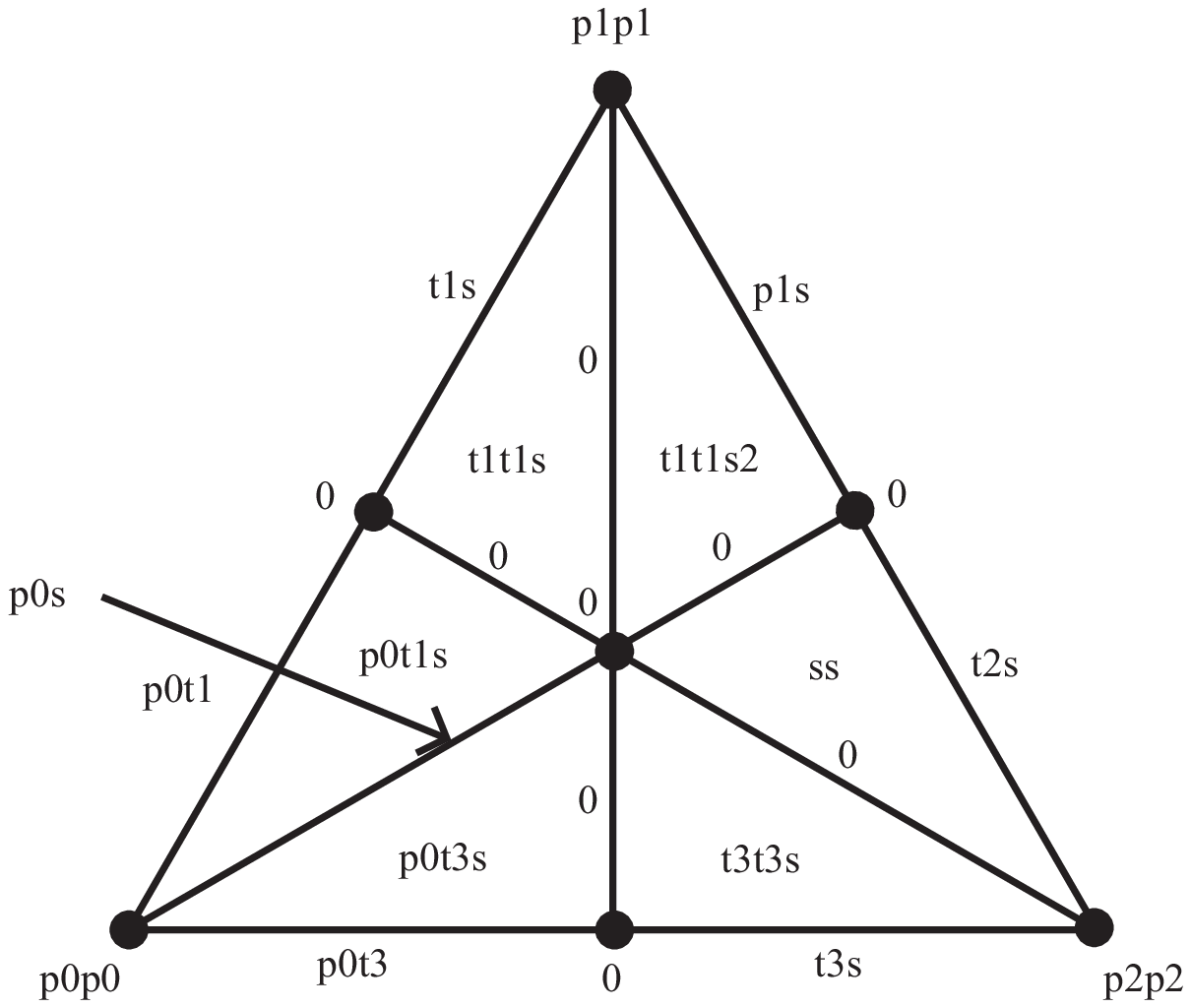}
}
\caption{$(s_*d_C)_n$ for the $2$-simplex $\sigma$.}
\label{Fig:sstardCtwosimplex}
\end{center}
\end{figure}
which is precisely the same as $(d_{Sd\, C}s_*)_n$ using Remark \ref{Rmk:dsquarediszero} repeatedly.
\qed\end{ex}

\section{Consequences of algebraic subdivision}
In this section we note a few properties of the subdivision functors and verify the claim that algebraic subdivision generalises the algebraic effect that barycentric subdivision has on the simplicial chain and cochain complexes.

\begin{lem}\label{Lem:SubcontIsCont}
$C\simeq 0 \in \A(X)$ if and only if $Sd\, C \simeq 0 \in \A(Sd\, X)$. 
\end{lem}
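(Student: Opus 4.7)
The plan is to reduce both directions to a local statement via Proposition \ref{chcont} and then exploit the fact that $Sd$ was constructed to be locally trivial along each $I_\sigma$. By Proposition \ref{chcont}, $C \simeq 0 \in \A(X)$ is equivalent to $C(\sigma) \simeq 0 \in \A$ for every $\sigma \in X$, and $Sd\, C \simeq 0 \in \A(Sd\, X)$ is equivalent to $Sd\, C(\widetilde{\sigma}) \simeq 0 \in \A$ for every $\widetilde{\sigma} \in Sd\, X$. So it suffices to prove that the diagonal pieces $C(\sigma)$ and $Sd\, C(\widetilde{\sigma})$ are chain contractible in $\A$ together.

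The key step is to identify the chain complex $Sd\, C(\widetilde{\sigma})$ explicitly. By Remark \ref{Isigmadecomp}, every $\widetilde{\sigma} \in Sd\, X$ lies in a unique $I_\sigma$. Definition/Lemma \ref{Deflem:SdFunctor} gives $Sd\, C(\widetilde{\sigma})_n = C(\sigma)_{n+|\sigma|-|\widetilde{\sigma}|}$. I would then inspect formula (\ref{dSdconcise}) specialised to the diagonal $\widetilde{\tau} = \widetilde{\sigma}$: case~1 forces $\tau = \sigma$ and contributes $(d_C)_{\sigma,\sigma,n+|\sigma|-|\widetilde{\sigma}|}$, whereas case~2 would require $|\widetilde{\tau}|-|\tau| = |\widetilde{\sigma}|-|\sigma|-1$, which is impossible when $\widetilde{\tau} = \widetilde{\sigma}$ and $\tau = \sigma$. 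Thus $Sd\, C(\widetilde{\sigma})$ is nothing more than the shifted chain complex $\Sigma^{|\sigma|-|\widetilde{\sigma}|} C(\sigma) \in \A$, with the same diagonal differential up to reindexing.

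Since suspension preserves and reflects chain contractibility, $Sd\, C(\widetilde{\sigma}) \simeq 0$ if and only if $C(\sigma) \simeq 0$. Quantifying over all $\widetilde{\sigma} \in I_\sigma$ and all $\sigma \in X$, and feeding the result back into the two applications of Proposition \ref{chcont}, yields both implications at once. The argument is symmetric between $\A^*(X)$ and $\A_*(X)$, since the upper/lower triangular distinction does not affect diagonal entries of $d_{Sd\, C}$. I expect no real obstacle here; as an alternative for the $(\Leftarrow)$ direction one could simply apply the additive assembly functor $\rr_r$ to a chain contraction of $Sd\, C$ and invoke $\rr_r Sd_r\, C \simeq C$ from Theorem \ref{algsubdiv}, but the direct approach above is already short and highlights precisely why the construction of $Sd$ makes this lemma formal.
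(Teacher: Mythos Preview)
Your proof is correct and follows essentially the same route as the paper: reduce to the local statement via Proposition~\ref{chcont}, identify the diagonal piece $Sd\,C(\widetilde{\sigma})$ as a suspension of $C(\sigma)$, and conclude. One tiny cosmetic slip: with the convention $(\Sigma^k D)_n = D_{n-k}$ the correct shift is $\Sigma^{|\widetilde{\sigma}|-|\sigma|}C(\sigma)$ rather than $\Sigma^{|\sigma|-|\widetilde{\sigma}|}C(\sigma)$, but since the argument only uses that some suspension occurs this does not affect validity.
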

\begin{proof}
This is more or less immediate from Proposition \ref{chcont}. We prove the lemma for $\A^*(X)$. Exactly the same proof holds for $\A_*(X)$.

$(\Rightarrow)$: Since $C\simeq 0 \in \A^*(X)$, for all $\sigma\in X$, there is a local chain contraction $P_\sigma$ such that \[ (d_C)_\sigma (P_C)_\sigma + (P_C)_\sigma (d_C)_\sigma = 1_{C(\sigma)}.\] Now let $Sd\, C = Sd_r\, C$ for any choice of $r$. Consider $(d_{Sd\, C})_{\widetilde{\sigma}}:=(d_{Sd\, C})_{\widetilde{\sigma},\widetilde{\sigma}}$ for some $\widetilde{\sigma}\in I_\sigma$ for arbitrary $\sigma$. Define \[P_{\widetilde{\sigma}}: = (P_C)_{\sigma,{n+|\sigma|-|\widetilde{\sigma}|} } : Sd\, C_n(\widetilde{\sigma}) = C(\sigma)_{n+|\sigma|-|\widetilde{\sigma}|} \to Sd\, C_{n+1}(\widetilde{\sigma}) = C(\sigma)_{n+1+|\sigma|-|\widetilde{\sigma}|}.\] Then this is a local chain contraction for $Sd\, C$, i.e.\
\begin{eqnarray*}
 (d_{Sd\, C})_{\widetilde{\sigma}}P_{\widetilde{\sigma}} + P_{\widetilde{\sigma}}(d_{Sd\, C})_{\widetilde{\sigma}} &=& ((d_C)_{\sigma,n+1+|\sigma|-|\widetilde{\sigma}|}\otimes 1)((P_C)_{\sigma,n+1+|\sigma|-|\widetilde{\sigma}|}\otimes 1) \\ && + ((P_C)_{\sigma,n+|\sigma|-|\widetilde{\sigma}|}\otimes 1)((d_C)_{\sigma,n+|\sigma|-|\widetilde{\sigma}|}\otimes 1) \\ &=& (P_Cd_C + d_CP_C)_{\sigma,n+|\sigma|-|\widetilde{\sigma}|} \\ &=& 1.
\end{eqnarray*}
By Proposition \ref{chcont}, we can extend this to a global chain contraction in $\A^*(Sd\, X)$, and so $Sd\, C\simeq 0 \in \A^*(Sd\, X)$ as required. 

$(\Leftarrow)$: Converse to Proposition \ref{chcont}.
\end{proof}

\begin{lem}\label{Sdcommuteswithcones}
The subdivision functor $Sd\,$ commutes with taking algebraic mapping cones.
\end{lem}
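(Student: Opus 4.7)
The claim is that for every chain map $f:C\to D$ in $ch(\A(X))$, there is a natural identification $Sd\,\C(f)=\C(Sd\,f)$ of chain complexes in $ch(\A(Sd\,X))$. The plan is to unpack both sides using the formulas in Definition/Lemma \ref{Deflem:SdFunctor} and the mapping cone formulas \eqref{conenchain}, \eqref{coneboundarymap}, and then compare objects and boundary maps componentwise over each $\widetilde{\sigma}\in I_\sigma$.

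First I would match the underlying objects. For $\widetilde{\sigma}\in I_\sigma$, writing $m=n+|\sigma|-|\widetilde\sigma|$, the left-hand side is
\begin{equation*}
Sd\,\C(f)(\widetilde\sigma)_n \;=\; \C(f)(\sigma)_m \;=\; C(\sigma)_m\oplus D(\sigma)_{m+1},
\end{equation*}
while the right-hand side is
\begin{equation*}
\C(Sd\,f)(\widetilde\sigma)_n \;=\; Sd\,C(\widetilde\sigma)_n\oplus Sd\,D(\widetilde\sigma)_{n+1}\;=\;C(\sigma)_m\oplus D(\sigma)_{m+1},
\end{equation*}
and these agree literally.

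Next I would compare the boundary maps in the two cases appearing in \eqref{dSdconcise}. In case $1$, where $I_\tau\ni\widetilde\tau\leqslant\widetilde\sigma\in I_\sigma$ with $|\widetilde\tau|-|\tau|=|\widetilde\sigma|-|\sigma|$, the $Sd$-boundary component on the left is $(d_{\C(f)})_{\tau,\sigma,m}$, which by \eqref{coneboundarymap} is the block matrix with entries $(d_C)_{\tau,\sigma,m}$, $0$, $f_{\tau,\sigma,m}$, $-(d_D)_{\tau,\sigma,m+1}$; on the right, plugging into the mapping cone formula for $Sd\,f$ produces exactly the same block matrix using \eqref{dSdconcise}\,(case 1) and \eqref{Sdfconcise}. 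In case $2$, where $\widetilde\tau,\widetilde\sigma\in I_\sigma$ with $|\widetilde\tau|-|\tau|=|\widetilde\sigma|-|\sigma|-1$, the left-hand boundary is $(-1)^n\,\id_{\C(f)(\sigma)_m}\!\otimes d$ for a simplicial boundary entry $d$, giving a diagonal block with both diagonal entries equal to $(-1)^n d$. On the right-hand side the $C$-block contributes $(-1)^n\,\id_{C(\sigma)_m}\!\otimes d$ from $(d_{Sd\,C})_n$, and the $D$-block contributes $-(d_{Sd\,D})_{n+1}=-(-1)^{n+1}\,\id_{D(\sigma)_{m+1}}\!\otimes d=(-1)^n\,\id_{D(\sigma)_{m+1}}\!\otimes d$, so the two diagonal entries again match.

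This establishes $Sd\,\C(f)=\C(Sd\,f)$ on objects and differentials. For naturality, if $(\alpha,\beta):f\Rightarrow f'$ is a morphism of chain maps (i.e.\ chain maps $\alpha:C\to C'$, $\beta:D\to D'$ with $\beta f=f'\alpha$), the induced map $\C(f)\to\C(f')$ is $\alpha\oplus\beta$ in each degree; applying $Sd$ and using the block-diagonal form of $(Sd\,f)_*$ in \eqref{Sdfconcise} shows that $Sd(\alpha\oplus\beta)=Sd\,\alpha\oplus Sd\,\beta$ is the same as the map of mapping cones $\C(Sd\,f)\to\C(Sd\,f')$ induced by $(Sd\,\alpha,Sd\,\beta)$, giving the stated natural isomorphism. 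The only delicate point in the whole argument is the sign bookkeeping in case $2$, where the sign $-1$ attached to $(d_D)_{n+1}$ in the mapping cone conspires with the shift-induced sign $(-1)^{n+1}$ in $(d_{Sd\,D})_{n+1}$ to reproduce the $(-1)^n$ that $Sd$ puts in front of the simplicial boundary on the $D$-summand; this is the step I would check most carefully.
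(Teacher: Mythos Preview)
Your proof is correct and follows essentially the same approach as the paper: you unpack both $Sd\,\C(f)$ and $\C(Sd\,f)$ using the formulas \eqref{conenchain}, \eqref{coneboundarymap}, \eqref{SdCn}, \eqref{dSdconcise}, \eqref{Sdfconcise}, match the $n$-chains over each $\widetilde\sigma\in I_\sigma$, and then check the boundary maps componentwise in case~1 and case~2, with the key observation being the sign cancellation $-(-1)^{n+1}=(-1)^n$ on the $D$-summand in case~2. Your added naturality paragraph goes slightly beyond what the paper records, but is routine and harmless.
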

\begin{proof}
As before it suffices to prove the statement for $Sd\,:\A^*(X) \to \A^*(Sd\, X)$. Let $Sd\, C$ denote $Sd_r\, C$ for any choice of $r$. We compute both of the compositions $\C(Sd\, f: Sd\, C \to Sd\, D)$ and $Sd\, \C(f:C\to D)$ using equations $(\ref{conenchain})$ and $(\ref{SdCn})$ and verify that they are exactly the same.
\begin{eqnarray*}
 \C(Sd\, f: Sd\, C \to Sd\, D)_n(\widetilde{\sigma}) &=& Sd\, C_n(\widetilde{\sigma}) \oplus Sd\, D_{n+1}(\widetilde{\sigma}) \\
 &=& C(\sigma)_{n+|\sigma|-|\widetilde{\sigma}|} \oplus D(\sigma)_{n+1+|\sigma|-|\widetilde{\sigma}|} \\
Sd\, \C(f:C\to D)(\widetilde{\sigma}) &=& \C(f:C\to D)(\sigma)_{n+|\sigma|-|\widetilde{\sigma}|}\\
&=& C(\sigma)_{n+|\sigma|-|\widetilde{\sigma}|} \oplus D(\sigma)_{n+|\sigma|-|\widetilde{\sigma}|+1} 
\end{eqnarray*}
so the $n$-chains are exactly the same. Next we compare the boundary maps using equations $(\ref{coneboundarymap})$, $(\ref{dSdconcise})$ and $(\ref{Sdfconcise})$. Suppose $\widetilde{\tau}\leqslant\widetilde{\sigma}$, then
\begin{eqnarray*}
 (d_{\C(Sd\, f: Sd\, C \to Sd\, D)})_{\widetilde{\tau}, \widetilde{\sigma}, n} &=& \matrixcc{(d_{Sd\, C})_{\widetilde{\tau}, \widetilde{\sigma}, n}}{0}{(Sd\, f)_{\widetilde{\tau}, \widetilde{\sigma}, n}}{(d_{Sd\, D})_{\widetilde{\tau}, \widetilde{\sigma}, n+1}} \\
\end{eqnarray*}
\[= \brccc{\matrixcc{(d_C)_{\tau,\sigma, n+|\sigma|-|\widetilde{\sigma}|}}{0}{(f)_{\tau,\sigma, n+|\sigma|-|\widetilde{\sigma}|} }{-(d_D)_{\tau,\sigma, n+1+|\sigma|-|\widetilde{\sigma}|}},}{|\tau|-|\widetilde{\tau}| = |\sigma|-|\widetilde{\sigma}|}{\left(\hspace{-2mm} \begin{array}{cc} (-1)^{n+|\sigma|-|\widetilde{\sigma}|}(d_{\Delta_*(Sd\, X)})_{\widetilde{\tau},\widetilde{\sigma}} & 0 \\ 0 & -(-1)^{n+1+|\sigma|-|\widetilde{\sigma}|}(d_{\Delta_*(Sd\, X)})_{\widetilde{\tau},\widetilde{\sigma}} \end{array}\hspace{-2mm}\right)\hspace{-1mm},}{|\tau|-|\widetilde{\tau}| = |\sigma|-|\widetilde{\sigma}|-1}
\]
\begin{eqnarray*}
 (d_{Sd\, \C(f: C \to D) })_{\widetilde{\tau}, \widetilde{\sigma}, n} &=& (d_{\C(f:C\to D)})_{\tau,\sigma, n+|\sigma|-|\widetilde{\sigma}|}
\end{eqnarray*}
\[= \brcc{\matrixcc{(d_C)_{\tau,\sigma, n+|\sigma|-|\widetilde{\sigma}|}}{0}{(f)_{\tau,\sigma, n+|\sigma|-|\widetilde{\sigma}|} }{-(d_D)_{\tau,\sigma, n+|\sigma|-|\widetilde{\sigma}|+1}},}{|\tau|-|\widetilde{\tau}| = |\sigma|-|\widetilde{\sigma}|}{(-1)^{n+|\sigma|-|\widetilde{\sigma}|}(d_{\Delta_*(Sd\, X)})_{\widetilde{\tau},\widetilde{\sigma}}\matrixcc{1}{0}{0}{1},}{|\tau|-|\widetilde{\tau}| = |\sigma|-|\widetilde{\sigma}|-1}
\]
which is exactly the same as $(d_{\C(Sd\, f: Sd\, C \to Sd\, D)})_{\widetilde{\tau}, \widetilde{\sigma}, n}$. Hence subdivision commutes with algebraic mapping cones.
\end{proof}

\begin{prop}\label{subdivgeneralised}
For any simplicial complex $X$ and any choice of subdivision chain equivalence 
\begin{displaymath}
\xymatrix@1{ (\Delta^{lf}_*(X), d_{\Delta^{lf}_*(X)},0) \ar@<0.5ex>[r]^-{s} & (\Delta^{lf}_*(Sd\, X), d_{\Delta^{lf}_*(Sd\, X)},P) \ar@<0.5ex>[l]^-{r}
}
\end{displaymath}
if we let $C = \Delta^{lf}_*(X)\in\A^*(X)$ be the simplicial chain complex of $X$, then $Sd_r\, C$ and $\Delta^{lf}_*(Sd\, X)$ are chain isomorphic in $\A^*(Sd\, X)$. Similarly for simplicial cochains. Thus for any $r$ our algebraic subdivision functor $Sd_r$ generalises the standard barycentric subdivision operation. 
\end{prop}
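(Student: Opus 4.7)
The plan is to unpack the definition of $Sd_r$ applied to the specific chain complex $C=\Delta^{lf}_*(X)$ and construct an explicit chain isomorphism $\phi:Sd_r\,\Delta^{lf}_*(X)\to\Delta^{lf}_*(Sd\,X)$ in $\A^*(Sd\,X)$; the cochain statement will then follow by the entirely dual construction using Theorem \ref{Dualsubdivision} and the dual pair $(s^*,r^*)$.

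First I would verify the modules agree. For $C=\Delta^{lf}_*(X)$ we have $C(\sigma)_k=\Z\langle\mathring{\sigma}\rangle$ when $k=|\sigma|$ and zero otherwise, so by Definition/Lemma \ref{Deflem:SdFunctor}, for $\widetilde{\sigma}\in I_\sigma$,
\[ Sd_r\,C(\widetilde{\sigma})_n=C(\sigma)_{n+|\sigma|-|\widetilde{\sigma}|}=\begin{cases}\Z\langle\mathring{\sigma}\rangle,& n=|\widetilde{\sigma}|,\\ 0,&\text{otherwise.}\end{cases}\]
On the other side, $\Delta^{lf}_*(Sd\,X)(\widetilde{\sigma})_n=\Z\langle\mathring{\widetilde{\sigma}}\rangle$ when $n=|\widetilde{\sigma}|$ and zero otherwise. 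Thus both objects are graded module-wise the same (a single free rank-one module at each simplex of $Sd\,X$, concentrated in its natural degree), and any candidate $\phi$ acts at each $\widetilde{\sigma}$ as $\pm\id_\Z$. I would define $\phi_n(\widetilde{\sigma}):=\epsilon(n)\cdot\id_\Z$ with the dimension-dependent sign $\epsilon(n)=(-1)^{n(n+1)/2}$.

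The heart of the proof is verifying $\phi\,d_{Sd_r\,C}=d_{\Delta^{lf}_*(Sd\,X)}\,\phi$. By Definition/Lemma \ref{Deflem:SdFunctor}, $d_{Sd_r\,C}$ splits into two types of components: a ``case $2$'' part within a fixed $I_\sigma$, equal to $(-1)^n$ times the simplicial incidence in $I_\sigma\subset Sd\,X$; and a ``case $1$'' part between different $I_\sigma$ and $I_\tau$, equal to $(d_C)_{\tau,\sigma,|\sigma|}=[\tau:\sigma]$ whenever $\tau<\sigma$ is codimension one. The compatibility of $\phi$ with case $2$ reduces to $\epsilon(n-1)\cdot(-1)^n=\epsilon(n)$, which is the defining recursion for $\epsilon$ and is a routine check; this is the reason for the choice of sign. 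The compatibility with case $1$ reduces, after cancelling $\epsilon(n-1)=(-1)^n\epsilon(n)$, to the identity
\[[\widetilde{\tau}:\widetilde{\sigma}]=(-1)^n[\tau:\sigma]\]
between incidence numbers in $Sd\,X$ and in $X$, where $\widetilde{\sigma}$ is the unique top-dimensional simplex of $I_\sigma$ and $\widetilde{\tau}$ the unique top-dimensional simplex of $I_\tau$ appearing as a face.

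The main obstacle is precisely this last sign identity: the incidence numbers in $Sd\,X$ are determined by the push-forward orientations from $X_\ep^\prime$ under $\Gamma$ as in Lemma \ref{constructgam} and its corollary \eqref{boundaryofgamma}, while those in $X$ come from the chosen orientations of simplices of $X$. The bridge is that $r:\Delta^{lf}_*(Sd\,X)\to\Delta^{lf}_*(X)$ is a chain map (Proposition \ref{partsofr}), specialised to $\widetilde{\sigma}\in\Gamma_\sigma(\sigma)$: applying $r\,d_{Sd\,X}=d_X\,r$ to this generator, using $r(\widetilde{\sigma})=\sigma$ and observing which faces $\widetilde{\rho}$ of $\widetilde{\sigma}$ satisfy $r(\widetilde{\rho})\neq 0$ (only the top-dim faces of each $I_\tau$ with $\tau<\sigma$), extracts exactly the required relation. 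Once this is established $\phi$ is a chain map with $\pm\id_\Z$ at each $\widetilde{\sigma}$, hence an isomorphism in $\A^*(Sd\,X)$. The simplicial cochain case is obtained by replacing $s,r,P$ with their duals and repeating the argument verbatim, using $\Sigma^{|\sigma|}\Delta^{-*}$ in place of $\Sigma^{-|\sigma|}\Delta_*$.
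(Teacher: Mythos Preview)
Your approach is essentially the paper's: verify that the modules agree simplex by simplex, compare the two boundary maps in the two cases of Definition/Lemma~\ref{Deflem:SdFunctor}, and write down a diagonal sign isomorphism. The module computation and the case~2 recursion are exactly what the paper does.

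There is, however, a genuine gap in your sign choice. Your $\epsilon(n)=(-1)^{n(n+1)/2}$ depends only on $n=|\widetilde{\sigma}|$, and for this to give a chain map in case~1 you need $[\widetilde{\tau}:\widetilde{\sigma}]=(-1)^n[\tau:\sigma]$. But your own proposed proof of this identity --- applying the chain map condition $r_*\,d_{Sd\,X}=d_X\,r_*$ to the top simplex $\widetilde{\sigma}=\Gamma_\sigma(\sigma)$ --- actually yields $[\widetilde{\tau}:\widetilde{\sigma}]=[\tau:\sigma]$ with no extra sign (with the push-forward orientations of Lemma~\ref{constructgam}, $r_*$ sends each $\Gamma_\sigma(\rho)$ to $+\rho$). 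So the identity you need is false, and a sign depending only on $n$ cannot work: in case~1 the dimension $n$ drops by one while the boundary maps already agree, so your $\epsilon$ introduces a spurious factor $(-1)^n$.

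The paper resolves this by making the sign depend on $k=n-|\sigma|$ (where $\widetilde{\sigma}\in I_\sigma$), namely $(-1)^{(n-|\sigma|)+\lfloor(n-|\sigma|)/2\rfloor}$. The point is that in case~1 both $n$ and $|\sigma|$ decrease by one, so $k$ is preserved and the sign is unchanged, matching the equality of boundary components; in case~2 only $n$ drops, so $k$ drops by one and the recursion absorbs the $(-1)^n$. Replacing your $\epsilon(n)$ by a function of $k$ fixes the argument with no other changes needed.
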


\begin{proof}
Recall that \[ C_i(\sigma) = (\Sigma^{|\sigma|}\Z)_i = \brcc{\Z,}{i=|\sigma|,}{0,}{\mathrm{otherwise.}}\] First we observe that for all $\widetilde{\sigma}\in Sd\, X$, $Sd_r\, C_i(\widetilde{\sigma}) \equiv \Delta_i(\mathring{\widetilde{\sigma}}).$ Let $\widetilde{\sigma}\in I_\sigma$, then
\begin{eqnarray*}
 Sd_r\, C_i(\widetilde{\sigma}) &=& (C_*(\sigma) \otimes_\Z \Sigma^{-|\sigma|}\Delta_*(\mathring{\widetilde{\sigma}}))_i \\
&=& ((\Sigma^{|\sigma|}\Z)_* \otimes_\Z (\Sigma^{-|\sigma|}\Delta_*(\mathring{\widetilde{\sigma}})))_i \\
&=& (\Sigma^{|\sigma|}\Sigma^{-|\sigma|}\Delta_*(\mathring{\widetilde{\sigma}}))_i \\
&=& \Delta_i(\mathring{\widetilde{\sigma}}).
\end{eqnarray*}
Having seen that the $i$-chains are identical we see how the boundary maps differ.
\begin{eqnarray*}
 ((d_{Sd_r\, C})|_{I_\sigma})_n  &=& (d_C \otimes d_ {\Sigma^{-|\sigma|}\Delta_*(I_\sigma)})_n \\
 &=& 1 \otimes (-1)^n( d_{\Sigma^{-|\sigma|}\Delta_*(I_\sigma)})_{n-|\sigma|} \\
 &=& (-1)^n( d_{\Delta_*(I_\sigma)})_n
\end{eqnarray*}
so within $I_\sigma$ the boundary maps of $Sd_r\, C$ and $\Delta^{lf}_*(Sd\, X)$ differ by a sign of $(-1)^n$. Between $I_\sigma$ and $I_\tau$ the sign of $d_{\tau\sigma}$ in the boundary map of $Sd_r\, C$ is always $+1$. Thus the boundary maps of the two chain complexes match between distinct $I_\sigma$, $I_\tau$. Bearing all this in mind \[ (-1)^{(n-|\sigma|) + \lfloor \frac{(n-|\sigma|)}{2}\rfloor} : Sd_r\, C_n(\widetilde{\sigma}) \to \Delta^{lf}_n(Sd\, X)(\widetilde{\sigma})\] is a chain isomorphism. The same holds for simplicial cochains. %ARE THESE SIGNS CORRECT WITH THE CURRENT SIGNS?
\end{proof}

\begin{rmk}
We have only defined a functor $Sd_r$ for \textbf{barycentric} subdivisions $Sd\, X$. By composition we also obtain functors for iterated barycentric subdivisions. Any subdivision $X^\prime$ of $X$ is a subdivision of some iterated barycentric subdivision\footnote{Since a simplex has a unique PL structure.} $Sd^j\, X$, therefore we can get a subdivision functor $\A(X) \to \A(X^\prime)$ by applying the barycentric subdivision functor $j$ times and then assembling covariantly over the subdivision from $X^\prime$ to $Sd^j\, X$. For this reason it is sufficient to consider only the barycentric subdivision.
\qed\end{rmk}
We end the chapter with a summary of the functors defined so far:
\begin{ex}\label{functorsummary}
We can summarise all the subdivision and assembly functors in one diagram:
\begin{displaymath}
\xymatrix@C=20mm{
\A_*(X) \ar@<0.5ex>[r]^-{Sd_{r_1}} & \A_*(Sd\, X) \ar@<0.5ex>[r]^-{Sd_{r_2}} \ar@<0.5ex>[l]^-{\rr_{r_1}} \ar[dl]_(0.7){\ttt} & \A_*(Sd^2\, X) \ar@<0.5ex>[r]^-{Sd_{r_3}} \ar@<0.5ex>[l]^-{\rr_{r_2}} \ar[dl]_(0.7){\ttt} & \ldots \ar@<0.5ex>[l]^{\rr_{r_3}} \ar[dl]_(0.7){\ttt} \\
\A^*(X) \ar@<0.5ex>[r]^-{Sd_{r_1}} & \A^*(Sd\, X) \ar@<0.5ex>[r]^-{Sd_{r_2}} \ar@<0.5ex>[l]^-{\rr_{r_1}} \ar[ul]_(0.3){\ttt} & \A^*(Sd^2\, X) \ar@<0.5ex>[r]^{Sd_{r_3}} \ar@<0.5ex>[l]^-{\rr_{r_2}} \ar[ul]_(0.3){\ttt} & \ldots \ar@<0.5ex>[l]^-{\rr_{r_3}} \ar[ul]_(0.3){\ttt} 
}
\end{displaymath}
In particular we can pass between $\A^*(X)$ and $\A_*(X)$ by applying $\ttt\circ Sd_r$.
\qed\end{ex}

\chapter{Controlled and bounded algebra}\label{chapnine}
Given an $\ep$-controlled map $f: (X,p) \to (Y, \id_Y)$ measured in $Y$, we saw at the end of Section \ref{subsection5pt1} that the induced map on simplicial chains can be thought of as a morphism in $\mathbb{G}_Y(\F(\Z))$. Since $f$ is $\ep$-controlled we have that $f_{\tau,\sigma}:\Delta_*(\sigma) \to \Delta_*(\tau)$ is zero if $d(\widehat{\tau},\widehat{\sigma})>\ep$. 

This motivates the generalisation of controlled topology to controlled algebra. The bounded categories $\CC_{Y}(\A)$ of Pedersen and Weibel are the natural generalisations and can be used to characterise when a map is a bounded homotopy equivalence.

In the case of a finite-dimensional locally finite simplicial complex $X$ it will turn out that the simplicial categories $\A(X)$ of Ranicki precisely capture algebraically the notion of $\ep$-control for all $\ep>0$. 

\begin{defn}\label{bdf2}
Let $(X,p)$ be a finite-dimensional locally finite simplicial complex with control map $p:X\to (M,d)$. Let $f:C\to D$ be a chain map of $X$-graded chain complexes, then we define the \textit{bound} of $f$ by \[\bd(f):= \sup_{f(\tau,\sigma)\neq 0}d(p(\widehat{\sigma}),p(\widehat{\tau})).\] Similarly for a chain homotopy $P:C_*\to C_{*+1}$, \[\bd(P):= \sup_{P(\tau,\sigma)\neq 0}d(p(\widehat{\sigma}),p(\widehat{\tau})).\] If $f:C\to D$ is a chain equivalence with prescribed inverse $g$ and homotopies $P:C_*\to C_{*+1}$, $Q:D_*\to D_{*+1}$, then we define the \textit{bound} of the chain equivalence to be maximum of the bounds of $f,g,P$ and $Q$.
\qed\end{defn}

\begin{rmk}\label{makesmall}
When we measure in $X$ with the identity map as the control map, then the bound of a chain complex (or a chain equivalence) in $\A(X)$ is at most the maximum diameter of any simplex in $X$, i.e.\ $\mesh(X)$. Thus by subdividing we can get a chain complex with control as small as we like that when reassembled is chain equivalent in $\A(X)$ to the one we started with.
\qed\end{rmk}

The following bounded categories are due to Pedersen and Weibel:
\begin{defn}\label{bddcat}
Given a metric space $(X,d)$ and an additive category $\A$, let $\CC_X(\A)$ be the category whose objects are collections $\{M(x)\,|\,x\in X \}$ of objects in $\A$ indexed by $X$ in a locally finite way, written as a direct sum \[M = \sum_{x\in X} {M(x)}\] where $\forall x\in X$, $\forall r>0$, the set $\{ y\in X \,|\, d(x,y)<r\;\mathrm{and}\; M(y)\neq 0 \}$ is finite. A morphism of $\CC_X(\A)$, \[f = \{f_{y,x}\}: L = \sum_{x\in X}L(x) \to M = \sum_{y\in X}M(y) \] is a collection $\{f_{y,x}:L(x) \to M(y)\,|\,x,y\in X\}$ of morphisms in $\A$ such that each morphism has a bound $k=k(f)$, such that if $d(x,y)>k$ then $f_{x,y} = f_{y,x} = 0$.
\qed\end{defn}

\section{Bounded triangulations for the open cone}
A generalisation of Theorem \ref{maintopthm} holds for controlled and bounded algebra. In order to consider algebraic objects over the open cone $O(X^+)$ we need to be able to construct a triangulation of $X\times\R$ that is bounded over $O(X^+)$ from a tame and bounded triangulation of $X$. The idea is to compensate for the scaling in the metric of $X$ as we go up in the cone by subdividing $X\times \R$ more and more at a sufficiently fast rate. In this section we outline precisely how this is done.

Let $X$ be an $(n-1)$-dimensional locally finite simplicial complex. We construct a family of bounded triangulations of the product $X\times \R$ as follows. Let $\{v_i\}_{i\in\Z}$ be the following collection of points in $\R$: \[ v_i = \brcc{i,}{i\leqslant 0,}{\sum_{j=0}^{i}{\left(\frac{n}{n+1} \right)^j},}{i\geqslant 0.}\] These points are chosen so that for all $i\geqslant 0$,
\begin{equation*}
|v_i-v_{i+1}| = \frac{n}{n+1}|v_{i+1}-v_{i+2}|.
\end{equation*}

\begin{defn}
Let $t^0(X\times\R)$ denote the simplicial decomposition of $X\times \R$ that is $Sd^j\,X$ on $X\times\{ v_i\}$ for $j=\max \{0,i\}$ and prisms in between.  
\qed\end{defn}

\begin{ex}\label{Ex:decompositiont0}
Let $X=[0,1]$ so that dim$(X) = 1$ and $n=2$, then \[ \{v_i\}_{i\in\Z} = \{ \ldots,-2, -1,0,1, \frac{5}{2}, \frac{19}{4}, \frac{65}{8}, \ldots \}.\] The decomposition $t^0(X\times\R)$ is then as in the \Figref{Fig:t0XxR}.

\begin{figure}[ht]
\begin{center}
{
\psfrag{t0}{$t^0(X\times\R)$}
\includegraphics[width=2cm]{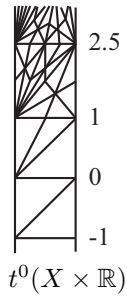}
}
\caption{The simplicial decompositions $t^0(X\times\R)$}
\label{Fig:t0XxR}
\end{center}
\end{figure}
\qed\end{ex}

\begin{rmk}\label{inclusionfunctorproblems}
 Note that we could obtain a triangulation for $O(X^+)$ from the triangulation $t^0(X\times\R)$ by quotienting it by the equivalence relation $\sim$ such that $O(X^+) = X\times\R/\sim$, i.e.\ by $(x,t)\sim (x^\prime,t)$ for all $t\leqslant 0$. This does not yield a locally finite triangulation of $O(X^+)$ if $X$ is not finite because the point $[(x,0)]$ would be contained in infinitely many simplices. Working with $X\times\R$ and measuring in $O(X^+)$ avoids this problem.
\qed\end{rmk}
\begin{rmk}
 The triangulation $t^0(X\times\R)$ is bounded over $O(X^+)$ but it may not be tame over $O(X^+)$ as the radius of simplices may tend to $0$ as we go higher up in the $\R$ direction. The triangulation is however locally tame as the simplex radii cannot approach zero except out towards infinity since $\comesh(X)>0 \Rightarrow \comesh(Sd\, X)>0$.
\qed\end{rmk}

\begin{defn}\label{exptrans} 
Using the $PL$ isomorphism $t^{-1}$ that sends $X\times [v_i,v_{i+1}]$ to $X\times [v_{i-1}, v_i]$ and its inverse we define the simplicial decompositions of $X\times \R$, $t^i(X\times\R)$, to be the images of the decomposition $t^0(X\times\R)$ under the map $t^i$. We call the map $t^i$ \textit{exponential translation by $i$.}
\qed\end{defn}

\begin{ex}
Continuing on from Example \ref{Ex:decompositiont0}, \Figref{Fig:XtimesR} below demonstrates a single exponential translation.
\begin{figure}[ht]
\begin{center}
{
\psfrag{-1}{$-1$}
\psfrag{0}{$0$}
\psfrag{1}{$1$}
\psfrag{2.5}{$2.5$}
\psfrag{t0}{$t^0(X\times\R)$}
\psfrag{t-1}{$t^{-1}(X\times\R)$}
\psfrag{t1}{$t^1(X\times\R)$}
\includegraphics[width=8cm]{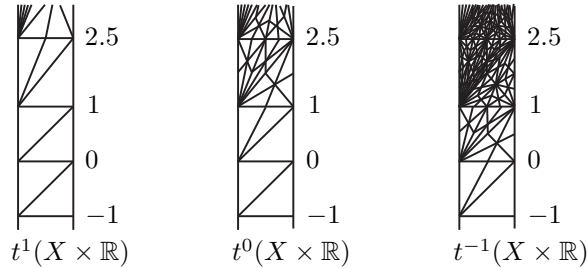}
}
\caption{The simplicial decompositions $t^i(X\times\R)$}
\label{Fig:XtimesR}
\end{center}
\end{figure}
\qed\end{ex}

\begin{rmk}
$t^i(X\times\R)$ is the barycentric subdivision of $t^{i+1}(X\times\R)$ on $X\times[v_i,\infty)$ and exactly the same as $t^{i+1}(X\times\R)$ on $X\times (-\infty, v_{i-1}]$.
\qed\end{rmk}

The exponential translation maps define isomorphisms of categories \[(t^i)_*: \left\{ \begin{array}{c} \A^*(t^j(X\times\R)) \to \A^*(t^{i+j}(X\times\R)) \\ \A_*(t^j(X\times\R)) \to \A_*(t^{i+j}(X\times\R)) \\ \mathbb{G}_{t^j(X\times\R)}(\A) \to \mathbb{G}_{t^{i+j}(X\times\R)}(\A) \end{array}\right. \]
for all $i,j\in \Z$.

The key feature of exponential translation is that it allows us to rescale the bound of an algebraic object in $\A(t^j(X\times\R))$ to be as small as we like, then when it is small enough it can be squeezed.

\begin{rmk}\label{slide}
Given a chain complex $C$ or a chain equivalence $f:C\to D$ in $\A(t^j(X\times\R))$ with bound less than $\ep$, then $t^{-1}(C)$ or $t^{-1}f:t^{-1}C\to t^{-1}D$ has bound less than $\frac{n}{n+1}\ep$ for $X\times[v_j,\infty)$ and less than $\ep$ for $X\times (-\infty,v_j)$ in $\A(t^{j-1}(X\times\R))$
\qed\end{rmk}
 
\section{Functor from controlled to bounded algebra}
We saw before that a chain equivalence $f:X\to Y$ with bound $\ep$ for all $\ep>0$ measured in $Y$ gives us a bounded chain equivalence $f\times\id: X\times\R \to Y\times \R$ measured in $O(Y^+)$. The same is true for algebra; a chain complex $C\in \brc{ch(\A^*(X))}{ch(\A_*(X))}$ provides us with a chain complex in $\brc{ch(\A^*(t^0(X\times\R)))}{ch(\A_*(t^0(X\times\R)))}$ which is bounded when measured in the open cone $O(X^+)$ since $t^0(X\times\R)$ is a bounded triangulation measured in the cone. This is done using a functor \[\textit{``}-\otimes\Z\textit{''}: \brc{ch(\A^*(X))\to ch(\A^*(t^0(X\times\R)))}{ch(\A_*(X)) \to ch(\A_*(t^0(X\times\R)))}\] which algebraically subdivides $C$ sufficiently quickly as we go up $X\times\R$ in the $\R$ direction to compensate for the growth in the metric. We carefully define this functor in this chapter dealing with two cases in parallel:
\begin{enumerate}
 \item $X$ is a finite-dimensional, locally finite simplicial complex.
 \item $X$ is a finite simplicial complex.
\end{enumerate}
We mention finite complexes as the general result we obtain can be packaged nicely in terms of the category $\CC_{O(X^+)}(\A)$ for a finite complex.

\begin{defn}\label{finiteinclusionfunctor}
Let $X$ be a finite simplicial complex. For any bounded and tame triangulation of $X\times\R$ there are inclusion functors
\[ \mathcal{I}:\left\{ \begin{array}{c} \A^*(X\times\R) \\ \A_*(X\times\R)\end{array} \right. \to \CC_{O(X^+)}(\A)\]
defined by associating simplices to the images of their barycentres in $O(X^+)$: 
\begin{eqnarray*}
\mathcal{I}(M)(x) &=& \sum_{\widehat{\sigma}\in j_X^{-1}(x)}M(\sigma)\\
\mathcal{I}(f)_{y,x}:\mathcal{I}(M)(x) \to \mathcal{I}(N)(y) &=& \{f_{\tau,\sigma}:M(\sigma)\to N(\tau)\}_{\widehat{\tau}\in j_X^{-1}(y),\widehat{\sigma}\in j_X^{-1}(x)}.
\end{eqnarray*}
These also induce inclusions 
\[ \mathcal{I}:\left\{ \begin{array}{c} ch(\A^*(X\times\R)) \\ ch(\A_*(X\times\R))\end{array} \right. \to ch(\CC_{O(X^+)}(\A)).\]
Since the $t^i(X\times\R)$ are bounded and locally tame we can consider $\A(t^i(X\times\R))$ as a subcategory of $\CC_{O(X^+)}(\A)$ and similarly for chain complexes.
\qed\end{defn}

\begin{rmk}
Note we do not get an inclusion functor if $X$ is not finite by remark \ref{inclusionfunctorproblems}. The problem is that we get possibly infinite sums associated to points in $X\times (-\infty,0]/\sim.$
\qed\end{rmk}

\begin{defn}\label{algcrosswithR}
Define a functor \[\textit{``}-\otimes\Z\textit{''}: \A(X) \to \A(t^0(X\times\{v_i\}_{i\in\Z}))\subset\A(t^0(X\times\R))\] by sending an object $M$ of $\A(X)$ to the object of $\A(t^0(X\times\{v_i\}_{i\in\Z}))$ that is $Sd^j\,M$ on $X\times\{v_i\}$ for $j=\max\{ 0,i\}$ and by sending a morphism $f:M\to N$ of $\A(X)$ to the morphism of $\A(t^0(X\times\{v_i\}_{i\in\Z}))$ that is $Sd^j\,f:Sd^j\, M \to Sd^j\, N$ on $X\times\{v_i\}$ again for $j=\max\{0,i\}.$ As before this also defines a functor \[\textit{``}-\otimes\Z\textit{''}: ch(\A(X)) \to ch(\A(t^0(X\times\{v_i\}_{i\in\Z}))).\] 
\qed\end{defn}

\begin{rmk}
If in addition $X$ is a finite complex we can compose $\textit{``}-\otimes\Z\textit{''}$ with the inclusion functor $\mathcal{I}$ of Definition \ref{finiteinclusionfunctor} to get a functor which we also call $\textit{``}-\otimes\Z\textit{''}$, \[\textit{``}-\otimes\Z\textit{''}: \A(X) \to \CC_{O(X^+)}(\A).\]
\qed\end{rmk}

\begin{rmk}\label{trivpartofalgmainthm}
Straight from the definitions and Lemma \ref{Lem:SubcontIsCont} we see that 
\[ C\simeq 0 \in \A(X) \Rightarrow \textit{``}C\otimes\Z\textit{''}\simeq 0 \in \A(t^0(X\times\{v_i\}_{i\in\Z})) \subset \A(t^0(X\times\R)).\]
\qed\end{rmk}

\begin{rmk}
If $X$ is finite then \[C\simeq 0 \in \A(X) \Rightarrow \textit{``}C\otimes\Z\textit{''}\simeq 0 \in \CC_{O(X^+)}(\A).\]
\qed\end{rmk}

In the next chapter we will prove that if $\textit{``}C\otimes\Z\textit{''}\simeq 0 \in \mathbb{G}_{t^0(X\times\R)}(\A)$ with finite bound measured in $O(X^+)$, then $C\simeq 0 \in \A(X)$. Thus these two conditions are equivalent. This will be the algebraic analogue of Theorem \ref{maintopthm}. Note how much easier it is to deduce Remark \ref{trivpartofalgmainthm} than it was to prove the corresponding topological versions in Theorem \ref{maintopthm}.

\begin{rmk}
If $X$ is finite then the algebraic analogue of Theorem \ref{maintopthm} is \[C\simeq 0 \in \A(X) \Leftrightarrow \textit{``}C\otimes\Z\textit{''}\simeq 0 \in \CC_{O(X^+)}(\A).\]
\end{rmk}

\chapter{Controlled algebraic Vietoris-like theorem for simplicial complexes}\label{chapten} 
In this chapter we reap all the rewards of our hard work, the culmination of which is to prove the main theorem of this thesis, which is stated so as to be reminiscent of Theorem \ref{maintopthm}: 
\begin{thm}\label{alganal}
Let $X$ be a finite-dimensional locally finite simplicial complex and let $C\in \A(X)$. Then the following are equivalent:
\begin{enumerate}
 \item $C(\sigma)\simeq 0 \in \A$ for all $\sigma \in X$,
 \item $C\simeq 0 \in \A(X)$,
 \item $\textit{``}C\otimes\Z\textit{''}\simeq 0\in\mathbb{G}_{t^0(X\times\R)}(\A)$ with finite bound measured in $O(X^+)$.
\end{enumerate}
\end{thm}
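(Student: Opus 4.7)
The structure of the argument is to prove $(1)\Leftrightarrow(2)$, $(2)\Rightarrow(3)$, and $(3)\Rightarrow(2)$. The first two are essentially already done. The equivalence $(1)\Leftrightarrow(2)$ is precisely Proposition \ref{chcont}, since local contractibility over every simplex is the same as chain contractibility in $\A(X)$. The implication $(2)\Rightarrow(3)$ is Remark \ref{trivpartofalgmainthm}: by Lemma \ref{Lem:SubcontIsCont} each iterated subdivision satisfies $Sd^j\, C\simeq 0\in\A(Sd^j\, X)$, and these slice-wise contractions assemble into a chain contraction of $\textit{``}C\otimes\Z\textit{''}$ with no components between distinct slices, hence finite (in fact slice-wise) bound measured in $O(X^+)$.

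The real work is in $(3)\Rightarrow(2)$, and the plan is to combine exponential translation equivalence with the Squeezing Theorem to descend a bounded contraction on the cone to a genuine contraction in $\A(X)$. Given a chain contraction $P$ of $\textit{``}C\otimes\Z\textit{''}$ of some finite bound $B$ in $O(X^+)$, I would first iterate the exponential translation $t^{-1}$ and appeal to Remark \ref{slide}: because $\textit{``}C\otimes\Z\textit{''}$ is exponential translation equivalent by construction, each application of $t^{-1}$ produces another representative chain contraction in $\mathbb{G}_{t^j(X\times\R)}(\A)$ whose bound on the positive half of the cone has been scaled by $n/(n+1)$, where $n=\dim(X)+1$. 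Iterating sufficiently many times yields a representative whose bound on any prescribed finite window of $X\times\R$ is as small as we wish.

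With the bound made small, I would then restrict the (newly small-bound) contraction to a single slice $X\times\{v_j\}$ for $j$ sufficiently large. Since $\textit{``}C\otimes\Z\textit{''}$ is by construction supported on the slices and is $Sd^j\,C$ on $X\times\{v_j\}$, and since the $O(X^+)$-metric scales the $X$-distance on the slice $X\times\{v_j\}$ by a factor of $v_j$, a contraction of bound $B$ in $O(X^+)$ descends to one of bound at most $B/v_j$ measured in $X$. In view of Lemma \ref{Sdcommuteswithcones} the contraction may be viewed as a chain equivalence $Sd^j\,C\to 0$ in $\mathbb{G}_{Sd^j\,X}(\A)$ with $X$-control $\leqslant B/v_j$. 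Choosing $B$ small enough (via translation) and $j$ large enough, we drop below the squeezing constant $\ep(X)$ of Proposition \ref{setupthesqueeze}, so the Squeezing Theorem produces a chain equivalence $C\to 0$ in $\A(X)$, i.e.\ a contraction $C\simeq 0\in\A(X)$ as required.

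The main obstacle I expect is handling the inter-slice components of $P$ correctly. Although $\textit{``}C\otimes\Z\textit{''}$ itself is supported on the discrete slices $X\times\{v_i\}$, the contraction $P$ is only constrained to have bound $B$ in $O(X^+)$ and may well have non-zero components between distinct slices; these do not vanish simply by shrinking $B$, since the inter-slice gaps $|v_{j+1}-v_j|=(n/(n+1))^j$ themselves tend to zero. The role of exponential translation is precisely to shrink both the intra-slice and inter-slice components uniformly until the full bound in $O(X^+)$ (measured at a chosen high slice) is below the squeezing threshold, at which point the Squeezing Theorem sees only a small-control chain equivalence in $\mathbb{G}_{Sd^j\,X}(\A)$ and does not care whether it originated from a restriction or a full contraction. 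Once this is set up cleanly, functoriality of assembly via $\rr_r$ from Theorem \ref{algsubdiv} identifies the resulting equivalence with the sought contraction of $C$ in $\A(X)$, completing the cycle of implications.
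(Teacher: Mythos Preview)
Your overall architecture matches the paper's exactly: $(1)\Leftrightarrow(2)$ is Proposition~\ref{chcont}, $(2)\Rightarrow(3)$ is Remark~\ref{trivpartofalgmainthm}, and $(3)\Rightarrow(2)$ is obtained by combining exponential translation equivalence of $\textit{``}C\otimes\Z\textit{''}$ with the Squeezing Theorem. The paper packages the hard direction by invoking the Splitting Theorem (Theorem~\ref{crucsplit}) applied to the chain equivalence $\textit{``}C\otimes\Z\textit{''}\to 0$, which squeezes on a \emph{block} $X\times[v_{j_-},v_{j_+}]$ and then restricts to the slice via Proposition~\ref{chcont}; you instead try to restrict to a slice first and then squeeze on $X$. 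Both routes are viable.

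There is, however, a factual error that drives your last paragraph in the wrong direction. The inter-slice gaps $|v_{j+1}-v_j|$ \emph{increase} by the factor $(n+1)/n$ (see the recursion $|v_i-v_{i+1}|=\tfrac{n}{n+1}|v_{i+1}-v_{i+2}|$ and Example~\ref{Ex:decompositiont0}), they do not tend to zero. Moreover, the obstacle you are worried about is not actually present in this specific case: by Definition~\ref{algcrosswithR} the differential of $\textit{``}C\otimes\Z\textit{''}$ is block-diagonal by slice, so for $\sigma,\tau$ in slice $j$ one has $(dP+Pd)_{\tau\sigma}=(d_jP_j+P_jd_j)_{\tau\sigma}$ regardless of any inter-slice components of $P$. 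Hence the naive restriction $P_j:=P|_{X\times\{v_j\}}$ is already a chain contraction of $Sd^j\,C$ in $\mathbb{G}_{Sd^j\,X}(\A)$ with $X$-control $\leqslant B/v_j$, and no exponential translation is needed here at all. Choosing $j\geqslant i(X)$ large enough that $B/v_j<\ep(X)$ (the construction in Proposition~\ref{setupthesqueeze} works for any $i\geqslant i(X)$ with the same $\ep(X)$), the Squeezing Theorem gives $C\simeq 0\in\A(X)$ directly, and Lemma~\ref{Lem:SubcontIsCont} is then superfluous. So your route is in fact \emph{simpler} than the paper's for this particular statement; the paper's detour through the Splitting Theorem is there because that theorem is stated for arbitrary exponential-translation-equivalent $C,D\in\A(t^0(X\times\R))$, where the differential need not be slice-diagonal and the block argument with translation is genuinely required.
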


\begin{rmk}
For $X$ a finite complex, we can replace condition $3$ with \[\textit{``}C\otimes\Z\textit{''}
\simeq 0\in \CC_{O(X^+)}(\A).\] This is because $\textit{``}C\otimes\Z\textit{''}\simeq 0\in\mathbb{G}_{t^0(X\times\R)}(\A)$ with finite bound measured in $O(X^+)$ precisely when $\textit{``}C\otimes\Z\textit{''}\simeq 0\in\CC_{O(X^+)}(\A).$
\qed\end{rmk}
% Showing $(i)\Leftrightarrow(ii)$ in Theorem \ref{maintopthm} was far harder than its algebraic equivalent which is just an application of Lemma \ref{chcont}. Similarly for $(ii)\Rightarrow(iii)$ we had to work much harder for the topological result; the algebraic version is a consequence in Remark \ref{trivpartofalgmainthm} of the construction of the functor $``-\otimes\Z``$. The only difficult direction is $(iii)\implies (i)$ for which the strategy will be to exploit the invariance\footnote{up to chain equivalence} of translation in the $\R$ direction over $X\times\R$. Translating \textit{exponentially}
This theorem allows us to replace a collection of local conditions on $X$ by a single global condition on $X\times\R$\footnote{For a finite complex $X$ we get a single global condition on $O(X^+)$.} which in general should be less work to verify. 

Comparing Theorems \ref{alganal} and \ref{maintopthm} we see that it was easier to work with algebra than topology; with the algebraic theorem the equivalence of $(1)$ and $(2)$ is immediate from Proposition \ref{chcont} and $(2)\Rightarrow(3)$ is given by the construction of the functor \[\textit{``}-\otimes\Z\textit{''}: \A(X) \to \A(t^0(X\times\R)).\]The difficult implication is $(3)\Rightarrow(2)$. To prove this we use an algebraic squeezing theorem which we hope is of interest in its own right.

\section{Algebraic squeezing}
Given a chain complex $C\in ch(\A(X))$ that is contractible in $\mathbb{G}_X(\A)$ one might ask the question
\begin{q}
When does $C\in ch(\A(X))$ and $C\simeq 0 \in \mathbb{G}_X(\A)$ imply also that $C\simeq 0 \in \A(X)$?
\end{q}
The answer is: when the bound of the contraction $C\simeq 0 \in \mathbb{G}_X(\A)$ is sufficiently small. In fact we can do better than that:
\begin{thm}[Squeezing Theorem]\label{triangulise}
Let $X$ be a finite-dimensional locally finite simplicial complex. There exists an $\ep=\ep(X)>0$ and an integer $i=i(X)$ such that if there exists a chain equivalence $Sd^i\,C \to Sd^i\, D$ in $\mathbb{G}_{Sd^i\, X}(\A)$ with control $<\ep$ measured in $X$ for $C,D\in \A(X)$, then there exists a chain equivalence
\begin{displaymath}
 \xymatrix{ f: C \ar[r]^-{\sim} & D
}
\end{displaymath}
in $\A(X)$ without subdividing.
\end{thm}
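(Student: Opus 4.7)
The plan is to build $f: C \to D$ in $\A(X)$ by sandwiching the hypothesised $g$ between algebraic subdivision chain equivalences. I will choose $\epsilon = \epsilon(X) > 0$ and $i = i(X)$ together with subdivision chain equivalences $s, r, P$ on simplicial chains as in Proposition \ref{setupthesqueeze}, strengthening the iterative choice of $r$ in Lemma \ref{keylemma} so that each barycentre outside $\Gamma^j_\sigma(\sigma)$ is sent toward the \emph{nearest} face of $\sigma$; consequently $r(\widetilde{\tau}) \subset \alpha$ whenever $\widetilde{\tau} \in Sd^i X$ lies within $\epsilon$ of a face $\alpha < \sigma$. Theorems \ref{algsubdiv} and \ref{Dualsubdivision} then give chain equivalences $s_*^C : C \leftrightarrows Sd^i C : r_*^C$ and $s_*^D : D \leftrightarrows Sd^i D : r_*^D$ in $\A(X)$ (with the subdivided complexes assembled covariantly via $\rr_r$), and the candidate
\[ f := r_*^D \circ g \circ s_*^C : C \to D \]
is automatically a chain equivalence in $\mathbb{G}_X(\A)$ as a composition of such.

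The crux of the argument, and its main obstacle, is to verify that $f$ lies in the triangular category $\A(X)$, i.e.\ $f_{\tau,\sigma} = 0$ unless $\tau \leq \sigma$ (resp.\ $\tau \geq \sigma$ in the $\A_*$ case). Chasing supports: a nonzero contribution to $f_{\tau,\sigma}$ demands simplices $\widetilde{\sigma}, \widetilde{\tau} \in Sd^i X$ with $\widetilde{\sigma}$ in the support of $(s_*^C)_{-,\sigma}$, $\widetilde{\tau} \in I_\tau$, and $d(\widetilde{\sigma},\widetilde{\tau}) < \epsilon$. The formula of Proposition \ref{Prop:sstardefined} confines $\widetilde{\sigma}$ to $\bigcup_{\rho \leq \sigma}\Gamma_{\rho,\ldots,\sigma}(\mathring{\rho}) \subset \overline{\sigma}$. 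If $\widetilde{\tau} \in \overline{\sigma}$ as well, then writing $\widetilde{\tau} = \widehat{\sigma_0}\cdots\widehat{\sigma_k}$ with $\sigma_0 < \cdots < \sigma_k \leq \sigma$ forces $r(\widetilde{\tau}) \leq \sigma_k \leq \sigma$; otherwise $\widetilde{\tau}$ lies in an adjacent simplex $\sigma' \neq \sigma$ sharing a common face $\alpha = \sigma \cap \sigma'$ (automatic since $\epsilon < \comesh(X)$ forces a shared face), within $\epsilon$ of $\alpha$, whereupon the nearest-face property of $r$ gives $r(\widetilde{\tau}) \leq \alpha < \sigma$. Either way $\tau \leq \sigma$, so $f \in \A(X)$.

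Finally, to upgrade $f$ from a chain equivalence in $\mathbb{G}_X(\A)$ to one in $\A(X)$, I will apply the same sandwich construction to the inverse and homotopies of $g$: set $f^{-1} := r_*^C \circ g^{-1} \circ s_*^D$ and build
\[ H_1 := r_*^C \circ g^{-1} \circ P_*^D \circ g \circ s_*^C \;+\; r_*^C \circ H^C \circ s_*^C, \]
together with an analogous $H_2$, where $H^C, H^D$ are the prescribed homotopies $g^{-1}g \simeq \id$, $gg^{-1}\simeq \id$ (all of control $<\epsilon$ by hypothesis) and $P_*^C, P_*^D$ are the chain homotopies $s_*r_* \simeq \id$ produced by Theorems \ref{algsubdiv}/\ref{Dualsubdivision}. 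A direct computation using $r_*^C s_*^C = \id_C$ confirms that $H_1$ witnesses $f^{-1}f \simeq \id_C$, and the same support chase used above, applied factor-by-factor, shows $f^{-1}, H_1, H_2 \in \A(X)$ provided the controls of $g, g^{-1}, H^C, H^D, P_*^C, P_*^D$ all stay below $\epsilon$. Hence $f$ is a chain equivalence in $\A(X)$ as required. The whole argument rests on the middle step, the support chase, which in turn rests delicately on the explicit formulae of Propositions \ref{Prop:sstardefined} and \ref{partsofr} and on the nearest-face strengthening of Lemma \ref{keylemma}.
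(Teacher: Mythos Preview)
Your approach is exactly the paper's: sandwich the given equivalence between $s_*$ and $r_*$ and chase supports. Two points deserve attention.

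First, the condition ``provided the controls of \ldots $P_*^C, P_*^D$ all stay below $\ep$'' is not what you get, and not what you need. The chain homotopy $P_*$ produced by Theorem~\ref{algsubdiv} has no reason to have control $<\ep$; what Proposition~\ref{setupthesqueeze} gives is that $P$ \emph{preserves} $N_{k\ep}$-neighbourhoods, $P(N_{k\ep}(Sd^i\sigma))\subset N_{k\ep}(Sd^i\sigma)$. This suffices for the support chase provided you leave room for the accumulation: the paper takes $\ep=\tfrac{1}{5}\ep'(X)$ so that $r_*\, g^{-1}\, P_*^D\, g\, s_*$ carries $\mathring\sigma\to Sd^i\sigma\to N_\ep(Sd^i\sigma)\to N_\ep(Sd^i\sigma)\to N_{2\ep}(Sd^i\sigma)\to\sigma$, the last arrow using that $r$ retracts a $2\ep$-neighbourhood (still below $\ep'$). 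Your nearest-face strengthening of Lemma~\ref{keylemma} is then redundant---the neighbourhood retraction property $r(N_{k\ep}(Sd^i\sigma))\subset\sigma$ already handles simplices just outside $\overline{\sigma}$---though it is a valid alternative packaging of the same fact.

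Second, the $\A_*(X)$ case is not a trivial relabelling. The maps $s_*, r_*$ of Theorem~\ref{algsubdiv} and the support inclusions $s_*(\mathring\sigma)\subset\overline\sigma$, $r_*(N_\ep)\subset\sigma$ are tailored to $\A^*$; for $\A_*$ the paper uses the dual equivalences $(r^*)_*, (s^*)_*, (P^*)_*$ of Theorem~\ref{Dualsubdivision} together with the dual support bounds of Proposition~\ref{setupthedualsqueeze}, phrased via complements $Sd^i\tau\setminus N_\ep(\partial\tau\setminus\sigma)$ rather than neighbourhoods. Your parenthetical ``resp.\ $\tau\geq\sigma$'' does not cover this.
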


\begin{proof}
Let $\ep^\prime(X)$ and $i(X)$ be chosen as in Proposition \ref{setupthesqueeze} and Corollary \ref{setupthedualsqueeze}. Then choose $\ep(X) = \frac{1}{5}\ep^\prime(X)$ so that by Proposition \ref{setupthedualsqueeze} there exist chain equivalences 
\begin{displaymath}
\xymatrix{ (\Delta^{lf}_*(X), d_{\Delta^{lf}_*(X)},0) \ar@<0.5ex>[rr]^-{s} && (\Delta^{lf}_*(Sd^i\,X), d_{\Delta^{lf}_*(Sd^i\,X)}, P) \ar@<0.5ex>[ll]^-{r} \\
 (\Delta^{-*}(X), \delta^{\Delta^{-*}(X)},0) \ar@<0.5ex>[rr]^-{r^*} && (\Delta^{-*}(Sd^i\,X), \delta^{\Delta^{-*}(Sd^i\,X)}, P^*) \ar@<0.5ex>[ll]^-{s^*}
}
\end{displaymath}
such that for all $\sigma\in X$, 
\begin{eqnarray}
 P(\Delta_*(N_{k\ep}(Sd^i \sigma))) &\subset& \Delta_{*+1}(N_{k\ep}(Sd^i \sigma)),\label{eqn1} \\
 r(\Delta_*(N_{k\ep}(Sd^i \sigma))) &\subset& \Delta_{*}(\sigma), \label{eqn2} \\
 r^*(\Delta^{-*}(\mathring{\sigma})) &\subset& \Delta^{-*}(\bigcup_{\tau\geqslant\sigma}(Sd^i\,\tau \backslash N_{5\ep}(\partial\tau\backslash \sigma))), \\
 s^*(\Delta^{-*}(Sd^i\, \mathring{\sigma})) &\subset& \Delta^{-*}(\bigcup_{\tau\geqslant \sigma}\mathring{\tau}), \\ 
 P^*(\Delta^{-*}(Sd^i\,\tau\backslash N_{k\ep}(\partial\tau\backslash\sigma))) &\subset& \Delta^{-*+1}(Sd^i\,\tau\backslash N_{k\ep/2}(\partial\tau\backslash\sigma)), \forall\tau\geqslant\sigma,
\end{eqnarray}
for all $0\leqslant k \leqslant 5$.

First suppose that $C,D\in \A^*(X)$. Using the chain equivalences above and Theorem \ref{algsubdiv} we have induced chain equivalences 
\begin{displaymath}
\xymatrix@1{ (C, d_C,0) \ar@<0.5ex>[r]^-{s_*} & (Sd^i\, C, d_{Sd^i\, C},(P_C)_*), \ar@<0.5ex>[l]^-{r_*} \\
(D, d_D,0) \ar@<0.5ex>[r]^-{s_*} & (Sd^i\, D, d_{Sd^i\, D},(P_D)_*), \ar@<0.5ex>[l]^-{r_*}
}
\end{displaymath}
where for $E=C,D$ the supports of $(P_E)_*$, $s_*$ and $r_*$ satisfy
\begin{eqnarray}
 s_*(E_*|_{\mathring{\sigma}}) &\subset& (Sd^i\,E)_{*}|_{\sigma}, \label{eqn7} \\
 (P_E)_*((Sd^i\,E)_*|_{N_{k\ep}(Sd^i \sigma)}) &\subset& (Sd^i\,E)_{*+1}|_{N_{k\ep}(Sd^i \sigma)}\;\mathrm{and} \label{eqn6} \\
 r_*((Sd^i\,E)_*|_{N_{k\ep}(Sd^i \sigma)}) &\subset& E_{*}|_{\sigma} \label{eqn8}
\end{eqnarray}
for all $0\leqslant k \leqslant 5$. Suppose there exists a chain equivalence 
\begin{displaymath}
 \xymatrix{ (Sd^i\, C,d_{Sd^i\,C},Q_1) \ar@<0.5ex>[r]^{f^i} & (Sd^i\, D,d_{Sd^i\,D},Q_2) \ar@<0.5ex>[l]^{g^i}
}
\end{displaymath}
with control $\ep$. By Lemma \ref{circcheq} composition gives a chain equivalence 
\begin{displaymath}
 \xymatrix{ (C,d_C,r_*(Q_1 + g^i\circ P_D\circ f^i)s_*) \ar@<0.5ex>[rr]^-{r_*\circ f^i \circ s_*} && (D,d_D,r_*(Q_2 + f^i\circ P_C\circ g^i)s_*). \ar@<0.5ex>[ll]^-{r_*\circ g^i \circ s_*}
}
\end{displaymath}
Examining this chain equivalence carefully we observe that it is in fact a chain equivalence in $\A^*(X)$:

Consider $r_*f^is_*$ applied to $C(\sigma)$ which we think of as supported on $\mathring{\sigma}$. By $(\ref{eqn7})$, $s_*(C(\sigma))$ is supported on $Sd^i\,\sigma$. Since $f^i$ has bound $\ep$, we see that $f^is_*(C(\sigma))$ is supported on $N_\ep(Sd^i\,\sigma)$. By $(\ref{eqn8})$, $r_*f^is_*(C(\sigma))$ is supported on $\sigma$, thus $r_*f^is_*$ is a morphism in $\A^*(X)$. For brevity in the following analyses we call this reasoning \textit{arguing by supports} and write
\begin{displaymath}
 \xymatrix{ r_*f^is_*: \mathring{\sigma} \ar[r]^-{(\ref{eqn7})} & Sd^i\, \sigma \ar[r] & N_\ep(Sd^i\, \sigma) \ar[r]^-{(\ref{eqn8})} & \sigma.
}
\end{displaymath}
See \Figref{Fig:Supports} for an example of this argument for a $2$-simplex.
\begin{figure}[h!]
\begin{center}
{
\psfrag{sstar}{$s_*$}
\psfrag{tstar}{$r_*$}
\psfrag{fi}{$f^i$}
\includegraphics[width=11cm]{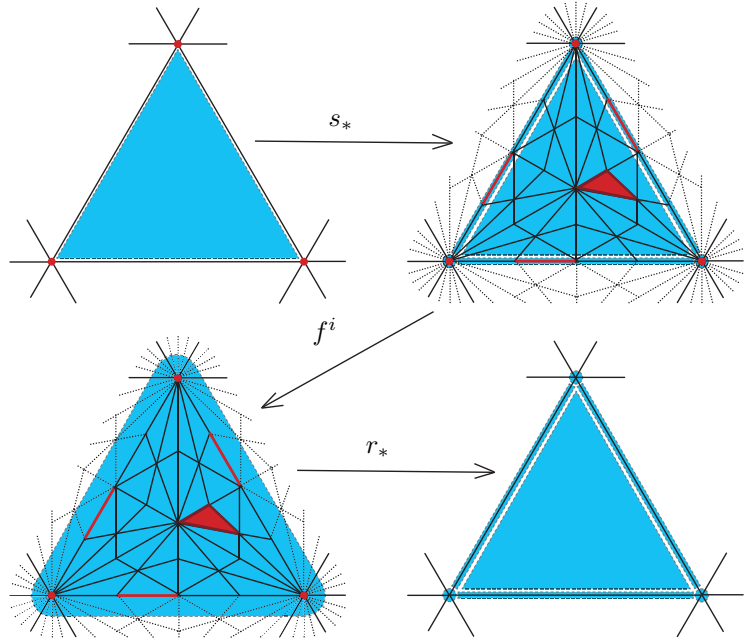}
}
\caption{Arguing by supports to show $r_*f^is_*\in \A^*(X)$.}
\label{Fig:Supports}
\end{center}
\end{figure}

By exactly the same argument we see that $r_*g^is_*$, $r_*Q_1s_*$ and $r_*Q_2s_*$ are all morphisms in $\A^*(X)$, noting that $g^i$, $Q_1$ and $Q_2$ all have bound $\ep$. This just leaves $r_*(g^i(P_D)_*f^i)s_*$ and $r_*(f^i(P_C)_*g^i)s_*$ to check. Arguing by supports both of these send 
\begin{displaymath}
 \xymatrix{ \mathring{\sigma} \ar[r]^-{(\ref{eqn7})} & Sd^i\, \sigma \ar[r] & N_\ep(Sd^i\, \sigma) \ar[r]^-{(\ref{eqn6})} & N_\ep(Sd^i\, \sigma) \ar[r] & N_{2\ep}(Sd^i\, \sigma) \ar[r]^-{(\ref{eqn8})} & \sigma,
}
\end{displaymath}
so they are also morphisms of $\A^*(X)$, thus $r_*f^is_*: C\to D$ is a chain equivalence in $\A^*(X)$ as required.

Next suppose that $C,D\in \A_*(X)$. Using the chain equivalences for the simplicial cochain complexes above and Theorem \ref{Dualsubdivision} we have induced chain equivalences 
\begin{displaymath}
\xymatrix@1{ (C, d_C,0) \ar@<0.5ex>[r]^-{(r^*)_*} & (Sd^i\, C, d_{Sd^i\, C},((P_C)^*)_*) \ar@<0.5ex>[l]^-{(s^*)_*} \\
(D, d_D,0) \ar@<0.5ex>[r]^-{(r^*)_*} & (Sd^i\, D, d_{Sd^i\, D},((P_D)^*)_*) \ar@<0.5ex>[l]^-{(s^*)_*}
}
\end{displaymath}
where for $E=C,D$ the supports of $((P_E)^*)_*$, $(s^*)_*$ and $(r^*)_*$ satisfy
\begin{eqnarray}
 (r^*)_*(E_*|_{\mathring{\sigma}}) &\subset& (Sd^i\,E)_*|_{\bigcup_{\tau\geqslant\sigma}(Sd^i\,\tau \backslash N_{5\ep}(\partial\tau\backslash \sigma))}, \label{eqn11} \\
 (P_E^*)_*((Sd^i\,E)_*|_{Sd^i\,\tau\backslash N_{k\ep}(\partial\tau\backslash\sigma)}) &\subset& (Sd^i\,E)_{*+1}|_{Sd^i\,\tau\backslash N_{k\ep/2}(\partial\tau\backslash\sigma)}, \forall\tau\geqslant\sigma, \label{eqn9} \\
 (s^*)_*((Sd^i\,E)_*|_{Sd^i\, \mathring{\sigma}}) &\subset& E_*|_{\bigcup_{\tau\geqslant \sigma}\mathring{\tau}}, \label{eqn10} 
\end{eqnarray}
for all $0\leqslant k \leqslant 5$. Suppose there exists a chain equivalence 
\begin{displaymath}
 \xymatrix{ (Sd^i\, C,d_{Sd^i\,C},Q_1) \ar@<0.5ex>[r]^{f^i} & (Sd^i\, D,d_{Sd^i\,D},Q_2) \ar@<0.5ex>[l]^{g^i}
}
\end{displaymath}
with control $\ep$. By Lemma \ref{circcheq} composition gives a chain equivalence 
\begin{displaymath}
 \xymatrix{ (C,d_C,(s^*)_*(Q_1 + g^i((P_D)^*)_*f^i)(r^*)_*) \ar@<0.5ex>[rr]^-{(s^*)_*f^i(r^*)_*} && (D,d_D,(s^*)_*(Q_2 + f^i((P_C)^*)_*g^i)(r^*)_*). \ar@<0.5ex>[ll]^-{(s^*)_*g^i(r^*)_*}
}
\end{displaymath}
As before, we examine this carefully and observe that it is a chain equivalence in $\A_*(X)$. All of $(s^*)_*f^i(r^*)_*$, $(s^*)_*g^i(r^*)_*$, $(s^*)_*Q_1(r^*)_*$ and $(s^*)_*Q_2(r^*)_*$ are morphisms in $\A_*(X)$ as they send
\begin{displaymath}
 \xymatrix{ \mathring{\sigma} \ar[r]^-{(\ref{eqn11})} & \bigcup_{\tau\geqslant\sigma}(Sd^i\,\tau \backslash N_{5\ep}(\partial\tau\backslash \sigma)) \ar[r] & \bigcup_{\tau\geqslant\sigma}(Sd^i\,\tau \backslash N_{4\ep}(\partial\tau\backslash \sigma)) \ar[r]^-{(\ref{eqn10})} & \bigcup_{\tau\geqslant \sigma}\mathring{\tau}. 
}
\end{displaymath}
Similarly, both  $(s^*)_*(g^i((P_D)^*)_*f^i)(r^*)_*$ and $(s^*)_*(f^i((P_C)^*)_*g^i)(r^*)_*$ send 
\begin{displaymath}
 \xymatrix@R=3mm@C=7.8mm{ \mathring{\sigma} \ar[r]^-{(\ref{eqn11})} & \bigcup_{\tau\geqslant\sigma}(Sd^i\,\tau \backslash N_{5\ep}(\partial\tau\backslash \sigma)) \ar[r] & \bigcup_{\tau\geqslant\sigma}(Sd^i\,\tau \backslash N_{4\ep}(\partial\tau\backslash \sigma)) \ar[r]^-{(\ref{eqn9})} & \bigcup_{\tau\geqslant\sigma}(Sd^i\,\tau \backslash N_{2\ep}(\partial\tau\backslash \sigma)) \\
&\ar[r] & \bigcup_{\tau\geqslant\sigma}(Sd^i\,\tau \backslash N_{\ep}(\partial\tau\backslash \sigma)) \ar[r]^-{(\ref{eqn10})} & \bigcup_{\tau\geqslant \sigma}\mathring{\tau}, 
}
\end{displaymath}
so are also morphisms in $\A_*(X)$. See \Figref{Fig:Supports2} for an example of this argument for a $1$-simplex.
\begin{figure}[ht]
\begin{center}
{
\psfrag{t}{}
\psfrag{A}[t][t]{$\mathring{\sigma}$}
\psfrag{B}[tl][tl]{$\bigcup_{\tau\geqslant\sigma}(Sd^i\,\tau \backslash N_{5\ep}(\partial\tau\backslash \sigma))$}
\psfrag{C}[t][t]{$\bigcup_{\tau\geqslant\sigma}(Sd^i\,\tau \backslash N_{1\ep}(\partial\tau\backslash \sigma))$}
\psfrag{D}[t][t]{$\bigcup_{\tau\geqslant \sigma}\mathring{\tau}$}
\psfrag{a}[t][t]{$(r^*)_*$}
\psfrag{b}[tl][tl]{$g^i((P_D)^*)_*f^i$}
\psfrag{c}[t][t]{$(s^*)_*$}
\includegraphics[width=9cm]{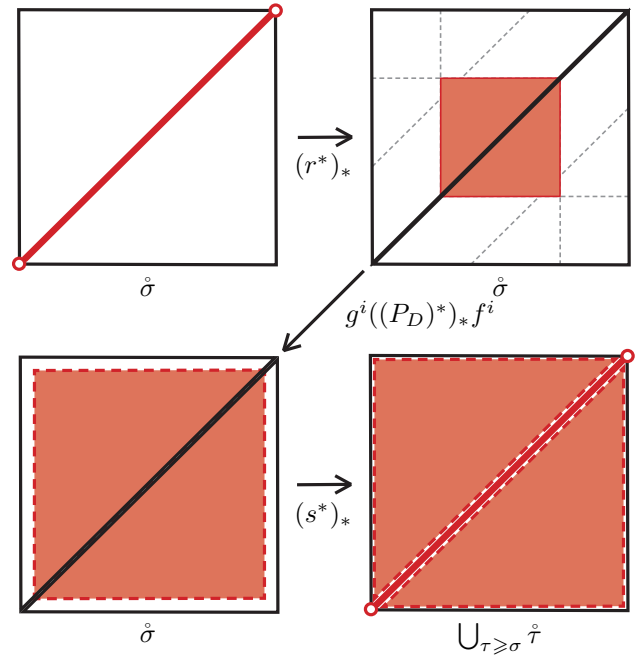}
}
\caption{Arguing by supports to show $(s^*)_*g^i((P_D)^*)_*f^i(r^*)_*\in \A_*(X)$.}
\label{Fig:Supports2}
\end{center}
\end{figure}
Thus $(s^*)_*f^i(r^*)_*:C\to D$ is a chain equivalence in $\A_*(X)$ as required.
\end{proof}

\section{Splitting}
Since $X\times\R$ is a product, exponential translates are of course all $PL$ isomorphic. We would like to exploit this fact on the chain level; a chain complex in $\A(t^0(X\times\R))$ whose exponential translates are all chain equivalent will turn out to be a product $\textit{``}C\otimes\Z\textit{''}$ which we can split to recover $C\in\A(X)$. Since exponential translation by $t^{-1}$ subdivides $t^j(X\times\R)$ into $t^{j-1}(X\times\R)$ we make the following definition:
\begin{defn}\label{exptranseq}
\begin{enumerate}[(i)]
 \item We say that a chain complex $C\in \A(t^j(X\times\R))$ is \textit{exponential translation equivalent} if for all $i\geqslant 0$, \[Sd_i\,C\simeq t^{-i}C \in \A(t^{j-i}(X\times\R)),\]where $Sd_i:\A(t^j(X\times\R))\to\A(t^{j-i}(X\times\R))$ is the subdivision functor obtained by viewing $t^{j-i}(X\times\R)$ as a subdivision of $t^j(X\times\R)$.
 \item We say that $C\in \CC_{O(X^+)}(\A)$ is \textit{exponential translation equivalent} if for all $i\geqslant 0$, \[t^{-i}C\simeq C \in \CC_{O(X^+)}(\A).\]
\end{enumerate} %NEED GOOD NOTATION FOR WHEN IT IS BARYCENTRIC OR NOT AND ALSO WE NEED MORE RIGOROUS PROOF THAT WE CAN SUBDIVIDE FOR ANY SUBDIVISION!!
\noindent For a finite simplicial complex $X$, we see that these definitions coincide under the inclusion $\mathcal{I}: \A(t^j(X\times\R)) \to \CC_{O(X^+)}(\A)$ since $\mathcal{I}(Sd_i\,C)$ is boundedly chain equivalent to $C$ in $\CC_{O(X^+)}(\A)$ due to all the $t^k(X\times\R)$ being bounded triangulations.
\qed\end{defn}

\begin{rmk}\label{otherformofexptranseq}
Let $C\in \A(t^j(X\times\R))$ be exponential translation equivalent. Then for all $i\geqslant 0$, 
\[ \rr_it^{-i}C \simeq \rr_iSd_i C \simeq C,\]where $\rr_i: \A(t^{j-i}(X\times\R))\to \A(t^j(X\times\R))$ is the covariant assembly functor defined by assembling $t^{j-i}(X\times\R)$ into $t^j(X\times\R)$.
\qed\end{rmk}

\begin{ex}\label{Gexptrans}
For $C$ a chain complex in $ch(\A(X))$, we have that $\textit{``}C\otimes\Z\textit{''}$ is exponential translation equivalent. This is almost a tautology from the fact that by definition \[t^{-i}(t^j(X\times\R)) = Sd_i(t^j(X\times\R))\] and $\textit{``}C\otimes\Z\textit{''}$ is defined to be $Sd^j\, C$ on $t^0(X\times\{v_i\})$ where $j=\max\{0,i\}$.
\qed\end{ex}

\begin{ex}
The simplicial chain and cochain complexes of $t^0(X\times\R)$ are exponential translation equivalent since
\begin{displaymath}
\xymatrix@R=0mm@C=5mm{
Sd_i\Delta^{lf}_*(t^0(X\times\R)) \ar@{}[r]|-{=} & \Delta^{lf}_*(Sd_i\,t^0(X\times\R)) \ar@{}[r]|-{=} & \Delta^{lf}_*(t^{-i}(X\times\R)) \ar@{}[r]|-{=} &t^{-i}\Delta^{lf}_*(t^0(X\times\R)), \\
Sd_i\Delta^{-*}(t^0(X\times\R)) \ar@{}[r]|-{=} & \Delta^{-*}(Sd_i\,t^0(X\times\R)) \ar@{}[r]|-{=} & \Delta^{-*}(t^{-i}(X\times\R)) \ar@{}[r]|-{=} & t^{-i}\Delta^{-*}(t^0(X\times\R)),
}
\end{displaymath}
where the chain equivalences are given by Proposition \ref{chcont} and the fact that for all subdivision functors 
\begin{eqnarray*}
 Sd_i\,\Delta^{lf}_*(X) &=& \Delta^{lf}_*(Sd_i(X)) \\
 Sd_i\,\Delta^{-*}(X) &=& \Delta^{-*}(Sd_i(X))
\end{eqnarray*}
as algebraic subdivision generalises geometric subdivision.
\qed\end{ex}

\begin{thm}[Splitting Theorem]\label{crucsplit}
Let $C,D$ be exponential translation equivalent chain complexes in $\left\{ \begin{array}{c} \A^*(t^0(X\times\R)) \\ \A_*(t^0(X\times\R)). \end{array} \right.$ If there exists a chain equivalence $f:C\to D$ in $\mathbb{G}_{t^0(X\times\R)}(\A)$ with finite bound $0\leqslant B<\infty$ when measured in $O(X^+)$, then for all $i>B$ there exists a chain equivalence \[ f_i: C|_{t^0(X\times\{v_i\})} \to D|_{t^0(X\times\{v_i\})}\] in $\left\{ \begin{array}{c} \A^*(t^0(X\times\{v_i\})) \\ \A_*(t^0(X\times\{v_i\})). \end{array} \right.$ Projecting to $X\times\{1\}$ this can be considered as a chain equivalence in $\left\{ \begin{array}{c} \A^*(Sd^{i}\,X) \\ \A_*(Sd^{i}\,X) \end{array} \right.$ with bound tending to zero as $i\to \infty.$
\end{thm}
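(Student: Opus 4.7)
The strategy is to slice the chain equivalence $f$ down to the single slice $X\times\{v_i\}$, using the exponential translation equivalence of $C$ and $D$ to supply the extra chain data needed to make the slicing a chain equivalence, and using the Squeezing Theorem \ref{triangulise} to land in the triangular category $\brc{\A^*}{\A_*}$ rather than merely in $\mathbb{G}$. The first observation is that since $f$ has bound $B$ in $O(X^+)$, components of $f$ and of its chain-inverse data $g,P_C,P_D$ between simplices in $X\times\{v_j\}$ and $X\times\{v_k\}$ can only be non-zero when $|v_j-v_k|\leqslant B$, so only finitely many slices near $X\times\{v_i\}$ contribute, all within a vertical slab of height at most $B$ in $\R$.

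The hypothesis $i>B$ ensures that this slab lies entirely in the positive-$\R$ region where $t^0(X\times\R)$ is built from the iterated subdivisions $Sd^j\,X$ (rather than from the unsubdivided $X$ used for $j\leqslant 0$), and where the exponential translation equivalence data $Sd_k C\simeq t^{-k}C$ and $Sd_k D\simeq t^{-k}D$ in $\A(t^{-k}(X\times\R))$ is available. Combining the restriction of the data of $f$ on this slab with these ``cylinder'' chain equivalences (composing as in Lemma \ref{circcheq}), I would construct a candidate chain equivalence $f_i$ on the single slice $X\times\{v_i\}$, together with inverse $g_i$ and chain homotopies $(P_C)_i,(P_D)_i$ assembled analogously. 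The role of the exponential translation equivalences is to cancel, up to chain homotopy, the off-slice contributions to $g_if_i-\id$ and $f_ig_i-\id$ that arise when one simply restricts $f,g,P_C,P_D$ to the slab.

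Identifying $t^0(X\times\{v_i\})$ with $Sd^i\,X$ via projection, this gives a chain equivalence in $\mathbb{G}_{Sd^i\,X}(\A)$. Applying the Squeezing Theorem \ref{triangulise} with $X$ replaced by $Sd^i\,X$ (passing to a further slice $X\times\{v_k\}$ with $k$ large to achieve a bound below the squeezing constant $\ep(Sd^i\,X)$, then invoking exponential translation equivalence once more to transfer the resulting chain equivalence back to the slice at $v_i$) promotes $f_i$ from $\mathbb{G}_{Sd^i\,X}(\A)$ into $\brc{\A^*(Sd^i\,X)}{\A_*(Sd^i\,X)}$. Once $f_i$ lies in the triangular category, Remark \ref{makesmall} automatically gives $\mathrm{bd}(f_i)\leqslant \mesh(Sd^i\,X)$, and as $Sd^i\,X$ is a subdivision of a fixed finite-dimensional $X$ measured in $X$, $\mesh(Sd^i\,X)\to 0$ as $i\to\infty$, giving the final bound statement.

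The main obstacle is the middle step: the explicit construction of the chain-equivalence data for $f_i$ by assembling the restriction of $f$ on the slab with the exponential translation equivalences. This is a delicate piece of bookkeeping of chain homotopies, in the spirit of Lemma \ref{circcheq} but more intricate, as one must simultaneously track (a) which pieces of $f,g,P_C,P_D$ survive restriction to the slab, (b) how to re-express them on the single slice $X\times\{v_i\}$ via exponential translation equivalence, and (c) ensure the eventual compatibility with the triangular $\A^*$ (or $\A_*$) structure after applying Squeezing. Both hypotheses---exponential translation equivalence and $i>B$---enter essentially at this step.
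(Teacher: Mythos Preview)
Your proposal identifies the right ingredients---exponential translation equivalence, the Squeezing Theorem, and the observation that $i>B$ keeps everything in the positive-$\R$ region---but you have the order of operations inverted, and this creates exactly the ``main obstacle'' you describe. The paper avoids that obstacle entirely.

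The paper does \emph{not} first restrict to a slice and then use exponential translation equivalence to ``cancel off-slice contributions''. Instead it exponentially translates the \emph{entire} chain equivalence $f$ by $t^{-k}$ for large $k$, which by Remark~\ref{slide} shrinks the bound below any prescribed $\ep$. Exponential translation equivalence is then invoked not to repair restriction artefacts but simply to identify $t^{-k}C$ with $Sd_k C$ (and likewise for $D$); on the block $X\times[v_{j_-},v_{j_+}]$ this $Sd_k$ coincides with the iterated barycentric subdivision $Sd^k$. So one now has, for free, a chain equivalence $Sd^k C\to Sd^k D$ in $\mathbb{G}$ with bound $<\ep$ on the slab---precisely the hypothesis of the Squeezing Theorem applied with base space the slab $t^0(X\times[v_{j_-},v_{j_+}])$, not $Sd^i X$. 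Squeezing then yields a chain equivalence in $\A^*(t^0(X\times[v_{j_-},v_{j_+}]))$, and only \emph{after} landing in the triangular category does one restrict to the single slice $X\times\{v_j\}$, which is now trivial by Proposition~\ref{chcont}.

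In short: translate to shrink bound $\Rightarrow$ ETE to recognise as subdivision $\Rightarrow$ squeeze on the slab $\Rightarrow$ restrict to the slice. Your ordering (restrict $\Rightarrow$ use ETE to patch $\Rightarrow$ squeeze on the slice) forces you to build the sliced chain-equivalence data by hand, which is the delicate bookkeeping you flag; the paper's ordering makes that step evaporate because restriction in $\A^*$ is automatic once squeezing has been done.
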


\begin{proof}
If $B=0$, just take \[ f_i=f|: C|_{t^0(X\times\{v_i\})} \to D|_{t^0(X\times\{v_i\})}.\]
So suppose $B>0$. For each radius $v_j$ such that $j>B$ we can find an interval $[v_{j_-},v_{j_+}]$ containing $(v_j-B, v_j+B)$ for $v_{j_\pm}\in \{v_i\}_{i\geqslant 0}$. Since $t^0(X\times [v_{j_-},v_{j_+}])$ has a bounded and tame triangulation it satisfies the conditions of the Squeezing Theorem, so there exist
\begin{eqnarray*}
\ep &=& \ep(t^0(X\times [v_{j_-},v_{j_+}]))\\
i &=& i(t^0(X\times [v_{j_-},v_{j_+}]))
\end{eqnarray*}
such that a chain equivalence $Sd^i\, C^\prime \to Sd^i\, D^\prime$ in $\mathbb{G}_{Sd^i\,(t^0(X\times [v_{j_-},v_{j_+}]))}(\A)$ can be squeezed to a chain equivalence in $\brc{\A^*(t^0(X\times [v_{j_-},v_{j_+}]))}{\A_*(t^0(X\times [v_{j_-},v_{j_+}])).}$

By Remark \ref{slide} we can translate $f$ to make the bound as small as we like: there exists a $k\geqslant i$ such that $t^{-k}f: t^{-k}C\to t^{-k}D$ is a chain equivalence in $\mathbb{G}_{t^{-k}(X\times\R)}(\A)$ with bound $<\frac{\ep}{3}$. By exponential translation equivalence we have chain equivalences
\begin{eqnarray*}
 t^{-k}C &\simeq& Sd_kC, \\
 t^{-k}D &\simeq& Sd_kD
\end{eqnarray*}
 in $\A(t^{-k}(X\times\R))$, where $Sd_k$ is the subdivision functor obtained from viewing $t^{-k}(X\times\R)$ as a subdivision of $t^0(X\times\R)$. Recall that $t^{-1}$ is barycentric subdivision on $t^i(X\times[v_i,\infty))$. Therefore, since $v_{j_-}>0$, on the block $t^{-k}(X\times[v_{j_-},v_{j_+}])$ we have that \[t^{-k}(X\times[v_{j_-},v_{j_+}]) = Sd^k\,t^0(X\times[v_{j_-},v_{j_+}]).\] So in particular the functor $Sd_k$ is the same as the $k^{th}$ iterated barycentric subdivision functor $Sd^k$ on this block. Restricted to this block\footnote{Strictly speaking we can choose this to hold for a neighbourhood of the block, then we can ignore what happens away from the block.} composing chain equivalences we get a chain equivalence
\begin{displaymath}
\xymatrix{\widetilde{f}: Sd^kC \ar[r] & t^{-k}C \ar[r]^{t^{-k}f|} & t^{-k}D \ar[r] & Sd^kD 
} 
\end{displaymath}
in $\mathbb{G}_{t^{-k}(X\times[v_{j_-},v_{j_+}])}(\A)$ where each equivalence has bound $\mesh(t^{-k}(X\times[v_{j_-},v_{j_+}]))< \frac{\ep}{3}$. So the combined bound is $<\ep$. We can now apply Theorem \ref{triangulise} to conclude that \[\brc{r_*\widetilde{f}s_*|_{t^0(X\times[v_{j_-},v_{j_+}])}}{s^*\widetilde{f}r^*|_{t^0(X\times[v_{j_-},v_{j_+}])}}:C|_{t^0(X\times[v_{j_-},v_{j_+}])} \to D|_{t^0(X\times[v_{j_-},v_{j_+}])}\] is a chain equivalence in $\brc{\A^*(t^0(X\times[v_{j_-},v_{j_+}]))}{\A_*(t^0(X\times[v_{j_-},v_{j_+}]))}$. By Proposition \ref{chcont} this means that \[\brc{r_*\widetilde{f}s_*|_{t^0(X\times\{v_j\})}}{s^*\widetilde{f}r^*|_{t^0(X\times\{v_j\})}}:C|_{t^0(X\times\{v_j\})} \to D|_{t^0(X\times\{v_j\})}\] is a chain equivalence in $\left\{ \begin{array}{c} \A^*(t^0(X\times\{v_j\})) = \A^*(Sd^j\, X) \\ \A_*(t^0(X\times\{v_j\})) = \A_*(Sd^j\, X) \end{array} \right.$ with bound $\mesh(Sd^{j}\,X)\to 0$ as $j\to\infty$ since $X$ is finite-dimensional. 
\end{proof}
With the previous two theorems the remainder of the proof of Theorem \ref{alganal} is now immediate.

\begin{proof}[Proof of Theorem \ref{alganal}]
$(1)\Leftrightarrow (2)$: Proposition \ref{chcont}.

\noindent $(2)\Rightarrow(3)$: Remark \ref{trivpartofalgmainthm}.

\noindent $(3)\Rightarrow (1)$: We already observed in Example \ref{Gexptrans} that $\textit{``}C\otimes\Z\textit{''}$ is exponential translation equivalent. The $0$ chain complex is trivially exponential translation equivalent, so applying Theorem \ref{crucsplit} to the bounded chain equivalence 
\begin{equation}\label{sta}
\textit{``}C\otimes\Z\textit{''} \to 0 
\end{equation}
given by the chain contraction $\textit{``}C\otimes\Z\textit{''} \simeq 0 \in \mathbb{G}_{t^0(X\times\R)}(\A)$ gives that \[\textit{``}C\otimes\Z\textit{''}[{t^0(X\times\{v_j\})}] \simeq 0 \in \left\{ \begin{array}{c} \A^*(t^0(X\times\{v_j\})) \\ \A_*(t^0(X\times\{v_j\})) \end{array} \right. \] for $j$ sufficiently large. That is to say that \[ Sd^j\, C \simeq 0 \in \left\{ \begin{array}{c} \A^*(Sd^j\, X) \\ \A_*(Sd^j\, X). \end{array} \right. \] By Lemma \ref{Lem:SubcontIsCont} this holds if and only if $C \simeq 0 \in \left\{ \begin{array}{c} \A^*(X) \\ \A_*(X) \end{array} \right.$ as required. 
\end{proof}

\begin{rmk}
It is the exponential translation equivalence that allows us to codimension one split over $X\times\R$, without this things are much more difficult.
\qed\end{rmk}

\section{Consequences for Poincar\'{e} duality}
Both of Theorems \ref{alganal} and \ref{triangulise} have applications to determining when a Poincar\'{e} duality space is a homology manifold.
\begin{defn}\label{PDstuff}
\begin{enumerate}[(i)]
 \item An \textit{$n$-dimensional Poincar\'{e} duality space} $X$ is a finite simplicial complex $X$ with a homology class $[X]\in H^{lf}_n(X)$ such that the cap product chain map \[ [X]\cap -: \Delta^{n-*}(X) \to \Delta^{lf}_*(X)\] is a chain equivalence, i.e.\ if $H^{n-*}(X) \cong H^{lf}_*(X).$
 \item An $n$-dimensional Poincar\'{e} duality space $X$ is said to be a \textit{homology manifold} if the cap product \[ [X]_x \cap -: \Delta^{n-*}(\{x\}) \to \Delta_*(X,X-\{x\})\] is a chain equivalence for each $x$, i.e.\ if $[X]_x \cap -: H^{n-*}(\{x\}) \cong H_*(X,X-\{x\}).$
 \item We will say that $X$ has \textit{$\ep$-controlled Poincar\'{e} duality} if there exists an $i$ such that the chain contraction \[\C([X]\cap - : \Delta^{n-*}(Sd^i\, X) \to \Delta^{lf}_*(Sd^i\, X))\simeq 0 \in \mathbb{G}_{Sd^i\,X}(\A)\] has control $\ep$.
\end{enumerate}
\qed\end{defn}

\begin{lem}\label{whenhommfld}
$X$ is a homology manifold if and only if it has $\ep$-controlled Poincar\'{e} duality for all $\ep>0$.
\end{lem}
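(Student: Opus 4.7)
The plan is to recognise both conditions as the same algebraic statement about the mapping cone of the Poincar\'{e} duality chain equivalence, stated respectively in the triangular category $\A^*(X)$ and in the ambient graded category $\mathbb{G}_{X}(\A)$ after sufficient subdivision. Let $C := \C([X]\cap - : \Delta^{n-*}(X)\to \Delta^{lf}_*(X))$ be the algebraic mapping cone of the cap product. Viewing $\Delta^{lf}_*(X)$ as a chain complex in $\A^*(X)$ (Example \ref{sup}) and $\Delta^{-*}(X)$ as one in $\A^*(X)$ after subdividing and contravariantly reassembling using $\ttt\circ Sd$ (Example \ref{functorsummary}), we regard $C$ as an object of $ch(\A^*(X))$. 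The characterisation of homology manifolds cited in the introduction (\cite{RanSingularities}) is precisely that $X$ is a homology manifold if and only if $C(\sigma)\simeq 0 \in \A$ for every $\sigma\in X$; by Proposition \ref{chcont} this is equivalent to $C\simeq 0 \in \A^*(X)$.

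For the forward direction, assume $X$ is a homology manifold, so $C\simeq 0 \in \A^*(X)$. By Lemma \ref{Lem:SubcontIsCont}, $Sd^i C \simeq 0 \in \A^*(Sd^i\, X)$ for every $i\geqslant 0$. Per Remark \ref{makesmall}, the bound of this subdivided chain contraction, measured in the standard metric on $X$, is at most
\[
\mesh(Sd^i\, X) \;\leqslant\; \left(\tfrac{\dim X}{\dim X + 1}\right)^{\! i}\mesh(X),
\]
which tends to $0$ as $i\to\infty$. Hence for any $\ep>0$ we may choose $i$ large enough that the chain contraction has control less than $\ep$, exhibiting $\ep$-controlled Poincar\'{e} duality.

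For the backward direction, assume $X$ has $\ep$-controlled Poincar\'{e} duality for all $\ep>0$. Let $\ep(X)$ and $i(X)$ be the constants supplied by the Squeezing Theorem \ref{triangulise}. By hypothesis there is some $j$ and a chain contraction of $Sd^j\, C$ in $\mathbb{G}_{Sd^j\, X}(\A)$ with control less than $\ep(X)$; by subdividing further if necessary (a small contraction of $Sd^j\, C$ induces a small contraction of $Sd^{j+1}\, C$) we may assume $j\geqslant i(X)$. Applying the Squeezing Theorem to the chain equivalence $Sd^j\, C\to 0$ produces a chain equivalence $C\to 0$ in $\A^*(X)$, so $C\simeq 0 \in \A^*(X)$. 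By Proposition \ref{chcont} this forces $C(\sigma)\simeq 0$ for each $\sigma\in X$, which is precisely the homology manifold condition.

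The main obstacle is the translation in the first paragraph: identifying ``$X$ is a homology manifold'' with the algebraic condition $C(\sigma)\simeq 0$ for all $\sigma$ inside the triangular category $\A^*(X)$. This requires taking the cap product, which a priori only lives in $\mathbb{G}_X(\A)$, and installing it in $\A^*(X)$ via the contravariant subdivision--assembly functor $\ttt\circ Sd$, then quoting Ranicki's characterisation from \cite{RanSingularities}. Once this dictionary is in place, both directions reduce to machinery already developed: the forward direction is Lemma \ref{Lem:SubcontIsCont} combined with the fact that subdivision scales the mesh by $\dim X/(\dim X+1)$, and the backward direction is a direct application of the Squeezing Theorem.
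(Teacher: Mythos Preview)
The paper does not prove this lemma at all: its proof is simply a citation to Theorem~6.13 of \cite{RanSingularities} (and Quinn \cite{QuinnHomMflds}). So there is no internal argument to compare against.

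Your attempt to supply a genuine proof using the thesis machinery is reasonable in spirit, but the crucial first step is circular. You write that ``the characterisation of homology manifolds cited in the introduction (\cite{RanSingularities}) is precisely that $X$ is a homology manifold if and only if $C(\sigma)\simeq 0$ for every $\sigma$''. That is not what the introduction says: the introduction cites \cite{RanSingularities} for the equivalence of ``homology manifold'' with ``$\ep$-controlled Poincar\'{e} duality for all $\ep>0$'' --- which is exactly the statement of this lemma. So your step~(A) is quoting the lemma itself in disguise. What you actually need, and have not supplied, is the passage from Definition~\ref{PDstuff}(ii) (the pointwise condition $H_*(X,X\setminus\{x\})\cong H^{n-*}(\{x\})$ for every $x$) to the simplexwise condition $C(\sigma)\simeq 0$ for every $\sigma$. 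That identification is the real content of the cited theorem in \cite{RanSingularities}, and it is not a consequence of anything proved in the thesis.

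Your steps (B) and (C), by contrast, are essentially correct and do establish something nontrivial: that $C\simeq 0$ in $\A^*(X)$ is equivalent to $\ep$-controlled Poincar\'{e} duality for all $\ep>0$. The forward direction via Lemma~\ref{Lem:SubcontIsCont} and Remark~\ref{makesmall} is fine; the backward direction via the Squeezing Theorem works, though you should also invoke Lemma~\ref{Sdcommuteswithcones} and Proposition~\ref{subdivgeneralised} to justify that the mapping cone of cap product on $Sd^i\,X$ agrees (up to chain isomorphism) with $Sd^i$ of the mapping cone on $X$, so that the hypotheses of Theorem~\ref{triangulise} are actually met. But this only reduces the lemma to the local characterisation $C(\sigma)\simeq 0$, which still has to be imported from \cite{RanSingularities}.
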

\begin{proof}
Theorem 6.13 of \cite{RanSingularities}. See also \cite{QuinnHomMflds} p.271.
\end{proof}

Since a finite simplicial complex necessarily has a bounded and tame triangulation, we can get a Poincar\'{e} duality corollary to the Squeezing Theorem (Theorem \ref{triangulise}):

\begin{thm}[Poincar\'{e} Duality Squeezing]\label{PDsqueezing}
Let $X$ be a finite-dimensional locally finite simplicial complex. There exists an $\ep=\ep(X)>0$ and an integer $i=i(X)$ such that if $Sd^i X$ has $\ep$-controlled Poincar\'{e} duality then, subdividing as necessary, $X$ has $\ep$-controlled Poincar\'{e} duality for all $\ep>0$ and hence is a homology manifold.
\end{thm}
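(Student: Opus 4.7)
The plan is to apply the Squeezing Theorem (Theorem \ref{triangulise}) to the algebraic mapping cone of the Poincar\'{e} duality chain equivalence, viewed as a chain complex in $\A^*(X)$, and then subdivide the resulting global chain contraction to obtain $\ep^\prime$-controlled Poincar\'{e} duality for every $\ep^\prime>0$.

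First I would set up the cap product as a chain map between chain complexes in a single simplicial category. The chain complex $\Delta^{lf}_*(X)$ is naturally in $\A^*(X)$, but $\Delta^{n-*}(X)$ lies in $\A_*(X)$, so the cap product $[X]\cap -$ is only a chain map in $\mathbb{G}_X(\A)$. Using the composite $\ttt\circ Sd$ from Example \ref{functorsummary}, and invoking Theorem \ref{Dualsubdivision} together with Proposition \ref{subdivgeneralised}, I would replace $\Delta^{n-*}(X)$ with a chain equivalent chain complex $C\in\A^*(X)$ (obtained by subdividing $\Delta^{-*}(X)$ and then contravariantly assembling over open dual cells). Setting $D:=\Delta^{lf}_*(X)\in\A^*(X)$, the cap product becomes a chain map $f:C\to D$ in $\A^*(X)$, whose algebraic mapping cone $\C(f)$ lies in $\A^*(X)$ by Lemma \ref{conestillincat}.

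Next, choose $\ep=\ep(X)>0$ and $i=i(X)$ as supplied by the Squeezing Theorem \ref{triangulise}. By Lemma \ref{Sdcommuteswithcones}, algebraic subdivision commutes with the algebraic mapping cone, so $Sd^i\,\C(f)=\C(Sd^i\,f)$, and by Proposition \ref{subdivgeneralised} the latter is chain isomorphic to the Poincar\'{e} duality mapping cone on $Sd^i\,X$ appearing in Definition \ref{PDstuff}(iii). The hypothesis that $Sd^i X$ has $\ep$-controlled Poincar\'{e} duality therefore provides a chain contraction of $Sd^i\,\C(f)$ in $\mathbb{G}_{Sd^i\,X}(\A)$ with control $<\ep$. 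Applying Theorem \ref{triangulise} to the pair $(\C(f),0)$ squeezes this to a global chain contraction $\C(f)\simeq 0\in\A^*(X)$.

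Finally, Lemma \ref{Lem:SubcontIsCont} gives $Sd^j\,\C(f)\simeq 0\in\A^*(Sd^j\,X)$ for every $j\geq 0$, and any such chain contraction automatically has bound at most $\mesh(Sd^j\,X)$. By the corollary following \eqref{subdivscales}, $\mesh(Sd^j\,X)\to 0$ as $j\to\infty$, so for any $\ep^\prime>0$ we can choose $j$ large enough that the resulting chain contraction has control $<\ep^\prime$. This is precisely the statement that $X$ has $\ep^\prime$-controlled Poincar\'{e} duality, and Lemma \ref{whenhommfld} concludes that $X$ is a homology manifold. The main obstacle is the bookkeeping in the first step, namely verifying that the passage through $\ttt\circ Sd$ genuinely realises the cap product as a chain map in $\A^*(X)$ whose mapping cone is, after $i$-fold algebraic subdivision, chain isomorphic (and not merely chain equivalent) in $\mathbb{G}_{Sd^i\,X}(\A)$ to the Poincar\'{e} duality cone on $Sd^i\,X$, so that the hypothesised small-control chain contraction really does feed into the Squeezing Theorem.
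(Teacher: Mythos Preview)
Your plan is essentially the paper's argument, and the three stages---place both complexes in $\A^*(X)$ via $\ttt$ and $\rr$, apply the Squeezing Theorem, then subdivide to shrink the bound---match exactly. The difference is organisational but it matters.

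You route everything through the mapping cone $\C(f)$, which forces you to claim that the cap product (after $\ttt\circ Sd$) is a chain map \emph{in} $\A^*(X)$, so that Lemma~\ref{conestillincat} applies. You flag this as ``bookkeeping'', but it is not obviously true and you give no argument for it: the cap product is a priori only a morphism in $\mathbb{G}_X(\A)$, and the passage through $\ttt\circ Sd$ does not automatically make it upper-triangular. The paper sidesteps this entirely by applying Theorem~\ref{triangulise} in its chain-equivalence form, which only asks that $C,D\in\A^*(X)$ and that there be \emph{some} $\mathbb{G}_{Sd^i X}(\A)$-chain equivalence $Sd^i C\to Sd^i D$ with small control---no requirement that this equivalence come from a morphism in $\A^*(X)$. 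So the paper never needs the claim you are worried about.

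Concretely, the paper starts from the hypothesised $\ep$-controlled equivalence $\phi_{i+1}:\Delta^{n-*}(Sd^{i+1}X)\to\Delta^{lf}_*(Sd^{i+1}X)$, assembles source and target into $\A^*(Sd^i X)$ via $\ttt$ and $\rr_r$, and then invokes Proposition~\ref{subdivgeneralised} to recognise these as $Sd^i$ of chain complexes in $\A^*(X)$ (namely $\ttt\Delta^{n-*}(Sd\,X)$ and $\Delta^{lf}_*(X)$). This directly feeds into Theorem~\ref{triangulise} and produces an $\A^*(X)$ chain equivalence, after which Remark~\ref{makesmall} finishes. The identification you worry about (your second obstacle, that $Sd^i\C(f)$ be chain \emph{isomorphic} to the subdivided PD cone) is also handled more cleanly this way: the paper works from the subdivided hypothesis and assembles down, rather than constructing at level $0$ and subdividing up, so Proposition~\ref{subdivgeneralised} applies directly without needing to commute $Sd^i$ past $\ttt$.
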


\begin{proof}
Take $\ep(X)$ and $i(X)$ as in the proof of Theorem \ref{triangulise}. Suppose $Sd^{i+1}\, X$ has $\ep$-controlled Poincar\'{e} duality. Then capping with the fundamental class gives a chain equivalence \[\phi_{i+1}: \Delta^{n-*}(Sd^{i+1}\, X) \to \Delta^{lf}_*(Sd^{i+1}\, X)\] in $\mathbb{G}_{Sd^{i+1}\, X}(\F(\Z))$ with control $\ep$. The chain complex $\Delta^{n-*}(Sd^{i+1}\, X)$ is naturally a chain complex in $\A_*(Sd^{i+1}\, X)$ by Example \ref{sup}. By applying the contravariant assembly functor $\ttt$, we can think instead of $\Delta^{n-*}(Sd^{i+1}\, X)$ as a chain complex in $\A^*(Sd^i\, X)$. Similarly, $\Delta^{lf}_*(Sd^{i+1}\, X)$ is naturally a chain complex in $\A^*(Sd^{i+1}\, X)$ and by applying the covariant assembly functor $\rr_r$ we think of it instead as a chain complex in $\A^*(Sd^i\, X)$. By Proposition \ref{subdivgeneralised}, $\phi_{i+1}$ can be thought of as an $(Sd^i\, X)$-based chain equivalence from $Sd^i\,(\Delta^{n-*}(Sd^1\, X))$ to $Sd^i\, (\Delta^{lf}_*(Sd^1\, X))$ for $\Delta^{n-*}(Sd^1\, X)$ and $\Delta^{lf}_*(Sd^1\, X)$ both thought of as chain complexes in $\A^*(X)$. Thus by the Squeezing Theorem there exists an $\A^*(X)$ chain equivalence \[\phi: \Delta^{n-*}(Sd^1\, X) \to \Delta^{lf}_*(Sd^1\, X).\] Now by Remark \ref{makesmall}, subdividing sufficiently yields a Poincar\'{e} duality chain equivalence with bound as small as we like.
\end{proof}

The algebraic Vietoris-like Theorem (Theorem \ref{alganal}) has the following corollary for Poincar\'{e} duality:

\begin{thm}
If $X\times\R$ has bounded Poincar\'{e} duality measured in $O(X^+)$ then $X$ has $\ep$-controlled Poincar\'{e} duality measured in $X$ for all $\ep>0$. This proves the converse of the footnote in \cite{epsurgthy} in the case of simplicial complexes over themselves.
\end{thm}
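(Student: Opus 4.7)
The plan is to build an $\A^*(X)$ chain complex whose chain contractibility in $\A^*(X)$ is equivalent to $X$ having $\ep$-controlled Poincar\'e duality for all $\ep>0$, identify its image under $\textit{``}-\otimes\Z\textit{''}$ with (a bounded chain equivalent model of) the algebraic mapping cone of the Poincar\'e duality chain equivalence for $X\times\R$, and then invoke Theorem \ref{alganal}.

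First I would set up the algebraic Poincar\'e duality cone in $\A^*(X)$. The locally finite chain complex $\Delta^{lf}_*(X)$ lives naturally in $\A^*(X)$ by Example \ref{sup}, while $\Delta^{n-*}(X)$ lives in $\A_*(X)$. Using the composite $\ttt\circ Sd_r$ of Example \ref{functorsummary}, I replace $\Delta^{n-*}(X)$ by the chain equivalent model $\widetilde{\Delta}^{n-*}(X):=\ttt(Sd_r\,\Delta^{n-*}(X))\in\A^*(X)$. Cap product with $[X]$ then lifts to a chain map $\phi: \widetilde{\Delta}^{n-*}(X)\to\Delta^{lf}_*(X)$ in $\A^*(X)$ (the formula for $\cap[X]$ being local enough on subdivided cochains that its components respect the face ordering), and I set $C:=\C(\phi)\in\A^*(X)$, a chain complex in $\A^*(X)$ by Lemma \ref{conestillincat}. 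By Proposition \ref{chcont}, $C\simeq 0\in\A^*(X)$ if and only if $\phi$ is an $\A^*(X)$ chain equivalence; and in that case Remark \ref{makesmall} applied to $Sd^i\,\phi$ (subdividing repeatedly while reassembling) yields a chain equivalence of bound at most $\mesh(Sd^i\,X)\to 0$, i.e.\ $\ep$-controlled Poincar\'e duality for all $\ep>0$. By Lemma \ref{whenhommfld} this would finish the theorem, so it suffices to show $C\simeq 0\in\A^*(X)$.

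Next I would identify $\textit{``}C\otimes\Z\textit{''}$, up to chain equivalence in $\mathbb{G}_{t^0(X\times\R)}(\A)$ with finite bound in $O(X^+)$, with the analogous cone $\C(\phi_{X\times\R})$ built from the Poincar\'e duality chain map for the bounded triangulation $t^0(X\times\R)$. The fundamental class of $X\times\R$ splits as $[X]\times[\R]$, cap product distributes across this product, and the functor $\textit{``}-\otimes\Z\textit{''}$ is by construction the algebraic shadow of crossing with $\R$ and iteratively subdividing to keep the triangulation bounded over $O(X^+)$. Since algebraic subdivision commutes with algebraic mapping cones (Lemma \ref{Sdcommuteswithcones}) and with $\ttt$ after reassembly, a slice-by-slice comparison matches $\textit{``}C\otimes\Z\textit{''}|_{X\times\{v_i\}}$ with the corresponding subdivided slice of $\C(\phi_{X\times\R})$. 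The hypothesis that $X\times\R$ has bounded Poincar\'e duality in $O(X^+)$ says precisely $\C(\phi_{X\times\R})\simeq 0\in\mathbb{G}_{t^0(X\times\R)}(\A)$ with finite bound measured in $O(X^+)$, so the same holds for $\textit{``}C\otimes\Z\textit{''}$. This verifies condition $(3)$ of Theorem \ref{alganal}, whose implication $(3)\Rightarrow(2)$ then gives $C\simeq 0\in\A^*(X)$, completing the proof via the previous paragraph.

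The main obstacle is this identification $\textit{``}C\otimes\Z\textit{''}\simeq\C(\phi_{X\times\R})$ in $\mathbb{G}_{t^0(X\times\R)}(\A)$ with bound finite in $O(X^+)$: one must reconcile the algebraically-defined iterated subdivisions inside $\textit{``}-\otimes\Z\textit{''}$ with the geometric Poincar\'e duality cone of the actual product $X\times\R$, tracking the contravariant assembly used to land in $\A^*$ and checking that the connecting chain equivalence has bound controlled by the open cone metric. The essential input is that the product fundamental class and product cap product are preserved by the prism triangulations and barycentric subdivisions defining $t^0(X\times\R)$, so the identification $[X\times\R]\cap-=([X]\times[\R])\cap(-\times-)$ descends to a bounded chain equivalence after the iterated subdivisions on the $X\times\{v_i\}$ slices. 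Everything else is an application of results already assembled in the excerpt.
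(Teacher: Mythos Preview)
Your strategy is reasonable in outline but takes a harder path than the paper, and the ``main obstacle'' you flag is precisely what the paper's proof avoids.

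The paper does \emph{not} go through Theorem \ref{alganal}. Instead it applies the Splitting Theorem (Theorem \ref{crucsplit}) directly to the bounded Poincar\'e duality chain equivalence
\[
\phi:\ttt\Delta^{n+1-*}(Sd^1\,t^0(X\times\R))\to\Delta^{lf}_*(t^0(X\times\R))
\]
in $\mathbb{G}_{t^0(X\times\R)}(\A)$, after using $\ttt$ and $\rr$ to place both sides in $\A^*(t^0(X\times\R))$. The crucial point is that Theorem \ref{crucsplit} only needs the source and target to be exponential translation equivalent, and the paper has already checked (in the example following Definition \ref{exptranseq}) that the simplicial chain and cochain complexes of $t^0(X\times\R)$ have this property on the nose. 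So there is no need to exhibit either side as $\textit{``}(\mathrm{something})\otimes\Z\textit{''}$. Splitting then produces equivalences on slices $X\times\{v_i\}$, and the remaining work is a direct geometric identification of the slice of $\ttt\Delta^{n+1-*}$ with $\Delta^{n-*}(Sd^{i+1}X)$ via the open dual cells $\mathring{D}(Sd^{i+1}X\times\{v_i\},Sd^1t^0(X\times\R))\cong Sd^{i+1}X\times(\text{interval})$.

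Your route via Theorem \ref{alganal} forces you to manufacture a specific $C\in\A^*(X)$ and then prove the K\"unneth-type identification $\textit{``}C\otimes\Z\textit{''}\simeq\C(\phi_{X\times\R})$ with bounded control in $O(X^+)$. This is not wrong in principle, but it is genuinely extra work: the chain complex of the prism-triangulated $t^0(X\times\R)$ is not slice-by-slice equal to $\textit{``}\Delta^{lf}_*(X)\otimes\Z\textit{''}$ (there are prism cells between slices), and the cap product for $X\times\R$ must be matched with a tensor of cap products through the subdivisions. There is also a smaller gap earlier: your assertion that $[X]\cap-$ lifts to a morphism in $\A^*(X)$ after applying $\ttt\circ Sd_r$ is not justified in the paper and is not obvious; the paper only ever treats the Poincar\'e duality map as a $\mathbb{G}$-morphism and uses squeezing/splitting to force it into $\A^*$. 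Going through the Splitting Theorem rather than Theorem \ref{alganal} eliminates both of these difficulties.
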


\begin{proof}
Give $X\times\R$ the simplicial decomposition $Sd^1\, t^0(X\times\R)$, and suppose we have a bounded $Sd^1\,t^0(X\times\R)$-graded Poincar\'{e} duality chain equivalence \[\phi: \Delta^{n+1-*}(Sd^1\, t^0(X\times\R)) \to \Delta^{lf}_*(Sd^1\, t^0(X\times\R)).\] As in Theorem \ref{PDsqueezing}, consider $\Delta^{n+1-*}(Sd^1\, t^0(X\times\R))$ and $\Delta^{lf}_*(Sd^1\, t^0(X\times\R))$ as chain complexes in $\A^*(t^0(X\times\{v_i\}))$ by contravariant assembling and covariantly assembling respectively. This gives a chain equivalence \[\phi^\prime: \ttt \Delta^{n+1-*}(Sd^1\,t^0(X\times\R)) \to \rr Sd\, \Delta^{lf}_*(t^0(X\times\R)),\] but $\rr Sd \simeq \id$ so we write this as \[\phi^\prime: \ttt \Delta^{n+1-*}(Sd^1\,t^0(X\times\R)) \to \Delta^{lf}_*(t^0(X\times\R)).\] This is a bounded $\mathbb{G}_{t^0(X\times\R)}(\A)$ chain equivalence of exponential translation equivalent chain complexes in $\A^*(t^0(X\times\{v_i\}))$, so we may apply the Splitting theorem (Theorem \ref{crucsplit}). This gives us chain equivalences 
\[\phi^\prime_i:\ttt \Delta^{n+1-*}(Sd^1\,t^0(X\times\R))(X\times\{v_i\}) \to \Delta^{lf}_*(t^0(X\times\R))(X\times\{v_i\})\] in $\A^*(t^0(X\times\{v_i\}))=\A^*(Sd^{i}\,X)$ for all $v_i>B$. More explicitly, since the contravariant assembly functor $\ttt$ assembles together open dual cells, these chain equivalences are
\[\phi_i:\Delta^{n+1-*}(\mathring{D}(Sd^{i+1}\,X\times\{v_i\}, Sd^1\,t^0(X\times\R))) \to \Delta^{lf}_*(Sd^i\, X\times\{v_i\}).\] Now, $\mathring{D}(Sd^{i+1}\,X\times\{v_i\}, Sd^1\,t^0(X\times\R))\cong (Sd^{i+1}\, X\times\{v_i\})\times (a,b)$, so in particular we have a chain equivalence \[\Delta^{n-*}(Sd\, X\times\{v_i\}) \cong \Delta^{n+1-*}(\mathring{D}(X\times\{v_i\}, t^0(X\times\R))). \] Composing this with $\phi_i$ we get the desired Poincar\'{e} duality chain equivalences \[\Delta^{n-*}(Sd^{i+1}\, X\times\{v_i\}) \to \Delta^{lf}_*(Sd^i\, X\times\{v_i\}).\]

I.e. we get $\ep$-controlled Poincar\'{e} duality chain equivalences for $X$ for all $\ep>0$. 
\end{proof}

Applying Lemma \ref{whenhommfld} we obtain the following:
\begin{cor}
$X$ is a homology manifold if and only if $X\times\R$ has bounded Poincar\'{e} duality measured in $O(X^+)$.
\end{cor}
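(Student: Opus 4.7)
The plan is to derive both implications by combining the theorem immediately preceding this corollary with Lemma~\ref{whenhommfld} and with Theorem~\ref{alganal} applied to the algebraic mapping cone of the Poincar\'{e} duality chain equivalence.

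First I would handle the implication ($\Leftarrow$): assume $X\times\R$ has bounded Poincar\'{e} duality measured in $O(X^+)$. By the theorem stated immediately above, this forces $X$ to have $\ep$-controlled Poincar\'{e} duality for every $\ep>0$ measured in $X$. Lemma~\ref{whenhommfld} then gives that $X$ is a homology manifold.

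For the converse ($\Rightarrow$), suppose $X$ is a homology manifold. Lemma~\ref{whenhommfld} yields $\ep$-controlled Poincar\'{e} duality for all $\ep>0$. The strategy is to consider the algebraic mapping cone $C := \C([X]\cap - : \Delta^{n-*}(X) \to \Delta^{lf}_*(X))$. Using the dual assembly $\ttt\circ Sd_r$ of Example~\ref{functorsummary}, $C$ can be viewed as a chain complex in $\A^*(X)$. The local-to-global passage is the key point: because the cap product is a geometrically local construction that on each open simplex reduces to capping against a generator of $\Sigma^{-|\sigma|}\Delta_*(\mathring\sigma)$, and because $X$ is a homology manifold so that the local cap products are chain equivalences, Proposition~\ref{chcont} gives $C\simeq 0 \in \A^*(X)$. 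Applying the equivalence $(2)\Rightarrow(3)$ of Theorem~\ref{alganal}, we obtain $\textit{``}C\otimes\Z\textit{''}\simeq 0$ with finite bound measured in $O(X^+)$.

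To conclude, I would identify $\textit{``}C\otimes\Z\textit{''}$ with the algebraic mapping cone of the assembled Poincar\'{e} duality chain map for $X\times\R$. This rests on two compatibilities: Lemma~\ref{Sdcommuteswithcones} (the subdivision functor commutes with taking algebraic mapping cones), and the definition of $\textit{``}-\otimes\Z\textit{''}$ via iterated barycentric subdivision on the slices $X\times\{v_i\}$, which on the simplicial chain and cochain complexes just reproduces the simplicial chain and cochain complexes of the bounded triangulation $t^0(X\times\R)$ up to chain isomorphism (Proposition~\ref{subdivgeneralised}). Consequently $\textit{``}C\otimes\Z\textit{''}\simeq 0$ in $\mathbb{G}_{t^0(X\times\R)}(\A)$ with finite bound translates directly into the statement that $X\times\R$ has bounded Poincar\'{e} duality measured in $O(X^+)$, completing the proof.

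The main obstacle I anticipate is the final identification step: making rigorous the claim that $\textit{``}C\otimes\Z\textit{''}$ really does represent the Poincar\'{e} duality mapping cone of $X\times\R$ rather than merely something chain equivalent to it. This is conceptually clear from the naturality of cap product under subdivision and from Lemma~\ref{Sdcommuteswithcones}, but writing down the explicit chain isomorphism between $\textit{``}\C([X]\cap -)\otimes\Z\textit{''}$ and $\C([X\times\R]\cap -)$, carrying along the fundamental classes and orientations coherently across every slice $X\times\{v_i\}$, is the delicate bookkeeping that the argument hinges on.
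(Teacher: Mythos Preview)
Your proposal is correct. The ($\Leftarrow$) direction matches the paper exactly: the paper's entire proof is the one line ``Applying Lemma~\ref{whenhommfld}'' to the preceding theorem, and that is precisely what you do.

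For ($\Rightarrow$) you actually supply more than the paper does. The paper gives no explicit argument for this direction here; it is the direction Ferry and Pedersen called ``easy to see'' in the footnote quoted in the introduction, and the thesis treats it as already covered by the construction of the functor $\textit{``}-\otimes\Z\textit{''}$ together with Remark~\ref{trivpartofalgmainthm}. Your route through Theorem~\ref{alganal} applied to the mapping cone $C=\C([X]\cap-)$ is a valid and natural way to make this explicit. The bookkeeping you flag as the main obstacle --- identifying $\textit{``}C\otimes\Z\textit{''}$ with the Poincar\'e duality mapping cone of $X\times\R$ --- is real but is exactly the content of the ``easy'' direction: since $\textit{``}-\otimes\Z\textit{''}$ is built slice-by-slice from iterated algebraic subdivisions, Proposition~\ref{subdivgeneralised} and Lemma~\ref{Sdcommuteswithcones} guarantee that on each slice $X\times\{v_i\}$ one recovers the subdivided Poincar\'e duality mapping cone, and these assemble to that of $t^0(X\times\R)$ up to bounded chain equivalence. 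So your concern is well placed but not an obstruction.
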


%\chapter{Further work}

\renewcommand{\chaptername}[1]{Appendix A. }
\renewcommand{\chaptermark}[1]{\markboth{\chaptername \ #1}{}} 

%-----------------------------------------------------------
%-----------------------------------------------------------
\appendix
%-----------------------------------------------------------
%-----------------------------------------------------------
\chapter{Locally finite homology} \label{appendixa}
%-----------------------------------------------------------
%-----------------------------------------------------------
In this thesis we work with unbounded simplicial complexes. To consider Poincar\'{e} duality for such spaces we need to work with locally finite simplicial chains. The contents of this appendix are taken from \cite{laitinen}. See \cite{RanHughes} for a more detailed account of locally finite homology.

Let $X$ be a finite-dimensional locally finite simplicial complex.
\begin{defn}
An \textit{exhaustion} $(K_i)$ of $X$ is a sequence of compact subcomplexes
\[K_0 \subset K_1 \subset K_2 \subset \ldots \]
such that \[X=\bigcup_{i}K_i\] and $K_i \subset \mathring{K}_{i+1}$ for all $i$.
\qed\end{defn}
Exhaustions always exist. (See \cite{rourkesanderson} proof of Theorem 2.2. page 12)

\begin{defn}\label{locallyfinitechains}
Define the locally finite simplicial chains of $X$ by \[\Delta_*^{lf}(X):= \varprojlim \Delta_*(X, X\backslash \mathring{K}_i)\] and the locally finite homology $H_*^{lf}(X)$ of $X$ by \[H_*^{lf}(X):= H_*(\Delta_*^{lf}(X)).\]
\qed\end{defn}

\begin{ex}
If $S_n$ denotes the set of $n$-simplices of $X$, then \[C_n^{lf}(X;R) = \prod_{S_n}R.\]
\qed\end{ex}

\begin{defn}
We say that $X$ is an \textit{$n$-dimensional Poincar\'{e} duality space} if there exists a homology class $[X]\in H_n^{lf}(X)$, called a fundamental class, such that the cap product chain map \[ [X]\cap -: \Delta^{n-*}(X) \to \Delta_*^{lf}(X)\] is a chain equivalence, i.e. if $H^{n-*}(X) \cong H_*^{lf}(X).$
\qed\end{defn}

\begin{ex}
Let $X$ be a PL manifold, then the fundamental class $[X]$ is the sum of all $n$-simplices of $X$ consistently oriented.
\qed\end{ex}

\small{
\singlespacing

%% And finally the references
\addcontentsline{toc}{chapter}{Bibliography}
\bibliographystyle{amsalpha}
\bibliography{spirosbib}{}

}
%% The END !!!
\end{document}